\documentclass[11pt,reqno]{amsart}
\usepackage[latin1]{inputenc}
\usepackage{amssymb}
\usepackage{pdfsync}
\usepackage[english]{babel}

\usepackage{tikz-cd}
\usepackage{adjustbox}

\usepackage[pdftex, colorlinks=true,urlcolor=black,linkcolor=blue,citecolor=blue, linktocpage]{hyperref}
\usepackage{graphicx}

\usepackage{fullpage}
\usepackage{color}
\usepackage{amsmath}
\usepackage{amsfonts}
\usepackage{amsthm}
\usepackage{mathrsfs}
\usepackage{t1enc, graphicx}
\usepackage{verbatim}
\usepackage{bbm}
\usepackage{enumitem}
\usepackage{centernot}
\usepackage{mathtools}
\usepackage[capitalize]{cleveref}

\usepackage[left= 1.2 in, right= 1.2 in ,top= 1.2 in, bottom = 2.1 in]{geometry}

\newcommand{\R}{\mathbb{R}}

\newcommand{\C}{\mathbb{C}}
\newcommand{\T}{\mathbb{T}}
\newcommand{\Q}{\mathbb{Q}}
\newcommand{\Z}{\mathbb{Z}}
\newcommand{\N}{\mathbb{N}}

\newcommand{\E}{\mathbb{E}}
\renewcommand{\P}{\mathbb{P}}

\newcommand{\Bohr}{\operatorname{Bohr}}

\newcommand{\spec}{\operatorname{spec}}

\newcommand{\DiffBohr}{\operatorname{(\mathcal{D}, \mathcal{B})}\text{-expanding}}
\newcommand{\AlmostBohrBohr}{\operatorname{(\mathcal{A}, \mathcal{B})}\text{-expanding}}
\newcommand{\DiffG}{\operatorname{(\mathcal{D}, \{G\})}\text{-expanding}}
\newcommand{\KMF}{\operatorname{KMF}}

\newtheorem{theorem}{Theorem}[section]
\newtheorem*{theorem*}{Theorem}
\newtheorem{proposition}[theorem]{Proposition}
\newtheorem{lemma}[theorem]{Lemma}

\newtheorem{corollary}[theorem]{Corollary}
\newtheorem*{corollary*}{Corollary}
\newtheorem{question}[theorem]{Question}
\newtheorem{conjecture}[theorem]{Conjecture}

\theoremstyle{definition}
\newtheorem*{definition*}{Definition}
\newtheorem{definition}[theorem]{Definition}

\theoremstyle{remark}

\newtheorem*{remark*}{Remark}
\newtheorem{remark}[theorem]{Remark}

\theoremstyle{plain}
\newcounter{MainTheoremCounter}

\newtheorem{Maintheorem}[MainTheoremCounter]{Theorem}

\theoremstyle{plain}
\newcounter{OldTheoremCounter}

\newcommand{\vertiii}[1]{{\left\vert\kern-0.25ex\left\vert\kern-0.25ex\left\vert #1 
    \right\vert\kern-0.25ex\right\vert\kern-0.25ex\right\vert}}

\crefformat{MainTheoremCounter}{Theorem #2#1#3}

\author{Pierre-Yves Bienvenu}

\author{John T. Griesmer}    
\author{Anh N. Le}
\author{Th\'ai Ho\`ang L\^e}

\address{Johann Radon Institute for Computational and Applied Mathematics, Austrian Academy of Sciences, Linz, Austria} 
\email{pierre.bienvenu@oeaw.ac.at}

\address{Department of Applied Mathematics and Statistics\\
	Colorado School of Mines\\
	1005 14th Street, Golden, CO 80401, USA} 
\email{jtgriesmer@gmail.com}

\address{Department of Mathematics\\
	University of Denver\\
	2390 S. York St, Denver, CO 80210, USA}
\email{anh.n.le@du.edu}

\address{Department of Mathematics\\
	University of Mississippi\\
	University, MS 38677, USA}
\email{leth@olemiss.edu}

\title{Bohr sets in sumsets III: Expanding Difference sets and almost Bohr sets}

\begin{document}
\begin{abstract}

Let $G$ be a discrete abelian group. F{\o}lner showed that if $A \subseteq G$ has positive upper Banach density, then $A - A$ contains an \emph{almost Bohr set} -- a set of the form $B \setminus E$ where $B$ is a Bohr set and $E$ has zero Banach density.

We study the sets $S \subseteq G$ for which $A - A + S$ contains a Bohr set for every $A \subseteq G$ of positive upper Banach density. For $G = \Z$, we show that the sets $\{n^2: n \in \N\}$, $\{p - 1: p \text{ prime}\}$, and $\{ \lfloor n^c \rfloor: n \in \N \}$ with $c > 0$, have this property. Moreover, we prove that there are sets $A, B \subseteq \Z$ such that $A$ is dense in the Bohr topology of $\Z$, $d^*(B) > 0$, while $A + B$ is not piecewise Bohr, answering two questions of the second author in \cite{griesmer-dense-set}.


We also study those sets $S$ such that $A + S$ contains a Bohr set for every almost Bohr set $A$. As applications, we prove:
  \begin{enumerate}
    \item If $\phi_1, \phi_2: G \to G$ are (not necessarily commuting) homomorphisms with finite indices $[G: \phi_i(G)]$, and $C \subseteq G$ is a central set, then $\phi_1(C) - \phi_1(C) + \phi_2(C)$ contains a Bohr set. This answers one of our questions in \cite{Griesmer-Le-Le-countable} and generalizes results in \cite{Le-Le-compact,Le-Wathodkar};

    \item Every set of pointwise recurrence in $\Z$ is a set of nice recurrence and a van der Corput set, extending known properties of sets of pointwise recurrence studied in \cite{Glasscock-Le-syndetic, glasscock_le_pointwise, HostKraMaass2016}.
  \end{enumerate}
\end{abstract}

\maketitle

\tableofcontents

\section{Introduction}

\subsection{History and motivation}
\label{sec:history}

This paper follows \cite{Griesmer-Le-Le-countable} and \cite{Le-Le-compact} in the study of Bohr structure in sumsets. 
Here we will deal with an arbitrary discrete abelian group $(G, +)$ (countable or uncountable). If $A, B \subseteq G$, the \emph{sumset} and \emph{difference set} of $A$ and $B$ are $A + B: = \{a + b: a \in A, b \in B \}$ and $A - B:=\{a - b: a \in A, b \in B \}$, respectively.  For $a\in G$, the \emph{translate} $a+B$ is $\{a+B:b\in B\}$. If $s \in \Z$, we define $sa := a + a + \cdots + a$, the sum of $s$ copies of $a$, if $s \geq 0$ and $s a = - (-s)a$ if $s < 0$. Likewise, $sA:=\{sa: a \in A\}$ for $A \subseteq G$.  A \emph{character} of $G$ is a homomorphism from $G$ to $S^1:=\{z\in \mathbb C: |z|=1\}$. Throughout this paper, $\Z$ denotes the set of integers and $\N$ denotes the set of positive integers $\{1, 2, 3, \ldots\}$.

Many classical results in additive combinatorics state roughly that sumsets are more structured than their summands.  Such results often quantify the structure found in sumsets in terms of Bohr sets, which we now define.
For a finite set $\Lambda$ of characters of $G$  and a constant $\eta > 0$, the set 
\begin{equation*} 
    B (\Lambda; \eta):= \{ x \in G : | \chi(x)-1 | < \eta \textup{ for all } \chi \in \Lambda\}
\end{equation*} 
is called a \emph{Bohr set}, or a \emph{Bohr neighborhood of $0$}.  The set $B(\Lambda; \eta)$ is also called a \emph{Bohr-$(k, \eta)$} set where $k = |\Lambda|$. We refer to $\eta$ as the \emph{radius} and $k$ as the \emph{rank} of the Bohr set. We call $\Lambda$ the set of \emph{frequencies} determining $B(\Lambda;\eta)$.  For $g\in G$, a \emph{Bohr neighborhood of $g$} is a set of the form $g+B$, where $B$ is a Bohr set.



The study of Bohr sets in sumsets started with the following  theorem of Bogolyubov \cite{Bogolyubov}; see \cref{sec:def_notation} for the definition of upper Banach density $d^*(A)$.

\begin{theorem}[\cite{Bogolyubov, Folner2}] \label{th:bogo}
If $A \subseteq \mathbb Z$ satisfies $d^*(A) > 0$, 
then $A - A + A - A$ contains a Bohr set whose rank and radius depend only on $d^*(A)$.
\end{theorem}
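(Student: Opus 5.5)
The plan is the standard Fourier-analytic argument: pass to a finite cyclic group, exhibit $A-A+A-A$ as the support of a convolution with nonnegative Fourier coefficients, and keep only the large Fourier coefficients. Since $d^*(A)=\delta>0$, we can find arbitrarily long intervals $I=\{M+1,\dots,M+N\}$ with $|A\cap I|\ge (\delta-\epsilon)N$. Translating, we may assume $A' := (A-M)\cap[1,N]$ has density at least $\delta/2$ in $[1,N]$. We embed $[1,N]$ into $\Z_p$ with $p$ a prime in $[4N,8N]$, so that no wraparound occurs. Then $A'\subseteq \Z_p$ has density $\alpha \ge \delta/16$, and sums or differences of up to four elements of $A'$ computed mod $p$ agree with the integer ones. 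Since $A'-A'+A'-A' \subseteq A-A+A-A$, it suffices to find, inside $A'-A'+A'-A'$, a Bohr set of $\Z$ with rank and radius depending only on $\delta$.

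In $\Z_p$, consider $f = 1_{A'} * 1_{-A'} * 1_{A'} * 1_{-A'}$. Its Fourier transform is $|\widehat{1_{A'}}(\xi)|^4 \ge 0$, and $f(x) = \sum_\xi |\widehat{1_{A'}}(\xi)|^4 e(\xi x/p)$ with the normalization $\widehat{1_{A'}}(\xi)=\E_x 1_{A'}(x)e(-\xi x/p)$. Let $\Lambda = \{\xi : |\widehat{1_{A'}}(\xi)| \ge \alpha^{3/2}/2\}$. By Parseval, $\sum_\xi |\widehat{1_{A'}}(\xi)|^2 = \alpha$, so $|\Lambda| \le 4\alpha^{-2}$. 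For $x$ with $|e(\xi x/p)-1| < 1/2$ for all $\xi\in\Lambda$, split the sum into two parts:
\begin{itemize}
\item the contribution of $\xi\in\Lambda$ has real part at least $\tfrac12\sum_{\xi\in\Lambda}|\widehat{1_{A'}}(\xi)|^4 \ge \tfrac12\alpha^4$, using the $\xi=0$ term;
\item the contribution of $\xi\notin\Lambda$ is at most $\max_{\xi\notin\Lambda}|\widehat{1_{A'}}(\xi)|^2\cdot\alpha \le \alpha^4/4$.
\end{itemize}
Hence $f(x)>0$, so $x \in A'-A'+A'-A'$ (mod $p$).

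The remaining step, which is the main subtlety, is transferring this Bohr set of $\Z_p$ back to $\Z$. Each $\xi\in\Lambda$ gives the character $n\mapsto e(\xi n/p)$ of $\Z$, and we define $B = B(\Lambda;1/2)\subseteq \Z$. The frequency set depends on $N$, but its size is at most $4\alpha^{-2}\le C\delta^{-2}$ and the radius is $1/2$. These are the "depends only on $d^*(A)$" parameters.

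However, an $x\in B$ with $|x|$ large is only shown to lie in $A'-A'+A'-A'$ modulo $p$, which is not an integer statement. To fix this, intersect with the condition $|x| < N$, or more cleanly add the frequency $1/p$ to $\Lambda$ with a small radius. With $p \le 8N$, the condition $|e(x/p)-1|$ small forces $x$ to be close to $p\Z$, and a residue class mod $p$ has a representative with $|x|<p/2$. That representative is genuinely a difference of four elements of $[1,N]$ only if $|x|\le 2N$, which holds once the radius for the frequency $1/p$ is about $1/4$; wraparound is excluded by $p\ge 4N$.

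The cost is that rank rises by one, which is still bounded, and we obtain the integer conclusion for these representatives. A Bohr set of $\Z$ contains $x$ with $|x|$ unbounded, so to get the full (infinite) Bohr set we take a limit. Choose $N_j\to\infty$; the rank is bounded, so by compactness of $\T^{k}$ a subsequence of frequency tuples converges to some $\Lambda_\infty$. For fixed $x$, $x\in B(\Lambda_\infty;1/4)$ implies $x\in B(\Lambda_j;1/2)$ for large $j$, with $|x|\le N_j$. Hence $x$ lies in the difference set built from the $j$-th window, and therefore in $A-A+A-A$. This limiting argument is where I expect to need the most care.
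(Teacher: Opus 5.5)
Your argument is correct, and it takes a genuinely different route from anything in the paper. The paper does not prove this theorem; it is quoted from Bogolyubov and F{\o}lner. Its only internal derivation, \cref{cor:generalization_bogolyubov}, recovers just the qualitative statement. There, F{\o}lner's theorem (\cref{thm:folner}) is combined with the characterization in \cref{mainthm:almost_Bohr_complete}: $A-A$ is almost Bohr and meets every Bohr set in a set of positive upper Banach density, so $A-A+A-A$ contains a Bohr set. That route works in every discrete abelian group and also handles $A-A+B-B$. However, it tracks neither rank nor radius, so it cannot give the clause ``depending only on $d^*(A)$''. Your finite-model argument does give it, with rank $O(\delta^{-2})$ ($1024\delta^{-2}$ with your constants) and radius $1/4$. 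It uses the large spectrum of $1_{A'}$ in $\Z_p$, the Parseval bound $|\Lambda|\le 4\alpha^{-2}$, and compactness in $\T^K$ over a sequence of windows. The price is that it is tied to $\Z$ (long windows, injectivity of reduction mod $p$) and needs the compactness transfer at the end.

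A few points should be cleaned up.

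First, your reduction ``find, inside $A'-A'+A'-A'$, a Bohr set of $\Z$'' cannot hold as stated, since that set is finite. What a single window actually gives is
$$B(\Lambda;1/2)\cap[-2N+2,2N-2]\subseteq A'-A'+A'-A' \subseteq A-A+A-A.$$
This holds because $A'-A'+A'-A'\subseteq[-2N+2,2N-2]$ as integers, and reduction mod $p$ is injective on that interval once $p\ge 4N$.

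Second, the extra frequency $1/p$ accomplishes nothing. The set $\{x:|e(x/p)-1|<\eta\}$ is all integers near $p\Z$, which is unbounded, so it does not confine $x$ to the window. Drop it. The limiting step only needs the window inclusion above, plus the fact that a fixed $x$ satisfies $|x|\le 2N_j-2$ for all large $j$.

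Third, pad each $\Lambda_j$ to a common size $K=\lfloor 1024\delta^{-2}\rfloor$ (say with the trivial frequency $0$) before invoking compactness of $\T^K$.

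Finally, primality of $p$ is not needed; any modulus in $[4N,8N]$ works.
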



F{\o}lner \cite{Folner1} extended Bogolyobov's theorem from $\mathbb Z$ to topological abelian groups, including discrete abelian groups of arbitrary cardinality.
While Bogolyubov's intended application for Theorem \ref{th:bogo} was the study of almost periodic functions, it is now a standard tool in additive combinatorics. It was used in Ruzsa's proof of Freiman's theorem \cite{Ruzsa_Generalized} and in Gowers's proof of Szemer\'edi's theorem \cite{Gowers01}.  

 In \cite{Folner1}, F{\o}lner  showed that the last two summands in Bogolyubov's theorem are ``almost'' redundant by proving that $A - A$ already contains most of a Bohr set. 

\begin{theorem}[\cite{Folner1, Folner2}]
\label{thm:folner}
 If $A \subseteq G$ satisfies $d^*(A) > 0$, then $A - A$ contains a set of the form $B \setminus E$ where $B$ is a Bohr set and $d^*(E) = 0$.
\end{theorem}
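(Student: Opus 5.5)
Our plan is to prove Følner's theorem through the correlation function of $A$ and its spectral decomposition. Since $G$ may be uncountable, we work with an invariant mean rather than a Følner sequence. Choose a translation-invariant mean $m$ on $\ell^\infty(G)$ with $m(1_{A}) = d^*(A) =: \delta > 0$; such means exist because $G$ is amenable, and the supremum over invariant means of $m(1_A)$ equals $d^*(A)$. Define
\[
f(x) := m\bigl(1_A(\cdot)\,1_A(\cdot + x)\bigr).
\]
Then $f$ is positive definite on $G$ and $f(x) > 0$ implies $x \in A - A$. By Herglotz–Bochner, $f = \hat\mu$ for a positive measure $\mu$ on the Bohr compactification $bG$ (dual of $G_d^\wedge$). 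We decompose $\mu = \mu_a + \mu_c$ into atomic and continuous parts, so $f = f_{ap} + f_0$. Here $f_{ap} = \sum_\chi \mu(\{\chi\})\chi$ is almost periodic with $f_{ap}(0) = \mu(\text{atoms}) \ge \mu(\{1\}) \ge \delta^2$; the last inequality follows from $\mu(\{1\}) = \lim$ of averages of $f$, i.e. $|m(1_A)|^2$ via the mean ergodic theorem in the GNS Hilbert space. The remainder $f_0$ has continuous spectral measure, so by Wiener's lemma $m'(|f_0|^2) = \sum_\chi \mu_c(\{\chi\})^2 = 0$ for every invariant mean $m'$, i.e. $|f_0|$ has zero upper Banach density in the sense that $\{x: |f_0(x)| > \epsilon\}$ has $d^* = 0$ for every $\epsilon > 0$.

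Next we show that $f_{ap}$ is large on a Bohr set. Approximate $f_{ap}$ uniformly within $\delta^2/4$ by a finite trigonometric polynomial $\sum_{\chi\in\Lambda} c_\chi \chi$ with $\sum |c_\chi| \le f_{ap}(0)$ (take finitely many atoms). For $x \in B(\Lambda;\eta)$ with $\eta$ small, $|f_{ap}(x) - f_{ap}(0)| \le \delta^2/4 + \eta\sum|c_\chi|$. Since the atomic part has real, positive coefficients, this gives $\operatorname{Re} f_{ap}(x) \ge \delta^2/2$ on $B := B(\Lambda;\eta)$.

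Finally, set $E := \{x : |f_0(x)| \ge \delta^2/4\}$, which has $d^*(E)=0$ by the previous step. For $x \in B\setminus E$ we get $\operatorname{Re} f(x) \ge \delta^2/4 > 0$, hence $x\in A-A$.

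The main obstacle is the Wiener-type step. It requires identifying $m'(|f_0|^2)$ with $\sum_\chi \mu_c(\{\chi\})^2$ uniformly over all invariant means $m'$ (Banach density, not just density along one sequence). We would handle this by writing $|\hat\mu_c|^2 = \widehat{\mu_c * \tilde\mu_c}$ and applying the uniqueness of the invariant mean on almost periodic / Fourier–Stieltjes transforms: any invariant mean of $\hat\nu$ equals $\nu(\{0\})$, which is zero for $\nu = \mu_c*\tilde\mu_c$ continuous. Then Chebyshev converts this into $d^*(E)=0$, since $d^*(E) \le \sup_{m'} m'(|f_0|^2)/\epsilon^2$.
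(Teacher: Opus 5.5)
The paper does not prove this theorem; it quotes it from F{\o}lner and uses it as a black box. Your argument is correct, and it is the same spectral argument the paper runs with its own tools. The proof of \cref{th:SpectralSumDifference_intro} gives $\sigma(\{\chi_0\}) = m(\hat\sigma) \ge m(1_A)^2$ and $\{g : \Re\hat\sigma(g) > 0\} \subseteq A - A$, via \cref{lem:HilbertianInequality} and \cref{lem:meanOfSigmaHat}. The proof of \cref{thm:ABB_vanderCorput_optimal_recurrence} (and of \cref{prop:kmf_imply_vanderCorput}) performs your split $\sigma = \sigma_d + \sigma_c$. It produces a Bohr set on which $\Re\hat\sigma_d > \sigma(\{\chi_0\}) - \epsilon/2$, and an exceptional set $\{|\hat\sigma_c| \ge \epsilon/2\}$ of zero Banach density. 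Combining the two with $\epsilon = \sigma(\{\chi_0\})$ gives $B \setminus E \subseteq A - A$, exactly as you do. What you call the main obstacle is already \cref{lem:meanOfSigmaHat}(ii) (Eberlein). Your reduction of it to part (i) through $|\hat\mu_c|^2 = \widehat{\mu_c * \tilde\mu_c}$ is the device used in \cref{lem:AnnihilateEquiv}.

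There are three small slips to fix.
\begin{enumerate}
\item The Herglotz--Bochner measure lives on the compact dual $\widehat{G}$, not on $bG$. The group $bG$ is the dual of $\widehat{G}_d$ and is a compactification of $G$ itself.
\item Your justification reads as if $\mu(\{\chi_0\}) = m(1_A)^2$. In general only $\mu(\{\chi_0\}) \ge m(1_A)^2$ holds, because the projection of $1_A$ onto all invariant vectors of the GNS representation dominates its projection onto the constants. The inequality is all you use.
\item A uniform approximation of $f_{ap}$ within $\delta^2/4$ only yields $|f_{ap}(x) - f_{ap}(0)| \le \delta^2/2 + \eta\sum c_\chi$. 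Truncate the atoms so the tail is below $\delta^2/8$; the final positivity $\Re f > 0$ on $B \setminus E$ survives either way.
\end{enumerate}
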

The exceptional set $E$ is unavoidable: Kriz  \cite{Kriz87} demonstrated that there exists a set $A\subseteq \mathbb Z$ having positive upper Banach density for which $A - A$ contains no Bohr set; see \cite{Ruzsa85} for an alternative presentation and details of how Kriz's construction relates to Bohr sets. The second author \cite{Griesmer_SeparatingBohr} showed that there is a set $A\subseteq \mathbb Z$ having $d^*(A)>0$ such that $A-A$ contains no Bohr neighborhood of any integer.  


While F{\o}lner and Bogolyubov's results involve two and four summands respectively, Bohr structure with three summands was first studied in $\mathbb Z$ by Bergelson and Ruzsa \cite{Bergelson-Ruzsa}. They show that if $s_1, s_2, s_3$ are non-zero integers satisfying $s_1 +s_2+s_3 = 0$ and $A \subseteq \Z$ has positive upper Banach density, then $s_1 A+s_2 A+s_3 A$ contains a Bohr set. They also show that the hypothesis $s_1+s_2+s_3=0$ cannot be omitted.

The latter theorem of Bergelson and Ruzsa in turn was generalized by the second author in \cite{Griesmer-threefold-sumset} and further generalized by the last three authors to the setting of discrete countable abelian groups:

\begin{theorem}[{\cite[Theorem 1.2]{Griesmer-Le-Le-countable}}] \label{th:main-density}
Let $G$ be a discrete countable abelian group. Let $\phi_1, \phi_2, \phi_3 : G \rightarrow G$ be commuting homomorphisms such that $\phi_1 + \phi_2 +\phi_3 =0$ and $[G:\phi_j(G)]$ are finite for $j \in \{1, 2, 3\}$.
Suppose $A \subseteq G$ satisfies $d^*(A) >0$. Then the set
\[
\phi_1(A) + \phi_2(A) + \phi_3(A)
\]
contains a Bohr set whose rank and radius depend only on $d^*(A)$ and $[G:\phi_j(G)]$.
\end{theorem}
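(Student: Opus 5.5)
The plan is the standard Fourier-analytic route, adapted to a F\o lner sequence, carried out on the group $\phi_1(G)\cap\phi_2(G)\cap\phi_3(G)$ and then transferred to $G$. Fix a F\o lner sequence $(\Phi_N)$ along which $A$ attains density $\delta=d^*(A)>0$. By weak compactness we may assume the indicator functions converge in an ergodic-theoretic or Hilbert-space sense. This gives a measure-preserving $G$-system $(X,\mu,T)$ and a set $\tilde A$ with $\mu(\tilde A)\ge\delta$, via Furstenberg's correspondence principle; we may assume the system is ergodic after passing to an ergodic component.

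Consider the counting function
\[
R(x)=\lim_N \frac{1}{|\Phi_N|}\sum_{g\in\Phi_N}\int 1_{\tilde A}\cdot T_{\phi_1 g}\ldots ,
\]
or, more concretely, the density of representations $x=\phi_1(a_1)+\phi_2(a_2)+\phi_3(a_3)$. Since $\phi_1+\phi_2+\phi_3=0$, the relevant average is
\[
\int f(T_{\phi_1 g}\,\cdot)\,f(T_{\phi_2 g}\,\cdot)\,f(T_{\phi_3 g}\,\cdot)\,d\mu
\]
twisted by $x$, with $f=1_{\tilde A}$. The key step is to show that this multiple average is controlled by the Kronecker factor. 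Because the $\phi_j$ commute and sum to zero, substituting $h=\phi_3 g$-type changes of variables reduces it to a two-term correlation. A van der Corput argument, using $[G:\phi_j(G)]<\infty$ to keep the averages F\o lner, then shows that only the projection of $f$ onto the Kronecker factor contributes.

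On the Kronecker factor the system is a rotation on a compact abelian group $K$, and $f$ becomes a function $\tilde f\ge0$ on $K$ with $\int\tilde f\ge\delta$. The count becomes
\[
\tilde f\circ\tilde\phi_1 * \tilde f\circ\tilde\phi_2 * \tilde f\circ\tilde\phi_3(\rho(x)),
\]
where $\rho:G\to K$ is the dense embedding. Such a triple convolution is a continuous function on $K$, because a convolution of two $L^2$ functions is continuous. Its value at $0$ is positive, by a pigeonhole or Cauchy--Schwarz argument using finite index: the images of $\tilde A$ under the $\tilde\phi_j$ have measure at least $\delta/[G:\phi_j(G)]$, and the zero-sum relation places the identity in the support. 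Hence the count is positive on a neighbourhood of $0$ in $K$, whose preimage in $G$ is a Bohr set contained in $\phi_1(A)+\phi_2(A)+\phi_3(A)$. This requires that positivity of the limiting count at $x$ implies $x$ is represented, which follows from the correspondence principle.

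For the uniform dependence of rank and radius only on $\delta$ and the indices, argue by contradiction with a compactness or ultraproduct argument, or quantify the continuity directly via the Fourier coefficients of $\tilde f$: only boundedly many characters have $|\hat f|\ge\epsilon(\delta)$, and those give the frequencies. The main obstacle is this Kronecker-factor reduction for three non-commuting-looking terms with merely finite-index homomorphisms, where $\phi_j$ need not be injective. Here I would first pass to the finite-index subgroup $\bigcap_j\phi_j(G)$ and handle the kernels by averaging over cosets.
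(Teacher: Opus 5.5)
\textbf{There is a genuine gap: the proposal never uses the commutativity-based parametrization of solutions, and its positivity and uniformity steps do not go through.}

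This theorem is only quoted in the paper, from \cite{Griesmer-Le-Le-countable}. The one thing the paper says about its proof is that commutativity is used to parametrize the solutions of $\phi_1(x)+\phi_2(y)+\phi_3(z)=0$, and that is exactly the step your proposal gets wrong. Your average $\int f(T_{\phi_1 g}y)f(T_{\phi_2 g}y)f(T_{\phi_3 g}y)\,d\mu(y)$ counts triples $(y+\phi_1 g,\,y+\phi_2 g,\,y+\phi_3 g)$, and these are not solutions. Indeed $\sum_j\phi_j(y+\phi_j g)=(\phi_1^2+\phi_2^2+\phi_3^2)(g)$, which is $6g$ for $(\phi_1,\phi_2,\phi_3)=(1,1,-2)$ on $\mathbb Z$.

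The right family is $(y,\,y+\phi_3h,\,y-\phi_2h)$, since $\phi_1(y)+\phi_2(y+\phi_3h)+\phi_3(y-\phi_2h)=(\phi_2\phi_3-\phi_3\phi_2)(h)=0$. To reach $x=\phi_2(u)+\phi_3(v)$ you shift to $(y,\,y+\phi_3h+u,\,y-\phi_2h+v)$. You then study the $h$-average of the density of $\{y:\ y,\ y+\phi_3h+u,\ y-\phi_2h+v\in A\}$. This is a genuine double-recurrence average (for $(1,1,-2)$ it is the $3$-AP average), and no change of variables reduces it to a two-term correlation. That the Kronecker factor is characteristic for it is a real theorem. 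Its hypotheses are that $\phi_3(G)$, $\phi_2(G)$ and $(\phi_2+\phi_3)(G)=\phi_1(G)$ have finite index, and this is where all three index assumptions are used.

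The compact-group step also fails as written. The function $(\tilde f\circ\tilde\phi_1)*(\tilde f\circ\tilde\phi_2)*(\tilde f\circ\tilde\phi_3)$ measures representations $t=z_1+z_2+z_3$ with each $\tilde\phi_j(z_j)$ in the support of $\tilde f$. So it concerns preimages under the $\tilde\phi_j$, not images. Also, the $\phi_j$ need not descend to the Kronecker group of an arbitrary system; work in $bG$, where they all extend.

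More seriously, ``the images have measure at least $\delta/[G:\phi_j(G)]$ and the zero-sum relation puts $0$ in the support'' does not give a positive density of representations at $0$. The trivial solutions $a_1=a_2=a_3$ form a null set, and for $(1,1,-2)$ positivity at $0$ is Roth's theorem. What works is the parametrized count. For $w$ in a small neighbourhood $V$ of $0$, continuity of translation gives $\int\tilde f(z)\tilde f(z+\tilde\phi_3w)\tilde f(z-\tilde\phi_2w)\,dz\ge\int\tilde f^3-\epsilon\ge\delta^3-\epsilon$, and you then integrate over $V$.

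That bound is roughly $\delta^3$ times the Haar measure of $V$, so it depends on $\tilde f$ and not only on $\delta$. Hence the claim that rank and radius depend only on $d^*(A)$ and the indices is not established. Your large-Fourier-coefficient route needs a lower bound for the main term that is uniform over all compact abelian groups and all $0\le\tilde f\le1$ with $\int\tilde f\ge\delta$, i.e.\ a Varnavides-type Roth count. This is because the threshold defining ``large'' coefficients (hence the rank) and the radius must both be chosen relative to that bound. The appeal to compactness or ultraproducts does not supply it.
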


The hypothesis that $[G:\phi_j(G)]$ is finite cannot be omitted: taking $A\subseteq \mathbb Z$ to be a set with $d^*(A)>0$ and $A-A$ does not contain a Bohr set (as given in \cite{Kriz87} or \cite{Griesmer_SeparatingBohr}), $\phi_1=-\phi_2=$ the identity map on $\mathbb Z$ and $\phi_3=0$, we get $A-A=\phi_1(A)+\phi_2(A)+\phi_3(A)$, so the latter set does not contain a Bohr set.

\subsection{Expanding difference sets}


With the above results in mind, we consider the following definition and question.

\begin{definition}
Let $G$ be a discrete abelian group.  A set $S \subseteq G$ is called a \emph{$\DiffBohr$} set if for every $A \subseteq G$ with $d^*(A) > 0$, the sumset $A - A + S$ contains a Bohr set. 
\end{definition}
In the definition above, $\mathcal{D}$ stands for ``difference sets'', and $\mathcal{B}$ for ``Bohr sets''.

 \begin{question}\label[question]{ques:complete_difference}
  Which sets $S \subseteq G$ are $\DiffBohr$?
 \end{question} 

Theorems \ref{th:SpectralSumDifference_intro}, \ref{cor:Specific-intro_main},  \ref{mainthm:difference_set_not_DF}, and \ref{th:SpectralSumDifference}  below address this question.  We will see that the class of $\DiffBohr$ sets occupies a new niche
in the hierarchy of recurrence properties, which includes sets of recurrence,  van der Corput sets, and others considered in dynamical approaches to additive combinatorics.

Our first result provides a sufficient condition for a set to be $\DiffBohr$.   The definition of \emph{$\KMF$-mean}, which we postpone to \cref{sec:def_notation}, isolates the properties used in Kamae and Mend{\`e}s France's generalization  \cite{Kamae_France78} of the Furstenberg-S{\'a}rk{\"o}zy theorem \cite{Furstenberg77, Sarkozy78}.


\begin{Maintheorem}\label[Maintheorem]{th:SpectralSumDifference_intro}  
If $S \subseteq G$ supports a $\KMF$ mean, then $S$ is a $\DiffBohr$ set.
\end{Maintheorem}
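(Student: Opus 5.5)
The plan is a Fourier-analytic argument on the Bohr compactification of $G$, in the spirit of Kamae and Mend\`es France. I will take the $\KMF$ mean to be a mean $m$ supported on $S$ with the following property: for every character $\chi$ the value $m(\chi)$ exists, and $m(\chi)=0$ whenever $\chi$ has infinite order. For characters of finite order I expect the definition to supply some positivity, for example a positive mean on each coset of a finite-index subgroup meeting $S$. The exact form of this finite-order hypothesis is the point I am least sure of.

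\emph{Setting up the convolution.} Fix $A$ with $d^*(A)>0$. Choose a F{\o}lner sequence $(\Phi_N)$ along which $1_A$ attains its upper Banach density. Pass to a subsequence so that the correlations
\[
g(x)=\lim_N \frac{|A\cap(A+x)\cap\Phi_N|}{|\Phi_N|}
\]
exist for all $x$; for uncountable $G$, use an ultralimit instead. Then $g$ is positive definite, so $g(x)=\int_{\widehat G}\chi(x)\,d\sigma(\chi)$ for a finite positive measure $\sigma$ on the dual group. It satisfies $\sigma(\{1\})\ge d^*(A)^2>0$ by the mean ergodic theorem / Cauchy--Schwarz. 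If $g(y)>0$ then $A\cap(A+y)\neq\emptyset$, hence $y\in A-A$.

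Now define $h(x)=m_s\bigl(g(x-s)\bigr)$, averaging over $s\in S$ with the mean $m$. If $h(x)>0$, then $g(x-s)>0$ for some $s\in S$, so $x\in A-A+S$. It therefore suffices to show that $\{x:h(x)>0\}$ contains a Bohr set.

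\emph{Computing $h$.} By Fubini for means against a finite measure (justified by dominated convergence after approximating $\sigma$), we get
\[
h(x)=\int \chi(x)\,m(\overline{\chi})\,d\sigma(\chi).
\]
By the $\KMF$ property only the finite-order characters contribute. Hence $h$ is a sum over the countably many atoms of $\sigma$ at torsion characters, so $h$ is an absolutely convergent trigonometric series and thus Bohr almost periodic. Consequently $\{h>0\}$ contains a Bohr neighborhood of any point where $h$ is positive. Concretely: truncate to finitely many atoms, with tail below $\varepsilon$, and observe that each remaining $\chi$ is within $\varepsilon$ of its value at $x_0$ on a Bohr set.

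\emph{Main obstacle: positivity at $0$.} The hard step is to show $h(0)>0$, or positivity at a suitable point. The coefficients $m(\overline{\chi})$ for finite-order $\chi$ are complex, so they could cancel against $\sigma(\{1\})$. My first attempt is as follows. Truncate the finite-order part of $\sigma$ to finitely many atoms carrying all but $\varepsilon$ of the mass. Let $K$ be the intersection of their kernels, a finite-index subgroup. Use the $\KMF$ hypothesis to replace $m$ by a mean $m'$ concentrated on a single coset $s_0+K$ with $S\cap(s_0+K)$ nonempty. Then every retained atom $\chi$ takes the constant value $\chi(s_0)$ on the support, and
\[
h'(s_0)=\sum \sigma(\{\chi\})\,|\chi(s_0)|^2 \pm \varepsilon \ \ge\ d^*(A)^2-\varepsilon>0.
\]
This gives a Bohr neighborhood of $s_0$ in $A-A+S$. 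To upgrade this to a neighborhood of $0$, I would exploit the symmetry of $g$. Specifically, I would average over the coset in a way that yields $\sum\sigma(\{\chi\})>0$ at $x=0$, or else use that a Bohr neighborhood of $s_0$ together with the finite-index structure supplies one at $0$. Verifying that the $\KMF$ definition gives exactly this coset-restriction property is the step I would check first.
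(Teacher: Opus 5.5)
There is a genuine gap, and it is exactly at the step you flagged. In the paper, a $\KMF$ mean is a mean $m$ with $m(1_S)=1$ that satisfies two conditions. (a) It annihilates continuous measures: $m(|\hat\sigma|^2)=0$ for every continuous positive Radon measure $\sigma$ on $\widehat{G}$. (b) It massively accumulates at $0$ in $bG$: $m(1_B)>0$ for every Bohr set $B$.

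The hypothesis you guessed does not imply the theorem. Take $G=\Z$, $S=2\Z+1$, and $m(f)=2m_0(f1_S)$ with $m_0$ an invariant mean. Then $m(e(n\alpha))=0$ for all $\alpha\notin\{0,\tfrac12\}$, $m$ gives positive mass to every coset of every finite-index subgroup meeting $S$, and $m$ even satisfies (a). Yet for $A=2\Z$ we get $A-A+S=2\Z+1$. This set contains the Bohr neighborhood $1+2\Z$ of $1$ but no Bohr set.

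Here your $h$ equals $\tfrac12 1_{2\Z+1}$, so $h(0)=\sigma(\{0\})-\sigma(\{\tfrac12\})=0$ (identifying $\widehat{\Z}$ with $\T$). This is precisely the cancellation you feared. It also shows that a Bohr neighborhood of $s_0$ cannot in general be upgraded to one of $0$. Restricting $m$ to a coset $s_0+K$ is exactly the move that destroys the property you need. The missing ingredient is (b), which fails in this example because $m(1_{2\Z})=0$.

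The second gap is the Fubini step. Means are only finitely additive, so no dominated convergence theorem holds for them; indeed $\chi\mapsto m(\bar\chi)$ need not even be measurable. Moreover, vanishing of $m$ on individual characters does not control $m(\hat\sigma_c)$ for a continuous $\sigma_c$. Hypothesis (a) is what replaces the interchange. Write $g=g_d+g_c$. For fixed $x$, the function $s\mapsto g_c(x-s)$ is the Fourier transform of the continuous complex measure $\chi(x)\,d\sigma_c(\chi)$. Split that measure into four continuous positive measures and use Cauchy--Schwarz; then (a) gives $m(|g_c(x-\cdot)|)=0$.

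For the discrete part, uniform convergence suffices. So $h(x)=\sum_\chi\sigma(\{\chi\})\,m(\bar\chi)\,\chi(x)$, an absolutely convergent character series and hence uniformly almost periodic. Characters of infinite order can occur here, e.g.\ when $m$ is an invariant mean normalized on an irrational Bohr set, but this is harmless.

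Positivity at $0$ then follows from (b) together with $g\ge 0$. Choose a Bohr set $U$ with $\Re g_d\ge\beta>0$ on $U$; this is possible since $g_d$ is almost periodic with $g_d(0)\ge\sigma(\{1\})>0$, and Bohr sets are symmetric. Then $h(0)=m(g(-\cdot))\ge m(1_U\,g(-\cdot))=m(1_U\,\Re g_d(-\cdot))\ge\beta\,m(1_U)>0$. Almost periodicity then yields a Bohr set inside $\{h>0\}\subseteq A-A+S$.

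With these two corrections your architecture is essentially the paper's, with one twist. The paper convolves $m$ with $1_U\hat\sigma$ instead of $\hat\sigma$, so the surviving discrete part dominates $\beta 1_U$ pointwise and \cref{lem:APconvolution} applies. This is what lets \cref{th:SpectralSumDifference} handle an arbitrary non-continuous $\sigma$ rather than only nonnegative correlation functions.
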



Using \cref{th:SpectralSumDifference_intro}, we can exhibit many examples of $\DiffBohr$ sets in $\Z$. For example, the sets $\{n^2: n \in \N\}$, $\{p-1: p \text{ prime}\}$, and $\{\lfloor n^c \rfloor: n \in \N\}$ are such sets, where $c > 0$ and $\lfloor \cdot \rfloor$ denotes the integer part. Moreover, a strengthening of Theorem \ref{th:SpectralSumDifference_intro} (see Theorem \ref{th:SpectralSumDifference} below) uses the \emph{spectrum} (see \cref{sec:kmf_mean_def}) of a KMF mean supported on $S$ to further specify the Bohr set contained in $A-A+S$. For example, the Bohr set could be a subgroup of finite index, or even the whole group.

The polynomial $P(n) = n^2$ is a classical example of an intersective polynomial; we say $P(x) \in \Z[x]$ is \textit{intersective} if for any positive integer $m$, there exists $r \in \Z$ such that $P(r)$ is divisible by $m$. Following \cite{LeThaiHoang_survey}, if one can moreover choose such an $r$ with $\gcd(m,r)=1$, then we say that $P$ is \textit{intersective of the second kind}. 

It is easy to see that every polynomial with at least one integer root is intersective. There are intersective polynomials having no integer roots, such as $P(n) = (n^2 - 13)(n^2 - 17)(n^2 - 221)$; cf.~\cite{Berend_Bilu96}. If the polynomial $P$ satisfies $P(0) = 0$, then the shifted polynomials $Q(n) = P(n+1)$ and $Q(n) = P(n-1)$ are intersective polynomials of the second kind. The following theorem showcases the applicability of Theorems \ref{th:SpectralSumDifference_intro} and \ref{th:SpectralSumDifference}.


\begin{Maintheorem}\label{cor:Specific-intro_main}
     Let $A \subseteq \Z$ with $d^*(A) > 0$. 
    \begin{enumerate}
        \item
        Let $P \in \Z[x]$ be a nonconstant intersective polynomial and $S:=\{P(n):n\in \mathbb N\}$. Then $A - A + S$ contains a finite index subgroup of $\Z$.

       \item
       Let $P \in \Z[x]$ be a nonconstant intersective polyomial of the second kind and let $S := \{P(p): p \text{ prime}\}$. Then $A - A + S$ contains a finite index subgroup of $\Z$.

        \item 
        If $S := \{\lfloor n^c \rfloor: n \in \N\}$ where $c > 0, c \not \in \Z$, then $A - A + S = \Z$.

        \item 
        Moreover, if $S$ is any of the above sets and $S = S_1 \cup S_2 \cup \cdots \cup S_k$, then one of the $S_i$ is $\DiffBohr$.
    \end{enumerate}
   
\end{Maintheorem}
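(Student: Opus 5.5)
The plan is to derive each item from Theorem \ref{th:SpectralSumDifference_intro} and its spectral refinement, Theorem \ref{th:SpectralSumDifference}. The refinement says, roughly, that if $S$ supports a $\KMF$ mean whose spectrum $\Gamma$ (the set of characters $\chi$ with $\lim \E_{s}\chi(s)\neq 0$ along the mean) is a subgroup of rational characters of order dividing some $m$, or is $\{1\}$, then $A-A+S$ contains the annihilator of that spectrum. That annihilator is a finite index subgroup ($m\Z$), or the whole group $\Z$ when the spectrum is trivial. So for each $S$ the work is to exhibit a $\KMF$ mean and compute its spectrum.

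\textbf{Item (1).} Since $P$ is intersective, I choose for each $m$ a residue $r_m$ with $m\mid P(r_m)$. I then take the means along $n\equiv r_{N!}\pmod{N!}$, that is, averages of $f(P(N!\,k+r_{N!}))$ over $k\le L_N$ with $L_N\to\infty$ fast. I pass to a subsequence so the means converge along $\Z$'s characters, giving a mean $\mu$ on $S$.
\begin{itemize}
\item For irrational $\theta$, Weyl equidistribution of $P(N!k+r)\theta$ (nonconstant polynomial in $k$ with an irrational non-constant coefficient) gives $\hat\mu(\theta)=0$.
\item For rational $\theta=a/q$, once $q\mid N!$ the values $P(N!k+r_{N!})\equiv P(r_{N!})\equiv 0 \pmod q$, so $\hat\mu(a/q)=1$.
\end{itemize}
Hence the spectrum is all rationals. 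This is too big for a finite-index conclusion directly, so instead I use the form of the refinement in which the conclusion is $A-A+S\supseteq$ a Bohr set whose frequencies can be taken inside the spectrum. With only rational frequencies, that Bohr set contains $q\Z$ for some $q$. I will check that Theorem \ref{th:SpectralSumDifference} is stated in that form, that is, that the Bohr frequencies come from the spectrum. This is the key point.

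\textbf{Item (2).} This is the same argument over primes, using $p\equiv r\pmod{N!}$ with $\gcd(r,N!)=1$, which intersectivity of the second kind supplies.
\begin{itemize}
\item Irrational frequencies vanish by Vinogradov/Rhin equidistribution of $P(p)\theta$ in arithmetic progressions. Weighting by $\log p$ or not is immaterial.
\item Rational frequencies again give $1$.
\end{itemize}

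\textbf{Item (3).} For noninteger $c>0$, the sequence $\lfloor n^c\rfloor\theta$ is equidistributed for every $\theta\notin\Z$, including rational $\theta$; this is a classical van der Corput / Fej\'er type result. So the Cesàro mean over $n\le N$ has trivial spectrum, and the refinement gives $A-A+S=\Z$.

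\textbf{Item (4).} Partition regularity comes from the fact that the $\KMF$ property is preserved under restriction. If $S=\bigcup S_i$, then $\mu(S_i)>0$ for some $i$. The restricted normalized mean $\mu_i=\mu(\cdot\,1_{S_i})/\mu(S_i)$ should still satisfy the $\KMF$ conditions. I expect this to follow from the definition of $\KMF$ mean, which presumably requires something like $\mu(1_{S'}\chi)\to0$ for irrational $\chi$ uniformly over subsets, or a van der Corput–type condition. If the definition is instead spectral only, I would instead refine $\mu$ by Furstenberg correspondence as in Kamae–Mendès France: their hereditary property is exactly that a positive-density subset of a $\KMF$ set is a set of recurrence. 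Theorem \ref{th:SpectralSumDifference_intro} then applies to $S_i$.

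\textbf{Main obstacle.} I expect the main difficulty to be matching the spectral information to the precise hypothesis of Theorem \ref{th:SpectralSumDifference}, in particular handling the infinitely many rational frequencies in (1)–(2) so as to land in a finite-index subgroup rather than merely a Bohr set.
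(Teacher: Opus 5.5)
\textbf{Items (1) and (2).} Your overall plan is the paper's: put a $\KMF$ mean on $S$ with rational (resp.\ trivial) spectrum, then apply \cref{th:SpectralSumDifference}, which already gives a Bohr set with finitely many frequencies from the spectrum, hence a finite-index subgroup. The gap is the mean you build. Averaging $f(P(N!k+r_{N!}))$ over $k\le L_N$, with the modulus $N!$ growing, does not give a $\KMF$ mean, whatever $(L_N)$ is. Weyl gives decay as $L\to\infty$ only for each \emph{fixed} modulus, with nothing uniform in $\theta$ as $N$ changes.

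Already $P(n)=n$ fails, where $\mu_N=L_N^{-1}\sum_{k\le L_N}\delta_{N!k}$. Take $M_1<M_2<\cdots$ growing fast relative to $(L_N)$, and let $\sigma$ be the continuous law of $\theta=\sum_j\epsilon_j/M_j!$, with independent fair bits $\epsilon_j$. For $M_j\le N<M_{j+1}$ one has $N!\theta\equiv\epsilon_{j+1}N!/M_{j+1}!+\rho$ with $L_N\rho\to0$. On $\{\epsilon_{j+1}=0\}$ the average $L_N^{-1}\sum_{k}e(-kN!\theta)$ is close to $1$; on $\{\epsilon_{j+1}=1\}$ its real part is at least $-1/4$. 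Hence $\Re\mu_N(\hat\sigma)\ge 1/4$ for all large $N$, and no cluster point annihilates $\sigma$.

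Taking all $\epsilon_j=1$ gives an irrational $\theta$ with $\mu_{M_j}(\chi_\theta)\to1$. So your claim that $\hat\mu$ vanishes at irrationals fails too, and with it the rational spectrum. Pointwise vanishing at a cluster point would not suffice anyway: exchanging $\mu$ with $\int d\sigma$ needs genuine limits along the whole sequence plus dominated convergence.

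The paper instead uses the plain Ces\`aro mean $f\mapsto\frac1N\sum_{n\le N}f(P(n))$. This is what \cref{th:improved_bulinski_fish_polynomial} with $Q_1=P^2$ and $B=\Z$ amounts to; for primes the same holds with Rhin's theorem. There $\frac1N\sum_{n\le N}e(P(n)\theta)\to0$ for every irrational $\theta$, which gives annihilation of continuous measures and rational spectrum at once.

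The progression $n\equiv r_{M!}\pmod{M!}$ appears only in proving \emph{massive accumulation at $0$}, with $M$ fixed and $N\to\infty$. For a Bohr set $D$, pick a Bohr set $C$ with $C-C\subseteq D$. Then show $\liminf_N\frac1N\sum_{n\le N}d^*(C\cap(C+P(n)))>0$ by the Kamae--Mend\`es France spectral argument. You never check massive accumulation, and that is exactly where intersectivity enters.

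\textbf{Item (4).} You assert that the normalized restriction $\mu_i=\mu(\cdot\,1_{S_i})/\mu(1_{S_i})$ is $\KMF$ for any piece of positive mass; this is false. Annihilation of continuous measures does pass to every such $\mu_i$, since $1_{S_i}|\hat\sigma|^2\le|\hat\sigma|^2$. Massive accumulation does not. Split $\{n^2\}$ into odd and even squares: the Ces\`aro mean restricted to the odd squares gives the Bohr set $2\Z$ mass $0$.

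The missing step is choosing the right piece. Take the $S_i$ disjoint. If every $\mu_i$ with $\mu(1_{S_i})>0$ gave mass $0$ to some Bohr set $U_i$, then $\mu$ would give mass $0$ to the Bohr set $\bigcap_iU_i$; this is \cref{prop:partition_regular_KMF_mean}. Your Kamae--Mend\`es France fallback only shows that some $S_i$ is a set of recurrence, which is much weaker than $\DiffBohr$.

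\textbf{Item (3).} This is fine in outline with the Ces\`aro mean of $\lfloor n^c\rfloor$. The paper handles $c<1$ trivially via $S\supseteq\N$, and $c>1$ via \cref{thm:bulinski_fish_hardy_field}. Here too you should verify massive accumulation; it follows from equidistribution of $\lfloor n^c\rfloor$ in $b\Z$.
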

We derive Theorem \ref{cor:Specific-intro_main} from Theorems \ref{th:improved_bulinski_fish_polynomial}, \ref{th:improved_bulinski_fish_polynomial_prime}, and \ref{thm:bulinski_fish_hardy_field}. These are stronger results whose hypotheses allow further restrictions  on $n$ and $p$ in parts (i), (ii) and (iii) of Theorem \ref{cor:Specific-intro_main}.  We draw inspiration for pursuing these stronger results from the work of Bulinski and Fish in \cite{BulinskiFish_QuantitativeTwisted}.

Since $0$ belongs to every Bohr set, every $\DiffBohr$ set $S$ satisfies  $(A - A) \cap S \neq \varnothing$ whenever $d^*(A) > 0$. In the language of ergodic theory, this means that every $\DiffBohr$ set is a \emph{set of recurrence}. 
We discuss this connection further in \cref{sec:relations_with_others}.  It is well-known that when $E \subseteq G$ is infinite,  $E-E$ is a set of recurrence.  The next theorem therefore shows that not every set of recurrence is $\DiffBohr$.

\begin{Maintheorem}\label{mainthm:difference_set_not_DF}
There exists an infinite set $E \subseteq \Z$ such that $E - E$ is not $\DiffBohr$.
\end{Maintheorem}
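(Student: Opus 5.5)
We need an infinite $E\subseteq\Z$ and a set $A$ with $d^*(A)>0$ such that $A-A+E-E$ contains no Bohr set. Since $A-A+E-E=(A+E)-(A+E)$, the natural plan is to arrange that $A+E$ is itself a set of positive density whose difference set avoids every Bohr set, in the spirit of the constructions of Kriz \cite{Kriz87} and of \cite{Griesmer_SeparatingBohr}. We cannot simply take a Kriz set $A$ and a single point $E$, since $E$ must be infinite. The idea is to choose $E$ very sparse and lacunary, $E=\{e_1<e_2<\cdots\}$ with $e_{j+1}/e_j\to\infty$, and to build $A$ blockwise so that the shifts $A+e_j$ do not destroy the ``no Bohr set'' property.

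\textbf{Step 1 (a Kriz-type building block with a Bohr-dense complement).} From the Kriz/Ruzsa construction we take a sequence of finite models. These are Kneser or Cayley graphs on $\mathbb{F}_2^n$ or on cyclic groups $\Z/N_k$, giving sets $A_k\subseteq \Z/N_k$ of density at least $\delta>0$ for which $A_k-A_k$ misses a set $M_k$. The set $M_k$ should meet every Bohr set of rank at most $k$ and radius at least $1/k$ on $\Z/N_k$; equivalently, it is dense in the Bohr topology up to scale $k$. This is what the Kriz example gives: the complement of $A-A$ contains a ``Bohr-dense'' set of specified rank and radius. Because $E$ is a difference set and we only need to avoid differences, we want the stronger statement that $M_k$ is disjoint from $(A_k-A_k)+(F_k-F_k)$ for a finite set $F_k$. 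Here $F_k$ consists of the elements $e_j$ already chosen, reduced mod $N_k$.

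\textbf{Step 2 (interleaving $E$ and $A$).} We proceed inductively. Having chosen $e_1,\dots,e_k$, we choose $N_{k}$ huge and the next block so that every $e_i$ with $i\le k$ is tiny compared to $N_k$. The set $A$ is a union of lifted copies of $A_k$ on intervals of length $N_k$, placed far apart, so that $d^*(A)\ge\delta$. Differences of elements from different blocks and differences involving $e_j$ with $j>k$ are then astronomically large. We choose $e_{k+1}$ so that for every $i\le k$ the shifts $\pm e_{k+1}\pm e_i$ fall into ``gaps'' that avoid the witnesses $M_k$. One way is to take $e_{k+1}$ divisible by all previous moduli $N_1,\dots,N_k$, so that modulo $N_i$ the new differences reduce to old ones.

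Making the $e_j$ divisible by the moduli, so that $E-E\subseteq \bigcap_{i\le k} N_i\Z$ up to finitely many elements, is the cleanest route. If every $e_j$ with $j>k$ is divisible by $N_k$, then modulo $N_k$ the set $A-A+E-E$ reduces to $(A_k-A_k)+F$ with $F$ finite. So it suffices to make $A_k$ robust against a finite set $F$ of perturbations chosen in advance. This is achieved by replacing $A_k$ with $\bigcap_{f\in F'}(A_k+f)$ for a suitable finite $F'$, which costs a controlled amount of density, or better by building the Kriz set in $\Z/N_k$ with $N_k$ coprime to, and much larger than, the earlier data.

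\textbf{Step 3 (conclusion).} Any Bohr set $B(\Lambda;\eta)$ in $\Z$ has some rank $r$ and radius $\eta$. Take $k>\max(r,1/\eta)$ with the frequencies approximable mod $N_k$; this requires $N_k$ to be chosen so that rational frequencies with denominator $N_k$ approximate arbitrary characters. That approximation issue is handled as in \cite{Griesmer_SeparatingBohr}, via a compactness/diagonal argument over the compact dual $\T^r$. The set $B(\Lambda;\eta)$ then contains an element of $M_k$ lying outside $A-A+E-E$.

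The main obstacle is Step 2, keeping positive density uniformly while the Kriz sets are made robust under the finite perturbation sets $F$ coming from the earlier $e_j$. I would first try to choose all $e_j$ in $\bigcap_{i<j}N_i\Z$, so that the perturbation modulo $N_k$ only involves $e_1,\dots,e_{k-1}$. I would then use the freedom in the Kriz construction, which gives density close to $1/2$ with uniform parameters, to absorb a bounded number of shifts.
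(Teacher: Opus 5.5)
There is a genuine gap, and it sits in Step 2. You claim that modulo $N_k$ the set $A-A+E-E$ reduces to $(A_k-A_k)+F$. That would hold only if $A$ were a union of cosets of $N_k\Z$, and then $A-A\supseteq N_k\Z$ would already be a Bohr set. For your blockwise $A$, a witness $x$ taken from $M_k$ is a small integer ($|x|<N_k$), and it must avoid $(A\cap I_i)-(A\cap I_i)+(E-E)$ for \emph{every} level $i$, not just $i=k$. For $i>k$ the block is a lift of $A_i\subseteq\Z/N_i$ with $N_i\gg N_k$, so you need $x \bmod N_i\notin A_i-A_i+F_i$. But $A_i$ was produced independently of $M_k$ and has density near $1/2$. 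Nothing in Steps 1--2 keeps $A_i-A_i+F_i$ off the earlier witnesses, and there is no reason an independently chosen dense set would miss them.

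Your remark that cross-block differences are astronomically large does not help here. The offending elements are within-block differences of later blocks, and those occur at all small scales. So the assertion in Step 3, that $B(\Lambda;\eta)$ contains an element of $M_k$ outside $A-A+E-E$, is unsupported. A secondary problem: $k$-Bohr density of $M_k$ in $\Z/N_k$ does not transfer to Bohr sets in $\Z$ with irrational frequencies. The error $|x|\,|\theta-a/N_k|$ is not small when $|x|$ is comparable to $N_k$.

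What is missing is the actual content of Kriz's theorem. You need a \emph{single} set of density $>\delta$ whose difference set avoids an infinite set meeting every Bohr set, here $M+(E-E)$. That requires a mechanism which, at stage $k+1$, adds new Bohr-dense witnesses while keeping \emph{all} earlier constraints nonrecurrent at a fixed density. Intersecting translates of independently chosen finite models gives no such nesting.

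The paper supplies this mechanism with \cref{lem:bohr-hamming-recurrence}: a finite $\delta'$-nonrecurrent set stays $\delta'$-nonrecurrent after adding a suitable proper Bohr--Hamming ball. Such balls are $k$-Bohr dense directly in $\Z$ (\cref{lem:bohr-hamming-ball-dense}), so no modular approximation is needed. The paper uses this to build nested symmetric finite sets $S_k$ such that $S_k-m$ meets every $(k,1/k)$-Bohr set for $|m|\le k$, while $1+S_k+S_k+S_k$ stays $\delta'$-nonrecurrent. Compactness (\cref{lem:finite_nonrecurrent_extrapolate}) then gives one $A$ with $(A-A)\cap(1+S+S+S)=\varnothing$. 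Equivalently, $A-A+S-S$ misses the Bohr-dense set $-1-S$.

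Note also the choice $E=S$. Taking $E$ Bohr dense rather than lacunary means the witnesses $-1-S$ and the perturbations $S-S$ are controlled by a single nonrecurrence condition. You would instead have to play a separate witness set $M$ against $E-E$.
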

Bogolyubov's theorem (\cref{th:bogo}) can be slightly strengthened to show that if $d^*(B) > 0$ then $B - B$ is $\DiffBohr$ (for example, see \cref{cor:generalization_bogolyubov}). \cref{mainthm:difference_set_not_DF} demonstrates that we cannot drop the assumption $d^*(B) > 0$ in that statement.

\subsection{Answering questions about dense sets in Bohr compactification}

Our proof of \cref{mainthm:difference_set_not_DF} yields a stronger statement in Theorem \ref{th:ultimateCounterExample}.  
In fact, the construction in that theorem allows us to answer two questions posed by the second author in \cite{griesmer-dense-set} regarding the largeness of double sumset of the form $A + B$. Before stating the precise result, we first need some definitions.
Given a discrete abelian group $G$, the \emph{Bohr compactification of $G$} is a compact Hausdorff abelian group  $bG$ together with an injective homomorphism $\iota:G \to bG$ such that
\begin{enumerate}
	\item[(a)] $\iota(G)$ is topologically dense in $bG$;
	
	\item[(b)] for all character $\chi$ of $G$, there is a character $\tilde{\chi}$ of $bG$ such that $\tilde{\chi}\circ \iota = \chi$.
\end{enumerate}
Identifying $G$ with its image $\iota(G)$, we can consider any set $A \subseteq G$ as a subset of $bG$ and we write $\overline{A}$ for the topological closure of $A$ in $bG$. We denote by $m_{bG}$ the probability Haar measure on $bG$. For more details on Bohr compactification, see \cref{sec:bohr_compactification}.

A subset $A\subseteq G$ is called 
\begin{enumerate}
    \item[$\bullet$] \textit{syndetic} if there exists a finite set $F$ such that $A+F=G$,

    \item[$\bullet$] \emph{thick} if it intersects every syndetic set,

    \item[$\bullet$] \emph{piecewise syndetic} if it is the intersection of a thick set and a syndetic set,

    \item[$\bullet$] \emph{piecewise Bohr} if it is the intersection of a thick set and a Bohr neighborhood of some $g \in G$.
\end{enumerate}
It is obvious from the definitions that thick $\Rightarrow$ piecewise Bohr $\Rightarrow$ piecewise syndetic.
The following is {\cite[Question 5.1]{griesmer-dense-set}.
\begin{question}
\label{ques:griesmer_closure_bZ}
For $A, B \subseteq \Z$, which of the following implications hold?
\begin{enumerate}
    \item If $m_{b\Z}(\overline{A}) > 0$ and $d^*(B) > 0$, then $A + B$ is piecewise syndetic.

    \item \label{item:ques_griesmer_pwBohr} If $m_{b\Z}(\overline{A}) > 0$ and $d^*(B) > 0$, then $A + B$ is piecewise Bohr.

    \item \label{item:quest_griesmer_thick} If $\overline{A} = b\Z$ and $d^*(B) > 0$, then $A + B$ is thick.
\end{enumerate}
\end{question}
Our next theorem gives negative answers to \eqref{item:ques_griesmer_pwBohr} and \eqref{item:quest_griesmer_thick} of \cref{ques:griesmer_closure_bZ}.
\begin{Maintheorem}
\label{thm:A+B_not_pwBohr}
There exist $A, B \subseteq \Z$ with $\overline{A} = b\Z$ and $d^*(B) > 0$ such that $A + B$ is not a piecewise Bohr set (and so is not thick).
\end{Maintheorem}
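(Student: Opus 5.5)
We derive \cref{thm:A+B_not_pwBohr} from the construction behind \cref{mainthm:difference_set_not_DF}, namely \cref{th:ultimateCounterExample}. We take an infinite set $E\subseteq\Z$ and a set $B\subseteq\Z$ with $d^*(B)>0$ such that $B-B+(E-E)$ contains no Bohr neighborhood of any integer. We also arrange that it contains no piecewise Bohr set. Concretely, we aim for the stronger separation that for every Bohr neighborhood $g+U$ and every thick set $T$, the set $(T\cap(g+U))\setminus\big((B-B)+(E-E)\big)$ is nonempty. Granting this, we set
\[
A:=E-E \quad(\text{or a suitable subset of }E-E+F\text{ for a finite }F),\qquad B':=-B\ \text{or}\ B .
\]
It then remains to check two things: first that $\overline{A}=b\Z$, and second that $A+B'\subseteq (B-B)+(E-E)$ up to a translate, so that $A+B'$ cannot be piecewise Bohr.

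\textbf{Density in $b\Z$.} Choose $E=\{e_1<e_2<\dots\}$ very lacunary and equidistributed in $b\Z$ in a strong sense. For instance, let $(e_n)$ be such that $\{e_n\}$ is dense in $b\Z$; a sparse subsequence of any sequence dense in $b\Z$ can be extracted inductively by approximating countably many basic open sets. Then $A\supseteq E-e_1$ is dense in $b\Z$. It is easiest to take $A:=E$ itself. Since $E$ is dense in $b\Z$, we have $\overline A=b\Z$.

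\textbf{Building $B$.} Next we construct $B$ with $d^*(B)>0$ so that $E+B$ misses a piece of every set $T\cap(g+U)$. Following a Kriz/Ruzsa-type construction as in \cite{Griesmer_SeparatingBohr}, we build $B$ as a union of long intervals-in-Bohr-structure blocks $B_k$ placed at positions $N_k$ growing super fast. Each block $B_k$ has density bounded below on an interval $I_k$. Meanwhile $E$ is chosen so that, for every fixed finite $\Lambda$, $\eta$ and $g$, there are arbitrarily long intervals $J$ such that $J\cap(g+B(\Lambda;\eta))$ avoids $E+B$. The key mechanism is lacunarity: at scale $N_k$ only boundedly many $e_n$ interact with $B_k$. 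Within an interval $J$ of length $L$, the set $E+B$ therefore looks like a union of a bounded number of translates of a set of density $<1$. Such a union can be made to avoid a Bohr-relatively dense set, by choosing $B_k$ inside a Bohr set whose frequencies are chosen after $e_1,\dots,e_k$ are fixed (a Hamming-ball/ Kriz-graph type set, where finitely many translates still leave Bohr-substantial gaps). Since piecewise Bohr sets contain $T\cap(g+U)$ and $T$ contains arbitrarily long intervals, it suffices to find, for every $(g,U)$ and every length $L$, a point of $g+U$ in each long interval outside $E+B$. A diagonal enumeration of countably many $(\Lambda,\eta,g)$ with rational data handles all cases. The interleaving of the density requirement on $E$ with the avoidance requirement is done stage by stage.

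\textbf{Main obstacle.} The main obstacle is the avoidance step. We need a set $B_k$ of density bounded below (uniformly in $k$) such that the union of a bounded number $m_k$ of translates $e_i+B_k$ fails to cover $g+U$ on long intervals, even though $m_k\to\infty$. The plan is to use Kriz's example: a set of positive density $\delta$ whose difference set avoids a Bohr set. From it we obtain the stronger property that $\bigcup_{i\le m}(x_i+B_k)$ misses a Bohr-neighborhood piece, with density independent of $m$, by taking $B_k$ to be a Kriz-type set in a high-dimensional torus factor orthogonal to the frequencies generated by $e_1,\dots,e_m$. The Bohr set $g+U$ is not orthogonal to those frequencies, so the orthogonality must be arranged carefully. \cref{th:ultimateCounterExample} is stated exactly to package this, and we would invoke it with $E$ chosen dense in $b\Z$.
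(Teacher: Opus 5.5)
Your proposal has a genuine gap. You embed $A+B$ into a translate of $D:=(B-B)+(E-E)$, so you need $D$ to contain no piecewise Bohr set, i.e.\ $T\cap(g+U)\not\subseteq D$ for every thick $T$ and every Bohr neighbourhood $g+U$. \cref{th:ultimateCounterExample} does not give this. It only says that $D$ contains no Bohr neighbourhood of any integer, which is strictly weaker. In its proof, the complement of $D$ is only known to contain the Bohr-dense set $-1-S$, and a thick set can easily avoid such a set. So your closing claim that \cref{th:ultimateCounterExample} ``is stated exactly to package this'' is incorrect.

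The from-scratch construction you sketch for the stronger separation is also not a proof, for three reasons. First, you leave the avoidance step open yourself. Second, a diagonal enumeration of countably many $(\Lambda,\eta,g)$ ``with rational data'' cannot handle Bohr sets with irrational frequencies; $b\Z$ has no countable base, so neither your density step nor your avoidance step can be done this way. Third, if ``very lacunary'' is meant in the Hadamard sense, such a set is never dense in $b\Z$: by the Pollington--de Mathan theorem it misses a Bohr neighbourhood.

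The missing idea is \cref{lem:PWBdifference}. If $C$ is piecewise Bohr, then $C$ contains a translate of every finite subset of some Bohr set $B_0$, so $C-C\supseteq B_0-B_0$ contains a Bohr set. Apply this to the difference set of $A+B$, not to a superset of $A+B$. Take the Bohr-dense set and the positive-density set produced by \cref{th:ultimateCounterExample}, and call them $A$ and $B$ respectively. Then
\[
(A+B)-(A+B)=(A-A)+(B-B)
\]
contains no Bohr neighbourhood of any integer, in particular no Bohr set, so $A+B$ is not piecewise Bohr. The property $\overline{A}=b\Z$ holds because the theorem already outputs a Bohr-dense set. With this lemma, no new construction and no stronger separation property is needed.
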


\subsection{Expanding almost Bohr sets}   

F{\o}lner's theorem (\cref{thm:folner})  says that if $d^*(A) > 0$, then $A - A$ contains $B\setminus E$ where $B$ is a Bohr set and $d^*(E) = 0$.  When $S$ is $\DiffBohr$, it is natural to ask whether the Bohr structure in $A-A+S$ truly relies on $A- A$ being a difference set, or whether it depends only on the fact that $A - A$ contains most of a Bohr set. To facilitate further discussion, we first give explicit definitions. 
\begin{definition}
A set $A \subseteq G$ is an \emph{almost Bohr set} if $A = B\setminus E$ where $B$ is a Bohr set and $d^*(E) = 0$.
\end{definition}
\begin{definition}
A set $S \subseteq G$ is called \emph{$\AlmostBohrBohr$} if $A + S$ contains a Bohr set for every almost Bohr set $A$. (Here $\mathcal{A}$ stands for almost Bohr sets and $\mathcal{B}$ for Bohr sets.)
\end{definition}

F{\o}lner's theorem implies that every $\AlmostBohrBohr$
 set is also $\DiffBohr$. By \cref{cor:Specific-intro_main}, the sets $\{n^2: n \in \N\}$ and $\{p - 1: p \text{ prime}\}$ are $\DiffBohr$ in $\Z$. 
 The following characterization implies that every $\AlmostBohrBohr$ set has positive upper Banach density and as a result shows the aforementioned $\DiffBohr$ sets are not $\AlmostBohrBohr$.


\begin{Maintheorem}\label{mainthm:almost_Bohr_complete}
    For $S \subseteq G$, the following are equivalent:
    \begin{enumerate}
        \item \label{item:ABC1-intro} 
        $S$ is $\AlmostBohrBohr$.

        \item \label{item:ABC2-intro} 
        For every Bohr set $B$, $d^*(S \cap B) > 0$.

        \item \label{item:ABC3-intro} For every almost Bohr set $B$, $S \cap B \neq \varnothing$. 
    \end{enumerate} 
\end{Maintheorem}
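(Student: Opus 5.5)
We prove the cycle (ii) $\Rightarrow$ (i) $\Rightarrow$ (iii) $\Rightarrow$ (ii). Two standard facts about Bohr sets will be used. First, for every Bohr set $B=B(\Lambda;\eta)$ there is a Bohr set $B'=B(\Lambda;\eta/2)$ with $B'-B'\subseteq B$ and $B'=-B'$. Second, Bohr sets, and in particular Bohr neighborhoods of any $g$, are syndetic. Hence $d^*(g+B')>0$, and $d^*$ of a translate equals $d^*$ of the set.

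\textbf{(ii) $\Rightarrow$ (i).} Let $A=B\setminus E$ be an almost Bohr set, and pick $B'$ with $B'-B'\subseteq B$. We claim $A+S\supseteq B'$. Fix $x\in B'$. It suffices to find $s\in S$ with $x-s\in B$ and $x-s\notin E$, i.e. $s\in (x-B)\setminus (x-E)$. Now $x-B\supseteq x+B'\supseteq$ (using $B'-B'\subseteq B$ and symmetry) the set $B'$ itself, since $x - b = x-b$ with $x,b\in B'$ gives $x-b\in B$, so $x-B\supseteq B'$. By (ii), $d^*(S\cap B')>0$. Since $d^*(x-E)=d^*(E)=0$ and upper Banach density is subadditive, $(S\cap B')\setminus(x-E)\neq\varnothing$. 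Any $s$ in this set gives $x=s+(x-s)\in S+A$.

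\textbf{(i) $\Rightarrow$ (iii).} Let $B\setminus E$ be almost Bohr, and choose $B'$ symmetric with $B'-B'\subseteq B$. Then $-(B'\setminus E)=B'\setminus(-E)$ is almost Bohr, so by (i) the set $S-(B'\setminus E)$ contains some Bohr set $B''$. We want $S\cap(B\setminus E)\neq\varnothing$, and the naive attempt of using $0\in B''$ only yields $0=s-a$ with $a\in B'\setminus E$, i.e. $s\in B'\setminus E$. That is exactly what we want, since $B'\setminus E\subseteq B\setminus E$. So (iii) follows directly; in fact we could apply (i) to $-(B\setminus E)$ and read off $0$.

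\textbf{(iii) $\Rightarrow$ (ii).} This is the main step. Suppose $d^*(S\cap B)=0$ for some Bohr set $B$. Then $E:=S\cap B$ has zero Banach density, so $B\setminus E$ is an almost Bohr set disjoint from $S$, contradicting (iii). This is immediate as well.

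So every implication reduces to the definitions plus the facts that Bohr sets are syndetic (hence of positive density) and that $d^*$ is subadditive and translation invariant. The only place requiring care is (ii) $\Rightarrow$ (i), where one must choose the smaller Bohr set $B'$ so that $x-B\supseteq B'$ for every $x\in B'$. The fact that $d^*(B)>0$ is not actually needed there, only that $S\cap B'$ has positive density, which is (ii) itself.
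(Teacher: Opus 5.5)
Your proof is correct and is essentially the paper's argument. You use the same three ideas: reading off $0$ in the Bohr set after applying (i) to the reflected almost Bohr set, deleting $S\cap B$ from $B$, and using a smaller Bohr set together with subadditivity and reflection/translation invariance of $d^*$. You arrange them as a cycle rather than the paper's two equivalences, and you show that a fixed half-radius $B'$ lies in $A+S$, whereas the paper shrinks the radius depending on $g$ to get all of $B\subseteq A+S$.

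One small slip: the intermediate claim $x+B'\supseteq B'$ is false in general. However, your direct observation that $x-b\in B'-B'\subseteq B$ for $b\in B'$ already gives $x-B\supseteq B'$, so nothing is lost.
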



\cref{mainthm:almost_Bohr_complete} gives us easy criteria to check for $\AlmostBohrBohr$ sets. Using it, we can recover several results about Bohr sets in sumsets. For example, in view of F{\o}lner's theorem, it is immediate to see that if $d^*(A) > 0$, then $A - A$ satisfies \eqref{item:ABC2-intro}. Therefore, we recover the qualitative statement of Bogolybov's theorem and its generalization to expressions of the form $A - A + B - B$ (see \cref{cor:generalization_bogolyubov}). \cref{mainthm:almost_Bohr_complete} also recovers our results in \cite{Griesmer-Le-Le-countable, Le-Le-compact} regarding sumsets in partition. We will discuss more about this in \cref{sec:central_intro} below.



\subsection{Central sets and partitions}
\label{sec:central_intro}
While the problem of finding Bohr sets in sumsets where the summands have positive upper Banach density has attracted much attention, the analogous question concerning partitions was little studied until recently, and the situation is less well understood. The following open question was popularized by Katznelson \cite{Katznelson_Chromatic} and Ruzsa \cite[Chapter 5]{Ruzsa-sumset_and_structure}: If $\Z = \bigcup_{i=1}^r A_i$, must $A_i - A_i$ contain a Bohr set for some $i$?
In terms of dynamical systems, this asks if every set of recurrence for minimal isometries (also known as a set of Bohr recurrence) is also a set of recurrence for minimal topological systems. See \cite{Glasscock_Koutsogiannis_Richter} for a detailed account of the history of this question and many equivalent formulations.  See \cite{griesmer-specialcases} for more equivalent formulations and resolution of some special cases.

In this setting of finding Bohr sets in partitions and in the same spirit of expanding to arbitrary abelian groups as \cref{th:main-density}, the last three authors \cite{Griesmer-Le-Le-countable} proved the following:

\begin{theorem}[{\cite[Theorem 1.4]{Griesmer-Le-Le-countable}}]
\label{th:main-partition}
Let $G$ be a discrete abelian group and let $\phi_1, \phi_2: G \rightarrow G$ be commuting homomorphisms such that $[G:\phi_j(G)]$ is finite for $j \in  \{1, 2\}$.
Then for every finite partition $G = \bigcup_{i=1}^r A_i$, there exists $i \in \{1, \ldots, r\}$ such that
\[
\phi_1(A_i) - \phi_1(A_i) + \phi_2(A_i)
\]
contains a Bohr set whose rank and radius depend only on $r$ and $[G:\phi_j(G)]$.
\end{theorem}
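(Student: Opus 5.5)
The plan is to derive the statement from the characterization of $\AlmostBohrBohr$ sets (\cref{mainthm:almost_Bohr_complete}) together with F{\o}lner's theorem (\cref{thm:folner}). The real work is choosing the right cell $A_i$. Pigeonholing on density alone is not enough: we need a cell that is large \emph{inside every Bohr set}. To get one, I would fix an idempotent ultrafilter $p$ on $G$ (in $\beta G$) with two properties:
\begin{enumerate}
\item[(a)] every member of $p$ has positive upper Banach density;
\item[(b)] every Bohr neighborhood of $0$ belongs to $p$.
\end{enumerate}
A minimal idempotent $p$ gives (b), since every member of a minimal idempotent is central and hence contains a Bohr-neighbourhood-dense structure; equivalently, Bohr sets are IP$^*$. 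It also gives (a), since members of minimal idempotents are piecewise syndetic and so have $d^*>0$. Since $G=\bigcup_{i} A_i$ is a finite partition, some $A_i\in p$; fix that $i$.

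Next I would verify the two inputs. First, since $\phi_1$ has finite index image, $d^*(\phi_1(C))\ge d^*(C)/[G:\phi_1(G)]$ for every $C\subseteq G$. I would prove this by pushing a F{\o}lner sequence for $G$ forward along $\phi_1$ and averaging over the finitely many cosets of $\phi_1(G)$; the kernel may be infinite, which only helps. Applying this with $C=A_i$, F{\o}lner's theorem shows that $\phi_1(A_i)-\phi_1(A_i)$ contains an almost Bohr set $B\setminus E$.

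Second, I would show that $S:=\phi_2(A_i)$ satisfies condition \eqref{item:ABC2-intro} of \cref{mainthm:almost_Bohr_complete}. Let $B'=B(\Lambda;\eta)$ be a Bohr set. Then $\phi_2^{-1}(B')=B(\{\chi\circ\phi_2:\chi\in\Lambda\};\eta)$ is again a Bohr set, so it lies in $p$ by (b). Hence $A_i\cap\phi_2^{-1}(B')\in p$, and by (a) it has positive upper Banach density. Applying the same image estimate to $\phi_2$, the set $\phi_2(A_i\cap\phi_2^{-1}(B'))\subseteq S\cap B'$ has positive upper Banach density. So $S$ is $\AlmostBohrBohr$, and $(B\setminus E)+S$, and hence $\phi_1(A_i)-\phi_1(A_i)+\phi_2(A_i)$, contains a Bohr set. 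This argument does not use that $\phi_1$ and $\phi_2$ commute.

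The main obstacle is the uniformity claim, namely that the rank and radius depend only on $r$ and the indices $[G:\phi_j(G)]$. The ultrafilter argument is purely qualitative. I would first try a compactness/contradiction argument: suppose that for each $n$ there is a group $G_n$, homomorphisms $\phi_{1,n},\phi_{2,n}$, and an $r$-partition of $G_n$ for which no cell yields a Bohr set of rank at most $n$ and radius at least $1/n$. Passing to an ultraproduct (or a direct-sum group with a combined partition) would then produce a single group, homomorphisms and partition contradicting the qualitative result. Making this transfer preserve Bohr sets with bounded parameters and finite indices is delicate. If it fails, I would fall back on a quantitative version of F{\o}lner's theorem, with rank and radius depending on the density (here at least $1/(r[G:\phi_1(G)])$), together with a quantitative Bohr-recurrence estimate for the chosen cell.
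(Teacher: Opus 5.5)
Your qualitative argument is correct and is essentially the paper's proof of \cref{thm:discrete_group_central_sets}. You take the cell lying in a minimal idempotent and use that $\phi_2^{-1}(B)$ is a Bohr set and that Bohr sets are IP$^*$ (\cref{lem:BohrIsIPstar}). You then push positive density through $\phi_1,\phi_2$; the paper does this with \cref{lem:ps_phi(A)} on piecewise syndeticity rather than your index inequality, which is also valid. You finish with F{\o}lner's theorem and \cref{mainthm:almost_Bohr_complete}, and, as you observe, commutativity is never used.

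However, the statement also asserts that the rank and radius depend only on $r$ and the indices, and your proposal does not prove this. The paper does not prove it either. The theorem is quoted from \cite{Griesmer-Le-Le-countable}, where commutativity is used to parametrize the solutions of $\phi_1(x)-\phi_1(y)+\phi_2(z)=0$. The paper's own central-set argument yields only the qualitative version; its final section stresses that all its results are qualitative.

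Neither of your repairs closes the gap, and the fallback fails for a structural reason. The bound $d^*(\phi_1(A_i))\ge 1/(r[G:\phi_1(G)])$ holds for a cell chosen by pigeonholing on density, not for the cell in $p$. Worse, the cell in $p$ can violate the uniform conclusion outright.

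Take $G=\Z$, $\phi_1=\phi_2=\mathrm{id}$, $r=2$, $\alpha$ irrational, $A_1=\{n:\lVert n\alpha\rVert<\epsilon\}=\{n:|e(n\alpha)-1|<2\sin(\pi\epsilon)\}$ and $A_2=\Z\setminus A_1$. Since $A_1$ is a Bohr set, it belongs to every idempotent, so $p$ selects $A_1$. But $A_1-A_1+A_1\subseteq\{n:\lVert n\alpha\rVert<3\epsilon\}$, which has upper Banach density $6\epsilon$. By a uniform version of the covering argument in \cref{lem:bohr_syndetic_set}, a Bohr set of rank at most $k$ and radius at least $\eta$ has upper Banach density at least some $c(k,\eta)>0$. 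So for small $\epsilon$ the cell $A_1$ admits no Bohr set with parameters depending only on $r$.

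The cell that does work here is $A_2$: its density exceeds $1/2$, so $A_2-A_2=\Z$. But $A_2\cap A_1=\varnothing$, so $A_2$ is not $(\mathcal{A},\mathcal{B})$-expanding. Thus no cell is both uniformly dense and $(\mathcal{A},\mathcal{B})$-expanding, and the route through \cref{mainthm:almost_Bohr_complete} cannot give uniform bounds. A different mechanism for choosing the cell and producing the Bohr set is needed.

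The ultraproduct argument breaks at the transfer step. Applying the qualitative theorem in $\prod_n G_n/\mathcal{U}$ produces a Bohr set defined by characters of the ultraproduct, and these need not be ultralimits of characters of the $G_n$. The property ``contains a Bohr set of rank at most $k$ and radius at least $\eta$'' quantifies over characters and is not first-order, so {\L}o\'s's theorem gives nothing back in the $G_n$. Showing that the Bohr set can be taken with internal characters would be the real content of that approach, and you have not addressed it.
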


The proofs of Theorems \ref{th:main-density} and \ref{th:main-partition} in \cite{Griesmer-Le-Le-countable} use  commutativity of the $\phi_i$ to parametrize the equations $\phi_1(x) + \phi_2(y) + \phi_3(z) = 0$ and $\phi_1(x) - \phi_1(y) + \phi_2(z) = 0$, respectively. 
The commutativity assumption is essential for those proofs, but it does not seem necessary for the results to hold. This observation motivated us to ask the following question in that paper.


\begin{question}[see {\cite[Question 11.1]{Griesmer-Le-Le-countable}}]
\label[question]{ques:drop_commuting}
Can the assumption that the $\phi_i$ commute in \cref{th:main-density} and \cref{th:main-partition} be dropped?
\end{question}

Using the tools developed in our study of $\AlmostBohrBohr$ sets, we offer a partial answer to \cref{ques:drop_commuting}, presented in the form of \cref{thm:discrete_group_central_sets} below. \cref{thm:discrete_group_central_sets} also answers another question, which we now introduce.
In \cref{th:main-partition}, the selection of the set $A_i$ may depend on the homomorphisms $\phi_1, \phi_2$. In the setting when the ambient group being $\Z$, the last author and Wathodkar \cite{Le-Wathodkar} proved a rather surprising result that there is a single set $A_i$ that works for all homomorphisms.
\begin{theorem}[\cite{Le-Wathodkar}]
\label{thm:le-wathodkar}
    For any partition $\Z = \bigcup_{i=1}^r A_i$, there is an $i$ such that $s_1 A_i - s_1 A_i + s_2A_i$ contains a Bohr set for any $s_1, s_2 \in \Z \setminus \{0\}$.
\end{theorem}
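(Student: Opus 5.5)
The plan is to find a single cell that is \emph{central}, and then show that a central set $C$ has $sC - sC + tC$ containing a Bohr set for every pair $s,t \in \Z\setminus\{0\}$. The two facts I will use are the following. First, central sets are partition regular: by Furstenberg's correspondence, for any finite partition $\Z = \bigcup_{i=1}^r A_i$ some $A_i$ is central. So it suffices to prove that if $C \subseteq \Z$ is central, then $s C - sC + tC$ contains a Bohr set for all nonzero $s,t$. Second, a central set is piecewise syndetic and has positive upper Banach density. Moreover, it is a return-time set $\{n : T^n x \in U\}$ for a uniformly recurrent point $x$ that is proximal to a suitable point $y \in U$.

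Fix nonzero $s, t$. The set $C$ has $d^*(C)>0$, and multiplication by $s$ has finite index image $s\Z$. So $d^*(sC)>0$ relative to $s\Z$, and F{\o}lner's theorem (\cref{thm:folner}) applied in the group $s\Z$ gives that $sC - sC$ contains an almost Bohr set of $s\Z$. Extending characters from $s\Z$ to $\Z$ (and intersecting with $s\Z$, which is itself a Bohr set of $\Z$ for finite index), we conclude that $sC - sC \supseteq B\setminus E$, with $B$ a Bohr set of $\Z$ and $d^*(E)=0$. By the characterization in \cref{mainthm:almost_Bohr_complete}, it then suffices to show that $S := tC$ is $\AlmostBohrBohr$, i.e.\ that $d^*(tC \cap B') > 0$ for every Bohr set $B'$. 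Once this holds, $(B\setminus E) + tC$ contains a Bohr set, hence so does $sC - sC + tC$.

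The main step, and the expected obstacle, is showing that a central set meets every Bohr set in positive density, after scaling by $t$. Since $B' \cap t\Z$ is again a Bohr set, it suffices to show that $C \cap B''$ has positive upper Banach density for every Bohr set $B''$. Here I would use centrality. The set $C$ contains $\{n: T^n x\in U\}$ with $x$ uniformly recurrent, so its closure in the Bohr compactification should contain $0$ in an essential way. I would pass to the product system $X\times K$, where $K$ is the compact group rotation defining $B''$ via $n \mapsto n\alpha$. The point $(x,0)$ is uniformly recurrent in the product, because the product of a minimal system with an equicontinuous one has uniformly recurrent points, using the proximal point $y$ (an idempotent ultrafilter $p$ with $p$-$\lim T^n x = y$ also has $p$-$\lim n\alpha = 0$, since $p$ is idempotent). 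Hence the set of $n$ with $T^n x \in U$ and $n\alpha$ near $0$ is central, in particular syndetic on long blocks, and so has positive upper Banach density. This is exactly where idempotents give the strength that plain density lacks. The uniformity over all $s,t$ comes for free, because the chosen cell $A_i$ is central independently of $s,t$.
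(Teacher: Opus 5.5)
Your architecture matches the paper's. The statement is the case $\phi_1(n)=sn$, $\phi_2(n)=tn$ of \cref{thm:discrete_group_central_sets}. That proof takes a central cell $C$ and uses F{\o}lner to put an almost Bohr set inside $sC-sC$. It then reduces via \cref{mainthm:almost_Bohr_complete} to showing $d^*(tC\cap B')>0$ for every Bohr set $B'$, and gets this from centrality of $C\cap B''$ with $B''=\{n: tn\in B'\}$. Here $B''$ is a Bohr set with frequencies $\chi^t$, and $tC\cap B' = t(C\cap B'')$; that is the relation you need, not that $B'\cap t\Z$ is a Bohr set.

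However, the key step as you wrote it does not go through, because you swapped the roles of the two points in the definition of a central set. It is $y$ that is uniformly recurrent, and $U$ is a neighborhood of $y$. The point $x$ is only proximal to $y$ and need not be uniformly recurrent: in the shift on $\{0,1\}^{\Z}$, take $y=0^\infty$ and $x$ the sequence with a single $1$. So the claim that $(x,0)$ is uniformly recurrent in $X\times K$ is false in general. Moreover, the fact that a product of a minimal system and an equicontinuous system has \emph{some} uniformly recurrent points would not single out $(x,0)$ anyway.

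The repair uses exactly the ingredient in your parenthetical and makes the product system unnecessary. Let $p$ be a minimal idempotent of $\beta\Z$ with $C\in p$. For any $\alpha\in\T^k$, the limit $\theta=p\text{-}\lim_n n\alpha$ satisfies $\theta=\theta+\theta$ because $p+p=p$, so $\theta=0$. Hence every Bohr set belongs to $p$; this is \cref{lem:BohrIsIPstar}, which says Bohr sets are $IP^*$. Therefore $C\cap B''\in p$, so $C\cap B''$ is central, hence piecewise syndetic. This gives $d^*(tC\cap B') = d^*(C\cap B'')/|t|>0$.

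This is precisely the paper's argument. The paper transfers piecewise syndeticity through $\phi_2$ using \cref{lem:ps_phi(A)}; in $\Z$ your density scaling suffices.

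If you want to keep the dynamical picture, the correct version goes as follows. The point $(y,0)$ is uniformly recurrent in $X\times K$ by Furstenberg's theorem on products with a distal point, as used in the proof of \cref{prop:dcPS_almost_Bohr_complete}. The point $(x,0)$ is proximal to $(y,0)$ trivially, since the two orbits share the second coordinate $n\alpha$. So $\{n: T^nx\in U,\ n\alpha\in V\}\subseteq C\cap B''$ is central by definition.
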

\cref{thm:le-wathodkar} was first proved by the third and fourth authors \cite{Le-Le-compact} in the case $s_1 = 1$.
In the same vein of extending $\Z$ to arbitrary discrete abelian group $G$, \cref{thm:le-wathodkar} gives rise to the following question:
\begin{question}\label[question]{ques:one_set_works}
Let $G = \bigcup_{i=1}^r A_i$ be a partition of a discrete abelian group. Does there exist an $i$ such that $\phi_1(A_i) - \phi_1(A_i) + \phi_2(A_i)$ contains a Bohr set for all homomorphisms $\phi_1, \phi_2: G \to G$ satisfying $[G: \phi_j(G)]$ is finite for $j \in \{1, 2\}$?
\end{question} 

As a corollary of \cref{mainthm:almost_Bohr_complete}, \cref{thm:discrete_group_central_sets} below gives a positive answer to 
\cref{ques:one_set_works}. As previously mentioned, it also gives a partial answer to  \cref{ques:drop_commuting} by showing that the commuting assumption in \cref{th:main-partition} can be dropped.

Theorem \ref{thm:discrete_group_central_sets} concerns central sets, which we will define in Section \ref{sec:def_notation}. Central sets were first introduced in $\Z$ by Furstenberg \cite{Furstenberg81} and later generalized to arbitrary semigroups by Glasner \cite{glasner_1980}  and Bergelson, Hindman, Weiss \cite{Bergelson_Hindman90}.
Many results in additive combinatorics and Ramsey theory rely on the fact that, whenever a group is partitioned into finitely many subsets, at least one set is central (for example, see \cite{
Bergelson_Hindman90, Bergelson_Hindman94, Bergelson_Hindman01, Hindman_Strauss98}). We will prove that every central set is $\AlmostBohrBohr$ and so obtain the following:


\begin{Maintheorem}
\label{thm:discrete_group_central_sets}
    If $A \subseteq G$ is a central set, then for all (not necessarily commuting) homomorphisms $\phi_1, \phi_2: G \to G$ satisfying $[G:\phi_i(G)]$ is finite, the sumset
    \[
        \phi_1(A) - \phi_1(A) + \phi_2(A)
    \]
    contains a Bohr set.
\end{Maintheorem}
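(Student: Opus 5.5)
The plan is to reduce the statement to \cref{mainthm:almost_Bohr_complete}. The claim will follow if we show two things. First, $\phi_1(A)-\phi_1(A)$ contains an almost Bohr set. Second, $\phi_2(A)$ is $\AlmostBohrBohr$, equivalently (by \cref{mainthm:almost_Bohr_complete}) $d^*(\phi_2(A)\cap B)>0$ for every Bohr set $B$. Granting both, if $\phi_1(A)-\phi_1(A)\supseteq B'\setminus E$ with $B'$ Bohr and $d^*(E)=0$, then $\phi_1(A)-\phi_1(A)+\phi_2(A)\supseteq (B'\setminus E)+\phi_2(A)$, and the latter contains a Bohr set by the definition of $\AlmostBohrBohr$.

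For the first point we use that central sets have positive upper Banach density; in fact they are piecewise syndetic, so $d^*(A)>0$. Since $[G:\phi_1(G)]<\infty$, the image $\phi_1(A)$ also has positive upper Banach density in $G$. To see this, take a F{\o}lner sequence $(F_n)$ with $|A\cap F_n|/|F_n|\to d^*(A)$; then $\phi_1(F_n)$ is asymptotically invariant, and the fibres of $\phi_1$ are cosets of $\ker\phi_1$, which cost only a factor controlled by the index. A cleaner route is to show $d^*(\phi(A))\ge d^*(A)/[G:\phi(G)]$ via invariant means: push forward an invariant mean $\lambda$ with $\lambda(1_A)>0$ along $\phi$, which gives a $\phi(G)$-invariant mean on $G$, and then average over coset representatives of $\phi(G)$. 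F{\o}lner's theorem (\cref{thm:folner}) then gives that $\phi_1(A)-\phi_1(A)$ contains an almost Bohr set.

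The second point is the main obstacle. We need $d^*(\phi_2(A)\cap B)>0$ for every Bohr set $B$. By the density-transfer argument above, it suffices to show $d^*(A\cap\phi_2^{-1}(B))>0$, and $\phi_2^{-1}(B)$ is again a Bohr set, with frequencies $\chi\circ\phi_2$. We also need to check that the density of $\phi_2(A\cap \phi_2^{-1}(B))\subseteq \phi_2(A)\cap B$ is positive, which is the same pushforward estimate. So everything reduces to the lemma: \emph{if $A$ is central and $B$ is a Bohr set, then $d^*(A\cap B)>0$}. I would prove this dynamically. A central set is a set of the form $\{g: T_g x\in U\}$, where $(X,T)$ is a topological $G$-system, $y$ is a uniformly recurrent point proximal to $x$, and $U$ is a neighborhood of $y$. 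Equivalently, $A$ is a member of a minimal idempotent ultrafilter $p$. Bohr sets $B$ (neighborhoods of $0$) belong to every idempotent ultrafilter, since $p$-$\lim \chi(g)=1$ for idempotent $p$. Minimal ultrafilters concentrate on piecewise syndetic sets, so $A\cap B\in p$ is piecewise syndetic and hence has positive upper Banach density.

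I expect the technical care to lie in two places. The first is the ultrafilter facts for uncountable $G$. Here I would argue that $p$-$\lim\chi=\zeta$ satisfies $\zeta^2=\zeta$, hence $\zeta=1$, so $B(\Lambda;\eta)\in p$. The second is the density comparison under the finite-index homomorphisms, which I would handle uniformly through invariant means rather than F{\o}lner sequences.
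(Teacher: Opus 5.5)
Your proposal is correct and follows essentially the same route as the paper. F{\o}lner's theorem puts an almost Bohr set inside $\phi_1(A)-\phi_1(A)$, and $\phi_2(A)$ is shown to be $\AlmostBohrBohr$ via \cref{mainthm:almost_Bohr_complete}: the Bohr set $\phi_2^{-1}(B)$ belongs to the minimal idempotent ultrafilter containing $A$, so $A\cap\phi_2^{-1}(B)$ is piecewise syndetic and its image sits inside $\phi_2(A)\cap B$. The differences are only in the auxiliary facts: the paper transfers piecewise syndeticity through $\phi$ (\cref{lem:ps_phi(A)}) and shows Bohr sets are $IP^*$ by pigeonholing partial sums in $bG$, whereas your invariant-mean bound $d^*(\phi(A))\ge d^*(A)/[G:\phi(G)]$ and the identity $\zeta^2=\zeta$ for $p$-limits of characters do the same jobs equally well.
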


\subsection{Relationships with other dynamical families and applications to pointwise recurrence}
\label{sec:relations_with_others}

Furstenberg's seminal work on single and multiple recurrence \cite{Furstenberg77, Furstenberg81} lead to analysis of various recurrence properties of subsets of $\mathbb Z$ and other groups.   These properties include recurrence, strong recurrence, nice recurrence, and the van der Corput property; see \cref{sec:def_notation} for definitions.
By definition, nice recurrence implies strong recurrence, but the converse is false \cite{Forrest90, McCutcheon93}. The notions of van der Corput sets, strong recurrence, and nice recurrence are all strictly stronger than recurrence. Our next theorem shows that $\AlmostBohrBohr$ is stronger than all of them.

\begin{Maintheorem}\label{thm:ABB_vanderCorput_optimal_recurrence}
If $A \subseteq G$ is $\AlmostBohrBohr$, then $A$ is both a van der Corput set and a set of nice recurrence.
\end{Maintheorem}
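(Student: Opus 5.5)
We will use characterization \eqref{item:ABC2-intro} of \cref{mainthm:almost_Bohr_complete}: if $S$ is $\AlmostBohrBohr$, then $d^*(S\cap B)>0$ for every Bohr set $B$. Both conclusions will follow from this one fact via spectral arguments. The common tool is that positive upper Banach density along a Følner sequence gives invariant means. Concretely, for each Bohr set $B$ we fix a Følner sequence $(F_N)$ with
\[
\lim_N \frac{|S\cap B\cap F_N|}{|F_N|}=d^*(S\cap B)>0,
\]
and then take a weak-$*$ limit point $\mu_B$ of the normalized counting measures on $S\cap B\cap F_N$. This gives a translation-invariant mean concentrated on $S\cap B$.

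\emph{Nice recurrence.} Let $(X,\mu,T)$ be a measure preserving $G$-system, let $D\subseteq X$ with $\mu(D)>0$, and let $\epsilon>0$. We must find $s\in S$ with $\mu(D\cap T^{s}D)\geq \mu(D)^2-\epsilon$. Set $f=1_D$ and decompose $f=f_c+f_w$, where $f_c$ is the projection onto the Kronecker factor (the span of eigenfunctions) and $f_w$ is weak mixing. Approximate $f_c$ in $L^2$ by a finite combination of eigenfunctions with eigencharacters $\chi_1,\dots,\chi_k$, and let $B=B(\{\chi_1,\dots,\chi_k\};\eta)$ with $\eta$ small. For $s\in B$ we then have $\langle T^s f_c,f_c\rangle\geq \|f_c\|^2-\epsilon/2\geq \mu(D)^2-\epsilon/2$. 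For the weak mixing part, the mixed terms vanish since $f_c\perp f_w$ and the Kronecker factor is invariant, and
\[
\langle T^s f_w,f_w\rangle=\hat\sigma_{f_w}(s)
\]
with $\sigma_{f_w}$ a continuous (atomless) spectral measure. Averaging with the invariant mean $\mu_B$ supported on $S\cap B$, we compute $\int \hat\sigma_{f_w}\,d\mu_B = \int \hat\mu_B(\chi)\,d\sigma_{f_w}(\chi)$. Since $\mu_B$ is invariant, $\hat\mu_B$ is supported on a countable set: by Cauchy--Schwarz / Wiener-type arguments, $|\hat\mu_B(\chi)|^2$ is at most the mean of $|\frac{1}{|F_N|}\sum_{x\in F_N}\chi(x)\ldots|$, which vanishes except on countably many $\chi$. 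To make this rigorous we will instead work with $\limsup$ of averages over $S\cap B\cap F_N$ and apply Wiener's lemma to $\frac{1}{|F_N|}\sum_{s\in F_N}1_{S\cap B}(s)\hat\sigma(s)$. The cleaner route is to reduce to the density statement: $\lim \frac1{|F_N|}\sum_{s\in F_N}1_{S\cap B}(s)|\hat\sigma_{f_w}(s)|^2\le \lim\frac1{|F_N|}\sum_{F_N}|\hat\sigma_{f_w}|^2=0$ by Wiener. So the $\mu_B$-average of $|\langle T^sf_w,f_w\rangle|$ is $0$, and hence some $s\in S\cap B$ satisfies $\mu(D\cap T^sD)\geq \mu(D)^2-\epsilon$.

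\emph{van der Corput.} We use the spectral characterization (Kamae--Mendès France, Ruzsa; for general $G$ the analogous one on $\widehat G$): $S$ is vdC iff every positive finite measure $\sigma$ on $\widehat G$ with $\hat\sigma(s)=0$ for all $s\in S$ satisfies $\sigma(\{1\})=0$. Write $\sigma=\sigma_d+\sigma_c$ with $\sigma_d=\sum a_j\delta_{\chi_j}$. Choose finitely many atoms carrying all but $\epsilon$ of the mass of $\sigma_d$, and a Bohr set $B$ on which those $\chi_j$ are within $\epsilon$ of $1$. Then for $s\in B$ we get $\operatorname{Re}\hat\sigma_d(s)\geq \sigma(\{1\})-O(\epsilon)\,\sigma(\widehat G)$. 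Averaging $\operatorname{Re}\hat\sigma$ along $S\cap B$ with the positive-density Følner averages, the continuous part contributes $0$ in the limit, by Wiener's lemma and Cauchy--Schwarz relative to density $d^*(S\cap B)>0$. The average of $\hat\sigma$ itself is $0$. Letting $\epsilon\to0$ gives $\sigma(\{1\})=0$.

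\emph{Main obstacle.} The technical step is the Wiener-lemma estimate over Følner sequences in an arbitrary (possibly uncountable) discrete abelian group: we need $\frac1{|F_N|}\sum_{s\in F_N}|\hat\sigma_c(s)|^2\to0$ for continuous $\sigma_c$. This follows from $|\hat\sigma_c|^2=\widehat{\sigma_c*\tilde\sigma_c}$ and the mean ergodic theorem on $L^2(\sigma_c*\tilde\sigma_c)$, whose only invariant vectors live on the atom at $1$, which is absent. We must also check that the definition of van der Corput used in \cref{sec:def_notation} matches this spectral form; if it is defined via sequences instead, we first prove the spectral equivalence.
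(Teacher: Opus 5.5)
Your argument is correct in substance and follows the paper's strategy. You split the spectral measure into discrete and continuous parts, control the discrete part on a Bohr set $B$, and use $d^*(S\cap B)>0$ together with Wiener's lemma to find $s\in S\cap B$ where the continuous part is small. The paper packages this more compactly. \cref{lem:very_general_way_to_get_nice_vdC} reduces both conclusions to finding $h\in S$ with $\Re\hat\sigma(h)>\sigma(\{\chi_0\})-\epsilon$; your separate Kronecker/weak-mixing treatment of nice recurrence is just the Hilbert-space form of the same discrete/continuous splitting of the spectral measure of $1_D$. Instead of averaging over $S\cap B$, the paper notes that $E=\{g:|\hat\sigma_c(g)|\ge\epsilon/2\}$ has $d^*(E)=0$ by \cref{lem:meanOfSigmaHat}, so $S$ meets the almost Bohr set $B\setminus E$ by characterization (iii) of \cref{mainthm:almost_Bohr_complete}. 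You use (ii), which is equivalent.

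Two statements in your write-up are wrong as written and need repair.

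First, the weak-$*$ limit $\mu_B$ of normalized counting measures on $S\cap B\cap F_N$ is a mean supported on $S\cap B$, but it is not translation invariant. Take $S\cap B=2\Z$ and $F_N=\{1,\dots,N\}$: then $\mu_B(1_{2\Z})=1$ while $\mu_B(1_{2\Z+1})=0$. So the line ``since $\mu_B$ is invariant, $\hat\mu_B$ is supported on a countable set'' must be discarded. Your ``cleaner route'' survives because it only uses invariance of the ambient averages: $\mu_B(|\hat\sigma_c|^2)\le m(|\hat\sigma_c|^2)/m(1_{S\cap B})=0$, which is \cref{prop:RelativeAnnihilates}.

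Second, the statement covers arbitrary discrete abelian $G$, and uncountable groups have no F{\o}lner sequences, so your choice of $(F_N)$ realizing $d^*(S\cap B)$ is unavailable there. Replace it by an invariant mean $m$ with $m(1_{S\cap B})>0$, which exists by the definition of $d^*$. Then set $\mu_B(f):=m(1_{S\cap B}f)/m(1_{S\cap B})$ and cite \cref{lem:meanOfSigmaHat}(ii) for $m(|\hat\sigma_c|^2)=0$ in place of your mean-ergodic proof of Wiener's lemma.

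With these changes both the nice recurrence and the van der Corput arguments go through.
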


None of the converse directions in \cref{thm:ABB_vanderCorput_optimal_recurrence} are true: the set $\{n^2: n \in \N\}$ is van der Corput and is set of nice recurrence in $\Z$, but not $\AlmostBohrBohr$. \cref{thm:ABB_vanderCorput_optimal_recurrence} should be compared with \cref{prop:DB-not-strong-recurrence} below which says that there exists a $\DiffBohr$ set which is not a set of strong recurrence. This difference again demonstrates that the notion of $\AlmostBohrBohr$ sets is significantly stronger than $\DiffBohr$.

Another notion of recurrence is pointwise recurrence. A set $A \subseteq G$ is called a \emph{set of pointwise recurrence} if for all minimal $G$-systems $(X,(T_g)_{g \in G})$, all points $x \in X$, and all neighborhood $U$ of $x$, there exists $a \in A$ such that $T_g x \in U$. In the context $G = \Z$, this notion of recurrence was first formally defined by Host, Kra, and Maass \cite{HostKraMaass2016} and was explored further by Glasscock and the third author \cite{Glasscock-Le-syndetic, glasscock_le_pointwise}. 
It is shown in \cite{Glasscock-Le-syndetic} that pointwise recurrence is strictly stronger than recurrence. Our next result shows that pointwise recurrence is even stronger than $\AlmostBohrBohr$.

\begin{Maintheorem}
\label{prop:pointwise_recurrence_is_ABB}
Every set of pointwise recurrence in $\Z$ is an $\AlmostBohrBohr$ set.
\end{Maintheorem}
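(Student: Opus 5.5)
By \cref{mainthm:almost_Bohr_complete}, it suffices to prove condition \eqref{item:ABC2-intro}: if $S\subseteq \Z$ is a set of pointwise recurrence and $B$ is a Bohr set, then $d^*(S\cap B)>0$. I would split this into two claims, each about a set of pointwise recurrence:
\begin{enumerate}
\item[(a)] if $S$ is a set of pointwise recurrence and $B$ is a Bohr set, then $S\cap B$ is again a set of pointwise recurrence;
\item[(b)] no set $T\subseteq \Z$ with $d^*(T)=0$ is a set of pointwise recurrence.
\end{enumerate}
Together these finish the proof: by (a) the set $S\cap B$ is a set of pointwise recurrence, so by (b) it has positive upper Banach density.

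For (a), take a minimal system $(X,T)$, a point $x$, and a neighborhood $U$ of $x$. Write $B=B(\Lambda;\eta)$ with $\Lambda=\{\chi_1,\dots,\chi_k\}$, and let $K$ be the closure of $\{(\chi_1(n),\dots,\chi_k(n)):n\in\Z\}$ in $(S^1)^k$. Then $K$ is a compact group, and the rotation $R$ by $\alpha=(\chi_1(1),\dots,\chi_k(1))$ is minimal on $K$. I would pass to a minimal subsystem $Y$ of $X\times K$ containing $(x,1)$; such a subsystem exists because $(x,1)$ is a uniformly recurrent point, as the product of a uniformly recurrent point of a minimal system with a point of a minimal isometric system. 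Applying pointwise recurrence of $S$ to the point $(x,1)\in Y$ and the neighborhood $U\times\{z\in K:|z_j-1|<\eta \text{ for all } j\}$ produces $s\in S$ with $T^s x\in U$ and $\chi_j(s)$ within $\eta$ of $1$ for all $j$. Thus $s\in S\cap B$, and this step should be routine. It may also already appear in the work of Glasscock and the third author.

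For (b), given $T$ with $d^*(T)=0$, I would build a minimal subshift $X\subseteq\{0,1\}^{\Z}$ and a point $x\in X$ with $x(0)=1$, chosen so that $x(t)=0$ for every $t\in T\setminus\{0\}$. The neighborhood $U=\{y:y(0)=1\}$ then witnesses that $T$ is not a set of pointwise recurrence (if $0\in T$ is allowed, one removes it, which is harmless). I would construct $x$ as a limit of finite words $W_0\subset W_1\subset\cdots$, each centered at the origin. Here $W_k$ is a concatenation of copies of $W_{k-1}$ and blocks of zeros, with $W_{k-1}$ recurring in $W_k$ with bounded gaps; this bounded-gaps property is what gives uniform recurrence of $x$, and hence minimality of its orbit closure. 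At each stage, the copies of $W_{k-1}$ must be placed at positions $p$ such that none of the $1$'s of $W_{k-1}+p$ lands in $T$. Since $W_{k-1}$ contains only finitely many $1$'s, the forbidden positions form a finite union of translates of $T$, which still has zero Banach density. Hence in every sufficiently long interval there are allowed positions, and they can be chosen with bounded gaps.

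The main obstacle is organizing this induction so that the bounded-gap (syndeticity) constraint holds uniformly at every level while the avoidance constraints keep accumulating. The gap bound $L_k$ at level $k$ has to be chosen after the forbidden set at level $k$ is known, using that a set of Banach density zero misses at least one point in every window of length $L_k$ once $L_k$ is large enough. One must also check that inserting zero-blocks does not create new occurrences of the pattern $x(0)=1$ at forbidden positions; this holds automatically because all $1$'s come from copies of earlier words placed at allowed positions.
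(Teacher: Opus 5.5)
Step (a) is correct, and it is the same product argument the paper uses. The point $(x,1)$ is uniformly recurrent in $X\times K$ because points of a group rotation are distal. Applying pointwise recurrence of $S$ inside its orbit closure then shows that $S\cap B$ is again a set of pointwise recurrence.

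The gap is in your construction for (b). For every $k$, your final point $x$ is a concatenation of copies of $W_k$ separated by the zero blocks inserted at later levels $j>k$. So two consecutive occurrences of $W_{k-1}$ in $x$ (indeed, of the single symbol $1$) can be as far apart as the longest zero block inserted at any later level. Bounded gaps of $W_{k-1}$ inside $W_k$, with a bound $L_k$ depending on $k$, therefore do not make the occurrences of $W_{k-1}$ in $x$ syndetic. For uniform recurrence you need all zero blocks, at all levels, to be bounded by a single constant $M$.

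Your choice of $L_k$ goes the other way. The forbidden sets $F_k=T-J_{k-1}$, with $J_{k-1}$ the positions of the $1$'s of $W_{k-1}$, increase with $k$. The window length needed to guarantee an allowed position increases with them, and in general has no bound. Once the zero blocks are unbounded, $x$ has arbitrarily long runs of $0$'s. Then $0^{\Z}$ lies in the orbit closure of $x$, and that orbit closure is not minimal because $x(0)=1$.

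Zero Banach density does not give you a uniform $M$ either. If all zero runs have length at most $M$, then $W_{k-1}$ has at least $|W_{k-1}|/(M+1)$ ones. Whenever the origin copy of $W_{k-1}$ gets a right-hand neighbour in $W_k$ (which must happen for infinitely many $k$), that neighbour can start at only $M+1$ positions, at distance about $|W_{k-1}|$ from the origin. Density zero of $T$ is a statement about long windows. It does not rule out all $M+1$ of the corresponding translates of $J_{k-1}$ meeting $T$. So the fillers cannot simply be zeros. Building fillers that keep every $1$ off $T$ while keeping every word syndetic is exactly the nontrivial content of (b).

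The simplest repair is to cite (b) rather than reprove it. As the paper recalls in its introduction, every set of pointwise recurrence in $\Z$ is piecewise syndetic (Glasner--Weiss), hence has positive upper Banach density. With this, (a) gives $d^*(S\cap B)>0$ immediately, and the proof is complete.

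The result is a genuine, slightly more economical variant of the paper's proof. The paper shows that $S\cap B$ is dcPS (a dcS set intersected with a set of pointwise recurrence), then invokes Glasscock and Le's theorem that dcPS sets are piecewise syndetic. Your observation that intersecting with a Bohr set preserves pointwise recurrence lets you rely only on the older special case for sets of pointwise recurrence.
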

The converse of \cref{prop:pointwise_recurrence_is_ABB} is false: while every set of pointwise recurrence in $\Z$ is piecewise syndetic \cite{Glasner_Weiss89}, we will show in \cref{mainthm:counter_for_almost_Bohr_0}
below that there exists an $\AlmostBohrBohr$ set in $\Z$ that is not piecewise syndetic. 

As an immediate corollary of Theorems \ref{thm:ABB_vanderCorput_optimal_recurrence} and \ref{prop:pointwise_recurrence_is_ABB}, we obtain additional, previously unknown properties of sets of pointwise recurrence in $\Z$.
\begin{Maintheorem}
Every set of pointwise recurrence in $\Z$ is a van der Corput set and a set of nice recurrence.
\end{Maintheorem}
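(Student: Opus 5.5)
This statement follows at once from \cref{thm:ABB_vanderCorput_optimal_recurrence} and \cref{prop:pointwise_recurrence_is_ABB}, which may be taken as given. Let $S \subseteq \Z$ be a set of pointwise recurrence. By \cref{prop:pointwise_recurrence_is_ABB}, $S$ is $\AlmostBohrBohr$ in $\Z$. Applying \cref{thm:ABB_vanderCorput_optimal_recurrence} with $G=\Z$ and $A=S$, we conclude that $S$ is both a van der Corput set and a set of nice recurrence. The real work therefore sits in those two theorems, and I sketch how I expect each to go.

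For \cref{prop:pointwise_recurrence_is_ABB}, I would use criterion \eqref{item:ABC3-intro} of \cref{mainthm:almost_Bohr_complete}: it suffices to show that $S$ meets every almost Bohr set $B\setminus E$ with $d^*(E)=0$.
\begin{itemize}
\item First, I would realize $E$ as a return-time set in a minimal system. Take a point $x$ of the subshift $\{0,1\}^\Z$ coding $\mathbb 1_E$. Replace the orbit closure by a minimal subsystem, using a point $y$ that is uniformly recurrent and proximal to $x$, or pass to a minimal system over it.
\item Next, I would form the product of this minimal system with the rotation $\Z\to\T^k$ defining $B$. The neighborhood $U$ is a small ball around the identity in the rotation factor times the cylinder on which the zero-density set is avoided. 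Pointwise recurrence then gives $s\in S\cap B\setminus E$.
\end{itemize}
The main obstacle is making the symbolic point minimal while keeping the event ``$n\notin E$''. Since $E$ has zero Banach density, its complement is thick. I would instead pick a minimal system with a point $z$ whose return set to a clopen cylinder lies inside $\Z\setminus E$. The standard fact available is that every thick set contains the return-time set of a suitable point in a minimal system (Glasner--Weiss style). Joining this with the minimal rotation via a minimal subsystem of the product containing a point over the identity finishes the argument.

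For \cref{thm:ABB_vanderCorput_optimal_recurrence}, I would use criterion \eqref{item:ABC2-intro}, namely that $d^*(S\cap B)>0$ for every Bohr set $B$.
\begin{itemize}
\item \emph{Nice recurrence.} Given a measure-preserving system, a set $D$ of positive measure and $\epsilon>0$, I would take $B$ to be a Bohr set on which the compact (Kronecker) part of the correlation $\mu(D\cap T^{-n}D)$ is at least $\mu(D)^2-\epsilon$. The weak-mixing part vanishes in density, so by $d^*(S\cap B)>0$ it is negligible along some $n\in S\cap B$.
\item \emph{Van der Corput.} Given a measure $\sigma$ on $\hat G$ with $\hat\sigma$ vanishing on $S$, I would first show $\sigma$ has no atom at $0$. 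I would average $\hat\sigma$ over $S\cap B$ along a F{\o}lner sequence, with $B$ shrinking, and take a weak limit. The continuous part of $\sigma$ contributes zero in density, and the atoms near $0$ contribute approximately $\sigma(\{0\})$.
\end{itemize}
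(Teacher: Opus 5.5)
Your deduction is exactly the paper's: the statement is recorded there as an immediate corollary of \cref{prop:pointwise_recurrence_is_ABB} and \cref{thm:ABB_vanderCorput_optimal_recurrence}, combined just as you do. One caveat on your optional sketch of \cref{prop:pointwise_recurrence_is_ABB}, which does not affect the deduction: your ``standard fact'' that every thick set contains a return-time set (to a nonempty open set) of a point in a minimal system is false, because such return-time sets are always syndetic while, e.g., $\bigcup_{k\in\N}\{k^3,\dots,k^3+k\}$ is thick but not syndetic; what your route actually needs, and what is true, is that $\Z\setminus E$ contains a dcS set whenever $d^*(E)=0$, which is the dual form of the cited fact that sets of pointwise recurrence are piecewise syndetic.
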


\subsection{Other properties of \texorpdfstring{$\AlmostBohrBohr$}{(AlmostBohr, Bohr) expanding} sets}

The examples of $\AlmostBohrBohr$ sets discussed so far include central sets,  difference sets of the form $A - A$ where $d^*(A) > 0$, and sets of pointwise recurrence in $\Z$. A common feature among these sets is that they are piecewise syndetic. This appears to be somewhat necessary as \cref{mainthm:almost_Bohr_complete} implies that an $\AlmostBohrBohr$ set must have positive upper Banach density. That being said, the gap between positive upper Banach density and piecewise syndeticity raises a natural question: is every $\AlmostBohrBohr$ set piecewise syndetic? Furthermore, every example above is a set of multiple recurrence, and except for the sets of pointwise recurrence they are all $IP$ sets. See \cref{sec:def_rec} for the definition of ``set of multiple recurrence,'' ``$IP$ set,'' etc. Theroem \ref{mainthm:counter_for_almost_Bohr_0} provides an example of a $\AlmostBohrBohr$ set having none of these properties.

\begin{Maintheorem}\label{mainthm:counter_for_almost_Bohr_0}
There exists $S \subseteq \Z$ such that 
\begin{enumerate}
    \item $A + S = \Z$ for every almost Bohr set $A$, 
    \item $S$ is not piecewise syndetic,
    \item $S$ is not a set of $2$-recurrence,
    \item and $S$ is not an $IP_0$ set.
\end{enumerate}
\end{Maintheorem}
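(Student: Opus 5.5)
The plan is to build $S$ as the intersection of two sets. The first is a pseudorandom set $R$ of density $\tfrac12$ on a sparse sequence of longer and longer blocks; it will give property (ii). The second is a quadratic Bohr-type set $Q=\{n:\|n^2\alpha\|>1/4\}$ with $\alpha$ irrational; it will give properties (iii) and (iv). The substantive check is that the intersection $S=R\cap Q$ still satisfies (i).

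\textbf{Reduction of (i).} For $n\in\Z$, the statement $n\in A+S$ means that $n-S$ meets $A$. So (i) says that every translate $n-S$ meets every almost Bohr set. By \cref{mainthm:almost_Bohr_complete}, (iii)$\Rightarrow$(ii), applied to $n-S$, this holds once $d^*((n-S)\cap B)>0$ for every Bohr set $B$ and every $n$. Equivalently, $d^*(S\cap(n-B))>0$ for every Bohr neighborhood $n-B$ of every integer $n$. So it suffices to show that $S$ has positive relative density inside every Bohr neighborhood of every integer, along some sequence of intervals.

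\textbf{Construction and (i).} Choose blocks $I_k=[x_k,x_k+N_k)$ with $N_k\to\infty$ and gaps between them tending to infinity. Let $R\cap I_k$ be a subset of $I_k$ satisfying
\[\sup_{\theta,\theta'}\Bigl|\sum_{m\in I_k}\bigl(1_R(m)-\tfrac12\bigr)e(m\theta+m^2\theta')\Bigr|\le N_k^{0.6}.\]
Such a set exists by the probabilistic method: for a random set this bound holds with high probability, after a union bound over an $N_k^{-3}$-net of $(\theta,\theta')$. Put $S=R\cap Q$.

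Fix a Bohr neighborhood $n+B(\Lambda;\eta)$, and write the condition defining it in terms of phases $m\theta_j$. Minorize its indicator, and the indicator of $\{\|m^2\alpha\|>1/4\}$, by trigonometric polynomials in the phases $(m\theta_j)_j$ and $m^2\alpha$. These polynomials have degree and coefficients bounded in terms of $\Lambda,\eta$ only, and positive mean $c(\Lambda,\eta)>0$. Expanding the product and applying the Fourier bound on each $I_k$ gives $\sum_{m\in I_k}1_{R}\cdot(\ldots)=\tfrac12\sum_{m\in I_k}(\ldots)+O(N_k^{0.6})$. By Weyl equidistribution of $(m\theta_1,\dots,m\theta_r,m^2\alpha)$ along the shifts $I_k$, the main term is at least $\tfrac12 c N_k-o(N_k)$. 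This needs some care when some $\theta_j$ are rational or rationally related to each other; I would handle it by restricting to the subtorus where the orbit closure lives, on which $m^2\alpha$ remains equidistributed because $\alpha$ is irrational. This step is the one I expect to be the main obstacle. Granting it, $d^*(S\cap(n-B))\ge c/2>0$.

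\textbf{(ii)--(iv).} For (ii), note that $S\subseteq R$, so it suffices that $R$ is not piecewise syndetic. Build $R\cap I_k$ so that it also has no gap-free run of length more than $C\log N_k$, and more strongly so that for each $L$ the set $R+[0,L]$ contains no interval longer than $C_L$. This is the typical behavior of a random set, since the probability of $L$ consecutive misses is $2^{-L}$, so it can be imposed alongside the Fourier condition. Also, outside $\bigcup_k I_k$ the set $R$ is empty, and the gaps between blocks grow. For (iii), use the known fact (Frantzikinakis--Lesigne--Wierdl) that $\{n:\|n^2\alpha\|\ge\varepsilon\}$ is not a set of $2$-recurrence. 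The witnessing system is a skew product on $\T^2$, and since subsets of non-recurrence sets are non-recurrence sets, $S\subseteq Q$ fails $2$-recurrence. For (iv), use the finitary IP polynomial recurrence theorem: for each $k$ large enough, any sequence $n_1,\dots,n_k$ has a nonempty finite sum $n$ with $\|n^2\alpha\|<1/4$, so $n\notin Q$. Hence $Q$, and therefore $S$, contains no $IP_0$ set.
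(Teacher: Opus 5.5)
\textbf{The gap is in (ii).} To rule out piecewise syndeticity you need $R$ to be \emph{uniformly} non-piecewise-syndetic: for each $L$ there must be a $C_L$, independent of $k$, such that $R+[0,L]$ contains no interval longer than $C_L$. This is not typical behaviour for a random set; it is exponentially rare. Suppose $R\cap I_k$ is random of density $\tfrac12$. A fixed window of length $\ell$ lies inside $R$ with probability $2^{-\ell}$, so with probability tending to $1$ the block $I_k$ contains a run of about $\tfrac12\log_2 N_k$ consecutive elements of $R$. More generally, the longest interval covered by $R+[0,L]$ has length of order $2^{L}\log N_k$. Your weaker condition (no run longer than $C\log N_k$) is typical precisely because it allows runs that grow with $k$. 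The event you actually need has probability at most $e^{-cN_k}$. That is far smaller than your $N_k^{O(1)}e^{-N_k^{0.2}}$ bound on the failure of the Fourier condition, so a union bound produces no realization with both properties. With a typical $R$ the construction really does fail. Then $R$ is thick. Meanwhile $Q=\{n:\lVert n^2\alpha\rVert>1/4\}$ is the set of times the orbit of $(0,0)$ under the minimal map $(x,y)\mapsto(x+\alpha,y+2x+\alpha)$ enters the open set $\T\times\{y:\lVert y\rVert>1/4\}$, so $Q$ is syndetic. Hence $S=R\cap Q$ is the intersection of a thick set and a syndetic set, i.e.\ piecewise syndetic.

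\textbf{What is missing, and the rest of your proposal.} You need a mechanism that forces holes of every length at bounded spacing while keeping positive density in every Bohr neighbourhood, and your method has no source for it. $Q$ is cut out by an open condition. A nowhere dense replacement for $Q$ cannot be handled by continuous minorants, because a nonnegative continuous function below the indicator of a nowhere dense closed set is identically zero. The paper obtains both properties from one set: $S_z=\{n:T^nz\in \T\times V\}$ for the same nilsystem $T$, with $V\subseteq[1/4,3/4]$ compact, nowhere dense and of positive measure. Minimality shows that if $S_z-[0,k]$ were thick then $X=\bigcup_{j\le k}T^{-j}U$, which is impossible for a finite union of closed nowhere dense sets. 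The uniform Wiener--Wintner theorem at a generic $z$, applied to $1_U-\mu(U)$ (which is orthogonal to the Kronecker factor), plays the role of your Fourier-uniform $R$. A random model might be repaired by deliberately inserting a run of $L+1$ misses at a random position in each window of length $C_L$, for every $L$, and re-proving the Fourier bound with a bounded-differences inequality. That is a different construction, and you would have to carry it out.

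The rest of your proposal is sound. The step you flagged as the main obstacle is routine: for $l\neq 0$, Weyl's inequality bounds $\frac1N\sum_{m\in[x,x+N)}e(l\alpha m^2+\beta m)$ uniformly in $x$ and $\beta$. Hence $(m\vec\theta,m^2\alpha)$ equidistributes on $H\times\T$ along your blocks, where $H$ is the closure of $\{m\vec\theta\}$ and contains $n\vec\theta$. Two technical fixes are needed. First, approximate a product $\phi_1\phi_2$ of \emph{nonnegative} continuous minorants rather than multiplying two trigonometric minorants, since a product of minorants need not be a minorant. Second, substitute $m=x_k+j$ before taking the net. Your (iii), via the second difference in the skew product, and your (iv) are correct, and (iii) is a little cleaner than the paper's route.
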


We also prove the following:
\begin{enumerate}
    \item As a family of subsets of $G$, the collection of $\AlmostBohrBohr$ sets has the Ramsey property (partition regularity) (see \cref{prop:partition_regular_almostBohr}).

    \item $\AlmostBohrBohr$ sets do not necessarily contain a difference set (see \cref{prop:dcps_not_difference}).
\end{enumerate}

\subsection{Diagram of implications}
In Figure \ref{figure:relations_intersective_prime} below, we summarize the relationships among the classes of sets studied in our paper. An arrow from (P) to (Q) indicates that every set having property (P) has property (Q), crossed out arrows indicate that at least one set with property (P) does not have property (Q).  Labels indicate proofs given in the present article.

\begin{figure}[ht]\label{fig:PrimeRelations}
    \centering
\adjustbox{scale=0.85,center}{
    \begin{tikzcd}[row sep= large]
         & \fbox{\begin{tabular}{c}
             Pointwise recurrence
        \end{tabular}} 
        \ar[d, rightarrow, "Thm. \ref{prop:pointwise_recurrence_is_ABB}"] 
        & & 
        \\
        \fbox{\begin{tabular}{c}
             Piecewise syndetic
        \end{tabular}}
        & 
        \fbox{\begin{tabular}{c}
             $\AlmostBohrBohr$
        \end{tabular}}
        \ar[l, rightarrow, "//" anchor=center, "Thm. \ref{mainthm:counter_for_almost_Bohr_0}"]
        \ar[dl, rightarrow, "//" anchor=center, "Thm. \ref{mainthm:counter_for_almost_Bohr_0}"]
        \ar[d, rightarrow]
        \ar[dr, rightarrow,bend right = 15, looseness=1.4, "Thm. \ref{thm:ABB_vanderCorput_optimal_recurrence}"]
        \ar[drr, rightarrow, bend left = 50, looseness=1, "Thm. \ref{thm:ABB_vanderCorput_optimal_recurrence}"]
        & 
        \fbox{\begin{tabular}{c}
             Support $\KMF$ mean
        \end{tabular}} 
        \ar[dl, rightarrow, bend right = 15, looseness=1.4, "Thm. \ref{th:SpectralSumDifference_intro}"]
        \ar[d, rightarrow, "Prop. \ref{prop:kmf_imply_vanderCorput}"]
        \ar[dr, rightarrow, "Prop. \ref{prop:kmf_imply_vanderCorput}"]
        & 
        \\
        \fbox{\begin{tabular}{c}
             Multiple recurrence
        \end{tabular}} 
        \ar[dr, rightarrow]
        & 
        \fbox{\begin{tabular}{c}
             $\DiffBohr$
        \end{tabular}}
        \ar[d, rightarrow]
        &
        \fbox{\begin{tabular}{c}
             van der Corput
        \end{tabular}}
        \ar[l, rightarrow, "//" anchor=center, "Thm. \ref{mainthm:difference_set_not_DF}"]
        \ar[dl, rightarrow]
        &
        \fbox{\begin{tabular}{c}
        Set of nice recurrence
        \end{tabular}}
        \ar[dll, rightarrow]
        \\
        & \fbox{\begin{tabular}{c}
             Set of recurrence
        \end{tabular}}
         & &
    \end{tikzcd}
    }

    \caption{Implications between classes of sets studied in our paper.}
    \label{figure:relations_intersective_prime}
\end{figure}
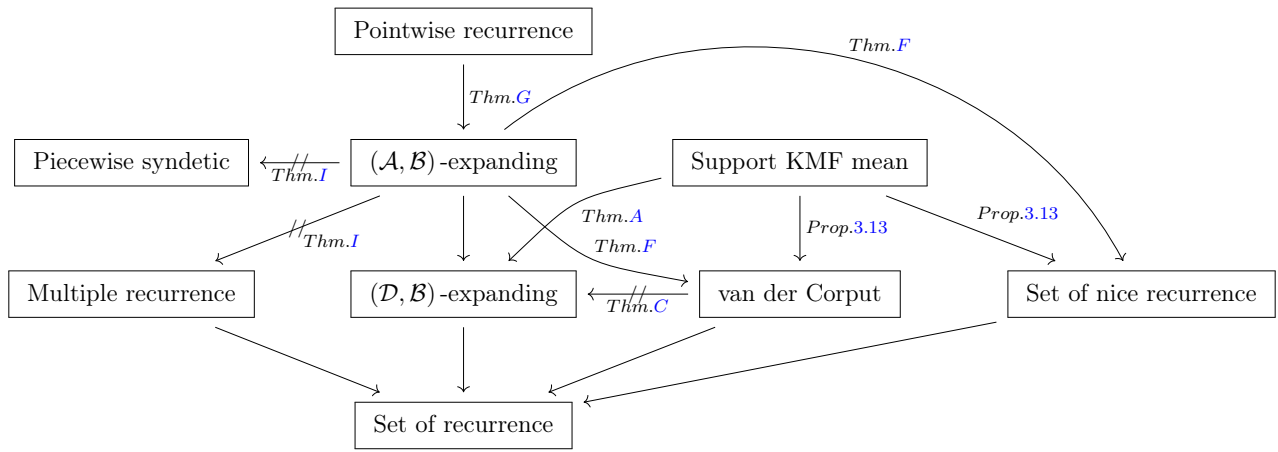

\subsection{Acknowledgments}
The fourth author is supported by NSF Grant DMS-2246921 and a Travel Support for Mathematicians gift from the Simons Foundation.

\section{Definitions and notations}
\label{sec:def_notation}

\subsection{Dynamical systems and notions of recurrence}
\label{sec:def_rec}

 Let $G$ be a discrete abelian group. A \emph{topological $G$-system} is a pair $(X, (T_g)_{g \in G})$ where $X$ is a compact metric space and $(T_g)_{g\in G}$ is an action of $G$ on $X$ by homeomorphisms. A topological $G$-system $(X, (T_g)_{g \in G})$ is \emph{minimal} if for every $x \in X$, $\{T_g x: g \in G\}$ is dense in $X$.
A \emph{measure preserving $G$-system} is a tuple $(X, \mu, (T_g)_{g \in G})$  where $(X,\mu)$ is a probability measure space and $(T_g)_{g\in G}$ is an action of $G$ on $X$ satisfying $\mu(E) = \mu(T_g E)$ for all $\mu$-measurable sets $E \subseteq X$.

The following notions of recurrence are well-studied in topological dynamics and ergodic theory. For $A \subseteq G$, we say that
\begin{enumerate}
    \item $A$ is a \emph{set of recurrence}  if for every measure preserving $G$-system $(X, \mu, (T_g)_{g \in G})$ and every $E \subseteq X$ such that $\mu(E) > 0$, there exists $a \in A$ such that $\mu(E \cap T_a E) > 0$; 

    \item $A$ is a set of \emph{strong recurrence} if for every measure preserving $G$-system $(X, \mu, (T_g)_{g \in G})$ and every $E \subseteq X$ such that $\mu(E) > 0$, there exists $c>0$ and infinitely many $a\in  A$ such that $\mu(E \cap T_{a} E) > c$;

    \item $A$ is a set of \emph{nice recurrence} if for every measure preserving $G$-system $(X, \mu, (T_g)_{g \in G})$, every $E \subseteq X$ such that $\mu(E) > 0$, and every $\epsilon > 0$, there exists $a \in A$ such that $\mu(E \cap T_a E) > \mu(E)^2 - \epsilon$;

    \item $A$ is a \emph{van der Corput set} if for every positive finite Radon measure $\sigma$ on $\widehat{G}$, $\hat{\sigma}(a)=0$ for all $a\in A$ implies $\sigma(\{\chi_0\})=0$, where $\chi_0\in \widehat{G}$ is the trivial character: $\chi_0(g) = 1$ for all $g \in G$.  
    
    Equivalently, $A$ is van der Corput if and only if it is a \textit{set of operatorial recurrence}, i.e., if $\mathcal{H}$ is any Hilbert space, $U$ is any unitary representation of $G$ on $\mathcal{H}$, if $x \in \mathcal{H}$ satisfies 
    $\langle x, U_a x\rangle = 0$ for every $a \in A$, then $x$ is orthogonal to the subspace of $\mathcal{H}$ of $U$-invariant vectors. (For the proof of this equivalent characterization in case $G$ is countable, see for example \cite[Theorem 4.3]{Farhangi_Tucker-Drob-asymptotic}. The proof of uncountable groups is the same.) 
    

    \item $A$ is a \emph{set of $k$-recurrence} if for every measure preserving $G$-system $(X, \mu, (T_g)_{g \in G})$, every $E \subseteq X$ such that $\mu(E) > 0$, there exists $a \in A$ such that
    \[
        \mu(E \cap T_a E \cap \cdots \cap T_{ka} E) > 0.
    \]

    \item $A$ is a \emph{set of multiple recurrence} if it is a set of $k$-recurrence for all $k \in \N$.

     \item $A$ is a \emph{set of pointwise recurrence} if for every minimal topological $G$-system $(X,(T_g)_{g \in G})$, every $x \in X$, and neighborhood $U$ of $x$, there exists $a \in A$ such that $T_a x \in U$.
\end{enumerate} 

Here are the previous known relations between these forms of recurrence: nice recurrence $\Rightarrow$ strong recurrence $\Rightarrow$ recurrence (by definitions); van der Corput $\Rightarrow$ recurrence \cite{Kamae_France78}; and in case $G = \Z$, pointwise recurrence $\Rightarrow$ multiple recurrence $\Rightarrow$ recurrence \cite{Glasscock-Le-syndetic}. The converses of these implications are known to be false in case $G = \Z$ \cite{Bourgain87, Forrest90, Glasscock-Le-syndetic}. 

\subsection{Means and density}
\label{sec:prelim_density_syndetic}




Given a function $f: G \to \C$ and $g\in G$, we define the translate $\tau_g f$ to be the function given by $(\tau_gf)(x):=f(x-g)$.  
We write $\ell^\infty(G)$ for the Banach space of bounded complex-valued functions on $G$ equipped with the supremum norm: $\|f\|_\infty := \sup\{|f(g)|:g\in G\}$.  

A \emph{mean} on $\ell^{\infty}(G)$ is a linear functional $m:\ell^{\infty}(G)\to \mathbb C$ satisfying $m(1_G)=1$ and $m(f)\geq 0$ whenever $f$ is real-valued and $\inf f\geq 0$.

An \emph{invariant mean} on $\ell^{\infty}(G)$ is a mean $m$ satisfying $m(\tau_g f)=m(f)$ for all $g\in G$.   
The set of invariant means on $G$ will be denoted $\mathcal M_\tau(G)$. In our setting ($G$ is a discrete abelian group), $\mathcal M_\tau(G)$ is always nonempty (for example, see Proposition 0.15 of \cite[page 14]{Paterson88}).
The \emph{upper Banach density} of a set $A\subseteq G$ is $d_G^*(A):=\sup\{m(1_A): m\in \mathcal M_\tau(G)\}$. If the ambient group $G$ is clear from context, we simply write $d^*(A)$. Every piecewise syndetic set has positive upper Banach density every thick set has upper Banach density $1$; see Lemmas 12.2 and 12.4 of \cite{Griesmer_DiscreteSumsets}.

\subsection{Central sets and \texorpdfstring{$IP$}{IP} sets}
\label{sec:central_ip_sets}

Given a $G$-system $(X, (T_g)_{g \in G})$ with the metric $d$ on $X$, two points $x, y \in X$ are \emph{proximal} if for every $\epsilon > 0$, $\{g \in G: d(T_g x, T_g y) < \epsilon\}$ is thick. Roughly speaking, two points are proximal if there are infinitely many times their orbits get arbitrarily close together. A point $y \in X$ is \emph{uniformly recurrent} if for every neighborhood $U$ of $y$, the set $\{g \in G: T_g y \in U\}$ is syndetic. Equivalently, the orbit closure $\overline{\{T_g y: g \in G\}}$ forms a minimal subsystem of $(X, T)$; see \cite[Theorems 1.15 and 1.17]{Furstenberg81} for a proof.

Central sets in $\N$ were first introduced by Furstenberg \cite[Definition 8.2]{Furstenberg81} which can be generalized to arbitrary semigroup:
A set $C \subseteq G$ is \emph{central} if there exists a $G$-system $(X, (T_g)_{g \in G})$, two proximal points $x, y \in X$ in which $y$ is uniformly recurrent, a neighborhood $U$ of $y$ such that
\[
    C \supseteq \{g \in G: T_g x \in U\}.
\]
It was shown by Glasner \cite{glasner_1980}  and Bergelson, Hindman, Weiss \cite{Bergelson_Hindman90} that central sets can be defined in terms of the the topological semigroup $\beta G$, the Stone-\v{C}ech compactification of $G$, viewed as the space of ultrafilters on $G$. A set $C \subseteq G$ is called a \emph{central set} if it is a member of a minimal idempotent in $\beta G$. 

Central sets are piecewise syndetic, and centrality is a partition regular property (if $B \cup C$ is central, then $B$ or $C$ must be  central). For an overview of central sets, see \cite{Hindman_Strauss98}.

For a set $A \subseteq G$,
\begin{enumerate}
    \item $A$ is an \emph{$IP_0$ set} if for every $k \in \N$, there exists $g_1, \ldots, g_k \in G \setminus \{0\}$ such that $\sum_{n \in F} g_n \in A$ for any set $F \subseteq \{1, \ldots, k\}$;

    \item $A$ is an \emph{$IP$ set} if there exists a sequence $(g_n)_{n \in \N} \subseteq G \setminus \{0\}$ such that $\sum_{n \in F} g_n \in A$ for all finite sets $F \subseteq \N$.
\end{enumerate}
Evidently, every $IP$ set is an $IP_0$ set. We can also define $IP$ sets in terms of $\beta G$: a set is $IP$ if it belongs to an idempotent in $\beta G$. As a result, every central set is an $IP$ set. The dual notion of $IP$ sets is $IP^*$ sets: a set is $IP^*$ if it intersects every $IP$ set in $G$, or equivalently, it belongs to every idempotent in $\beta G$. It is shown by Furstenberg and Katznelson \cite{Furstenberg_Katznelson85} that every $IP_0$ set in $\Z$ is a set of multiple recurrence.

\subsection{Bohr sets and Bohr compactification} 

\label{sec:bohr_compactification}

The group of characters of $G$ is called the dual of $G$ and is  denoted by $\widehat{G}$. Recall the definition of Bohr sets in \cref{sec:history}: 
A \emph{Bohr set of rank $k$ and radius $\eta$} is a set of the form  
\begin{equation*} 
    \{ g \in G : | \chi(g)-1 | < \eta \textup{ for all } \chi \in \Lambda\}
\end{equation*} 
where $\Lambda \subseteq \widehat{G}$ of size $k$.
For $g \in G$ and a Bohr set $B$, the set $g + B$ is called \emph{Bohr neighborhood of $g$}. We have the following well-known fact whose short proof is included for completeness. 
\begin{lemma}\label[lemma]{lem:bohr_syndetic_set}
   Every Bohr set is syndetic, and so has positive upper Banach density.
\end{lemma}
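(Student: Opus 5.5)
We show that every Bohr set $B = B(\Lambda;\eta)$, with $\Lambda = \{\chi_1,\dots,\chi_k\}$, is syndetic by pulling back a finite cover of a torus.

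First, consider the map $\Phi: G \to (S^1)^k$ given by $\Phi(g) = (\chi_1(g),\dots,\chi_k(g))$. This is a homomorphism into a compact group, and $B = \Phi^{-1}(U)$, where $U = \{z \in (S^1)^k : |z_j - 1| < \eta \text{ for all } j\}$ is an open neighborhood of the identity.

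Second, choose a symmetric open neighborhood $V$ of the identity with $V\cdot V^{-1} \subseteq U$. For instance, take $V$ to be the set of $z$ with $|z_j - 1| < \eta/2$ for all $j$. Then
\[
|z_jw_j^{-1} - 1| \le |z_j - 1| + |w_j - 1| < \eta,
\]
because multiplication by a unit-modulus number preserves distances.

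Third, cover the compact set $\overline{\Phi(G)}$ by finitely many translates. The translates $zV$ with $z \in \Phi(G)$ cover $\overline{\Phi(G)}$, since $\Phi(G)$ is dense in it and $V$ is open and symmetric. By compactness, finitely many of them suffice, say $\Phi(f_1)V, \dots, \Phi(f_n)V$ with $f_i \in G$. Put $F = \{f_1,\dots,f_n\}$.

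Fourth, verify that $B + F = G$. Given $g \in G$, there is some $i$ with $\Phi(g) \in \Phi(f_i)V$. Then
\[
\Phi(g - f_i) = \Phi(g)\Phi(f_i)^{-1} \in V \subseteq U,
\]
so $g - f_i \in B$ and hence $g \in B + F$. Therefore $B$ is syndetic.

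For the density claim, let $m$ be any invariant mean; one exists because $\mathcal M_\tau(G)$ is nonempty. Then
\[
1 = m(1_G) \le \sum_{i=1}^n m(1_{B + f_i}) = n\, m(1_B),
\]
using the positivity and linearity of $m$ together with translation invariance. Hence $d^*(B) \ge m(1_B) \ge 1/n > 0$.

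None of these steps is a real obstacle. The only point requiring care is the compactness step, where we use the finite-dimensional torus rather than $bG$ so that the argument stays self-contained.
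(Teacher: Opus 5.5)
Your proof is correct and essentially the paper's own: map $G$ into $(S^1)^k$ via the characters, cover the compact closure of the image by finitely many translates (centered at image points) of an open identity neighborhood, and pull these back to finitely many translates of $B$ covering $G$. The differences are cosmetic. The paper uses $U$ itself as the neighborhood; your condition $V V^{-1}\subseteq U$ is never used, only the symmetry of $V$ and $V\subseteq U$. The paper also leaves the density consequence to the standard fact that syndetic sets have positive upper Banach density, which you verify directly with an invariant mean.
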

\begin{proof}
Fix $d\in \mathbb N$, a set of characters $\Lambda = \{\chi_1,\dots,\chi_d\}$ of $G$, and $\eta>0$.  We will show that $G$ is covered by finitely many translates of $B:=\Bohr(\Lambda;\eta)$. 

Define $\chi: G \to (S^1)^d$ by $\chi(g) = (\chi_1(g), \ldots, \chi_d(g))$. Let $U = \{z\in S^1 : |z-1| < \eta\}$. The closure $\overline{\chi(G)}$ of the image of $\chi$ is a compact subset of $(S^1)^d$, while the cartesian power $U^d$ is open, so $\chi(G)$ can be covered by finitely many translates of $U^d$ by elements of $\chi(G)$. This implies that $G$ is covered by finitely many sets of the form $\chi^{-1}(\chi(g) U^d)$, and each such set is a translate of $B$.
\end{proof}

 A \emph{trigonometric polynomial} $p:G\to \mathbb C$ is a function of the form $p=\sum_{j=1}^d c_j\chi_j$, where $d\in \mathbb N$, $c_1,\dots, c_d\in \mathbb C$, and $\chi_j\in \widehat{G}$. A function $f: G \to \C$ is called a \emph{uniformly almost periodic} function if it is a uniform limit of a sequence of trigonometric polynomials.

The \emph{Bohr compactification of $G$} is a compact Hausdorff abelian group  $bG$ together with a one-to-one homomorphism $\iota:G \to bG$ such that
\begin{enumerate}
	\item[(i)] $\iota(G)$ is topologically dense in $bG$;
	
	\item[(ii)] for all $\chi\in \widehat{G}$, there is a character $\tilde{\chi}\in \widehat{bG}$ such that $\tilde{\chi}\circ \iota = \chi$.
\end{enumerate}
We will identify $G$ with its image $\iota(G)$, and consider $G$ as a topologically dense subgroup of $bG$, so that each $\chi\in \widehat{G}$ is the restriction of a continuous $\tilde{\chi}\in \widehat{bG}$: $\chi = \tilde{\chi}|_G$.  

The group $bG$ may be constructed as the Pontryagin dual of $\widehat{G}_d$, where $\widehat{G}_d$ is the group $\widehat{G}$ with the discrete topology instead of the usual topology.  In this construction, the embedding map $\iota$ is given by $\iota(g):=e_g$, where $e_g(\chi)$ is the evaluation map $e_g(\chi) =\chi(g)$. We write $\mu_{bG}$ for Haar probability measure on $bG$. See Section 1.8 of \cite{Rudin62} or Section 4.7 of \cite{Folland_Abstract} for more on $bG$.

The following is the special case of \cite[Theorem 4.79]{Folland_Abstract} for discrete abelian groups.

\begin{proposition}\label[proposition]{th:BohrCompactificationAndAP} 
	If $f$ is a bounded  function on $G$, the following are equivalent: 
	\begin{enumerate}
        \item $f$ is uniformly almost periodic.	
        
		\item $f$ is the restriction to $G$ of a continuous function on $bG$.
	\end{enumerate}
\end{proposition}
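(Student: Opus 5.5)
The plan is to prove the two implications separately. The main tools are density of $G$ in $bG$, which lets sup norms on $G$ control sup norms on $bG$, and the Stone--Weierstrass theorem on the compact Hausdorff space $bG$.

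\emph{(i)$\Rightarrow$(ii).} Let $f$ be the uniform limit on $G$ of trigonometric polynomials $p_n=\sum_j c_{j,n}\chi_{j,n}$. By property (ii) of $bG$, each $\chi_{j,n}$ is the restriction of a continuous character $\tilde\chi_{j,n}$ of $bG$. Hence $p_n$ is the restriction of the continuous function $\tilde p_n:=\sum_j c_{j,n}\tilde\chi_{j,n}$.
\begin{itemize}
\item For any continuous $F$ on $bG$, density of $G$ in $bG$ gives $\sup_{bG}|F|=\sup_G|F|$.
\item Applying this to $F=\tilde p_n-\tilde p_m$ shows that $(\tilde p_n)$ is Cauchy in $C(bG)$ with the sup norm.
\item By completeness of $C(bG)$, $\tilde p_n\to F$ uniformly for some continuous $F$, and then $F|_G=\lim p_n=f$.
\end{itemize}

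\emph{(ii)$\Rightarrow$(i).} Let $f=F|_G$ with $F\in C(bG)$. Consider the algebra $\mathcal A\subseteq C(bG)$ spanned by the continuous characters of $bG$. It contains the constants, and it is closed under multiplication and complex conjugation, since characters form a group and $\overline{\psi}=\psi^{-1}$. The key point is that $\mathcal A$ separates the points of $bG$. I would get this from the concrete model $bG=\widehat{(\widehat G_d)}$: points of $bG$ are characters $\omega$ of the discrete group $\widehat G_d$. For each $\chi\in\widehat G$, the evaluation map $\omega\mapsto\omega(\chi)$ is a continuous character of $bG$, and two distinct $\omega$'s differ at some $\chi$. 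Moreover, this evaluation character restricts on $\iota(G)$ to $e_g\mapsto e_g(\chi)=\chi(g)$, so it is exactly $\tilde\chi$.

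By Stone--Weierstrass, $\mathcal A$ is dense in $C(bG)$. So there are $\tilde p_n\in\mathcal A$ with $\tilde p_n\to F$ uniformly on $bG$. Restricting to $G$, each $\tilde p_n|_G$ is a trigonometric polynomial on $G$, because a continuous character of $bG$ composed with the homomorphism $\iota$ is a character of $G$. These restrictions converge uniformly to $f$, so $f$ is uniformly almost periodic.

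The only step needing care is point separation in $bG$. In the general abstract setting this is Pontryagin duality or Peter--Weyl for compact abelian groups, but with the explicit construction of $bG$ recalled above it reduces to the evaluation-map observation, which is immediate. Boundedness of $f$ is automatic in both directions and plays no further role.
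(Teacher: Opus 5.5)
Your proof is correct, and it takes a different route from the paper only in the sense that the paper gives no argument at all: it quotes the statement as the discrete-group case of Folland's Theorem 4.79, a more general result. You prove it directly. For (i)$\Rightarrow$(ii) you use density of $G$ in $bG$ to turn a uniformly Cauchy sequence of trigonometric polynomials into a Cauchy sequence in $C(bG)$. For (ii)$\Rightarrow$(i) you apply Stone--Weierstrass to the span of the continuous characters of $bG$. This is the standard proof of the relevant equivalence: your version is self-contained for discrete $G$, while the citation gives more generality.

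Your handling of point separation is also right. With the concrete model $bG=\widehat{(\widehat G_d)}$ it is immediate from the evaluation characters. If $bG$ is defined only through properties (i)--(ii), it requires Peter--Weyl, or Pontryagin duality, which identifies any such $bG$ with $\widehat{(\widehat G_d)}$, as you note.
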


Likewise, we have the following characterization of Bohr sets. In the next proposition and the remainder of the paper, $\Re z$ denotes the real part of $z$.
\begin{proposition}\label[proposition]{lem:BohrFourierEquivalent}
Let $S\subseteq G$. The following are equivalent:
\begin{enumerate}
    \item\label{item:Bohr0} $S$ contains a Bohr set.

    \item\label{item:Bohr0UAP} There is a uniformly almost periodic function $\phi: G\to \mathbb C$ such that $\Re \phi(0)>0$ and $\{g \in G: \Re \phi(g) >0 \}\subseteq S$. 

    \item\label{item:PositiveUAP} There is a uniformly almost periodic function $\phi:G\to [0,1]$ supported on $S$ with $\phi(0)>0$.
\end{enumerate}
\end{proposition}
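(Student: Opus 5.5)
I will prove the cycle of implications (i)$\Rightarrow$(iii)$\Rightarrow$(ii)$\Rightarrow$(i), working with the Bohr compactification and \cref{th:BohrCompactificationAndAP} throughout.

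\emph{(i)$\Rightarrow$(iii).} Suppose $S \supseteq B(\Lambda;\eta)$. Choose a trigonometric polynomial $p$ built from the characters in $\Lambda$; for instance $p = \frac{1}{|\Lambda|}\sum_{\chi\in\Lambda} \Re \chi$, or a product of Fej\'er-type bumps in each coordinate $\chi$. Then $\phi := \max(0, p - (1-\delta))/\delta$, with $\delta$ small in terms of $\eta$ and $|\Lambda|$, is a continuous function of finitely many characters. It is therefore uniformly almost periodic, and it takes values in $[0,1]$. We have $\phi(0) = 1$, and if $\phi(g) > 0$ then $\Re\chi(g) > 1-|\Lambda|\delta$ for every $\chi \in \Lambda$. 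This forces $|\chi(g)-1| < \eta$, so $\phi$ is supported in $B(\Lambda;\eta) \subseteq S$.

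\emph{(iii)$\Rightarrow$(ii).} Take $\phi$ itself; then $\Re\phi(0) = \phi(0) > 0$ and $\{\Re\phi > 0\} = \operatorname{supp}\phi \subseteq S$.

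\emph{(ii)$\Rightarrow$(i).} This is the step with content. By \cref{th:BohrCompactificationAndAP}, $\Re\phi$ extends to a continuous function $F$ on $bG$. Hence $V := \{x \in bG : F(x) > 0\}$ is an open neighborhood of $0$ in $bG$. The key claim is that the sets $\{x\in bG : |\tilde\chi(x) - 1| < \eta \text{ for all } \chi \in \Lambda\}$ form a neighborhood base at $0$ in $bG$. This holds because the topology of $bG = \widehat{(\widehat G_d)}$ is the compact-open topology, which on a dual of a discrete group is the topology of pointwise convergence on $\widehat G_d$. So $V$ contains such a basic set $W$, and then $W \cap G = B(\Lambda;\eta) \subseteq \{g : \Re\phi(g) > 0\} \subseteq S$.

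An alternative for (ii)$\Rightarrow$(i) that avoids topology is to approximate $\Re\phi$ uniformly within $\epsilon = \Re\phi(0)/3$ by a trigonometric polynomial $q = \sum_j c_j \chi_j$. Continuity of $q$ in the characters then gives $\eta$ with $|q(g) - q(0)| < \epsilon$ on $B(\{\chi_j\};\eta)$; one can take $\eta = \epsilon/\sum_j |c_j|$. Then $\Re\phi(g) > \Re\phi(0) - 3\epsilon \geq 0$ on that Bohr set, which gives the containment directly. I would present this elementary version, since it sidesteps identifying the topology of $bG$.

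I expect the only real obstacle to be the bump construction in (i)$\Rightarrow$(iii), namely ensuring that the support is genuinely inside the Bohr set. Taking the max of $\min_\chi$-type expressions is a continuous operation on finitely many almost periodic functions, and almost periodic functions form a closed algebra closed under such lattice operations, so this works.
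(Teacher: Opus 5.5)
Your proof is correct. Its core step, the elementary (ii)$\Rightarrow$(i), is the paper's trigonometric-polynomial approximation argument; the difference lies in how you connect the three conditions.

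The paper proves (i)$\Rightarrow$(ii) directly with the trigonometric polynomial $\phi=\eta^2-\sum_j|\chi_j-1|^2$. No truncation is needed there, since only the set $\{\Re\phi>0\}$ matters. It then cites \cite[Lemma 12.6]{Griesmer_DiscreteSumsets} for (ii)$\Leftrightarrow$(iii).

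You instead run the cycle (i)$\Rightarrow$(iii)$\Rightarrow$(ii)$\Rightarrow$(i), building the $[0,1]$-valued bump $\max(0,p-(1-\delta))/\delta$ explicitly. This makes the proposition self-contained, at the small cost of two checks:
\begin{enumerate}
\item The bump must be uniformly almost periodic. You can get this from \cref{th:BohrCompactificationAndAP}. Alternatively, approximate the truncation function uniformly on $[-1,1]$ by polynomials, since polynomials in $p$ are trigonometric polynomials.
\item You should make $\delta$ explicit, e.g.\ $\delta\le\eta^2/(2|\Lambda|)$. Then $|\chi(g)-1|^2=2(1-\Re\chi(g))<\eta^2$ on the support.
\end{enumerate}

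Your bookkeeping in (ii)$\Rightarrow$(i) is actually tighter than the paper's written version. You approximate $\Re\phi$ within $\Re\phi(0)/3$ and use the full three-term triangle inequality. The paper works with $\phi$ as though $\phi(0)$ were a positive real, and it drops the term $|p(0)-\phi(0)|$ from its triangle inequality. This is harmless, but fixing it requires exactly your adjustment.

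Your topological variant, via a neighborhood base of $0$ in $bG$, is also valid. The elementary version simply avoids having to identify the topology of $bG$ with pointwise convergence on $\widehat{G}_d$.
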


\begin{proof}
\noindent (\ref{item:Bohr0})$\implies$(\ref{item:Bohr0UAP}). Assume $S$ contains the Bohr set $B:=\Bohr(\chi_1,\dots,\chi_d;\eta)$ and let $\phi=\eta^2-\sum_{j=1}^d |\chi_j-1|^2$.  Then $\phi$ is a uniformly almost periodic function, and $\phi(g)>0$ implies $g\in B$.  Since $B\subseteq S$, this means $\{g \in G: \Re \phi(g) >0\} \subseteq S$.

\noindent (\ref{item:Bohr0UAP})$\implies$(\ref{item:Bohr0}). Let $\phi$ be uniformly  almost periodic and satisfy $\phi(0)>0$.  Let $p=\sum_{j=1}^{d} c_j\chi_j$ be a trigonometric polynomial with $\|\phi-p\|_\infty < \frac{1}{2}|\phi(0)|$.  Let $M=1+\max_j|c_j|$, and let $\eta=\frac{1}{1+2dM}|\phi(0)|$.  Let $B= \Bohr(\chi_1,\dots,\chi_d;\eta)$.  We will prove that $B\subseteq \{g \in G: \Re \phi(g) >0\}$.  To prove this, let $g\in B$. Then 
\[
    |p(g)-p(0)|=\left|\sum_{j=1}^d c_j\chi_j(g)-c_j\chi_j(0)\right|\leq \sum_{j=1}^d |c_j||\chi_j(g)-1| \leq dM\eta<\frac{1}{2}|\phi(0)|.
\]  
For such $g$, we have 
\[
    |\phi(g)-\phi(0)|\leq |\phi(g)-p(g)|+|p(g)-p(0)|<|\phi(0)|.
\]
Thus $|\phi(g)-\phi(0)|<|\phi(0)|$.  Since $\phi(0)>0$, this implies $\Re \phi(g)>0$.  

See \cite[Lemma 12.6]{Griesmer_DiscreteSumsets} for a proof of (\ref{item:Bohr0UAP})$\iff$(\ref{item:PositiveUAP}), which is similar to the above proof.
\end{proof}

\section{Bohr structure in \texorpdfstring{$A-A+S$}{A-A+S}}
\label{sec:proof_of_TheoremA}

The goal of this section is to prove \cref{th:SpectralSumDifference_intro}. Before going to the proofs, we first need some preliminary definitions and lemmas.

\subsection{Positive definite functions and Fourier transforms of measures}

\label{sec:positive_definite}

Let $G$ be a discrete abelian group.  A function $\phi:G\to \mathbb C$ is \emph{positive definite} 
if for every function $c:G\to \mathbb C$ with finite support, 
\[
    \sum_{g,g'\in G} c(g)\overline{c(g')} \phi(g-g')\geq 0.
\]
The most relevant examples are the \emph{correlation sequences} given by an invariant mean $m\in \mathcal M_{\tau}(G)$ and a function $f\in \ell^\infty(G)$:
\[
    \phi_f(g):= m(f \cdot \tau_g \overline{f}).
\]
Another class of examples is given by a measure-preserving system $(X, \mu, (T_g)_{g \in G})$ and $A \subseteq X$:
\[
    \phi_A(g):= \mu(A \cap T_g A).
\]
Another class of examples comes from linear isometries of (pseudo-)metrics induced by a semidefinite Hermitian form positive Hermitian form $\langle \cdot,\cdot \rangle $ on a vector space $V$. Given such a form and an action $U$ of $G$ on $V$ preserving $\langle \cdot ,\cdot \rangle$, we define $\phi_w:G\to \mathbb C$  by $\phi_w(g):=\langle w, U_g w\rangle$.  To see that $\phi_w$ is positive definite, let $c:G\to \mathbb C$ have finite support. Then
\begin{align*}
    \sum_{g,g'\in G} c(g)\overline{c(g')}\phi_w(g-g')&=\sum_{g,g'\in G} c(g)\overline{c(g')}\langle w, U_{g-g'} w\rangle\\
    &=\sum_{g,g'\in G} c(g)\overline{c(g')}\langle U_{-g} w, U_{-g'} w\rangle \\
    &=  \Big\langle \sum_{g\in G} c(g)U_{g}^{-1}w, \sum_{g'\in G} c(g')U_{g'}^{-1}w\Big\rangle\\
    &\geq 0.
\end{align*}

\begin{lemma}\label[lemma]{lem:HilbertianInequality}
Let $\langle\cdot, \cdot \rangle$ be a positive semidefinite Hermitian form on a vector space $V$ with associated seminorm $\|\cdot \|$.  Let $U$ be an action of $G$ on $V$ preserving $\langle \cdot, \cdot \rangle$.  Assume $v \in V$ is $U$-invariant and $\|v\|=1$.  Let $w\in V$ and $c= \langle w,v\rangle$.  Then for every $m\in \mathcal M_\tau(G)$, $m(\phi_{w})\geq |c|^2$.
\end{lemma}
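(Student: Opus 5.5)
Decompose $w$ along the invariant vector and average the correlation with the mean. Write $w = cv + u$ with $u := w - cv$, so that $\langle u, v\rangle = \langle w,v\rangle - c\langle v,v\rangle = c - c = 0$ (using $\|v\|=1$). Since $v$ is $U$-invariant and $U$ preserves the form, $U_g v = v$ and $\langle u, U_g v\rangle = \langle u, v\rangle = 0$. Also $\langle v, U_g u\rangle = \langle U_{-g}v, u\rangle = \langle v,u\rangle = 0$. Expanding bilinearly then gives
\[
\phi_w(g) = \langle cv+u, U_g(cv+u)\rangle = |c|^2 + \langle u, U_g u\rangle = |c|^2 + \phi_u(g)
\]
for every $g\in G$, with the cross terms vanishing. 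Here $\phi_w$ is bounded (by Cauchy--Schwarz, $|\phi_w(g)|\le \|w\|^2$), so $m(\phi_w)$ makes sense. Since $m(1_G)=1$, we get $m(\phi_w) = |c|^2 + m(\phi_u)$.

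It remains to show $m(\phi_u)\ge 0$, which is the only real step. We have seen above that $\phi_u$ is positive definite, so for every finite set $F\subseteq G$,
\[
\sum_{g,g'\in F}\phi_u(g-g') \ge 0 .
\]
Equivalently, the function $\psi_F(x) := \frac{1}{|F|^2}\sum_{g,g'\in F}\phi_u(x+g-g')$ is a nonnegative real number for every $x$: it is the same positive-definite sum for the translated vector $U_{-x}u$, or simply positive definiteness applied to the finitely supported function $1_F$ at a shifted index. Indeed, $\sum_{g,g'\in F}\langle u, U_{x+g-g'}u\rangle = \langle \sum_{g'} U_{-g'}u, \sum_{g} U_{x+g}\ldots\rangle$ needs care. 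Cleaner is to avoid $x$ altogether: by translation invariance of $m$,
\[
m(\phi_u) = \frac{1}{|F|^2}\sum_{g,g'\in F} m(\tau_{g'-g}\phi_u) = m\Big(x\mapsto \frac{1}{|F|^2}\sum_{g,g'\in F}\phi_u(x+g-g')\Big).
\]
Then I take real parts and bound the averaged function from below. Writing $\phi_u(x+g-g') = \langle U_{-g}u, U_{x-g'}u\rangle$, the average is $\frac{1}{|F|^2}\langle a, U_x a\rangle$, where $a := \sum_{g\in F}U_{-g}u$. This has no obvious sign, so instead I use an amenability (F\o lner) argument. For every $x$, Cauchy--Schwarz gives
\[
\Big|\tfrac{1}{|F|^2}\langle a,U_xa\rangle\Big| \le \frac{\|a\|^2}{|F|^2} = \frac{1}{|F|^2}\sum_{g,g'\in F}\phi_u(g-g'),
\]
so $\Re m(\phi_u) \ge -\|a\|^2/|F|^2$. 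Thus it suffices to find finite $F$ with $\|a\|^2/|F|^2$ arbitrarily small, or else to use this bound differently.

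The main obstacle is that $\|a\|/|F|$ need not tend to $0$: by the mean ergodic theorem it tends to the norm of the projection of $u$ onto invariant vectors. The remedy is to apply the whole argument from the start with $v$ replaced by that projection. The honest route, which I would adopt, is to work in the completion $\mathcal H$ of $V/\{\|\cdot\|=0\}$ and let $P$ be the orthogonal projection onto the $U$-invariant vectors. Write $w = Pw + (w-Pw)$. The same cross-term vanishing gives $\phi_w = \|Pw\|^2 + \phi_{w-Pw}$. Since $w-Pw$ is orthogonal to the invariant vectors, the mean ergodic theorem along a F\o lner net gives $\|a\|/|F|\to 0$, so by the estimate above $\Re m(\phi_{w-Pw}) \ge 0$. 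Since $m(\phi_w)$ is real (because $\phi_w(-g)=\overline{\phi_w(g)}$ and $m$ may be symmetrized), we obtain $m(\phi_w)\ge \|Pw\|^2$. Finally, $\|Pw\|^2 \ge |\langle Pw, v\rangle|^2 = |\langle w, v\rangle|^2 = |c|^2$, because $v$ is a unit invariant vector and $\langle Pw, v\rangle = \langle w, Pv\rangle = \langle w, v\rangle$.
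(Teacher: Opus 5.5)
Your proof is correct in the end, and its first step is exactly the paper's: with $u=w-cv$ (the paper's $z$), the cross terms vanish and $\phi_w=|c|^2+\phi_u$. The routes differ in how you get $m(\phi_u)\ge 0$. The paper just invokes the standard fact that an invariant mean is nonnegative on positive definite functions. In its framework this follows from Bochner--Herglotz ($\phi_u=\hat\sigma$ with $\sigma\ge 0$) and Lemma~\ref{lem:meanOfSigmaHat}(i), which gives $m(\hat\sigma)=\sigma(\{\chi_0\})\ge 0$. You instead prove positivity by hand:
\begin{itemize}
\item pass to the Hilbert completion and split off the projection $P$ onto all invariant vectors;
\item kill the remainder $r=w-Pw$ with the averaging bound $|m(\phi_r)|\le \bigl\|\frac{1}{|F|}\sum_{g\in F}U_{-g}r\bigr\|^2$;
\item let this bound tend to $0$ by the mean ergodic theorem along a F{\o}lner net, which exists for every discrete abelian group, so uncountable $G$ is covered.
\end{itemize}
This needs more machinery than the paper's three lines, but it avoids spectral measures. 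It also gives the sharper, $m$-independent identity $m(\phi_w)=\|Pw\|^2$, which is exactly $\sigma_w(\{\chi_0\})$ for the spectral measure of $w$. The lemma is then the Cauchy--Schwarz consequence $\|Pw\|^2\ge|\langle w,v\rangle|^2$. The ``main obstacle'' you describe is not really one: the same argument applied to $u$ gives $m(\phi_u)=\|Pu\|^2\ge 0$, which already closes your first route.

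Two things should be removed or repaired in a write-up.

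First, the assertion that $\psi_F(x)\ge 0$ for every $x$ is false. Positive definiteness controls only $x=0$; for example, take $\phi(n)=(-1)^n$ on $\Z$, $F=\{0\}$ and $x=1$.

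Second, the claim that $m(\phi_w)$ is real because ``$m$ may be symmetrized'' is not a valid justification. Here $m$ is a fixed but arbitrary invariant mean, and invariant means need not be invariant under $x\mapsto -x$: on $\Z$ one can have $m(1_{\N})=1$ and $m(1_{-\N})=0$. Replacing $m$ by its symmetrization changes the quantity being bounded. You do not need this step. Your Cauchy--Schwarz estimate bounds the modulus of the averaged function $x\mapsto\frac{1}{|F|^2}\langle a,U_xa\rangle$, hence bounds $|m(\phi_r)|$, not just its real part. So it gives $m(\phi_{w-Pw})=0$ exactly, and $m(\phi_w)=\|Pw\|^2$ is automatically real.
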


\begin{proof}
Let $z= w-cv$.  Then
\begin{align*}
    \phi_{z}&=\langle w-cv,U_g(w-cv)\rangle \\
    &= \langle w,U_gw\rangle - c\langle v,U_gw\rangle - \overline{c}\langle w, U_g v\rangle + |c|^2 \langle v,U_g v\rangle\\
    &= \langle w,U_gw\rangle -c\langle v,w\rangle - \overline{c\langle v,w\rangle} + |c|^2 \langle v, v\rangle && \text{(since $\langle v,U_gw\rangle = \langle U_g^{-1} v,w\rangle= \langle v,w\rangle$)}\\
    &= \phi_{w}-c\bar{c}-\bar{c}c +|c|^2\\
    &= \phi_{w} -|c|^2.
\end{align*}
So $\phi_{z} = \phi_{w}-|c|^2$.  Since $\phi_{z}$ is positive definite, we have $m(\phi_{z})\geq 0$, which implies $m(\phi_{w})\geq |c|^2$.

\end{proof}

The following result is often called the Bochner-Herglotz theorem; it connects a positive definite functions on $G$ with Fourier transforms of  measures on $\widehat{G}$. 
\begin{theorem}[see {\cite{Rudin62}} or {\cite[p.103]{Folland_Abstract}}]
\label{th:B-H}
  Let $G$ be a discrete abelian group and $\phi$ a positive definite function on $G$.  Then there is a positive finite
Radon measure $\sigma$ on $\widehat{G}$ such that $\phi = \hat{\sigma}$, that is for all $g \in G$,
\[
    \phi(g) = \int_{\widehat{G}} \overline{\chi}(g) \ d \sigma(\chi).
\]
\end{theorem}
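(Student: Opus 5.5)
The plan is to realize $\phi$ as a positive linear functional on the trigonometric polynomials on the compact group $\widehat{G}$, extend it by continuity to $C(\widehat{G})$, and then apply the Riesz representation theorem.

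\emph{Setup.} For $g\in G$ let $e_g\in C(\widehat{G})$ be the evaluation $e_g(\chi)=\overline{\chi}(g)$. Let $\mathcal P$ be the linear span of the $e_g$. The $e_g$ are linearly independent characters of $\widehat G$, so each $p\in\mathcal P$ has a unique expansion $p=\sum_g a_g e_g$. Define
\[
\Lambda(p)=\sum_{g} a_g\,\phi(g).
\]
Since $e_ge_h=e_{g+h}$ and $\overline{e_g}=e_{-g}$, $\mathcal P$ is a unital $*$-algebra. By Pontryagin duality the $e_g$ separate points of $\widehat G$, so Stone--Weierstrass shows $\mathcal P$ is uniformly dense in $C(\widehat G)$.

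\emph{Positivity on squares.} For $q=\sum_h c(h)e_h$ we have
\[
|q|^2=\sum_{h,h'}c(h)\overline{c(h')}\,e_{h-h'},
\qquad\text{so}\qquad
\Lambda(|q|^2)=\sum_{h,h'} c(h)\overline{c(h')}\,\phi(h-h')\ge 0
\]
by positive definiteness. It follows that $(p,q)\mapsto\Lambda(p\overline q)$ is a positive semidefinite Hermitian form on $\mathcal P$, and Cauchy--Schwarz gives
\[
|\Lambda(p)|^2\le \Lambda(1)\,\Lambda(|p|^2)=\phi(0)\,\Lambda(|p|^2).
\]
Applying positive definiteness to two-point supports also gives $|\phi(g)|\le\phi(0)$.

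\emph{Boundedness (main obstacle).} The key step is the estimate $|\Lambda(p)|\le\phi(0)\|p\|_\infty$. Iterating Cauchy--Schwarz, with $h=|p|^2$ (real, and $h^{2^k}$ is again of the form $|q|^2$), yields
\[
\Lambda(h)\le \phi(0)^{1-2^{-n}}\,\Lambda\bigl(h^{2^n}\bigr)^{2^{-n}}.
\]
The crude bound $|\Lambda(r)|\le\phi(0)\|\hat r\|_{\ell^1(G)}$, where $\hat r$ is the coefficient sequence of $r$, then gives
\[
\Lambda(h)\le\phi(0)\,\lim_{n\to\infty}\|\hat h^{*2^n}\|_{\ell^1}^{2^{-n}}.
\]
By Gelfand's spectral radius formula in the commutative Banach algebra $\ell^1(G)$, this limit equals $\sup|\widehat{\hat h}|$ over the Gelfand spectrum of $\ell^1(G)$. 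That spectrum is $\widehat G$, and the Gelfand transform of $\hat h$ is $h$ itself, so the limit is $\|h\|_\infty=\|p\|_\infty^2$. Combining with the Cauchy--Schwarz bound gives $|\Lambda(p)|\le\phi(0)\|p\|_\infty$. If one prefers to avoid Gelfand theory, a fallback is the GNS construction: build the Hilbert space from finitely supported $c$ with the form $\sum c(h)\overline{c(h')}\phi(h-h')$, note that translations act unitarily, and invoke the spectral theorem for this unitary representation of $G$.

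\emph{Conclusion.} $\Lambda$ now extends to a bounded linear functional on $C(\widehat G)$. It is positive there: for $f\ge0$ continuous, approximate $\sqrt f$ uniformly by some $q\in\mathcal P$, so that $|q|^2\to f$ uniformly and $\Lambda(f)=\lim\Lambda(|q|^2)\ge0$. The Riesz representation theorem then gives a positive finite Radon measure $\sigma$ with $\Lambda(f)=\int f\,d\sigma$. Taking $f=e_g$ yields $\phi(g)=\int\overline{\chi}(g)\,d\sigma(\chi)$, which is the claimed identity $\phi=\hat\sigma$.
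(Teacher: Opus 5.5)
Your proof is correct, and it is essentially the classical Gelfand-theoretic proof of Bochner's theorem in Rudin \cite{Rudin62}, one of the references the paper cites instead of giving a proof: positivity of $\Lambda$ on $|q|^2$, iterated Cauchy--Schwarz combined with the spectral radius formula in $\ell^1(G)$ to get $|\Lambda(p)|\le\phi(0)\|p\|_\infty$, then extension and Riesz; since $G$ is discrete, $\ell^1(G)$ has the unit $\delta_0$, which is why you need no approximate identity. One small correction of attribution: that the $e_g$ separate points of $\widehat G$ is trivial, whereas the duality input you actually rely on is that $g\mapsto e_g$ is injective (characters separate points of $G$), which is what gives the linear independence making $\Lambda$ well defined --- alternatively, you can define $\Lambda$ on all of $\ell^1(G)$, where your bound by itself shows that the induced functional on $\mathcal P$ is well defined.
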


The next lemma is well-known.

\begin{lemma}[see {\cite{EberleinFS, Griesmer_DiscreteSumsets}}]
\label[lemma]{lem:meanOfSigmaHat}
    Let $G$ be a discrete abelian group, $m\in \mathcal M_\tau(G)$, and $\sigma$ a positive finite Radon measure on $\widehat{G}$. Then 
    \begin{enumerate}
\item\label{item:Eb1}$m(\hat{\sigma})=\sigma(\{\chi_0\})$, where $\chi_0\in \widehat{G}$ is the trivial character;  
        \item\label{item:Eb2} $m(|\hat{\sigma}|^2)= \sum_{\chi\in \widehat{G}} |\sigma(\{\chi\})|^2$.
    \end{enumerate}
\end{lemma}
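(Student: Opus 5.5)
The plan is to reduce both parts to the behavior of characters under an invariant mean. The key fact is that for any invariant mean $m$ and any character $\chi$, $m(\chi)=1$ if $\chi=\chi_0$ and $m(\chi)=0$ otherwise. Indeed, invariance gives $m(\tau_g\chi)=m(\chi)$, while $\tau_g\chi=\overline{\chi(g)}\chi$ and $m$ is linear. So $(1-\overline{\chi(g)})m(\chi)=0$ for all $g$, and if $\chi\neq\chi_0$ we may pick $g$ with $\chi(g)\neq 1$ to conclude $m(\chi)=0$. We also have $m(1_G)=1$.

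For part (\ref{item:Eb1}), we write $\hat\sigma(g)=\int_{\widehat G}\overline{\chi}(g)\,d\sigma(\chi)$ and want to interchange $m$ with the integral. Because $m$ is only finitely additive, we will not interchange directly. Instead we split
\[
\sigma=\sigma(\{\chi_0\})\delta_{\chi_0}+\sigma',
\]
where $\sigma'$ is the restriction of $\sigma$ to $\widehat G\setminus\{\chi_0\}$. Then $\hat\sigma=\sigma(\{\chi_0\})+\hat{\sigma'}$, so it suffices to show $m(\hat{\sigma'})=0$.

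The main obstacle is this step, and we plan to handle it with the Cauchy--Schwarz inequality for the positive functional $m$, combined with a Fubini argument on finite averages. Fix $\epsilon>0$. Since $\sigma'(\{\chi_0\})=0$, we can choose an open neighborhood $V$ of $\chi_0$ with $\sigma'(V)<\epsilon$. The set $V$ contains a basic neighborhood $\{\chi:|\chi(g_i)-1|<\delta,\ i\le k\}$ for some $g_1,\dots,g_k\in G$.

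Next we use the fact that the Følner-type averages $\frac1N\sum_{n<N}\overline{\chi}(h_n)$ can be driven to $0$ uniformly for $\chi$ outside $V$. Concretely, consider the finitely generated subgroup $H=\langle g_1,\dots,g_k\rangle$, which is isomorphic to a quotient of $\Z^k$. Using boxes $F_N\subseteq\Z^k$ mapped into $H$, the average $\frac{1}{|F_N|}\sum_{h\in F_N}\overline{\chi}(h)$ is a product of geometric averages, and it tends to $0$ uniformly on $\widehat G\setminus V$, since there some $|\chi(g_i)-1|\ge\delta$.

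By invariance, $m(\hat{\sigma'})=m\big(\frac1{|F_N|}\sum_{h\in F_N}\tau_h\hat{\sigma'}\big)$. The function inside is
\[
g\mapsto\int \overline\chi(g)\Big(\frac1{|F_N|}\sum_{h\in F_N}\chi(h)\Big)d\sigma'(\chi),
\]
an honest Fubini computation on a finite sum. Its sup norm is at most $\sigma'(V)+\sigma'(\widehat G)\sup_{\chi\notin V}\big|\frac1{|F_N|}\sum_{h\in F_N}\chi(h)\big|$, which is less than $2\epsilon$ for large $N$. Since $|m(f)|\le\|f\|_\infty$, we get $|m(\hat{\sigma'})|\le2\epsilon$, and letting $\epsilon\to0$ gives part (\ref{item:Eb1}).

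For part (\ref{item:Eb2}), we note that $|\hat\sigma|^2=\hat\sigma\,\overline{\hat\sigma}$ is the Fourier transform of the convolution $\sigma*\tilde\sigma$, where $\tilde\sigma(E)=\sigma(E^{-1})$. Indeed,
\[
\hat\sigma(g)\overline{\hat\sigma(g)}=\iint\overline{\chi\psi^{-1}}(g)\,d\sigma(\chi)\,d\sigma(\psi),
\]
and $\sigma*\tilde\sigma$ is a positive finite Radon measure. Applying part (\ref{item:Eb1}), $m(|\hat\sigma|^2)=(\sigma*\tilde\sigma)(\{\chi_0\})=(\sigma\times\sigma)(\{(\chi,\psi):\chi=\psi\})$. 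By Fubini this equals $\int\sigma(\{\chi\})\,d\sigma(\chi)=\sum_{\chi}\sigma(\{\chi\})^2$, since only the countably many atoms contribute.
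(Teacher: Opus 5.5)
Your proof is correct. The paper does not prove this lemma: it defers to Eberlein \cite{EberleinFS} (part (ii) is his Theorem~1, and part (i) appears in the proof of his lemma) or to Theorem~3.5 and Lemma~9.5 of \cite{Griesmer_DiscreteSumsets}. Your argument is therefore an independent, self-contained route.

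What it buys is that it is elementary and works equally well for countable and uncountable $G$. In part (i) you use invariance of $m$ only under the finitely many translates $\sum_i n_i g_i$ with $0\le n_i<N$. The explicit bound $\bigl|\frac{1}{N}\sum_{n<N}z^n\bigr|\le 2/(N\delta)$ for $|z-1|\ge\delta$ gives decay uniformly off a basic neighborhood of $\chi_0$, and regularity of $\sigma$ makes the mass near $\chi_0$ small. No passage to limits along nets, no dominated convergence, and no structure theory for the functions $\hat\sigma$ is needed. Independence of $m(\hat\sigma)$ from the choice of invariant mean also comes out directly. Your reduction of (ii) to (i) via $|\hat\sigma|^2=\widehat{\sigma*\tilde\sigma}$ is the same identity the paper uses in the proof of \cref{lem:AnnihilateEquiv}.

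Three details deserve a sentence in a write-up.
\begin{enumerate}
\item The map from the box $\{0,\dots,N-1\}^k$ to $G$ need not be injective, so the average should be taken over the box with multiplicity. This is harmless, since invariance applies term by term.
\item Choosing $V$ with $\sigma'(V)<\epsilon$ uses outer regularity of $\sigma$ at $\{\chi_0\}$. This is genuinely needed: for uncountable $G$ the group $\widehat{G}$ is not metrizable, so $\{\chi_0\}$ is not a countable intersection of open sets.
\item In that non-metrizable case, $\sigma\times\sigma$ should be the Radon product, and $\sigma*\tilde\sigma$ its (again Radon) image under $(\chi,\psi)\mapsto\chi\overline{\psi}$. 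The evaluation $(\sigma\times\sigma)(\{(\chi,\psi):\chi=\psi\})=\int\sigma(\{\chi\})\,d\sigma(\chi)$ is then an instance of Fubini--Tonelli for Radon products of finite measures on compact spaces.
\end{enumerate}
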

Part (\ref{item:Eb2}) is \cite[Theorem 1]{EberleinFS}.
Part (\ref{item:Eb1}) is proved in 
the course of the proof of the unnumbered lemma in \cite[page 311]{EberleinFS}. It can also be deduced by combining Theorem 3.5 and Lemma 9.5 in \cite{Griesmer_DiscreteSumsets}.






\subsection{\texorpdfstring{$\KMF$}{KMF} means} 

\label{sec:kmf_mean_def}

Let $m$ be a (not necessarily invariant) mean on $\ell^{\infty}(G)$ and $S\subseteq G$. We say $m$ is \emph{supported}  $S$ if $m(1_S) = 1$, where $1_S$ is the indicator function of $S$. 
The \emph{spectrum} of $m$ is 
\[
    \spec(m):=\{\chi\in\widehat{G}: m(\overline{\chi})\neq 0\}.
\]
For $f\in \ell^{\infty}(G)$, define the function $m*f:G\to \mathbb C$ by 
\[
    (m*f)(g):=m(\tau_{g}f ).
\]

\begin{lemma}\label[lemma]{lem:ConvolutionSupport}
    If $m$ is supported on $S$, $f$ is a real-valued function on $G$ and $C := \{g \in G: f(g) > 0\}$, then $\{g \in G: m * f (g) > 0\} \subseteq S - C$.
\end{lemma}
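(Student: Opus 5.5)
We argue by contraposition: fix $g \in G$ with $g \notin S - C$, and show that $(m*f)(g) \le 0$.

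First unpack the condition. Saying $g \notin S - C$ means there are no $s \in S$ and $c \in C$ with $g = s - c$. Equivalently, for every $s \in S$ we have $s - g \notin C$, that is, $f(s-g) \le 0$.

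Next translate this into a statement about $\tau_g f$. Since $(\tau_g f)(s) = f(s-g)$, the function $\tau_g f$ is real-valued and satisfies $\tau_g f \le 0$ pointwise on $S$. It follows that
\[
1_S\,\tau_g f \le 0 \quad \text{everywhere on } G .
\]

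We then use the support hypothesis. Because $m(1_S)=1$, we get $m(1_{G\setminus S}) = 0$. For any real bounded function $h$ we have $|h\,1_{G\setminus S}| \le \|h\|_\infty 1_{G\setminus S}$. Positivity of $m$ gives $|m(h 1_{G\setminus S})| \le \|h\|_\infty\, m(1_{G\setminus S}) = 0$. Here we use that a mean is real on real functions and monotone: if $-a\le h\le a$ pointwise, then $-m(a)\le m(h)\le m(a)$. Hence $m(h) = m(h 1_S)$.

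Apply this with $h = \tau_g f$:
\[
(m*f)(g) = m(\tau_g f) = m(1_S\,\tau_g f) \le 0,
\]
where the last inequality uses positivity of $m$ applied to $-1_S\tau_g f \ge 0$. Thus $g \notin \{m*f>0\}$, which proves the inclusion.

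The only point requiring care is the step showing that $m$ ignores values off $S$. This is a routine positivity argument, so there is no real obstacle.
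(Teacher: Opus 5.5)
Your proof is correct and is the paper's argument in contrapositive form: the paper observes that $m(\tau_h f)>0$ forces $\tau_h f$ to be positive at some point of $S$, so that $f(s-h)>0$ for some $s\in S$ and hence $h\in S-C$. Your positivity argument showing $m(h)=m(h1_S)$ for a mean supported on $S$ just makes explicit the step that the paper leaves implicit.
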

\begin{proof}
If $h \in G$ satisfies $m*f(h)>0$, then $m(\tau_{h}f) > 0$ and so 
\[
    S\cap \{g: \tau_{h}f(g) > 0\} = S \cap \{g: f(g - h) > 0\} \neq \varnothing.
\]
This implies $f(s - h)>0$ for some $s\in S$. It follows that  $s -  h \in C$, and therefore $h \in S - C$.
\end{proof}

\begin{definition}\label{def:Annihilates}
  Let $m$ be a mean on $\ell^\infty(G)$.  We say that
  \begin{enumerate}

\item  \label{item:anihilate_continuous_measures} $m$ \emph{annihilates continuous measures on $\widehat{G}$} if $m(|\hat{\sigma}|^2)=0$ for every continuous positive Radon measure $\sigma$ on $\widehat{G}$;

\item \label{item:massively_accumulate} $m$ \emph{massively accumulates at $0$ in $bG$} if $m(1_B)>0$ for every Bohr set $B \subseteq G$;

\item $m$ is a \emph{$\KMF$ mean} if $m$ satisfies both \eqref{item:anihilate_continuous_measures} and \eqref{item:massively_accumulate}.
  \end{enumerate}
\end{definition}

\begin{remark}
Every invariant mean is a $\KMF$ mean. Indeed, let $m$ be an invariant mean and $B$ be a Bohr set. By \cref{lem:bohr_syndetic_set}, $B$ is syndetic and so there are $g_1, \ldots, g_k \in G$ such that $G = \bigcup_{i=1}^k (B + g_i)$. We have $m(1_G) = 1$ and so $m(1_{B + g_i}) \geq 1/k$ for some $i$. Since $m$ is invariant, it follows that $m(1_B) \geq 1/k > 0$. Thus $m$ massively accumulates at $0$ in $b G$. The fact that $m$ annihilates continuous measures follows from Lemma \ref{lem:meanOfSigmaHat} (ii).
\end{remark}

The next lemma says that we can replace $m(|\hat{\sigma}|^2) = 0$ with $m(\hat{\sigma}) = 0$ in Definition \ref{def:Annihilates}.
\begin{lemma}\label[lemma]{lem:AnnihilateEquiv}
A mean $m$ on $\ell^{\infty}(G)$ annihilates continuous measures if and only if $m(\hat{\sigma})=0$ for every continuous positive Radon measure $\sigma$ on $\widehat{G}$.
\end{lemma}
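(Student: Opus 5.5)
The plan is to prove the two implications separately, using positivity of $m$ together with the facts that $|\hat\sigma|^2$ is again the Fourier transform of a continuous positive measure and that $|\hat\sigma| \le \hat\sigma(0)$ controls $\hat\sigma$ through a Cauchy--Schwarz inequality for means.

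For ($\Leftarrow$), suppose $m(\hat{\sigma})=0$ for every continuous positive Radon measure $\sigma$ on $\widehat{G}$. Given such a $\sigma$, write
\[
|\hat{\sigma}(g)|^2=\hat{\sigma}(g)\overline{\hat{\sigma}(g)}=\int\int \overline{\chi}(g)\psi(g)\,d\sigma(\chi)\,d\sigma(\psi).
\]
Hence $|\hat\sigma|^2=\hat{\rho}$, where $\rho$ is the pushforward of $\sigma\times\sigma$ under $(\chi,\psi)\mapsto \chi\overline{\psi}$. Thus $\rho=\sigma*\tilde{\sigma}$ with $\tilde\sigma(E)=\sigma(E^{-1})$, and $\rho$ is a positive finite Radon measure on $\widehat{G}$. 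The key point is that $\rho$ is continuous. For any $\lambda\in\widehat G$,
\[
\rho(\{\lambda\})=(\sigma\times\sigma)\{(\chi,\psi):\chi=\lambda\psi\}=\int \sigma(\{\lambda\psi\})\,d\sigma(\psi)=0,
\]
by Fubini and the continuity of $\sigma$. Applying the hypothesis to $\rho$ then gives $m(|\hat\sigma|^2)=m(\hat\rho)=0$.

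For ($\Rightarrow$), suppose $m(|\hat\sigma|^2)=0$ for all continuous $\sigma$. We use the Cauchy--Schwarz inequality for means, $|m(f)|^2\le m(|f|^2)$ for $f\in\ell^\infty(G)$. This follows from positivity: the sesquilinear form $(f,h)\mapsto m(f\overline h)$ is positive semidefinite, since $m(|f|^2)\ge 0$, and pairing with $h=1$ with $m(1)=1$ gives the inequality. Then $|m(\hat\sigma)|^2\le m(|\hat\sigma|^2)=0$.

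The only step needing real care is the identification $|\hat\sigma|^2=\hat\rho$ with $\rho$ continuous in the uncountable (possibly non-metrizable) setting of $\widehat G$. The product measure and its pushforward should be handled as Radon measures on the compact group $\widehat G$. Alternatively, one can bypass the measure construction by noting that $|\hat\sigma|^2$ is positive definite, applying \cref{th:B-H} to obtain $\rho$, and computing atoms via \cref{lem:meanOfSigmaHat}. By part (i) applied to $\chi\cdot|\hat\sigma|^2$ with an invariant mean, $\rho(\{\lambda\})$ equals the invariant mean of $\lambda|\hat\sigma|^2$, which vanishes by the Wiener-type computation above. The Fubini route is the one we would present.
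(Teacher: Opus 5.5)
Your proof is correct and follows the paper's argument. The forward direction is Cauchy--Schwarz for means, and the converse uses the identity $|\hat{\sigma}|^2=\hat{\rho}$, where $\rho$ is the pushforward of $\sigma\times\sigma$ under $(\chi,\psi)\mapsto\chi\overline{\psi}$ (the measure the paper writes as $\sigma*\sigma$). The only difference is that you verify continuity of $\rho$ directly by Fubini, whereas the paper cites Hewitt--Ross, Theorem 19.16.
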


\begin{proof}

The ``only if'' implication follows from Cauchy-Schwarz: 
\[
    |m(\hat{\sigma})|\leq m(|\hat{\sigma}|)=m(|\hat{\sigma}1_G|)\leq m(|\hat{\sigma}|^2)^{1/2}m(1_G)^{1/2}.
\]
Thus $m(|\hat{\sigma}|^2)=0$ implies $m(\hat{\sigma})=0$.

To prove the ``if'' implication, assume $m(\hat{\sigma}) = 0$ for every continuous positive Radon measure on $\widehat{G}$. Let $\sigma$ be a such a measure. Our goal is to show $m(|\hat{\sigma}|^2) = 0$. Define the measure $\sigma * \sigma$ on $\widehat{G}$ by 
\[
    \int_{\widehat{G}} f \ d(\sigma*\sigma) = \int_{\widehat{G}^2} f(\chi \overline{\psi}) \ d \sigma(\chi) d \sigma(\psi) \text{ for } f \in C(\widehat{G}). 
\]
Continuity of $\sigma*\sigma$ follows from continuity of $\sigma$ and  Theorem 19.16 of \cite[p.~271]{HewittRoss_Abstract}. 
    
    By our assumption, $m(\widehat{\sigma * \sigma}) = 0$.
For $g \in G$, let $e_g: \widehat{G} \to \C$ denote the evaluation function, $e_g(\chi) := \chi(g)$. We have
\begin{multline*}
    \widehat{\sigma * \sigma}(g) = \int_{\widehat{G}} \overline{\chi}(g) \ d (\sigma * \sigma)(\chi) = \int_{\widehat{G}} e_{-g}(\chi) \ d (\sigma * \sigma)(\chi) = \int_{\widehat{G}^2} e_{-g}(\chi \overline{\psi}) \ d \sigma(\chi) d \sigma(\psi) 
    \\ = \int_{\widehat{G}^2} \overline{\chi}(g) \psi(g) \ d \sigma(\chi) d \sigma(\psi) = |\hat{\sigma}(g)|^2.
\end{multline*}
Thus $m(|\hat{\sigma}|^2) = 0$, as was to be shown.
\end{proof}

\begin{lemma}\label[lemma]{lem:APconvolution}
If $m$ is a mean which massively accumulates at $0$ in $bG$ and $U\subseteq G$ is a Bohr set, then $\{g \in G: m*1_U(g) >0\}$ contains a Bohr set with frequencies belonging to $\spec(m)$.
\end{lemma}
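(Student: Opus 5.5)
The plan is to bound $m*1_U$ from below by $m*\phi$ for a suitable uniformly almost periodic function $\phi\le 1_U$, and to show that $m*\phi$ is a uniform limit of trigonometric polynomials whose frequencies lie in $\spec(m)$. Once that is in place, the argument of \cref{lem:BohrFourierEquivalent}, (\ref{item:Bohr0UAP})$\implies$(\ref{item:Bohr0}), produces the Bohr set, and keeping track of the frequencies in that argument gives the constraint on the frequencies.

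\emph{Step 1 (a continuous minorant).} Write $U=\Bohr(\Lambda;\eta)$ with $\Lambda=\{\chi_1,\dots,\chi_d\}$, and let $U'=\Bohr(\Lambda;\eta/2)$. Choose a continuous $F:(S^1)^d\to[0,1]$ that equals $1$ on $\{z:|z_j-1|\le \eta/2 \text{ for all } j\}$ and vanishes outside $\{z:|z_j-1|<\eta \text{ for all } j\}$. Set $\phi(g)=F(\chi_1(g),\dots,\chi_d(g))$. Then $1_{U'}\le\phi\le 1_U$. Since $\tau_g$ preserves pointwise inequalities and $m$ is positive, we get $m*1_U\ge m*\phi\ge 0$ pointwise. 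Moreover $m*\phi(0)=m(\phi)\ge m(1_{U'})>0$, because $m$ massively accumulates at $0$.

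\emph{Step 2 (frequencies of $m*\phi$).} By Stone--Weierstrass, $F$ is a uniform limit of polynomials in the coordinates $z_j$ and $\bar z_j$. Hence $\phi$ is a uniform limit of trigonometric polynomials $p=\sum_\chi c_\chi\chi$. The key computation is
\[
\tau_g\chi(x)=\chi(x-g)=\chi(x)\overline{\chi}(g),
\qquad\text{so}\qquad
(m*\chi)(g)=m(\chi)\,\overline{\chi}(g).
\]
It follows that $m*p=\sum_\chi c_\chi m(\chi)\overline{\chi}$. The only characters that appear are those $\overline\chi$ with $m(\chi)\neq0$, that is, with $m(\overline{\overline\chi})\ne 0$; these are exactly the elements of $\spec(m)$. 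Also $|m(f)|\le\|f\|_\infty$ for any mean, so $\|m*\phi-m*p\|_\infty\le\|\phi-p\|_\infty$. Therefore $m*\phi$ is a uniform limit of trigonometric polynomials with frequencies in $\spec(m)$.

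\emph{Step 3 (extracting the Bohr set).} Pick such a $q=m*p$ with $\|m*\phi-q\|_\infty<\tfrac12 m*\phi(0)$, and write $q=\sum_{j=1}^k c_j\psi_j$ with each $\psi_j\in\spec(m)$. Now repeat verbatim the proof of \cref{lem:BohrFourierEquivalent}, (\ref{item:Bohr0UAP})$\implies$(\ref{item:Bohr0}). For a suitable radius $\eta'$, every $g\in\Bohr(\psi_1,\dots,\psi_k;\eta')$ satisfies $|m*\phi(g)-m*\phi(0)|<m*\phi(0)$. Since $m*\phi$ is real, this gives $m*\phi(g)>0$, and hence $m*1_U(g)>0$.

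The only real subtlety is Step 2: one must keep track of the fact that convolving with $m$ kills exactly the frequencies outside $\spec(m)$. One must also check that the conjugation convention in the definition of $\spec(m)$ matches the character $\overline\chi$ that actually appears in $m*\chi$. Everything else is routine.
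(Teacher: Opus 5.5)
Your proof is correct and takes essentially the paper's route. You use a $[0,1]$-valued almost periodic minorant of $1_U$ that dominates the indicator of a smaller Bohr set: you build it with Urysohn and Stone--Weierstrass, while the paper takes it from \cref{lem:BohrFourierEquivalent}. From there, positivity at $0$ comes from massive accumulation, the identity $(m*\chi)(g)=m(\chi)\overline{\chi}(g)$ discards frequencies outside $\spec(m)$, and the Bohr set is extracted at the end.

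One caution for Step 3: the radius used in the proof of \cref{lem:BohrFourierEquivalent} is not small enough if copied literally, because that estimate drops the term $|q(0)-m*\phi(0)|$. Instead choose $\eta'$ with $\eta'\sum_j|c_j|<m*\phi(0)-2\|m*\phi-q\|_\infty$; this quantity is positive, and the choice is what your phrase ``a suitable radius'' already allows.
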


\begin{proof}  

Let $f:G\to [0,1]$ be a uniformly almost periodic function supported on $U$, with $f(0)=1$. Let $V\subseteq G$ be a Bohr set with $V\subseteq \{g \in G: f(g) > 1/2\}$.  Since $m$ massively accumulates at $0$, we have $m*1_V(0)=m(1_V)>0$.  It follows that $m*f(0)\geq \frac{1}{2}m*1_V(0)>0$.

Let $\epsilon = m*f(0)/2 > 0$. Since $f$ is uniformly almost periodic, there exist $\chi_1, \ldots, \chi_k \in \widehat{G}$ and $c_1, \ldots, c_k \in \C$ such that 
\[
    \Big| f(g) - \sum_{j=1}^k c_j \chi_j(g) \Big| < \epsilon
\]
for all $g \in G$. Observe that for any $\chi \in \widehat{G}$, we have 
\begin{equation*}
    (m*\chi)(g)=m(\tau_g \chi)=m(\chi(-g)\chi)= m(\chi) \overline{\chi}(g).
\end{equation*}
Therefore,
\begin{multline*}
    \epsilon > \Big| m*f(g) - m* \left( \sum_{j=1}^k c_j \chi_j \right)(g) \Big| = \Big| m*f(g) - \sum_{j=1}^k c_j m(\chi_j) \overline{\chi_j}(g)\Big| \\
    = \Big| m*f(g) - \sum_{\substack{1 \leq j \leq k \\ m(\chi_j) \neq 0}} c_j m(\chi_j) \overline{\chi_j}(g) \Big|
\end{multline*}
for all $g \in G$. Now $\{g \in G: m*f(g) > 0\}$ contains the set 
\[
    \{g \in G: |m*f(g) - m*f(0)| < \epsilon\}
\]
which in turn contains the set
\[
    B = \{g \in G: |\chi_j(g) - 1| < \eta \text{ for } 1 \leq j \leq k, m(\chi_j) \neq 0\}
\]
where 
\[
    \eta = \frac{\epsilon}{\sum_{\substack{1 \leq j \leq k \\ m(\chi_j) \neq 0}} |c_j m(\chi_j)|}.
\]
Since $B$ is a Bohr set whose frequencies belong to the spectrum of $m$, we are done.
\end{proof}

\subsection{Proof of \texorpdfstring{\cref{th:SpectralSumDifference_intro}}{Theorem A}}

We are now ready to prove \cref{th:SpectralSumDifference_intro}. In fact, we prove a stronger version of \cref{th:SpectralSumDifference_intro}, namely \cref{th:SpectralSumDifference}, which not only shows the existence of Bohr sets in $A - A + S$ but also provides information on the spectrum of Bohr sets that appear. The proof of \cref{th:SpectralSumDifference} draws inspiration from Kamae and Mend{\`e}s France's proof \cite{Kamae_France78} of the Furstenberg-S\'ark\"ozy theorem \cite{Furstenberg77, Sarkozy78}, generalized to the setting of discrete abelian groups. 


We say a mean $m$ has \emph{rational spectrum} if for all $\chi \in \spec(m)$, $\ker(\chi) := \{g \in G: \chi(g) = 1\}$ has finite index in $G$. The reason for this terminology is because in case $G = \Z$, $m$ has rational spectrum if and only if $\spec(m)$ consists of characters of the form $n \mapsto e(rn)$ where $e(t)$ denotes $\exp(2\pi i t)$ and $r \in [0, 1)$ is rational. Furthermore, if $\spec(m)$ consists only of the trivial character $\chi_0$, then we say that $m$ has \emph{trivial spectrum}.

\begin{theorem}
\label{th:SpectralSumDifference}  
Let $S \subseteq G$ be a set supporting a $\KMF$ mean $m$. Let $\sigma$ be a noncontinuous positive Radon measure on $\widehat{G}$. 
Then $S - \{g \in G: \Re \hat{\sigma}(g) > 0\}$  contains a Bohr set with frequencies belonging to $\spec(m)$.

In particular, $S - \{g \in G: \Re \hat{\sigma}(g) > 0\}$ contains a finite index subgroup of $G$ if $m$ has rational spectrum and is equal to $G$ if $m$ has trivial spectrum.

\end{theorem}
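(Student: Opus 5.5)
Set $C:=\{g\in G:\Re\hat\sigma(g)>0\}$. By \cref{lem:ConvolutionSupport} applied to $f:=\Re\hat\sigma$, it is enough to produce a Bohr set $B$ with frequencies in $\spec(m)$ such that $m*f>0$ on $B$, since then $B\subseteq S-C$. The plan is to split $\sigma$ into its discrete and continuous parts, $\sigma=\sigma_d+\sigma_c$. Since $\sigma$ is noncontinuous, $\sigma_d=\sum_{\chi}a_\chi\delta_\chi$ is nonzero, with $a_\chi\ge 0$ and $\sum a_\chi<\infty$.

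The first step is to show that the continuous part contributes nothing. For each $g$, $\tau_g\hat\sigma_c=\hat{\sigma_c'}$, where $\sigma_c'$ is the measure $\sigma_c$ multiplied by the unimodular function $\chi\mapsto\chi(g)$. This is continuous but complex, so I would avoid \cref{lem:AnnihilateEquiv} and use Cauchy–Schwarz directly: $|m(\tau_g\hat\sigma_c)|^2\le m(|\tau_g\hat\sigma_c|^2)=m(|\hat\sigma_c|^2)=0$, because $|\tau_g\hat\sigma_c|=\tau_g|\hat\sigma_c|$ and modulus-one twisting does not change $|\hat{\ }|$ of the translate. Hence $m*\Re\hat\sigma_c\equiv 0$, and so $m*f=m*\Re\hat\sigma_d$.

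The second step handles the discrete part. The function $\hat\sigma_d=\sum a_\chi\overline\chi$ is uniformly almost periodic, being an absolutely convergent sum. Truncate to a finite set $F$ of characters so that the tail has sup norm less than $\epsilon$. For each character, $m*\overline\chi(g)=m(\overline\chi)\chi(g)$, as in the proof of \cref{lem:APconvolution}. Therefore, up to $\epsilon$, $m*\hat\sigma_d(g)=\sum_{\chi\in F}a_\chi m(\overline\chi)\chi(g)$, and only $\chi\in\spec(m)$ survive. The main obstacle is showing that $m*\Re\hat\sigma_d(0)>0$, that is, that this constant is positive rather than merely nonzero. For this I would use massive accumulation. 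Write $\hat\sigma_d\ge$ something positive near $0$: since $\hat\sigma_d$ is uniformly almost periodic with $\hat\sigma_d(0)=\sigma_d(\widehat G)>0$, there is a Bohr set $V$ on which $\Re\hat\sigma_d>\hat\sigma_d(0)/2$. Then
\[
m(\Re\hat\sigma_d)\ \ge\ \tfrac{1}{2}\hat\sigma_d(0)\,m(1_V)+m\bigl(\Re\hat\sigma_d\,1_{G\setminus V}\bigr).
\]
The second term need not be nonnegative, however. To get around this, I would replace $\sigma$ by a positive-definite modification, or argue directly as follows. Massive accumulation plus positivity of the mean gives $m(\phi)\ge0$ for every positive-definite uniformly almost periodic $\phi$, via the approximation $m(\phi)=\lim m(\phi\cdot\psi_k)$ with Bohr-localized $\psi_k$. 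As a fallback, I would apply \cref{lem:HilbertianInequality}-type reasoning. The cleaner route I would try first is the following. We have $m(\overline\chi)$, and the total is $m(\hat\sigma_d)=\sum a_\chi m(\overline\chi)$. Choose a Bohr set $W$ with frequencies in $F$ on which every $\chi\in F$ is within $\delta$ of $1$. Then consider the mean $m_W:=m(\cdot\,1_W)/m(1_W)$, which is positive on $\hat\sigma_d\approx\hat\sigma_d(0)$ there. The key point is that $m*\hat\sigma_d$ evaluated at $0$, restricted in this way, is positive, and one then transfers this back to $m$ by using the spectral formula.

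Once $m*\Re\hat\sigma_d(0)=:2\epsilon'>0$ is established, choose the truncation with $\epsilon<\epsilon'/2$. Then the function $g\mapsto\Re\sum_{\chi\in F\cap\spec(m)}a_\chi m(\overline\chi)\chi(g)$ exceeds $\epsilon'$ on a Bohr set whose frequencies are in $F\cap\spec(m)$, exactly as at the end of \cref{lem:APconvolution}. This yields $B$.

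For the "in particular" statements: if every frequency has finite-index kernel, then $B$ contains the finite-index subgroup $\bigcap\ker\chi$. If the spectrum is trivial, then $B=G$.
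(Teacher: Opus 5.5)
There is a genuine gap at the step you flag as the main obstacle. Applying \cref{lem:ConvolutionSupport} to $f=\Re\hat\sigma$ itself commits you to showing that $m*\Re\hat\sigma_d$ is positive on a Bohr set, and in particular that $m(\Re\hat\sigma_d)>0$. This is false for general KMF means, so none of your workarounds can succeed.

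If $m$ is an invariant mean (a KMF mean supported on $S=G$) and $\sigma=\delta_\chi$ with $\chi\neq\chi_0$, then $m(\overline{\chi})=0$. Hence $m*\Re\hat\sigma(g)=\Re(\chi(g)m(\overline{\chi}))=0$ for every $g$, and $\{m*f>0\}=\varnothing$.

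The quantity can even be negative. On $G=\Z$, take an invariant mean $\lambda$, set $m_i(f):=3\lambda(f1_{3\Z+i})$, and let $m:=\epsilon m_0+(1-\epsilon)m_1$ with $\epsilon<1/3$. This $m$ is KMF: every Bohr set meets $3\Z$ in a Bohr set, hence in a syndetic set, and $m\le 3\lambda$ on nonnegative functions. Yet for $\chi(n)=e(n/3)$ and $\sigma=\delta_\chi$ one has $\Re m(\overline{\chi})=\epsilon-(1-\epsilon)/2<0$, so $m*\Re\hat\sigma<0$ on all of $3\Z$.

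This refutes your claim that $m(\phi)\ge 0$ for positive-definite uniformly almost periodic $\phi$. It also shows that nothing proved for $m_W$ can be ``transferred back'' to $m$. Using $m_W$ itself as a new KMF mean supported on $S$ would give some Bohr set, but with frequencies in $\spec(m_W)$, which can be strictly larger than $\spec(m)$. For instance, with $m$ invariant and $W=3\Z$ one has $\chi\in\spec(m_W)\setminus\spec(m)$. So the spectral claim and its corollaries would be lost.

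The missing idea is to localize the function, not the mean. Choose $\beta>0$ and a Bohr set $U\subseteq\{\Re\hat\sigma_d>\beta\}$, and apply \cref{lem:ConvolutionSupport} to $f=\Re(1_U\hat\sigma)$. This is legitimate because $\{\Re(1_U\hat\sigma)>0\}\subseteq\{\Re\hat\sigma>0\}$. Now $\Re(1_U\hat\sigma_d)\ge\beta 1_U\ge 0$ pointwise, so positivity of $m$ gives $m*\Re(1_U\hat\sigma_d)\ge\beta\, m*1_U$. Then \cref{lem:APconvolution}, which is where massive accumulation is actually used, supplies a Bohr set with frequencies in $\spec(m)$ inside $\{m*1_U>0\}$.

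Because $1_U\hat\sigma_c$ is no longer a Fourier transform, you then need the stronger statement $m*|\hat\sigma_c|\equiv 0$. Your Cauchy--Schwarz bound points the right way, but the equality $m(|\tau_g\hat\sigma_c|^2)=m(|\hat\sigma_c|^2)$ uses translation invariance, which a KMF mean does not have. Instead, write $\tau_g\hat\sigma_c=\hat\nu_g$ with $d\nu_g=e_g\,d\sigma_c$. Split $\nu_g$ as a combination of four positive measures that are absolutely continuous with respect to $\sigma_c$, hence continuous. For each such $\rho$ use $m(|\hat\rho|)\le m(|\hat\rho|^2)^{1/2}=0$.

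Your handling of the ``in particular'' clauses is fine.
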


\begin{proof}
Let $m$, $S$, and $\sigma$ be as in the hypothesis. By \cref{lem:ConvolutionSupport}, we have for each $U\subseteq G$,
\[
    \{g \in G: (m*\Re(1_U\hat{\sigma}))(g)>0\} \subseteq S - \{g \in G: \Re (1_U\hat{\sigma})(g) > 0\} \subseteq S - \{g \in G: \Re \hat{\sigma}(g) > 0\}.
\]
Therefore it suffices to prove that for some $U\subseteq G$, the set $\{g \in G: (m* \Re (1_U\hat{\sigma})) (g) >0\}$ contains a Bohr set with frequencies belonging to $\spec(m)$.

Write $\sigma$ as $\sigma_d+\sigma_c$, where $\sigma_d$ is discrete and $\sigma_c$ is continuous.  For $g \in G$, we have
\[
    \hat{\sigma}_d(g) = \int_{\widehat{G}} \overline{\chi}(g) \  d \sigma_d(\chi) = \sum_{\chi \in \widehat{G}} \sigma(\{\chi\}) \overline{\chi}(g).
\]
Since $\sigma$ is not continuous, $\sigma_d$ is not zero, and we get that
\[
\hat{\sigma}_d(0)=\sum_{\chi\in\widehat{G}} \sigma(\{\chi\})>0.
\]
Now $\hat{\sigma}_d$ is uniformly almost periodic and $\hat{\sigma}_d(0)>0$.  By Proposition \ref{lem:BohrFourierEquivalent}, we can fix $\beta>0$ and a Bohr set $U\subseteq \{g \in G:\Re \hat{\sigma}_{d}(g) > \beta\}$.   Let $\phi=1_U\hat{\sigma}$, so that $\phi=1_U\hat{\sigma}_c+1_U\hat{\sigma}_d$.  

We will show that $m* \phi = m* \Re (1_U\hat{\sigma}_d)$. It suffices to show $m* |1_U\hat{\sigma}_c| = 0$. Since $m*|1_U\hat{\sigma}_c|\leq m*|\hat{\sigma}_c|$ pointwise, it suffices to show that the latter is identically $0$. Now for every $g \in G$,
\[
    m*(|\hat{\sigma}_c|)(g) = m(|\tau_g \hat{\sigma}_c|).
\]
For $h \in G$, we have
\[
    (\tau_g \hat{\sigma}_c)(h) = \hat{\sigma}_c(h - g) = \int_{\widehat{G}} \overline{\chi}(h - g) \ d \sigma_c(\chi) = \int_{\widehat{G}} \overline{\chi}(h) e_g(\chi) \ d \sigma_c(\chi) = \hat{\nu}_g(h) 
\]
where $\nu$ is a measure satisfying $d \nu_g = e_g d \sigma_c$. Since $\sigma_c$ is a continuous measure, $\nu_g$ is also continuous, and can be written as $\nu_1^{+}-\nu_1^{-} +i\nu_{2}^{+}-i\nu_{2}^{-}$, where $\nu_{i}^{\pm}$ are each continuous positive Radon measures. Because $m$ annihilates continuous measures, we deduce that for all $g \in G$
\[
    m(|\tau_g \hat{\sigma}_c|) = m(|\hat{\nu}_g|) = 0,
\]
and so $m*|\hat{\sigma}_c(g)| = 0$, as was to be shown.

\cref{lem:APconvolution} implies $\{g \in G: m* 1_U (g) > 0\}$ contains a Bohr set with frequencies belonging to $\spec(m)$. Since $\Re (1_U\hat{\sigma}_d) \geq  \beta 1_U$ pointwise, we get $\{g \in G: m* 1_U(g) > 0\} \subseteq \{g\in G: m*(\Re 1_U\hat{\sigma}_d)(g)>0\}$. The first assertion of our theorem then follows.

To prove the second assertion, assume $m$ has rational spectrum. From above, $S - \{g \in G: \Re \hat{\sigma}(g) > 0\}$ contains the Bohr set
\[
    B = \{g \in G: |\chi_j(g) - 1| < \eta \text{ for } 1 \leq j \leq k\}
\]
for some $k \in \N, \eta > 0$, and $\chi_1, \ldots, \chi_k \in \spec(m)$. Note that
\[
    B \supseteq \{g \in G: \chi_j(g) = 1 \text{ for } 1 \leq j \leq k\} = \bigcap_{j=1}^k \ker(\chi_j),
\]
which is a subgroup of finite index of $G$ since each $\ker(\chi_j)$ is. Now if $m$ has trivial spectrum, then $S - \{g \in G: \Re \hat{\sigma}(g) > 0\}$ contains
\[
    B = \{g \in G: |\chi_0(g) - 1| < \eta\}
\]
for some $\eta > 0$. Since $\chi_0(g) = 1$ for all $g$, the right hand side above is $G$. 
\end{proof}

\begin{proof}[Proof of \cref{th:SpectralSumDifference_intro}]
Let $S \subseteq G$ be a set that supports a $\KMF$ mean and let $A \subseteq G$ with $d^*(A) > 0$.

Let $m$ be an invariant mean on $\ell^{\infty}(G)$ such that $m(1_A) > 0$. As mentioned in \cref{sec:positive_definite}, the correlation sequence $\phi_{1_A}(g) = m(1_A \cdot \tau_g 1_A)$ is positive definite. By the Bochner-Herglotz theorem (\cref{th:B-H}), there exists a positive finite Radon measure $\sigma$ on $\widehat{G}$ such that $\phi_{1_A}(g) = \hat{\sigma}(g)$ for all $g \in G$. 

Applying \cref{lem:HilbertianInequality} to the case $V = \ell^\infty(G), \langle f_1, f_2 \rangle = m(f_1 \overline{f_2})$, $v = 1_G$, and $w = 1_A$, we obtain
\[
    m(\hat{\sigma}) = m(\phi_{1_A}) \geq m(1_A)^2 > 0.
\]
By \cref{lem:meanOfSigmaHat}, $\sigma(\{\chi_0\}) = m(\hat{\sigma})$ where $\chi_0$ is the trivial character of $G$. Therefore, $\sigma(\{\chi_0\}) > 0$.

We claim that 
\[
    \{g \in G: \Re \hat{\sigma}(g) > 0\} \subseteq A - A.
\]
Indeed, if $\Re \hat{\sigma}(g) > 0$, then 
\[
    m(1_A \cdot \tau_g 1_A) = \phi_{1_A}(g) = \hat{\sigma}(g) > 0.
\]
Therefore, $1_A(h) \cdot 1_A(h - g) > 0$ for some $h \in G$, meaning $h, h - g \in A$ and so $g = h - (h - g) \in A - A$.

Thus,
\[
    S + A - A = S - (A - A) \supseteq S - \{g \in G: \Re \hat{\sigma}(g) > 0\},
\]
which contains a Bohr set according to \cref{th:SpectralSumDifference}, as was to be shown.
\end{proof}

\subsection{Other properties of sets that support \texorpdfstring{$\KMF$}{KMF} means}

In this section, we show that the property of supporting an $\KMF$ mean is partition regular, and that any set supporting a $\KMF$ mean is both a van der Corput set and a set of nice recurrence. We start with a lemma.

\begin{lemma}\label[lemma]{prop:RelativeAnnihilates}
If $m$ is a mean on $\ell^{\infty}(G)$ which annihilates continuous measures and $A\subseteq G$ has $m(1_A)>0$, then the mean $\nu$ given by $\nu(f):=\frac{1}{m(1_A)} m(f1_A)$ also annihilates continuous measures.
\end{lemma}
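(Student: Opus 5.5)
First, $\nu$ is a mean: it is linear, $\nu(1_G)=m(1_A)/m(1_A)=1$, and $\nu(f)\ge 0$ whenever $f\ge 0$, since then $f1_A\ge 0$. To show that $\nu$ annihilates continuous measures, we use \cref{def:Annihilates} directly rather than \cref{lem:AnnihilateEquiv}. The point is that $|\hat\sigma|^2\ge 0$, so multiplying by $1_A$ can only decrease it pointwise.

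The steps are as follows. Fix a continuous positive Radon measure $\sigma$ on $\widehat{G}$. Pointwise on $G$ we have
\[
0\le |\hat{\sigma}|^2 1_A \le |\hat{\sigma}|^2 .
\]
The function $|\hat\sigma|^2$ is bounded by $\sigma(\widehat G)^2$, so all these functions lie in $\ell^\infty(G)$. Positivity of $m$, applied to $|\hat\sigma|^2-|\hat\sigma|^2 1_A\ge 0$, together with linearity gives
\[
0\le m(|\hat\sigma|^2 1_A)\le m(|\hat\sigma|^2)=0,
\]
where the final equality is the hypothesis that $m$ annihilates continuous measures. Dividing by $m(1_A)>0$ yields $\nu(|\hat\sigma|^2)=0$. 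Since $\sigma$ was an arbitrary continuous positive Radon measure, $\nu$ annihilates continuous measures.

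There is no real obstacle in this argument. The one thing to take care over is to work with the squared-modulus formulation in \cref{def:Annihilates} and not with $m(\hat\sigma)=0$. The function $\hat\sigma$ need not be nonnegative, so a bound of the form $m(\hat\sigma 1_A)\le m(\hat\sigma)$ is not available for it. If one wanted the $m(\hat\sigma)$ form instead, the route would be through \cref{lem:AnnihilateEquiv}, using Cauchy--Schwarz as in its proof.
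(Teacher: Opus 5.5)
Your proof is correct and is essentially the paper's argument: the paper likewise uses positivity of $m$ and the pointwise bound $1_A|\hat{\sigma}|^2\le|\hat{\sigma}|^2$ to get $m(1_A|\hat{\sigma}|^2)\le m(|\hat{\sigma}|^2)=0$, and then divides by $m(1_A)$. Your extra checks, that $\nu$ is a mean and that $\nu(|\hat{\sigma}|^2)\ge 0$, make explicit what the paper leaves implicit.
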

\begin{proof}
If $m$ annihilates continuous measures, $\sigma$ is a continuous positive Radon measure on $\widehat{G}$, and $m(A)>0$, then 
\[
    \frac{1}{m(1_A)}m(1_A|\hat{\sigma}|^2)\leq \frac{1}{m(1_A)}m(|\hat{\sigma}|^2)=0. \qedhere
\]
\end{proof}

\begin{proposition}\label[proposition]{prop:partition_regular_KMF_mean}
If $S = S_1 \cup S_2 \subseteq G$ supports $\KMF$ mean, then $S_1$ or $S_2$ also supports a $\KMF$ mean. 
\end{proposition}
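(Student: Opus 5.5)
The plan is to restrict the given $\KMF$ mean $m$ to one of the pieces and renormalize, as in \cref{prop:RelativeAnnihilates}. The only real issue is choosing the piece so that the \emph{massive accumulation} property survives, since annihilation of continuous measures is inherited automatically.

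Let $m$ be a $\KMF$ mean supported on $S=S_1\cup S_2$. Replacing $S_2$ by $S_2\setminus S_1$, we may assume the union is disjoint, so $m(1_{S_1})+m(1_{S_2})=1$. For $i\in\{1,2\}$ with $m(1_{S_i})>0$, define
\[
\nu_i(f):=\frac{1}{m(1_{S_i})}\,m(f1_{S_i}).
\]
This is a mean supported on $S_i$, and by \cref{prop:RelativeAnnihilates} it annihilates continuous measures. It remains to show that for some $i$, $\nu_i$ massively accumulates at $0$, i.e.\ $m(1_{B\cap S_i})>0$ for every Bohr set $B$.

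Argue by contradiction. Suppose that for each $i$ there is a Bohr set $B_i$ with $m(1_{B_i\cap S_i})=0$; if $m(1_{S_i})=0$, take $B_i=G$. Then $B:=B_1\cap B_2$ is again a Bohr set, since we can take the union of the frequency sets and the minimum of the radii. By monotonicity of $m$ on nonnegative functions, $m(1_{B\cap S_i})\le m(1_{B_i\cap S_i})=0$ for both $i$. Since $m(1_{G\setminus S})=0$, we get
\[
m(1_B)=m(1_{B\cap S_1})+m(1_{B\cap S_2})+m(1_{B\setminus S})=0.
\]
This contradicts the assumption that $m$ massively accumulates at $0$. Hence some $i$ has $m(1_{B\cap S_i})>0$ for every Bohr set $B$. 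In particular, taking $B=G$ gives $m(1_{S_i})>0$, so $\nu_i$ is well defined, and $\nu_i(1_B)>0$ for all Bohr sets $B$. Therefore $\nu_i$ is a $\KMF$ mean supported on $S_i$.

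The main obstacle is exactly this step: the naive choice of an $i$ with $m(1_{S_i})>0$ need not work. The fix is to use that Bohr sets are closed under finite intersection, which lets the two potential failures be combined into a single Bohr set of $m$-measure zero.
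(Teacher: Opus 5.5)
Your proof is correct and is essentially the paper's argument: restrict and renormalize $m$ to each piece, get annihilation of continuous measures from \cref{prop:RelativeAnnihilates}, and secure massive accumulation for one piece by intersecting the two offending Bohr sets to produce a Bohr set of $m$-measure zero. The only difference is cosmetic: taking $B_i=G$ when $m(1_{S_i})=0$ handles the degenerate case uniformly, where the paper instead treats $m(1_{S_i})=1$ separately and writes $m=c_1m_1+c_2m_2$.
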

\begin{proof}
Suppose $S = S_1 \cup S_2$ supports a $\KMF$ mean $m$.
Without loss of generality, we may assume $S_1\cap S_2=\varnothing$.  If $m(S_i)=1$ for some $i$, then $m$ is supported on $S_i$, and we are done.  Otherwise,  for $i=1,2$, define the means $m_i$ by $m_i(f):=\frac{1}{m(S_i)}m(1_{S_i}f)$ for $f \in \ell^{\infty}(G)$.  Then $m_i$ is supported on $S_i$ and by \cref{prop:RelativeAnnihilates} both $m_1$ and $m_2$ annihilate continuous measures. Furthermore $m = c_1m_1+c_2m_2$, where $c_i=m(S_i)$.

We claim that one of the $m_i$ massively accumulates at $0$ in $bG$.  Assume, to get a contradiction, that neither $m_1$ nor $m_2$ massively accumulates at $0$ in $bG$.  Then there are Bohr sets $U_1$, $U_2$ such that $m_1(1_{U_1})=0$ and $m_2(1_{U_2})=0$. Setting $V=U_1\cap U_2$, we get that $V$ is a Bohr set with $m_1(1_V)=m_2(1_V)=0$.  Then $m(1_V)=c_1m_1(1_V)+c_2m_2(1_V)=0$, as well.  Thus $m$ does not massively accumulate at $0$ in $bG$.
\end{proof}

The next lemma provides a general criterion for proving that a set is both a van der Corput set and a set of nice recurrence. We will use it to show that any set supporting a $\KMF$ mean enjoys both properties. The lemma will be used again in the context of $\AlmostBohrBohr$ sets in \cref{sec:abb_vanderCorput_nice_recurrence}.

\begin{lemma}\label[lemma]{lem:very_general_way_to_get_nice_vdC}
Let $S \subseteq G$. If for every positive finite Radon measure $\sigma$ on $\widehat{G}$  and every $\epsilon > 0$, there exists $h \in S$ such that
\[
    \Re (\hat{\sigma}(h)) > \sigma(\{\chi_0\}) - \epsilon,
\]
then $S$ is a van der Corput set and a set of nice recurrence.
\end{lemma}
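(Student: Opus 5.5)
The two conclusions are proved separately; each comes from applying the hypothesis to a suitable positive measure.

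\emph{Van der Corput.} Let $\sigma$ be a positive finite Radon measure on $\widehat{G}$ with $\hat{\sigma}(a)=0$ for all $a\in S$, and suppose, for contradiction, that $\sigma(\{\chi_0\})>0$. Take $\epsilon=\sigma(\{\chi_0\})$ in the hypothesis. This produces $h\in S$ with $0=\Re\hat{\sigma}(h)>\sigma(\{\chi_0\})-\epsilon=0$, which is impossible. Hence $\sigma(\{\chi_0\})=0$, and $S$ is van der Corput.

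\emph{Nice recurrence.} Let $(X,\mu,(T_g)_{g\in G})$ be a measure preserving system, $E\subseteq X$ with $\mu(E)>0$, and $\epsilon>0$. Define $\phi_E(g)=\mu(E\cap T_gE)=\langle 1_E, U_g 1_E\rangle$, where $U_g$ is the Koopman representation $U_gf=f\circ T_g^{-1}$ (or $f\circ T_g$, whichever matches $T_gE$) on $L^2(\mu)$. The computation in \cref{sec:positive_definite} shows $\phi_E$ is positive definite. By \cref{th:B-H}, $\phi_E=\hat{\sigma}$ for some positive finite Radon measure $\sigma$.

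The key step is the lower bound $\sigma(\{\chi_0\})\ge \mu(E)^2$. To obtain it, fix an invariant mean $m\in\mathcal M_\tau(G)$. By \cref{lem:meanOfSigmaHat}(i), $\sigma(\{\chi_0\})=m(\hat{\sigma})=m(\phi_E)$. Apply \cref{lem:HilbertianInequality} with $V=L^2(\mu)$, $v=1_X$ (which is $U$-invariant and has norm $1$), and $w=1_E$. Then $c=\langle 1_E,1_X\rangle=\mu(E)$, so $m(\phi_E)\ge \mu(E)^2$.

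Now the hypothesis gives $h\in S$ with
\[
\mu(E\cap T_hE)=\Re\hat{\sigma}(h)>\sigma(\{\chi_0\})-\epsilon\ge \mu(E)^2-\epsilon,
\]
where the first equality holds because $\phi_E$ is real. This is exactly nice recurrence.

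The only potential obstacle is matching the sign conventions: $\hat{\sigma}(g)=\int\overline{\chi}(g)\,d\sigma$ must agree with $\phi_E(g)=\langle 1_E,U_g1_E\rangle$. Bochner–Herglotz applies to any positive definite function, so whichever orientation of $U_g$ makes $\phi_E(g)=\mu(E\cap T_gE)$, the representing measure exists and nothing else is affected.
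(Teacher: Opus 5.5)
Your proposal is correct and follows the paper's proof essentially step for step. For the van der Corput part you apply the hypothesis with $\epsilon=\sigma(\{\chi_0\})$ where the paper uses $\sigma(\{\chi_0\})/2$; either works because the inequality is strict. For nice recurrence you use Bochner--Herglotz for $g\mapsto\mu(E\cap T_gE)$ together with \cref{lem:HilbertianInequality} (with $v=1_X$, $w=1_E$) and \cref{lem:meanOfSigmaHat} to get $\sigma(\{\chi_0\})\ge\mu(E)^2$, exactly as the paper does.
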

\begin{proof}   
Let $S \subseteq G$ be a set that satisfies the lemma's hypothesis. To show that $S$ is a van der Corput set, we will prove that for every positive, finite Radon measure on $\widehat{G}$, if $\sigma(\{\chi_0\}) > 0$, then there exists $h \in S$ such that $\hat{\sigma}(h) \neq 0$. 
However, this is now obvious since by the hypothesis, there exists $ h \in S$ for which 
\[
    \Re(\hat{\sigma}(h)) > \sigma(\{\chi_0\}) - \frac{\sigma(\{\chi_0\})}{2} > 0.
\]

To show that $S$ is a set of nice recurrence, let $(X, \mu, (T_g)_{g \in G})$ be a $G$-measure preserving system, and $A \subseteq G$ be a measurable set with $\mu(A) > 0$.
For $g \in G$, define $c_g = \mu(A \cap T_g A)$. Then the function $g \mapsto c_g$ is positive definite. By the Bochner-Herglotz theorem (\cref{th:B-H}), there exists a nonnegative Radon measure $\sigma$ on $\widehat{G}$ such that $c = \hat{\sigma}$, i.e. 
\[
    c_g = \int_{\widehat{G}} \overline{\chi}(g) \, d \sigma(\chi) \text{ for every $g \in G$.}
\]
By Lemmas \ref{lem:HilbertianInequality} and \ref{lem:meanOfSigmaHat}, we have $\sigma( \{\chi_0\}) \geq c_0^2 = \mu(A)^2$. According to the hypothesis, for any $\epsilon > 0$, there exists $h \in S$ such that
\[
    \mu(A \cap T_h A) = \Re(\hat{\sigma}(h)) > \sigma(\{\chi_0\}) - \epsilon \geq \mu(A)^2 - \epsilon.
\]
\end{proof}

\begin{proposition}\label[proposition]{prop:kmf_imply_vanderCorput}
If $S \subseteq G$ supports a $\KMF$ mean, then $S$ is a van der Corput set and is a set of nice recurrence.
\end{proposition}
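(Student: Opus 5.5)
The plan is to verify the hypothesis of \cref{lem:very_general_way_to_get_nice_vdC}. Fix a $\KMF$ mean $m$ supported on $S$, a positive finite Radon measure $\sigma$ on $\widehat{G}$, and $\epsilon>0$. We must find $h\in S$ with $\Re\hat{\sigma}(h) > \sigma(\{\chi_0\})-\epsilon$. If $\sigma(\{\chi_0\}) = 0$ this is still nontrivial, because $\Re\hat\sigma$ can be negative, so we treat both cases uniformly.

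Decompose $\sigma=\sigma_d+\sigma_c$ into its discrete and continuous parts. The continuous part contributes nothing on average along $m$: by \cref{lem:AnnihilateEquiv} (or directly from $m(|\hat{\sigma}_c|^2)=0$ and Cauchy--Schwarz), $m(|\hat{\sigma}_c|)=0$. The same holds for $m(1_U|\hat\sigma_c|)$ for any set $U$.

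For the discrete part, $\hat{\sigma}_d(g)=\sum_\chi \sigma(\{\chi\})\overline{\chi}(g)$ is uniformly almost periodic with $\hat\sigma_d(0)=\sum_\chi\sigma(\{\chi\})\ge \sigma(\{\chi_0\})$. Choose finitely many characters $\chi_1,\dots,\chi_k$ (including $\chi_0$) carrying all but $\epsilon/4$ of the mass of $\sigma_d$. Then take a Bohr set $U=\Bohr(\chi_1,\dots,\chi_k;\eta)$ with $\eta$ small, so that $\Re\hat{\sigma}_d(g) > \hat{\sigma}_d(0)-\epsilon/2 \ge \sigma(\{\chi_0\})-\epsilon/2$ for all $g\in U$. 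Each term satisfies $\Re(\sigma(\{\chi_j\})\overline{\chi_j}(g))\ge \sigma(\{\chi_j\})(1-\eta)$, and the tail changes the value by at most $\epsilon/2$ in total.

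Since $m$ massively accumulates at $0$, we have $m(1_U)>0$, and since $m$ is supported on $S$, $m(1_{U\cap S})=m(1_U)>0$. We then compute
\[
m\bigl(1_{U\cap S}\,(\Re\hat{\sigma}-\sigma(\{\chi_0\})+\epsilon)\bigr)\ \ge\ m\bigl(1_{U\cap S}(\epsilon/2)\bigr)-m(|\hat\sigma_c|) = \tfrac{\epsilon}{2}m(1_U)>0.
\]
Because $m$ is a positive functional, the function inside cannot be $\le 0$ everywhere on $U\cap S$. Hence some $h\in U\cap S$ satisfies $\Re\hat\sigma(h)>\sigma(\{\chi_0\})-\epsilon$. \cref{lem:very_general_way_to_get_nice_vdC} then finishes the proof.

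The main obstacle is the uniform lower bound for $\Re\hat\sigma_d$ on a Bohr set with a controlled loss, which requires truncating the possibly infinite atomic part of $\sigma$. This is routine, since $\sigma_d$ has finite total mass.
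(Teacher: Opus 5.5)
Your proof is correct and follows essentially the same route as the paper: reduce to \cref{lem:very_general_way_to_get_nice_vdC}, split $\sigma=\sigma_d+\sigma_c$, find a Bohr set on which $\Re\hat{\sigma}_d>\sigma(\{\chi_0\})-\epsilon/2$, and combine massive accumulation with $m(|\hat{\sigma}_c|)=0$. The only cosmetic difference is in the last step: the paper discards the $m$-null set $E=\{|\hat{\sigma}_c|\geq\epsilon/2\}$ and picks $h\in S\cap(B\setminus E)$, whereas you apply $m$ directly to $1_{U\cap S}(\Re\hat{\sigma}-\sigma(\{\chi_0\})+\epsilon)$ and use positivity of $m$.
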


\begin{proof}
Let $S \subseteq G$ be a set that supports a $\KMF$ mean $m$ and let $\sigma$ be a positive finite Radon measure on $\widehat{G}$. Without loss of generality, assume that $\sigma(\widehat{G}) =1$. Let $\epsilon > 0$ be arbitrary. In view of \cref{lem:very_general_way_to_get_nice_vdC}, it suffices to show there exists $h \in S$ such that
\[
     \Re (\hat{\sigma}(h)) > \sigma(\{\chi_0\}) - \epsilon.
\]

Decompose $\sigma = \sigma_c + \sigma_d$ where $\sigma_c$ is continuous and $\sigma_d$ is discrete. We abuse notation and write $\hat{\sigma}_d$ and $\hat{\sigma}_c$ in place of $\widehat{\sigma_d}$ and $\widehat{\sigma_c}$. Note that the number of atoms of $\sigma_d$ is at most countable and for every $g \in G$,
\[
\hat{\sigma}_d(g) = \sum_{\chi \in \widehat{G}} \overline{\chi}(g) \sigma_d( \{ \chi \}). 
\] 
In particular, 
\[
    \hat{\sigma}_d(0) = \sum_{\chi \in \widehat{G}} \sigma_d( \{ \chi \})  \geq \sigma_d (\{ \chi_0 \}) = \sigma (\{ \chi_0 \}).
\]
Let 
\[
    B =  \left\{ g \in G: \Re \left(\hat{\sigma}_d(g)\right) > \sigma(\{\chi_0\}) - \frac{\epsilon}{2}\right\}.
\]
Then $B$ contains 
\[
    \left\{ g \in G: \left| \hat{\sigma}_d(g) - \hat{\sigma}_d(0) \right| < \frac{\epsilon}{2} \right\}.
\]
Since $\sigma_d(\widehat{G}) \leq 1$, there exists a finite set $\Lambda \subseteq \widehat{G}$ such that $\sum_{\chi \in \widehat{G} \setminus \Lambda} \left| \sigma_d( \{ \chi \}) \right| < \frac{\epsilon}{8}$. 
Note that
\[
    \hat{\sigma}_d(g) - \hat{\sigma}_d(0) = \sum_{\chi \in \Lambda} (\overline{\chi}(g) - 1) \sigma_d(\{\chi\}) + \sum_{\chi \in \widehat{G} \setminus \Lambda} (\overline{\chi}(g) - 1) \sigma_d(\{\chi\}).
\]
Therefore, $B$ contains the Bohr set 
\[
    \Big\{ g \in G: \sum_{\chi \in \Lambda} \left| \overline{\chi}(g) - 1 \right| \sigma_d( \{ \chi \} )\Big\} < \frac{\epsilon}{4}.
\]
Since $m$ massively accumulates at $0$ in $bG$, we get $m(1_B) > 0$. 

Let $E: = \left\{ g \in G: \left| \hat{\sigma}_c(g) \right| \geq \frac{\epsilon}{2}\right\}$.
Then $|\hat{\sigma}_c(g)| \geq \frac{\epsilon}{2} 1_E(g)$ for all $g \in G$. Since $m$ annihilates continuous measures, we get
\[
    m(1_E) \leq \frac{2 m(|\hat{\sigma}_c|)}{\epsilon} = 0.
\]
It follows that
\[
    m(1_{B \setminus E}) \geq m(1_B) - m(1_E) > 0,
\]
and so $S \cap (B \setminus E) \neq \varnothing$. Let $h$ be an element in this nonempty intersection. We have
$\Re(\hat{\sigma}_d(h)) > \sigma(\{\chi_0\}) - \epsilon/2$ and $\Re(\hat{\sigma}_c(h)) > - \epsilon/2$. Therefore \[
   \Re(\hat{\sigma}(h)) = \Re(\hat{\sigma}_d(h)) + \Re( \hat{\sigma}_c(h)) > \sigma(\{\chi_0\}) - \frac{\epsilon}{2} - \frac{\epsilon}{2} = \sigma(\{\chi_0\}) - \epsilon,
\]    
as was to be shown.
\end{proof}

\section{Applications}
\label{sec:means_from_sequences}

\subsection{Bulinski and Fish's theorem and generalizations}

\label{sec:bulinski_fish}

While one can deduce Theorem \ref{cor:Specific-intro_main} from Theorem \ref{th:SpectralSumDifference} relatively quickly, we will prove significantly stronger results than Theorem \ref{cor:Specific-intro_main}, namely Theorems \ref{th:improved_bulinski_fish_polynomial}, \ref{th:improved_bulinski_fish_polynomial_prime} and \ref{thm:bulinski_fish_hardy_field} below. Our motivation for these theorems is the following result of Bulinski and Fish \cite{BulinskiFish_QuantitativeTwisted}.

\begin{theorem}[{\cite[Theorem 1.4]{BulinskiFish_QuantitativeTwisted}}] \label{th:bulinski-fish}
Let $\epsilon > 0$ and assume that $P(n) \in \Z[n]$ is a polynomial such that $\deg(P) \geq 2$ and $P(0) = 0$. Let $E \subseteq \Z^2$ with $d^*(E) > \epsilon$. Then there exists $k \leq k (\epsilon, P)$ such that
\[
    k \Z \subseteq \{x + P(y): (x, y) \in E - E\}.
\]
\end{theorem}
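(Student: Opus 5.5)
This result is cited from Bulinski and Fish, but I would reprove it with the machinery of \cref{th:SpectralSumDifference}, working in the group $G=\Z^2$. Since $x+P(y)$ depends on $(x,y)$ through the homomorphism-like map $\psi(x,y)=x+P(y)$, the strategy is to realize $\{x+P(y):(x,y)\in E-E\}$ as $\psi$ applied to a set that contains a Bohr-type structure. Write $D=E-E$. For $t\in\Z$, $t=x+P(y)$ with $(x,y)\in D$ iff $(t,0)-(P(y),y)+(0,y)\cdot$\ldots{} More concretely: $t\in\psi(D)$ iff $(t,0)\in D+\{(P(n),-n)\cdot(-1)\}$, i.e.\ $(t,0)\in D+S$ where $S:=\{(P(n),-n):n\in\Z\}$. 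Indeed, if $(t,0)=(x,y)+(P(n),-n)$ then $y=n$ and $t=x+P(y)$. So it suffices to show that $D+S$ (equivalently $S-D$ after replacing $D$ by $-D=D$) contains a finite-index subgroup $H$ of $\Z^2$; then $\{t:(t,0)\in H\}$ is a nonzero subgroup $k\Z$.

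To get this I would apply the proof of \cref{th:SpectralSumDifference_intro}: take an invariant mean with $m(1_E)>\epsilon$, obtain $\sigma$ with $\hat\sigma=\phi_{1_E}$, $\sigma(\{\chi_0\})\ge \epsilon^2$, and $\{\Re\hat\sigma>0\}\subseteq D$. Then I need a KMF mean on $\Z^2$ supported on $S$ with rational spectrum. The natural candidate is a weak-$*$ limit $m_S$ of the averages $f\mapsto \frac1N\sum_{n=1}^N f(P(n),-n)$. Characters of $\Z^2$ are $(a,b)\mapsto e(\alpha a+\beta b)$, so $m_S(\bar\chi)$ is a limit of Weyl sums $\frac1N\sum e(-\alpha P(n)+\beta n)$; by Weyl equidistribution this vanishes unless $\alpha,\beta$ are rational (using $\deg P\ge2$ so the leading coefficient forces $\alpha$ rational, and then $\beta$ rational), giving rational spectrum. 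Massive accumulation at $0$ follows because $(P(n),-n)$ lies in any Bohr set for a positive density set of $n$ (restrict to $n$ in a suitable progression $q\Z$, using $P(0)=0$, and equidistribution of the polynomial orbit in the torus). Annihilation of continuous measures is the Kamae--Mend\`es France step: $m_S(|\hat\nu|^2)=\int\int \lim \frac1N\sum_n e(\text{polynomial in }n)\,d\nu\,d\nu$, and the inner limit is nonzero only on a countable set (rational differences), which is $\nu\times\nu$-null for continuous $\nu$.

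The main obstacle is the quantitative uniformity: \cref{th:SpectralSumDifference} gives some finite-index subgroup, but $k\le k(\epsilon,P)$ requires that the frequencies and radius be controlled. I would handle this by restricting the averages to $n\in q\Z$ for $q=q(\epsilon,P)$ chosen so that the rational characters of denominator up to some bound become trivial on $S$, and then bound the index using that $\sigma$ has at most $1/\epsilon^2$ atoms of mass $\ge\epsilon^2/2$ and a compactness argument over such measures; the dependence $k(\epsilon,P)$ would come out of this pigeonholing, and this is where I expect the real work.
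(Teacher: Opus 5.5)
Your qualitative argument is correct, and it is a nice direct route. The identity $\{x+P(y):(x,y)\in D\}=\{t:(t,0)\in D+S\}$ with $S=\{(P(n),-n):n\in\Z\}$ holds. A weak-$*$ limit $m$ of $f\mapsto\frac1N\sum_{n\le N}f(P(n),-n)$ is a $\KMF$ mean on $\Z^2$ supported on $S$ with rational spectrum; this uses Weyl together with $\deg P\ge 2$ and $P(0)=0$. Since $\{\Re\hat\sigma>0\}\subseteq D=-D$, Theorem~\ref{th:SpectralSumDifference} gives a finite-index $H\subseteq S+D$, hence $k\Z\subseteq\{x+P(y):(x,y)\in D\}$ for some $k$ dividing $[\Z^2:H]$. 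One minor imprecision: the set of pairs on which the inner Weyl limit can be nonzero is not countable. It is a countable union of translates of the diagonal, which is still $\nu\times\nu$-null. The paper itself gives no proof of this statement; it quotes it from Bulinski and Fish and uses only its qualitative consequence, Corollary~\ref{cor:bulinski_fish_cor}.

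The gap is the uniform bound $k\le k(\epsilon,P)$. It is part of the statement, your last paragraph only gestures at it, and the route sketched there would not give it. In the proof of Theorem~\ref{th:SpectralSumDifference}, the Bohr set comes from a Bohr set $U\subseteq\{\Re\hat\sigma_d>\beta\}$ and a trigonometric approximation of a uniformly almost periodic function supported on $U$. Both depend on the entire discrete part $\sigma_d$, so nothing bounds $[\Z^2:H]$ in terms of $\epsilon$. The paper stresses that its method is purely qualitative and leaves the uniform version open (Question~\ref{q:KMF_quantitative}). Counting the boundedly many atoms of mass $\ge\epsilon^2/2$ does not help: the remaining small atoms can carry total mass close to $1$ and destroy positivity of $\Re\hat\sigma_d$ off any Bohr set built from the large atoms. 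Weak-$*$ compactness over $\sigma$ also fails, because the relevant quantities are not weak-$*$ continuous in $\sigma$: averages of $\hat\sigma$ along $S$ only see the rational atoms, and they must be tested at infinitely many $t$. Restricting to $n\in q\Z$ is the right idea, but $q$ must be chosen depending on $\sigma$ from a chain of bounded length, and atoms with large denominators must be controlled by an exponential-sum estimate.

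Concretely, identify $(\alpha,\beta)\in\T^2$ with the character $(a,b)\mapsto e(\alpha a+\beta b)$. For $q\in\N$ and $t\in\Z$,
\[ \lim_{N\to\infty}\frac1N\sum_{n=1}^N\hat\sigma\bigl(t-P(qn),qn\bigr)=\int_{\T^2}e(-\alpha t)\,L_q(\alpha,\beta)\,d\sigma(\alpha,\beta),\qquad L_q(\alpha,\beta):=\lim_{N\to\infty}\frac1N\sum_{n=1}^N e\bigl(\alpha P(qn)-\beta qn\bigr). \]
Here $L_q=0$ off $(\Q/\Z)^2$, and $L_q=1$ on the group $\Lambda_q$ of pairs with $\alpha P(qn)-\beta qn\in\Z$ for all $n$. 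For such pairs $\alpha\in\frac{1}{d!\,|a_d|\,q^d}\Z$, where $d=\deg P$ and $a_d$ is its leading coefficient.

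Hua's bound for complete exponential sums shows that $|L_q(\alpha,\beta)|\ge\delta$ forces the common denominator of the coefficients of $\alpha P(qn)-\beta qn$ to be at most $R(d,\delta)$. Hence $(\alpha,\beta)\in\Lambda_{qQ}$ with $Q=R(d,\delta)!$.

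Take $\delta=\epsilon^2/4$, $q_1=1$, $q_{i+1}=q_iQ$. Pigeonholing the nondecreasing masses $\sigma(\Lambda_{q_i})\le 1$ gives some $j\le\lceil 4\epsilon^{-2}\rceil+1$ with $\sigma(\Lambda_{q_{j+1}}\setminus\Lambda_{q_j})<\epsilon^2/4$. Set $k=d!\,|a_d|\,q_j^d$ and $q=q_j$. For every $t\in k\Z$, the real part of the limit is at least $\sigma(\Lambda_{q_j})-\epsilon^2/2\ge\sigma(\{0\})-\epsilon^2/2>0$. So $(t-P(q_jn),q_jn)\in E-E$ for some $n$, and $k$ is bounded in terms of $\epsilon$ and $P$ alone.
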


\begin{remark}
It is necessary in \cref{th:bulinski-fish} that the degree of the polynomial $P$ is at least $2$. In fact, if $E \subseteq \Z^2, d^*(E) >0$, then $\{ x+y : (x,y) \in E-E\} $ does not necessarily contain a Bohr set. Indeed, by Kriz's example \cite{Kriz87}, there exists $A \subseteq \Z$ with $d^*(A) > 0$ such that $A-A$ does not contain a Bohr set. Let $E=\{(x,y): x+y\in A\}$.  Then it is easy to show $d^*_{\Z^2}(E) > 0$. Now for $(x', y') \in E - E$, there exist $(x_1, y_1), (x_2, y_2) \in E$ such that
\[
    (x', y') = (x_1, y_1) - (x_2, y_2).
\]
Therefore,
\[
    x' + y' = (x_1 - x_2) + (y_1 - y_2) = (x_1 + y_1) - (x_2 + y_2) \in A - A.
\]
Thus the set $\{x' + y': (x', y') \in E - E\}$ is a subset of $A - A$ and so it does not contain a Bohr set.
\end{remark}

By taking $E = A \times B$ where $A, B \subseteq \Z$ have positive upper Banach density, \cref{th:bulinski-fish} gives us the following corollary in the flavor of $\DiffBohr$ sets.

\begin{corollary}\label[corollary]{cor:bulinski_fish_cor}
For every $A, B \subseteq \Z$ with $d^*(A)$, $d^*(B) > 0$, and every $P \in \Z[x]$ with $\deg(P) \geq 2$ and $P(0) = 0$,  $A - A + P(B - B)$ contains a finite index subgroup of $\Z$.
\end{corollary}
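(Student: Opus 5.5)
The plan is to apply \cref{th:bulinski-fish} directly to the product set $E := A \times B \subseteq \Z^2$. The key steps, in order, are: (1) show $d^*_{\Z^2}(A\times B) > 0$; (2) identify the set $\{x + P(y) : (x,y)\in E - E\}$ with $A - A + P(B-B)$; (3) invoke \cref{th:bulinski-fish}.

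For step (1), I would use the characterization of upper Banach density in $\Z$ (and $\Z^2$) via F{\o}lner sequences of intervals and boxes. Fix intervals $I_N$ with $|I_N|\to\infty$ and $|A\cap I_N|/|I_N| \to d^*(A)$. Fix intervals $J_N$ with $|J_N|\to\infty$ and $|B\cap J_N|/|J_N|\to d^*(B)$. The boxes $I_N\times J_N$ form a F{\o}lner sequence in $\Z^2$. Along these boxes,
\[
\frac{|(A\times B)\cap (I_N\times J_N)|}{|I_N||J_N|} = \frac{|A\cap I_N|}{|I_N|}\cdot\frac{|B\cap J_N|}{|J_N|} \to d^*(A)\,d^*(B).
\]
Any weak-$*$ limit of the averaging functionals along these boxes is an invariant mean on $\ell^\infty(\Z^2)$, which gives $d^*_{\Z^2}(A\times B)\ge d^*(A)d^*(B) > 0$. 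Alternatively, one can take invariant means $m_1, m_2$ on $\Z$ with $m_1(1_A)>0$ and $m_2(1_B)>0$. The iterated mean $f\mapsto m_1\big(x\mapsto m_2(f(x,\cdot))\big)$ is then invariant on $\Z^2$ and gives $A\times B$ mass $m_1(1_A)m_2(1_B)$. Either route gives positivity; the precise constant is not needed.

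For step (2), note that $E - E = (A-A)\times(B-B)$. Hence
\[
\{x + P(y) : (x,y)\in E-E\} = \{a + P(b) : a\in A-A,\ b\in B-B\} = A - A + P(B-B).
\]

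For step (3), \cref{th:bulinski-fish} applied with any $\epsilon < d^*(A\times B)$ gives $k\le k(\epsilon,P)$ with $k\Z \subseteq A - A + P(B-B)$. Since $k\Z$ has finite index in $\Z$, this is the claim.

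The only step needing any care is the density estimate in step (1), and it is routine. One should also check that the paper's notion of $d^*$ via invariant means agrees with the F{\o}lner-box description. This holds for amenable groups like $\Z^2$, and the iterated-mean construction sidesteps the question entirely.
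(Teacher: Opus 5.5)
Your proposal is correct and is the paper's argument: the corollary is obtained by applying \cref{th:bulinski-fish} to $E = A\times B$, using $E - E = (A-A)\times(B-B)$ and $d^*_{\Z^2}(A\times B) > 0$. Your iterated invariant mean $f\mapsto m_1\bigl(x\mapsto m_2(f(x,\cdot))\bigr)$ verifies the density condition directly from the paper's invariant-mean definition of $d^*$, a step the paper leaves implicit.
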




Using \cref{th:SpectralSumDifference}, we will prove several generalizations of \cref{cor:bulinski_fish_cor} for polynomials not necessarily vanishing at 0, polynomials evaluated at primes and for Hardy field sequences. First we recall the definition of intersective polynomials.

\begin{definition} \label{def:intersective}
A polynomial $h \in \Z[x]$ is said to be \textit{intersective} if for any positive integer $m$, there exists $n \in \Z$ such that $h(n)$ is divisible by $m$. The polynomial $h$ is said to be \textit{intersective of the second kind} if for any positive integer $m$, there exists $n \in \Z$ such that $h(n)$ is divisible by $m$ and $\gcd(m,n)=1$.
\end{definition}

It is well-known that $h$ is intersective if and only if the set $\{ h(n): n \in \Z \}$ is a set of recurrence for $\Z$-measure preserving systems, and $h$ is intersective of the second kind if and only if the set $\{ h(p): p \textup{ prime} \}$ is a set of recurrence for $\Z$-measure preserving systems (see \cite{LeThaiHoang_survey}). The notion of intersectivity was generalized to a family of polynomials in \cite{Bergelson_Leibman_Lesigne08}.
\begin{definition}
A family of polynomials $Q_1, \ldots, Q_d \in \Z[x]$ is said to be \textit{jointly intersective} if for any positive integer $m$, there exists $n \in \Z$ such that $Q_1(n), \ldots, Q_d(n)$ are all divisible by $m$. The family is said to be \textit{jointly intersective of the second kind} if for any positive integer $m$, there exists $n \in \Z$ such that $\gcd(m,n)=1$ and $Q_1(n), \ldots, Q_d(n)$ are all divisible by $m$. 
\end{definition}

Similarly to the notion of intersective polynomials, polynomials $Q_1, \ldots, Q_d$ are jointly intersective if and only if the set $\{ (Q_1(n), \ldots, Q_d(n)): n \in \Z \}$ is a set of recurrence for $\Z^d$-measure preserving systems (see \cite{Bergelson_Leibman_Lesigne08}). The polynomials $Q_1, \ldots, Q_d$ are jointly intersective of the second kind if and only if the set $\{ (Q_1(p), \ldots, Q_d(p)): p \textup{ prime} \}$ is a set of recurrence for $\Z^d$-measure preserving systems and this fact follows from \cref{th:improved_bulinski_fish_polynomial_prime} below. If all the $Q_i$'s have a common integer root, then they are jointly intersective, but there are other examples of jointly intersective polynomials. Likewise, if all the $Q_i$'s have $1$ or $-1$ as a common root, then they are jointly intersective of the second kind, but these are not all the examples. We refer the reader to \cite[Section 2]{Le_Spencer_Intersective_1}, \cite[Section 2]{Le_Spencer_Intersective_2} for properties and examples of (jointly) intersective polynomials.


\begin{theorem} \label{th:improved_bulinski_fish_polynomial}
Let $P, Q_1, \ldots, Q_d$ be jointly intersective polynomials, $A \subseteq \Z$ and $B \subseteq \Z^d$ with $d_{\Z}^*(A)>0$ and $d_{\Z^d}^*(B) > 0$. Let
\[
 S = \{ P(n) : n \in \N, (Q_1(n), \ldots, Q_d(n)) \in B-B\}.
\]
Then $A-A+S$ contains a Bohr set. Furthermore, if $P$ is not in the $\Q$-vector space spanned by $Q_1, \ldots, Q_d$, then $A-A+S$ contains a subgroup of finite index of $\Z$.
 \end{theorem}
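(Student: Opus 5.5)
The plan is to apply \cref{th:SpectralSumDifference}, exactly as in the proof of \cref{th:SpectralSumDifference_intro}, so everything reduces to building a $\KMF$ mean supported on $S$ with controlled spectrum. From $A$ we already obtain a positive measure $\sigma$ with $\sigma(\{\chi_0\})>0$ and $\{\Re\hat\sigma>0\}\subseteq A-A$. It therefore suffices to construct a mean $m$ on $\ell^\infty(\Z)$ with $m(1_S)=1$ that annihilates continuous measures on $\T$ and massively accumulates at $0$ in $b\Z$. For the second assertion we additionally need $\spec(m)$ to be rational.

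\textbf{Construction of $m$.} Fix an invariant mean $\lambda$ on $\ell^\infty(\Z^d)$ with $\lambda(1_B)>0$, and let $\psi(v)=\lambda(1_B\tau_v1_B)$. Then $\psi$ is positive definite, $\psi=\hat\rho$ for a positive measure $\rho$ on $\T^d$, $\rho(\{0\})\ge\lambda(1_B)^2$, and $\psi(v)>0$ implies $v\in B-B$. Joint intersectivity lets us pass to the progression $n\in r_N+N!\Z$ with $N!\mid Q_i(r_N)$ and $N!\mid P(r_N)$. Along this progression $Q(n)=(Q_1(n),\dots,Q_d(n))$ lies in $N!\Z^d$.

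We then define weighted averages
\[
m_N(f)=\frac{\sum_{n\le L_N,\,n\equiv r_N}\psi(Q(n))\,f(P(n))}{\sum_{n\le L_N,\,n\equiv r_N}\psi(Q(n))}
\]
with $L_N$ large, and let $m$ be a weak-$*$ limit point. Support on $S$ is immediate, since the weights vanish off $\{Q(n)\in B-B\}$. The denominator stays bounded below: by Weyl equidistribution of $Q(n)$ along the progression, the average of $\psi(Q(n))$ tends to $\int\!\cdots$, and the atom at $0$ of $\rho$ together with rational atoms (which become trivial on $N!\Z^d$ as $N\to\infty$) contribute at least $\rho(\{0\})>0$, while irrational and continuous parts average to $0$.

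\textbf{Verifying the $\KMF$ conditions.} To show that $m$ annihilates continuous $\sigma'$ on $\T$, we need
\[
\frac1{L}\sum_n \psi(Q(n))\,|\hat\sigma'(P(n))|^2\to 0 .
\]
Expand $\psi(Q(n))|\hat\sigma'(P(n))|^2=\iint\!\!\int e\big((\alpha-\beta)P(n)+\theta\cdot Q(n)\big)\,d\sigma'd\sigma'd\rho$ and apply Weyl's theorem to the joint polynomial phase: the average tends to $0$ unless $(\alpha-\beta)P+\theta\cdot Q$ is essentially constant modulo rationals. When $P\notin\operatorname{span}_\Q(Q_i)$, this forces $\alpha-\beta\in\Q$, a $\sigma'\times\sigma'$-null set by continuity. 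In general (for instance $P=Q_1$) the exceptional set is $\{(\alpha,\beta,\theta): (\alpha-\beta)c+\theta\text{ rational-type}\}$, and we must check it is $\sigma'\times\sigma'\times\rho$-null. I expect this to fail, and the right move is to restrict to the $\rho$-part at rational $\theta$ (first averaging $\psi$ by its discrete rational part) so that continuity of $\sigma'$ again gives nullity. \textbf{This is the step I expect to be the main obstacle}: handling the interaction between $\rho$ and the frequencies along the span of the $Q_i$. If needed, I would replace $\psi$ by its projection onto the rational Kronecker factor, using a standard van der Corput/Bergelson–Leibman–Lesigne argument that the remaining part contributes nothing along polynomial orbits.

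Massive accumulation follows the same pattern: for a Bohr set $U$, take a trigonometric approximation of an almost periodic minorant $\phi\le 1_U$ with $\phi(0)>0$. The frequencies $\gamma$ of $\phi$ enter through averages of $e(\gamma P(n)+\theta\cdot Q(n))$ over the progression. As $N\to\infty$, the rational-divisibility choice makes $e(\gamma P(n))\to1$ for rational $\gamma$, so $m(\phi)\gtrsim\phi(0)\rho(\{0\})$ up to contributions from irrational $\gamma$, which only help or vanish by equidistribution once $\phi$ is chosen positive definite (for example $\phi=|\text{trig poly}|^2$-type Fejér kernels). The spectrum computation is $m(\bar\chi_\gamma)=\lim$ of the weighted Weyl sums. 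When $P\notin\operatorname{span}_\Q(Q_i)$, these vanish for every irrational $\gamma$ regardless of $\theta$, so $\spec(m)\subseteq\Q/\Z$ and $m$ has rational spectrum. Then \cref{th:SpectralSumDifference} gives a finite-index subgroup, while in general it gives a Bohr set.
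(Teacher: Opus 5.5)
\textbf{The gap is in the construction of $m$ itself, not at the step you flagged.} Your progression $r_N+N!\Z$ changes with $N$, and $P(r_N)\equiv 0 \pmod{N!}$. So each $m_N$ is a finite average supported on $N!\Z\cap[-R_N,R_N]$, where $R_N:=\max_{n\le L_N}|P(n)|$. No weak-$*$ cluster point of such means annihilates continuous measures, whatever $P$, $B$ and $L_N$ are.

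Here is why. Choose $c_1<c_2<\cdots$ with $c_{i+1}!\ge 100\max_{N\le c_i}R_N$, and let $\sigma'$ be the law of $\sum_j\epsilon_j/c_{2j}!$ with $\epsilon_j$ i.i.d.\ fair bits. This is a continuous measure on $\T$. Take $N\in[c_{2j},c_{2j+1})$, $x\in N!\Z$ with $|x|\le R_N$, and $\alpha,\beta$ in the support of $\sigma'$. The first $j$ digits contribute integers to $x(\alpha-\beta)$, and the tail contributes at most $2R_N/c_{2j+2}!\le 1/50$. Hence $|\hat{\sigma}'(x)|^2\ge\cos(2\pi/50)$ on the support of $m_N$.

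Using odd indices instead handles $N\in[c_{2j+1},c_{2j+2})$. Every cluster point $m$ is a limit along a nonprincipal ultrafilter on $\N$, which contains one of these two sets of $N$. Therefore $m(|\hat{\sigma}'|^2)>0.99$ for some continuous $\sigma'$, so $m$ is not a $\KMF$ mean and \cref{th:SpectralSumDifference} cannot be invoked.

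The underlying issue is uniformity. Weyl's theorem gives decay along a \emph{fixed} progression as the window grows, but not uniformly in the frequency. Letting the modulus grow with $N$ therefore destroys the pointwise convergence in $N$ that the dominated-convergence step needs. The same non-uniformity leaves your claim that irrational frequencies ``only help or vanish'' in the massive-accumulation step unjustified: windowed Weyl sums along $N!\Z$ can have negative real part.

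\textbf{The obstacle you flagged is not one.} If $P=\vec r\cdot\vec Q$, then for each fixed $\theta$ the set $\{(\alpha,\beta):(\alpha-\beta)\vec r+\theta\in\Q^d\}$ lies in $\{\alpha-\beta\in E_\theta\}$ with $E_\theta$ countable. That set is $\sigma'\times\sigma'$-null by Fubini, so your triple-integral computation goes through for full averages. Your proposed remedy, projecting $\psi$ onto its rational spectrum, would also forfeit $m(1_S)=1$, since the new weights need not vanish off $\{\vec Q(n)\in B-B\}$.

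\textbf{How the paper avoids the tension.} It uses full averages,
$m_N(f)=\sum_{n\le N}f(P(n))\,\nu(1_B1_{B+\vec Q(n)})\big/\sum_{n\le N}\nu(1_B1_{B+\vec Q(n)})$.
Annihilation then follows from $S_N(\alpha)\to 0$ for all but countably many $\alpha$, dominated convergence, and \cref{lem:AnnihilateEquiv}.

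The progression $r_{M!}+M!\Z$ appears only with $M$ fixed, inside the proof of
$\liminf_N\frac1N\sum_{n\le N}d^*(C\cap(C+P(n)))\,\nu(1_B1_{B+\vec Q(n)})>0$ (Claim~2). There, positivity comes from the product spectral measure $\tau\times\sigma$ of the two correlation sequences. Rational atoms of denominator at most $M$ contribute their full mass, and the remaining contributions are either small or average to zero. Nonnegativity of the summands then transfers the bound from the subprogression to the full average.

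That single estimate gives both needed facts:
\begin{itemize}
\item the lower bound on the denominators, by taking $C=\Z$;
\item massive accumulation: for a Bohr set $D$, take a Bohr set $C$ with $C-C\subseteq D$, so that $1_D(P(n))\ge d^*(C\cap(C+P(n)))$.
\end{itemize}
The rational-spectrum part then goes as you describe.
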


\begin{remark}
Taking $P(0)=0, \deg P \geq 2$, $d=1$ and $Q_1(n) = n$ in Theorem \ref{th:improved_bulinski_fish_polynomial}, we recover \cref{cor:bulinski_fish_cor}.  

Taking $d = 1, P(n) = Q_1(n) = n$ and $A = B$, we recover Bogolyubov's theorem (\cref{th:bogo}). Note that the Bohr set in \cref{th:bogo} cannot be upgraded to a subgroup of finite index in general. This fact demonstrates the necessity of the linear independence condition in the second part of Theorem \ref{th:improved_bulinski_fish_polynomial}. 
\end{remark}

Before embarking on the proof of Theorem \ref{th:improved_bulinski_fish_polynomial}, let us state some basic properties of jointly intersective polynomials. Suppose now $Q_1, \ldots, Q_d \in \Z[x]$ are jointly intersective. The condition of joint intersectivity is equivalent to the property that for each prime $p$, $Q_1, Q_2, \ldots, Q_d$ have a common $p$-adic integer root $z_p$. 
By the Chinese Remainder Theorem and multiplicativity, for every $m \in \N$ 
we can fix an integer $
r_m \in (-m,0]$ such that $Q_i(r_m) \equiv 0 \pmod{m}$ for $i=1, \ldots, d$. Moreover, $r_{mq} \equiv r_m \pmod{m}$ for all $q \in \N$. If $Q_1, \ldots, Q_d$ are jointly intersective of the second kind, then we can further require $(r_m, m)=1$.

\begin{lemma}\label{lem:basic_intersective}\
    \begin{enumerate}
      \item Suppose $Q_1, \ldots, Q_d \in \Z[x]$ are jointly intersective. Then for any $m$, the polynomials 
      $\frac{1}{m} Q_i (mn + r_{m} )$ ($1 \leq i \leq d$) are also jointly intersective.
      \item Suppose $Q_1, \ldots, Q_d \in \Z[x]$ are jointly intersective and linearly independent over $\Q$. Then $Q_1 - Q_1(0), \ldots, Q_d - Q_d(0) $ are also linearly independent over $\Q$. 
  \end{enumerate}
\end{lemma}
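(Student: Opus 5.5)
Both parts are elementary, and I will prove them separately.

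For part (i), fix $m$ and put $\tilde Q_i(n) := \frac{1}{m} Q_i(mn + r_m)$. I first check that $\tilde Q_i \in \Z[x]$. Expanding $Q_i(mn + r_m)$ in powers of $mn$ gives $Q_i(r_m) + \sum_{k\ge 1} c_k m^k n^k$ with $c_k\in\Z$. Here $Q_i(r_m)\equiv 0 \pmod m$ by the choice of $r_m$, and every other term is divisible by $m$, so $\tilde Q_i$ has integer coefficients.

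Next I show joint intersectivity. Given $q\in\N$, I need $n\in\Z$ with $m \mid Q_i(mn+r_m)/m \cdot$ replaced by the condition $q \mid \tilde Q_i(n)$, i.e.\ $mq \mid Q_i(mn+r_m)$ for all $i$. Use $r_{mq}$: we have $Q_i(r_{mq})\equiv 0 \pmod{mq}$, and by the stated compatibility $r_{mq}\equiv r_m \pmod m$. So $r_{mq} = mn + r_m$ for some integer $n$. For this $n$, $mq \mid Q_i(mn+r_m)$ for all $i$, hence $q\mid \tilde Q_i(n)$. If one prefers not to rely on the compatibility of the chosen $r$'s, the same conclusion follows from the $p$-adic common roots: pick $z$ simultaneously close to the roots $z_p$ with $z\equiv r_m$ modulo the relevant prime powers, using CRT. (The second-kind version is not claimed here, so no gcd condition needs tracking.)

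For part (ii), suppose $\sum_i a_i (Q_i - Q_i(0)) = 0$ with $a_i\in\Q$. Then $R:=\sum a_i Q_i$ equals the constant $c:=\sum a_i Q_i(0)$. The polynomial $R$ is a rational combination of jointly intersective polynomials. Clear denominators: take $N$ with $N a_i\in\Z$, so $NR = Nc$ is an integer constant $C$. For every $m$, choosing $n$ with $m\mid Q_i(n)$ for all $i$ gives $m \mid NR(n) = C$. Hence $C$ is divisible by every positive integer, so $C=0$ and $c=0$. Therefore $\sum a_i Q_i = 0$, and linear independence of the $Q_i$ forces all $a_i=0$.

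The only mildly delicate point is the integrality and compatibility bookkeeping in (i); the rest is direct.
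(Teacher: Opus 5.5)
Your proof is correct and follows the paper's argument: in (i) you take $n=(r_{mq}-r_m)/m$ and use $mq\mid Q_i(r_{mq})$, and in (ii) you note that a rational dependence among the $Q_i-Q_i(0)$ makes an integer combination of the $Q_i$ a constant divisible by every $m$, hence zero. You also check the integrality of $\frac1m Q_i(mn+r_m)$, which the paper leaves implicit. One caveat: your parenthetical alternative does not actually avoid the compatibility of the $r_m$'s, since it needs $r_m\equiv z_p$ modulo the $p$-part of $m$, and for an arbitrary common root modulo $m$ the conclusion of (i) can fail (e.g.\ $Q(x)=x(x^2+1)$, $m=2$, $r_2=-1$ gives $\frac12Q(2n-1)=(2n-1)(2n^2-2n+1)$, which is always odd).
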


\begin{proof}
    For (i), observe that for any $\ell \in \N$, $n= \frac{r_{m\ell} - r_m}{m}$ satisfies $\frac{1}{m} Q_i (mn + r_{m} ) = \frac{1}{m} Q_i(r_{m \ell}) \equiv 0 \pmod{\ell}$.

For (ii), suppose for a contradiction that $Q_1-Q_1(0), \ldots, Q_\ell - Q_\ell(0)$ are not $\Q$-linearly independent. Then there exist $v_1, \ldots, v_d \in \Z$ such that $v_1Q_1 + \cdots + v_\ell Q_\ell$ is a nonzero constant. This is impossible since $Q_1, \ldots, Q_\ell$ are jointly intersective.
\end{proof}


\begin{proof}[Proof of Theorem \ref{th:improved_bulinski_fish_polynomial}] By the discussion preceding Lemma \ref{lem:basic_intersective}, for each $m \in \N$, we can fix an integer $r_m \in (-m,0]$ such that for every $P(r_m), Q_1(r_m), \ldots, Q_d(r_m)$ are all divisible by $m$. Moreover, $r_{mq} \equiv r_m \pmod{m}$ for all $q \in \N$. We write $\vec{Q} = (Q_1, \ldots, Q_d)$.

For simplicity, we will prove Theorem \ref{th:improved_bulinski_fish_polynomial} in the case where $Q_1, \ldots, Q_d$ are linearly independent over $\Q$. At the end of the proof, we will indicate the necessary changes in the general case. 



Let $\nu$ be an invariant mean on $\Z^d$ such that $\nu(B) = d^*(1_B) >0$. By the Bochner-Herglotz theorem (Theorem \ref{th:B-H}), there exists a nonnegative Radon measure $\sigma$ on $\T^d$ such that
\begin{equation} \label{eq:sigma}
\nu(1_B \cdot 1_{B+\vec{m}}) = \int_{\T^d} e(\vec{\beta} \cdot \vec{m}) \, d \sigma(\vec{\beta})
\end{equation}
for every $\vec{m} \in \Z^d$. 


For $\alpha \in \T$, we consider
\begin{equation} \label{eq:s-alpha}
S_N (\alpha) = \frac{1}{N} \sum_{n=1}^N e(P(n) \alpha) \cdot \nu(1_B \cdot 1_{B+\vec{Q}(n)}).
\end{equation}

\noindent \textbf{Claim 1:} For all but countably many $\alpha \in \T$, we have $\lim_{N \rightarrow \infty} S_N(\alpha) =0 $. Furthermore, if $P$ is not in the $\Q$-vector space spanned by $Q_1, \ldots, Q_d$, then the exceptional $\alpha$'s are all rational.
\begin{proof}[Proof of Claim 1]
We have
\begin{eqnarray*}
S_N (\alpha) &=& \frac{1}{N} \sum_{n=1}^N e(\alpha P(n)) \int_{\T^{d}} e(\vec{\beta} \cdot \vec{Q}(n)) \, d \sigma(\vec{\beta})\\
&=& \int_{\T^{d}} \left( \frac{1}{N} \sum_{n=1}^N e \left(\alpha P(n) + \vec{\beta} \cdot \vec{Q}(n) \right) \right)\, d \sigma(\vec{\beta}).
\end{eqnarray*}


First we assume that $P, Q_1, \ldots, Q_d$ are $\Q$-linearly independent. By Lemma \ref{lem:basic_intersective}, $P-P(0), Q_1-Q_1(0), \ldots, Q_d-Q_d(0)$ are also $\Q$-linearly independent.
It follows that whenever at least one of $\alpha, \gamma_1, \ldots, \gamma_d$ is irrational,  the polynomial $\alpha P + \gamma Q_1 + \cdots +\gamma_\ell Q_\ell$ must have an irrational, non-constant coefficient. Consequently, whenever $\alpha$ is irrational, the polynomial $\alpha P +  \vec{\beta} \cdot \vec{Q}$ must have an irrational, non-constant coefficient.
In this case, Weyl's theorem implies that
\[
\lim_{N \rightarrow \infty} \frac{1}{N} \sum_{n=1}^N e \left(P(n) \alpha + \vec{Q}(n) \cdot \vec{\beta} \right) =0.
\]
By the dominated convergence theorem, we conclude that $\lim_{N \rightarrow \infty} S_N(\alpha) = 0$ whenever $\alpha$ is irrational.

Suppose now that $P = r_1 Q_1 + \cdots + r_\ell Q_d$ for some $\vec{r} = (r_1, \ldots, r_d) \in \Q^d \setminus \{ \vec{0}\}$. Then 
\[
\lim_{N \rightarrow \infty} \frac{1}{N} \sum_{n=1}^N e \left( \alpha P(n) + \vec{\beta}\cdot \vec{Q}(n) \right) = 
\lim_{N \rightarrow \infty} \frac{1}{N} \sum_{n=1}^N e \left( (\alpha \vec{r} + \vec{\beta}) \cdot \vec{Q}(n) \right),
\]
By Lemma \ref{lem:basic_intersective}, $Q_1-Q_1(0), \ldots, Q_d-Q_d(0)$ are $\Q$-linearly independent. By Weyl's theorem, the latter limit is 0 unless $\alpha \vec{r} + \vec{\beta} \in \Q^d$. Letting $X = \{ \vec{\beta} \in \T^d: \sigma(\{ \vec{\beta} \}) >0 \}$ be the set of atoms of $\sigma$, then $X$ is countable. For each $\vec{\beta} \in X$, there are at most countably many $\alpha \in \T$ such that $\alpha \vec{r} + \vec{\beta} \in \Q^d$ for some $\vec{r} \in \Q^d \setminus \{ \vec{0} \}$.
%
Consequently, for all but countably many $\alpha$, $\lim_{N \rightarrow \infty} S_N(\alpha) =0$,
by the dominated convergence theorem. Hence, the claim is proved. 
\end{proof}

\noindent \textbf{Claim 2:}
Let $C$ be any subset of $\Z$ with $d^*(C) >0$. We have
\[
\liminf_{N \to \infty} \frac{1}{N} \sum_{n=1}^N d^*(C \cap (C+P(n))) \cdot \nu(1_B \cdot 1_{B+\vec{Q}(n)}) >0.
\]


\begin{proof}[Proof of Claim 2] This is essentially a proof that the set $\{ (P(n), Q_1(n), \ldots, Q_d(n)): n \in \Z \}$ is a set of measurable recurrence in $\Z^{d+1}$, by adapting Furstenberg's proof for $\Z$ in \cite{Furstenberg77}. Let $\lambda$ be an invariant mean on $\Z$ such that $\lambda(1_C) = d^*(C) >0$. Then there exists a nonnegative Radon measure $\tau$ on $\T$ such that
\[
\lambda(1_C \cdot 1_{C+n}) = \int_\T e(n \alpha) \, d \tau(\alpha)
\]
for every $n \in \Z$.

We will prove that 
\[
    \liminf_{N \to \infty} \frac{1}{N} \sum_{n=1}^N \lambda(1_C \cdot 1_{C+P(n)}) \cdot \nu(1_B \cdot 1_{B+\vec{Q}(n)}) > 0.
\]
Note that
\begin{equation*}
 \lambda(1_C \cdot 1_{C+P(n)}) \cdot \nu(1_B \cdot 1_{B+\vec{Q}(n)}) = \int_{\T^{d+1}} \left(  e( \alpha P(n) + \vec{\beta} \cdot \vec{Q}(n) )\right) d (\tau \times \sigma) (\alpha, \vec{\beta}).    
\end{equation*}

We decompose $\tau \times \sigma = (\tau \times \sigma)_c + (\tau \times \sigma)_d$ as a sum of continuous and discrete parts. Note that the atoms of $\tau \times \sigma$ are precisely pairs $(\alpha, \vec{\beta})$ where $\alpha$ is an atom of $\tau$ and $\vec{\beta}$ is an atom of $\sigma$. By Lemma \ref{lem:meanOfSigmaHat}, we have $\tau(\{ 0 \}) \geq \lambda(1_C) = d^*(C)$ and $\sigma(\{ \vec{0} \}) \geq \nu(1_B) = d_{\Z^d}^*(B)$. 

Let $0<\epsilon < d_{\Z^d}^*(B) d^*(C)$. Again, let us distinguish two cases.\\

\noindent \textit{Case 1.} Suppose that $P, Q_1, \ldots, Q_d$ are $\Q$-linearly independent.  Let $M>0$ be chosen later and $X_M$ denote the set of all rational elements in $\T$ with denominator at most $M$. By partitioning the atoms of $(\tau \times \sigma)_d$, we further decompose 
\[
(\tau\times \sigma)_d = (\tau\times \sigma)'_d + (\tau\times \sigma)^{''}_d,
\]
where an atom $(\alpha, \vec{\beta})$ of $(\tau \times \sigma)_d$ is an atom of $(\tau \times \sigma)^{''}_d$ if and only if all components of $(\alpha, \vec{\beta})$ are rational and at least one of them is not in $X_M$. We can choose $M$ so that $(\tau \times \sigma)^{''}_d (\T^{d+1}) < \epsilon$.

 Let $P^*(n) := P(r_{M!} + M! \cdot n)$ and $Q_i^*(n) := Q(r_{M!} + M! \cdot n)$ for $i=1, \ldots, d$, then $P^*(n)$ and $Q_i^*(n)$ are divisible by $M!$ for any $n \in \Z$. We write $\vec{Q^*} = (Q_1^*, \ldots, Q_d^*)$ and consider
\[
T_N := \frac{1}{N} \sum_{n=1}^N \lambda(1_C \cdot 1_{C+P^*(n)}) \cdot \nu(1_B \cdot 1_{B+\vec{Q^*}(n)}).
\]
Then 
\begin{align}
T_N &=  \int_{\T^{d+1}} \frac{1}{N} \sum_{n=1}^N e \left(  \alpha P^*(n) + \vec{\beta} \cdot \vec{Q^*}(n) \right) d (\tau \times \sigma) (\alpha, \vec{\beta}) \nonumber \\
&  = \int_{\T^{d+1}} \frac{1}{N} \sum_{n=1}^N e \left(  \alpha P^*(n) + \vec{\beta} \cdot \vec{Q^*}(n) \right) d (\tau \times \sigma)_c (\alpha, \vec{\beta}) \label{eq:int1} \\
& \qquad + \int_{\T^{d+1}} \frac{1}{N} \sum_{n=1}^N e \left(  \alpha P^*(n) + \vec{\beta} \cdot \vec{Q^*}(n) \right) d (\tau \times \sigma)^{'}_d (\alpha, \vec{\beta}) \label{eq:int2} \\
&  \qquad + \int_{\T^{d+1}} \frac{1}{N} \sum_{n=1}^N e \left(  \alpha P^*(n) + \vec{\beta} \cdot \vec{Q^*}(n) \right) d (\tau \times \sigma)^{''}_d (\alpha, \vec{\beta}). \label{eq:int3}
\end{align}
By Lemma \ref{lem:basic_intersective} (i), $P^*, Q_1^*, \ldots, Q_d^*$ are jointly intersective. By Lemma \ref{lem:basic_intersective} (ii), $P^*(n)-P^*(0), Q_1^*(n)-Q_1^*(0), \ldots, Q_\ell^*-Q_\ell^*(0)$ are linearly independent over $\Q$. Therefore, as we have seen before, 
\[
    \lim\limits_{N \rightarrow \infty} \frac{1}{N} \sum_{n=1}^Ne \left(  \alpha P^*(n) + \vec{\beta} \cdot \vec{Q^*}(n) \right) =0
\]
for all $(\alpha, \beta) \in \R^{d+1} \setminus \Q^{d+1}$.
By the dominated convergence theorem, the limit of the integral in \eqref{eq:int1} is 0 as $N$ goes to infinity. Also, by the dominated convergence theorem, the limit as $N$ goes to infinity of the integral in \eqref{eq:int2} exists, and is at least $(\tau \times \sigma) (\{ 0,\vec{0} \}) \geq d_{\Z^d}^*(B) d^*(C)$. By choice of $M$, the integral in \eqref{eq:int3} is at most $\epsilon$ in absolute value. Therefore, for $N$ sufficiently large, we have $T_N > d_{\Z^d}^*(B)d^*(C) - \epsilon$. Hence, $\liminf_N T_N >0$ and the claim is proved in this case.\\

\noindent \textit{Case 2.} Suppose that $P, Q_1, \ldots, Q_d$ are linearly dependent over $\Q$; that is $P = \vec{r} \cdot \vec{Q}$ for some $\vec{r} \in \Q^d$. This time, we decompose
\[
(\tau\times \sigma)_d = (\tau\times \sigma)'_d + (\tau\times \sigma)^{''}_d,
\]
where an atom $(\alpha, \vec{\beta})$ of $(\tau \times \sigma)_d$ is an atom of $(\tau \times \sigma)^{''}_d$ if and only if $\alpha \vec{r} +  \vec{\beta} \in \Q^d \setminus X_M^d$. We choose $M$ so that $(\sigma\times \tau)^{''}_d (\T^{d+1}) < \epsilon$.

We define $P^*, \vec{Q^*}$ and $T_N$ as in the previous case. Then 
\[
\lim\limits_{N \rightarrow \infty} \frac{1}{N} \sum_{n=1}^N e \left( \alpha P^*(n) + \vec{\gamma} \cdot \vec{Q^*}(n) \right)
= \lim\limits_{N \rightarrow \infty} \frac{1}{N} \sum_{n=1}^N e \left( (\alpha \vec{r} +  \gamma) \cdot \vec{Q^*}(n) \right) =0
\]
whenever $\alpha \vec{r} +  \vec{\gamma} \in \R^{d} \setminus \Q^\ell$. Arguing similarly to the previous case, we again have $\liminf_N T_N > 0$, and the claim is proved. 
\end{proof}
Using Claims 1 and 2, we will now construct a mean that is supported on $S$. 
Consider the means $m_N$ given by 
\[
m_N(f):=\frac{ \sum_{n=1}^N f(P(n)) \cdot \nu(1_B \cdot 1_{B+\vec{Q}(n)})}{\sum_{n=1}^N \nu(1_B \cdot 1_{B+\vec{Q}(n)})},
\]
for every $f \in \ell^\infty(\Z)$. Then $m_N(1_S)=1$. Also, Claim 2 implies that there is a positive constant $c$ such that for sufficiently large $N$,
\[
c N \leq \sum_{n=1}^N \nu(1_B \cdot 1_{B+Q(n)}) \leq N.
\]
Let $m$ be any weak$^*$ cluster point of $m_N$.  Since $\mathcal M(\Z)$, the set of all means on $\Z$, is a compact convex subset of $\ell^\infty(\Z)^*$, $m$ is a mean on $\ell^{\infty}(\Z)$ satisfying $m(1_S)=1$. \\

\noindent \textbf{Claim 3:} $m$ annihilates continuous measures.

\begin{proof}[Proof of Claim 3]
Let $\mu$ be a continuous measure on $\T$. We have
\begin{align*}
m(\hat{\mu}) &= \lim_{N\to\infty} m_N(\hat{\mu})\\
&= \lim_{N\to\infty} \frac{N }{\sum_{n=1}^N \nu(1_B \cdot 1_{B+\vec{Q}(n)})} \int_{\T} S_N(\alpha) \, d\mu(\alpha) ,
\end{align*}
where $S_N(\alpha)$ is defined in \eqref{eq:s-alpha}. By Claim 1, we have $\lim_{N \rightarrow \infty} S_N(\alpha) =0$ for all but countably many $\alpha$. Since $\mu$ is continuous, we have $\lim_{N \rightarrow \infty} \int_{\T} S_N(\alpha) \, d\mu(\alpha) =0$, by the dominated convergence theorem. Hence, $m(\hat{\mu}) = 0$ and the claim follows from Lemma \ref{lem:AnnihilateEquiv}.
\end{proof}

\noindent \textbf{Claim 4:} $m$ massively accumulates at 0 in $b\Z$.    

\begin{proof}[Proof of Claim 4] If $D$ is any Bohr set, then there is a sequence ${N_k}$ such that
\begin{align*}
m(1_D) &= \lim_{k\to\infty} m_{N_k}(1_D)\\
&= \lim_{k\to\infty}\frac{ \sum_{n=1}^{N_k} 1_D(P(n)) \cdot \nu(1_B \cdot 1_{B+\vec{Q}(n)})}{ \sum_{n=1}^{N_k} \nu(1_B \cdot 1_{B+\vec{Q}(n)})}
\end{align*}
The claim now follows from Claim 2, by taking a Bohr set $C$ with $C-C \subseteq D$.    
\end{proof}
From Claims 3 and 4 it follows that $m$ is a KMF mean supported on $S$. By \cref{th:SpectralSumDifference_intro}, we conclude that for any $A \subseteq \Z$ with $d^*(A)>0$, $A-A+S$ contains a Bohr set. 

Finally, suppose now that $P$ is not in the $\Q$-vector space spanned by $Q_1, \ldots, Q_d$. For $\alpha \in \T$, let $\chi_\alpha$ be the character $n \mapsto e(n \alpha)$ on $\Z$. Then
\[
\widehat{m}(\chi_\alpha) = m(\overline{\chi_\alpha}) = \lim_{N \rightarrow \infty} \frac{S_N(-\alpha)}{\sum_{n=1}^{N} \nu(1_B \cdot 1_{B+\vec{Q}(n)})},  
\]
where $S_N(\alpha)$ is defined in \eqref{eq:s-alpha}. By Claim 1, we have $\widehat{m}(\chi_\alpha) = 0$ whenever $\alpha$ is irrational. Thus $m$ has rational spectrum and so by the second part of Theorem \ref{th:SpectralSumDifference}, we conclude that $A-A+S$ contains a subgroup of finite index of $\Z$. 

We will now address the general case where $Q_1, \ldots, Q_d$ are not necessarily $\Q$-linearly independent. Without loss of generality, we may assume that $\{ Q_1, \ldots, Q_\ell \}$ is a basis of the $\Q$-vector space spanned by $Q_1, \ldots, Q_d$. Let $\vec{Q'} = (Q_1, \ldots, Q_\ell)$. Then there is a linear map $\Phi: \R^d \rightarrow \R^\ell$ with rational entries such that 
\[
\vec{\beta} \cdot \vec{Q} = \Phi(\vec{\beta}) \cdot \vec{Q'}
\]
for every $\vec{\beta} \in \R^d$. 

Let $q \in \N$ be such that $q\Phi$ has integer entries. Thus $\Psi := q\Phi$ can be regarded as a map from $\T^d$ to $\T^\ell$. We will pass to a subset of $S$, namely
\[
S'= \{ P(qn+r_q) : (Q_1(qn+r_q), \ldots, Q_d(qn+r_q)) \in B-B\}.
\]
It suffices to show that $A-A + S'$ contains a Bohr set or a subgroup of finite index.

Let $\sigma$ be the measure defined in \eqref{eq:sigma}, then for every $n \in \Z$, we have
\begin{eqnarray*}
\nu(1_B \cdot 1_{B+\vec{Q}(nq+r_q)}) &=& \int_{\T^d} e \left(\vec{\beta} \cdot \vec{Q}(nq+r_q) \right) \, d \sigma(\vec{\beta})  \\
&=& \int_{\T^d} e \left( \Psi(\vec{\beta}) \cdot \frac{1}{q} \vec{Q'}(nq+r_q) \right) \, d \sigma(\vec{\beta}) \\
&=& \int_{\T^\ell} e \left( \vec{\gamma} \cdot \frac{1}{q} \vec{Q'}(nq+r_q) \right) \, d \rho(\vec{\gamma}),
\end{eqnarray*}
where $\rho = \Psi_*(\sigma)$ is the pushforward of $\sigma$ by $\Psi$, i.e. $\rho(X) = \sigma (\Psi^{-1}(X))$ for any Borel set $X \subseteq \T^\ell$.

We can now repeat the statement and proof of Claim 1 with $S_N(\alpha)$ in \eqref{eq:s-alpha} replaced by
\[
S'_N (\alpha) = \frac{1}{N} \sum_{n=1}^N e(P(qn+r_q) \alpha) \cdot \nu(1_B \cdot 1_{B+\vec{Q}(qn+r_q)}),
\]
$\vec{Q}(n)$ by $\frac{1}{q}\vec{Q'}(nq+r_q)$ and $\sigma$ by $\rho$. Note that $\frac{1}{q} P(nq+r_q)$ and the components of $\frac{1}{q} \vec{Q'}(nq+r_q)$ are still jointly intersective, by Lemma \ref{lem:basic_intersective} (i). The analog of Claim 2 is
\[
\liminf_{N \to \infty} \frac{1}{N} \sum_{n=1}^N d^*(C \cap (C+P(qn+r_q))) \cdot \nu(1_B \cdot 1_{B+\vec{Q}(qn+r_q)}) >0.
\]
From the modified versions of Claims 1 and 2, we can proceed in the same way as before and construct a KMF mean supported on $S'$.
\end{proof}

The following is a prime version of Theorem \ref{th:improved_bulinski_fish_polynomial}.

\begin{theorem} \label{th:improved_bulinski_fish_polynomial_prime}
Let $P, Q_1, \ldots, Q_d$ be jointly intersective polynomials of the second kind,  $A \subseteq \Z, B \subseteq \Z^d$ with $d^*(A)>0$ and $d_{\Z^d}^*(B) >0$. Let
\[
 S = \{ P(p) : p \textup { prime},\ (Q_1(p), \ldots, Q_d(p)) \in B-B\}.
\]
Then $A-A+S$ contains a Bohr set. Furthermore, if $P$ is not in the $\Q$-vector space spanned by $Q_1, \ldots, Q_d$, then $A-A+S$ contains a subgroup of finite index.
 \end{theorem}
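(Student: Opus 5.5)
The plan mirrors the proof of Theorem \ref{th:improved_bulinski_fish_polynomial}, replacing averages over $n\in\N$ by averages over primes weighted by the von Mangoldt function. As there, we may first reduce to the case where $Q_1,\dots,Q_d$ are $\Q$-linearly independent by passing to a basis and to a residue class. For intersective polynomials of the second kind we choose $r_q$ with $\gcd(r_q,q)=1$, so that primes $p\equiv r_q \pmod q$ exist in abundance.

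We fix an invariant mean $\nu$ on $\Z^d$ with $\nu(1_B)=d^*(B)$ and let $\sigma$ be the Bochner--Herglotz measure of $\vec m\mapsto \nu(1_B1_{B+\vec m})$. We then define
\[
m_N(f):=\frac{\sum_{p\le N}\log p\, f(P(p))\,\nu(1_B\cdot 1_{B+\vec Q(p)})}{\sum_{p\le N}\log p\,\nu(1_B\cdot 1_{B+\vec Q(p)})},
\]
and let $m$ be a weak$^*$ cluster point. Each $m_N$ is supported on $S$, so $m(1_S)=1$. We must then verify three analogues of the earlier claims.

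For Claim 1, we show that $S'_N(\alpha):=\frac1N\sum_{p\le N}\log p\, e(\alpha P(p))\nu(1_B1_{B+\vec Q(p)})$ tends to $0$ for all but countably many $\alpha$, and for all irrational $\alpha$ when $P\notin\operatorname{span}_\Q(Q_i)$. Writing $S'_N(\alpha)$ as an integral against $\sigma$ and using dominated convergence, it suffices to have
\[
\frac1N\sum_{p\le N}\log p\; e(g(p))\to 0
\]
whenever the polynomial $g=\alpha P+\vec\beta\cdot\vec Q$ has an irrational non-constant coefficient. This is Vinogradov's theorem on equidistribution of polynomials along primes (Rhin's extension). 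The linear algebra reducing to this case, together with the countability argument over atoms of $\sigma$, is exactly as in Claim 1 of the previous proof.

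Claim 2 is the main obstacle: we need a positive lower bound
\[
\liminf_N \frac1N\sum_{p\le N}\log p\;\lambda(1_C1_{C+P(p)})\,\nu(1_B1_{B+\vec Q(p)})>0,
\]
both to normalize $m_N$ and to prove massive accumulation at $0$. We again split $\tau\times\sigma$ into a continuous part, atoms with some irrational coordinate, atoms with all denominators at most $M$, and a small remainder of mass less than $\epsilon$. The continuous part and irrational atoms vanish in the limit by Vinogradov, applied on the progression, via the appropriate version of Lemma \ref{lem:basic_intersective}. For atoms of small denominator we must restrict to primes $p\equiv r_{M!}\pmod{M!}$, where $\gcd(r_{M!},M!)=1$ by the second-kind hypothesis. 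On these primes, $P(p)$ and $\vec Q(p)$ are divisible by $M!$, so every such atom contributes exactly its mass, which is nonnegative. By Dirichlet's theorem (prime number theorem in progressions) these primes have weighted density $1/\phi(M!)$. The main term is therefore at least $\frac{1}{\phi(M!)}(d^*(C)d^*(B)-\epsilon)$, up to errors that vanish. Since $\phi(M!)$ depends only on $\epsilon$, this stays positive. Because the full sum dominates the sum over this progression, the liminf is positive.

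Claims 3 and 4 then follow verbatim. Claim 3 uses Claim 1, dominated convergence and Lemma \ref{lem:AnnihilateEquiv}, with the normalization from Claim 2. Claim 4 uses Claim 2 with $C$ a Bohr set satisfying $C-C\subseteq D$, noting that $C\cap(C+P(p))\neq\varnothing$ forces $P(p)\in D$. Hence $m$ is a $\KMF$ mean, and Theorem \ref{th:SpectralSumDifference_intro} gives a Bohr set in $A-A+S$. If $P$ is independent of the $Q_i$, Claim 1 gives $m(\overline{\chi_\alpha})=0$ for irrational $\alpha$, so $m$ has rational spectrum. Theorem \ref{th:SpectralSumDifference} then yields a finite index subgroup.
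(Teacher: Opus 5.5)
Your route is the paper's: prime-weighted means (your $\log p$ weights are equivalent to the paper's $\frac{\log N}{N}\sum_{p\le N}$ normalization) and Rhin's theorem in place of Weyl's. For Claim 2 you likewise restrict to primes $p\equiv r_{M!}\pmod{M!}$, where $\gcd(r_{M!},M!)=1$ comes from the second-kind hypothesis and the prime number theorem in progressions gives the density $1/\phi(M!)$. The step that fails as written is your Claim 2 when $P=\vec r\cdot\vec Q$ for some $\vec r\in\Q^d\setminus\{\vec 0\}$ (e.g.\ $P=Q_1$). The first assertion of the theorem covers this case, and your reduction, which only makes the $Q_i$ independent, does not remove it. There the assertion that atoms with an irrational coordinate vanish is false. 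Take $d=1$, $P=Q_1$ and $C=B=\{n:\{n\alpha\}<1/2\}$ with $\alpha$ irrational. The symmetric measures $\tau,\sigma$ both have atoms at $\pm\alpha$, and the atom $(\alpha,-\alpha)$ of $\tau\times\sigma$ has phase $e(\alpha P(p)-\alpha Q_1(p))=1$ for every prime. So it contributes its full mass, not $0$.

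The repair is the paper's Case 2 (in Claim 2 of the proof of Theorem \ref{th:improved_bulinski_fish_polynomial}): classify the atoms by $\vec\gamma=\alpha\vec r+\vec\beta$ instead of by $(\alpha,\vec\beta)$.
\begin{itemize}
\item If $\vec\gamma\notin\Q^d$, then $\vec\gamma\cdot\vec Q$ has an irrational non-constant coefficient because the $Q_i-Q_i(0)$ are independent. Rhin's theorem on the progression then kills the average.
\item If $\vec\gamma\in\Q^d$ has denominators at most $M$, the phase $e(\vec\gamma\cdot\vec Q(p))$ is identically $1$ on your progression, since $M!\mid \vec Q(p)$. These atoms, including the origin, contribute their nonnegative mass.
\item The remaining atoms have total mass $<\epsilon$ once $M$ is large.
\end{itemize}
The continuous part still vanishes, because for $\vec r\neq\vec 0$ the set $\{(\alpha,\vec\beta):\alpha\vec r+\vec\beta\in\Q^d\}$ is null for each of $\tau_c\times\sigma_c$, $\tau_c\times\sigma_d$ and $\tau_d\times\sigma_c$. (This uses $P\not\equiv 0$, which the statement tacitly assumes; for $P\equiv 0$ it fails by Kriz's example.) Also, Lemmas \ref{lem:HilbertianInequality} and \ref{lem:meanOfSigmaHat} only give $\tau(\{0\})\ge d^*(C)^2$ and $\sigma(\{\vec 0\})\ge d^*(B)^2$, not $d^*(C)$ and $d^*(B)$. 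So you should take $\epsilon<d^*(C)^2d^*(B)^2$. With these changes your argument goes through.
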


\begin{proof}
The proof is essentially the same as the proof of Theorem \ref{th:improved_bulinski_fish_polynomial} and therefore we just highlight the differences. For each $m$, we fix $r_m$ as in the beginning of the proof of Theorem \ref{th:improved_bulinski_fish_polynomial}, then $(r_m,m)=1$. Without loss of generality, we can assume that $Q_1, \ldots, Q_d$ are linearly independent over $\Q$. In the general case, we can pass to a subset
\[
 S' = \{ P(p) : (Q_1(p), \ldots, Q_d(p)) \in B-B, \, p \textup { prime}, \, p \equiv r_q \pmod{q}\}.
\]
for some suitable $q$.

For $\alpha \in \T$, we consider
\begin{equation} \label{eq:s-alpha-prime}
S_N (\alpha) = \frac{\log N}{N} \sum_{\substack{p=1,\\ p \textup{ prime}}}^N e(P(p) \alpha) \cdot \nu(1_B \cdot 1_{B+\vec{Q}(p)}).
\end{equation}
Similarly to Claim 1 in the proof of \ref{th:improved_bulinski_fish_polynomial}, we can show for all but countably many $\alpha \in \T$, we have $\lim_{N \rightarrow \infty} S_N(\alpha) =0 $. Furthermore, if $P$ is not in the $\Q$-vector space spanned by $Q_1, \ldots, Q_d$, then the exceptional $\alpha$'s are all rational. However, for this, instead of using  Weyl's equidistribution theorem, we use Rhin's classic result \cite{Rhin}, which says that if $f \in \Z[x]$ is a polynomial and at least one of the coefficients of $f-f(0)$ is irrational, then $\displaystyle \lim_{N \rightarrow \infty} \frac{\log N}{N} \sum_{\substack{p=1,\\ p \textup{ prime}}}^N e(f(p)) = 0$. 

The analog of Claim 2 in the proof of \ref{th:improved_bulinski_fish_polynomial} is that
\[
    \liminf_{N \to \infty} \frac{\log N}{N} \sum_{\substack{p=1,\\ p \textup{ prime}}}^N d^*(C \cap (C+P(p))) \cdot \nu(1_B \cdot 1_{B+\vec{Q}(p)}) > 0,
\]
for all $C \subseteq \Z$ satisfying $d^*(C) > 0$. The proof is essentially the same, with Rhin's theorem in place of Weyl's, and using the fact that $(r_m, m)=1$.



Now consider the means $m_N$ given by 
\[
    m_N(f):=\frac{ \sum_{p=1,\, p \textup{ prime}}^N f(P(p)) \cdot \nu(1_B \cdot 1_{B+ \vec{Q}(p)})}{\sum_{p=1, \, p \textup{ prime}}^N \nu(1_B \cdot 1_{B + \vec{Q}(p)})}
\]
and let $m$ be any weak$^*$ cluster point of $(m_N)$. Then it can be shown that $m$ is a $\KMF$ mean supported on $S$. The rest of the proof follows the proof of \cref{th:improved_bulinski_fish_polynomial} verbatim.
\end{proof}

Before addressing the result regarding the set $\{\lfloor n^c \rfloor: n \in \N\}$, we want to introduce a large class of sequences to which $(\lfloor n^c \rfloor)_{n \in \N}$ belongs. Let $B$ denote the set of germs at infinity of smooth real functions on $\R$. Observe that $B$ forms a ring with respect to pointwise addition and multiplication. A \emph{Hardy field} is a subfield of $B$ closed under differentiation. Note that if $f$ is in a Hardy field, the limit $\lim_{x \to \infty} f(x)$ exists in $[- \infty, + \infty]$. Many familiar functions belong to some Hardy field, including functions obtained from $e^x, \log x$ and polynomial functions by composition and arithmetic operations. On the other hand, (non-constant) periodic functions do not belong to any Hardy field. A \emph{Hardy field sequence} is a sequence of the form $(\lfloor f(n) \rfloor)$ where $f: \R \to \R$ belongs to some Hardy field. Besides $( \lfloor n^c \rfloor)$ where $c > 0$, other examples of Hardy field sequences are $(\lfloor n \log n \rfloor)$, $(\lfloor n^2 \sqrt{2} + n \sqrt{3} \rfloor)$, $(\lfloor n^3 + (\log n)^3 \rfloor)$. Two functions $a_1(t), a_2(t)$ in a Hardy field is said to have \emph{different growth rates} if $\lim_{t \to \infty} a_1(t)/a_2(t) = 0$ or $\infty$.

Let $G$ be a nilpotent Lie group and $\Gamma$ be a discrete cocompact subgroup of $G$. The quotient space $X = G/\Gamma$ is called a \emph{nilmanifold}. A nilmanifold has a probability Haar measure $m_X$ which is the image of Haar measure on $G$ under the projection map. The only example of nilmanifold we use in this paper is the torus $\T = \R/\Z$ and in this case the probability Haar measure on $\T$ coincides with the Lebesgue measure.
We say a sequence $(x_n)$ in a nilmanifold $X$ is \emph{equidistributed} in $X$ if for every continuous function $f:X \to \C$,
\[
    \lim_{N \to \infty} \frac{1}{N} \sum_{n=1}^N f(x_n) = \int_X f \ d m_X
\]

\begin{lemma}[{\cite[Theorem 1.3]{Frantzikinakis09}}]
\label[lemma]{lem:frantzikinakis_equidistribution}
Suppose that the functions $a_1(t), \ldots, a_{\ell} (t)$ belong to the same Hardy field, having different growth rates, satisfying 
\begin{equation}\label{eq:need_this_to_equidistribution}
    \lim_{t \to \infty} \frac{a_i(t)}{t^{k_i} \log t} = \lim_{t \to \infty} \frac{t^{k_i + 1}}{a_i(t)} = \infty \text{ for some $k_i \in \N$}.
\end{equation}
If $X_i = G_i/\Gamma_i$ are nilmanifolds, then for every $b_i \in G_i$ and $x_i \in G_i$, the sequence
\[
    (b_1^{\lfloor a_1(n) \rfloor} x_1, \ldots, b_{\ell}^{\lfloor a_{\ell}(n) \rfloor} x_{\ell})_{n \in \N}
\]
is equidistributed in the nilmanifold $\overline{\{b_1^n x_1: n \in \N\}} \times \cdots \times \overline{\{b_{\ell}^n x_{\ell}: n \in \N\}}$.
\end{lemma}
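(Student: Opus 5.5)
This lemma is quoted from \cite[Theorem 1.3]{Frantzikinakis09}, and in the paper it is used only for the torus $\T$. I would still aim to recover it in the stated generality along the following lines. The first step is a reduction to abelian targets. Each orbit closure $Y_i=\overline{\{b_i^n x_i\}}$ is itself a sub-nilmanifold on which $b_i$ acts ergodically. The first thing to remove is the floor function. I would write $\lfloor a_i(n)\rfloor = a_i(n)-\{a_i(n)\}$ and lift to the suspension nilmanifold $\widetilde Y_i=(\R\times G_i)/(\Z\times\Gamma_i)$, on which $\R$ acts by a flow $b_i^t$. This turns the sequence into a continuous-time orbit $b_i^{a_i(n)}$ evaluated at the auxiliary point $(\{a_i(n)\}, \cdot)$. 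Joint equidistribution of $(b_i^{\lfloor a_i(n)\rfloor}x_i)_i$ in $\prod Y_i$ then follows from joint equidistribution of $(b_i^{a_i(n)}\tilde x_i)_i$ in $\prod \widetilde Y_i$.

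Next I would apply Leibman's criterion, in its flow/polynomial-type version: a sequence of this kind is equidistributed in a nilmanifold if and only if its projection to the maximal factor torus (the horizontal torus) is equidistributed. Since $b^t$ is a one-parameter subgroup, the projection is linear in $t$. Here I would cite the fact, already used by Frantzikinakis, that for Hardy-field sequences Leibman's criterion still applies once we know equidistribution for all the rescaled sequences. This reduces everything to a Weyl-sum statement: for real numbers $\gamma_1,\dots,\gamma_\ell$, not all of which give a trivial horizontal character,
\[
\lim_{N\to\infty}\frac1N\sum_{n=1}^N e\Big(\sum_{i=1}^\ell \gamma_i a_i(n)\Big)=0 ,
\]
with the rational obstructions controlled by the ergodicity of each $b_i$ on $Y_i$.

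The last step is the scalar estimate. Because the $a_i$ have different growth rates, the Hardy-field function $f=\sum\gamma_i a_i$ with some $\gamma_i\neq0$ has the growth of its fastest nonvanishing term. By \eqref{eq:need_this_to_equidistribution}, that growth satisfies $t^{k}\log t\prec f(t)\prec t^{k+1}$ for some $k$. I would then apply van der Corput differencing $k$ times, which lowers the growth by one power of $t$ per step, together with the fact that Hardy-field derivatives behave like $f(t)/t$. This produces a function $g$ with $\log t\prec g(t)\prec t$. For such $g$, Fej\'er's theorem (monotone derivative tending to $0$ with $tg'(t)\to\infty$) gives $\frac1N\sum e(g(n))\to 0$.

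I expect the main obstacle to be the floor/suspension step in the second stage. One has to check that Leibman's criterion genuinely transfers to the flow lift, and that the fractional parts $\{a_i(n)\}$ do not correlate with the nilorbit. Concretely, this means the horizontal torus of $\prod\widetilde Y_i$ must include the circle coordinates carrying $\{a_i(n)\}$. The scalar estimate then has to cover frequencies $\gamma_i$ attached to those coordinates as well, and the growth hypothesis is exactly what makes that possible.
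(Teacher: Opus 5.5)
\textbf{There is a genuine gap: the step that carries all the weight is assumed.} The paper gives no proof of this lemma; it imports it from \cite{Frantzikinakis09}, so your sketch has to stand on its own.

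\emph{What works.} Your outer steps are sound.
\begin{itemize}
\item The scalar estimate is Boshernitzan's criterion (or Fej\'er's theorem after differencing). Your growth bookkeeping is right: $t^{k_j}\log t\prec\sum_i\gamma_i a_i\prec t^{k_j+1}$, where $j$ is the fastest index with $\gamma_j\neq 0$.
\item The floor function is routine, with one correction. As written, $(\R\times G_i)/(\Z\times\Gamma_i)$ is just $\T\times X_i$. It carries a flow through $b_i$ only if $b_i$ already lies on a one-parameter subgroup of $G_i$. So you must first use Leibman's lifting to pass to a connected, simply connected group, where $b_i^t$ is defined. Then $b_i^{\lfloor a_i(n)\rfloor}x_i=b_i^{-\{a_i(n)\}}b_i^{a_i(n)}x_i$. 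Equidistribution of $(b_i^{a_i(n)}x_i,\,a_i(n)\bmod 1)_i$ in the product of the orbit closures of the flows $(b_i^t x_i,t\bmod 1)$ pushes forward under $(y,s)\mapsto b_i^{-s}y$. This map is continuous off the null set $\{s=0\}$.
\end{itemize}
For $X_i=\T$, the only case the paper actually uses (\cref{lem:corollary_hardy_field}), these two steps alone already give a complete proof.

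\emph{The gap: the middle step.} Leibman's criterion is a theorem about \emph{polynomial} sequences. The sequence $n\mapsto b^{a(n)}$, with $a$ a non-polynomial Hardy function, is not one. For general sequences, equidistribution of the horizontal projection does not imply equidistribution in the nilmanifold. For Hardy sequences, that implication is exactly what the theorem asserts. In the generality you invoke it, it is a much later result of Richter. So citing it as ``the fact already used by Frantzikinakis'' is circular, and your reduction to Weyl sums is unjustified.

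\emph{The missing mechanism (Frantzikinakis's actual argument).}
\begin{itemize}
\item On blocks $[N,N+L(N)]$, with $1\prec L(N)\prec N$ chosen suitably, Taylor expansion gives $a_i(N+h)=p_{N,i}(h)+o(1)$ with $\deg p_{N,i}=k_i$. This uses $a_i\prec t^{k_i+1}$. So each block is, up to $o(1)$, a polynomial orbit.
\item These polynomials change with $N$, so the qualitative Leibman theorem is useless here. You need the quantitative Green--Tao equidistribution theorem, which is uniform over the family.
\item You then use $t^{k_i}\log t\prec a_i$ and the distinct growth rates. They show that, for all but a negligible proportion of $N$, no nontrivial horizontal character of bounded height turns the block into a polynomial phase whose coefficients are all close to integers at scale $L(N)$ (the Green--Tao obstruction).
\end{itemize}
This is where the hypotheses really enter. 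So the main obstacle is this step, not the floor/suspension step you flagged.
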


From Lemma \ref{lem:frantzikinakis_equidistribution} we immediately deduce the following.

\begin{lemma}\label[lemma]{lem:corollary_hardy_field}
Let $c_1, c_2 \ldots, c_d > 1$ be distinct non-integers and suppose $\alpha_1, \ldots, \alpha_d \in \T$ are not all equal to 0. Then
\[
    \lim_{N \to \infty} \frac{1}{N} \sum_{n=1}^N e(\lfloor n^{c_1} \rfloor \alpha_1 + \cdots + \lfloor n^{c_d} \rfloor \alpha_d) = 0.
\]
\end{lemma}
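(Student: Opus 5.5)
The plan is to apply \cref{lem:frantzikinakis_equidistribution} on the one-dimensional torus $\T=\R/\Z$, viewed as the nilmanifold $G_i/\Gamma_i$ with $G_i=\R$ and $\Gamma_i=\Z$. First we discard the indices $i$ with $\alpha_i=0$, since they contribute nothing to the exponent. Relabel so that $\alpha_1,\dots,\alpha_\ell\neq 0$ with $\ell\geq 1$. We take $a_i(t)=t^{c_i}$, all of which lie in a common Hardy field (for instance the field generated by $\R$, $\log t$ and $t^{c}$ for all real $c$, i.e.\ the logarithmico-exponential functions). Because the $c_i$ are distinct, the ratios $a_i/a_j=t^{c_i-c_j}$ tend to $0$ or $\infty$, so the functions have different growth rates.

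Next we check condition \eqref{eq:need_this_to_equidistribution}. Since $c_i>1$ is not an integer, set $k_i=\lfloor c_i\rfloor\in\N$, so that $k_i<c_i<k_i+1$. Then $t^{c_i}/(t^{k_i}\log t)=t^{c_i-k_i}/\log t\to\infty$ and $t^{k_i+1}/t^{c_i}\to\infty$. This is the only place where non-integrality and $c_i>1$ are used; the hypothesis $c_i>1$ guarantees $k_i\geq 1$.

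Now take $b_i=\alpha_i$ and $x_i=0$ in $\T$, so that $b_i^{n}x_i$ is written additively as $n\alpha_i$. The lemma gives that $(\lfloor n^{c_1}\rfloor\alpha_1,\dots,\lfloor n^{c_\ell}\rfloor\alpha_\ell)$ is equidistributed in $Y=Y_1\times\cdots\times Y_\ell$, where $Y_i=\overline{\{n\alpha_i:n\in\N\}}$. Each $Y_i$ is a closed subgroup of $\T$: it is all of $\T$ if $\alpha_i$ is irrational, and the finite cyclic group $\frac{1}{q_i}\Z/\Z$ if $\alpha_i$ has denominator $q_i\geq 2$. Haar measure on $Y$ is the product of the Haar measures $m_{Y_i}$.

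Finally we apply equidistribution to the continuous function $F(y_1,\dots,y_\ell)=e(y_1+\cdots+y_\ell)$. This gives that the limit in question equals $\prod_{i=1}^\ell\int_{Y_i}e(y)\,dm_{Y_i}(y)$. For each $i$, the map $y\mapsto e(y)$ is a nontrivial character of $Y_i$, because $e(\alpha_i)\neq 1$ and $\alpha_i\in Y_i$. So its Haar integral vanishes, and since $\ell\geq 1$ the product is $0$.

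There is no real obstacle here. The points that need care are that the closure of $\{n\alpha_i\}$ is a subgroup on which $e$ restricts to a nontrivial character (this covers the rational case), and that all the $t^{c_i}$ lie in a single Hardy field.
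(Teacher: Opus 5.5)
Your proposal is correct and matches the paper's proof: apply \cref{lem:frantzikinakis_equidistribution} on $\T$ with $a_i(t)=t^{c_i}$, then write the limit as a product of Haar integrals of $e(\cdot)$ over the orbit closures $\overline{\{n\alpha_i\}}$, at least one of which vanishes. Discarding the indices with $\alpha_i=0$ and checking the growth condition explicitly with $k_i=\lfloor c_i\rfloor$ are harmless additions of detail.
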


\begin{proof}
    The condition \eqref{eq:need_this_to_equidistribution} is clearly satisfied for the functions $a_i(t) = t^{c_i}$, $i=1, \ldots, d$. By Lemma \ref{lem:frantzikinakis_equidistribution}, the sequence $\left( (\lfloor n^{c_1} \rfloor \alpha_1, \ldots, \lfloor n^{c_d} \rfloor \alpha_d) \right)_{n \in \N}$ is equidistributed on $X = X_1 \times \cdots \times X_d$ where $X_i = \overline{\{n \alpha_i: n \in \N\}}$. For $i \in \{1, \ldots, d\}$, let $m_{X_i}$ be the probability Haar measure on $X_i$. We then get
\[
    \lim_{N \to \infty} \frac{1}{N} \sum_{n=1}^N e(\lfloor n^{c_1} \rfloor \alpha_1 + \cdots + \lfloor n^{c_d} \rfloor \alpha_i) = \prod_{i=1}^d \int_{X_i} e(x) \ d m_{X_i}(x).
\]
Since $\alpha_1. \ldots, \alpha_d$ are not all equal to 0, at least one of the integrals on the right hand side is zero and so we are done.
\end{proof}

\begin{theorem}\label{thm:bulinski_fish_hardy_field}
Let $c, c_1, \ldots, c_d > 1$ be non-integers.
Let $A \subseteq \Z$, $B \subseteq \Z^d$ with $d_{\Z}^*(A), d_{\Z^d}^*(B) >0$ and
\[
    S = \{ \lfloor n^c \rfloor : ( \lfloor n^{c_1}  \rfloor, \ldots \lfloor n^{c_d}  \rfloor) \in B-B\}.
\]
Then $A-A+S$ contains a Bohr set. Furthermore, if $c \not \in \{c_1, \ldots, c_d \}$, then $A-A+S = \Z$. 
\end{theorem}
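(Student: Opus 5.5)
We follow the scheme of the proof of \cref{th:improved_bulinski_fish_polynomial}: we build a $\KMF$ mean supported on $S$ and then apply \cref{th:SpectralSumDifference}. Fix an invariant mean $\nu$ on $\Z^d$ with $\nu(1_B)=d^*_{\Z^d}(B)>0$. By Bochner--Herglotz, take a positive measure $\sigma$ on $\T^d$ with $\nu(1_B\cdot 1_{B+\vec m})=\int e(\vec\beta\cdot\vec m)\,d\sigma(\vec\beta)$. Write $\vec a(n)=(\lfloor n^{c_1}\rfloor,\dots,\lfloor n^{c_d}\rfloor)$, $w_n=\nu(1_B\cdot 1_{B+\vec a(n)})\ge 0$, and define
\[
m_N(f)=\frac{\sum_{n\le N} f(\lfloor n^c\rfloor)\,w_n}{\sum_{n\le N} w_n}.
\]
Each $m_N$ is supported on $S$, since $w_n>0$ forces $\vec a(n)\in B-B$. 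We let $m$ be a weak$^*$ cluster point of $(m_N)$. Without loss of generality the $c_i$ are distinct: repeated exponents give identical coordinates, and we can merge them by pushing $\sigma$ forward under the map that adds the corresponding $\beta$'s.

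\emph{Step 1 (Claim 1 analogue).} Set $S_N(\alpha)=\frac1N\sum_{n\le N} e(\lfloor n^c\rfloor\alpha)w_n=\int \frac1N\sum_{n\le N} e(\lfloor n^c\rfloor\alpha+\vec\beta\cdot\vec a(n))\,d\sigma(\vec\beta)$.
\begin{itemize}
\item If $c\notin\{c_1,\dots,c_d\}$, then $c,c_1,\dots,c_d$ are distinct non-integers. \cref{lem:corollary_hardy_field} shows that the inner average tends to $0$ for every $\vec\beta$ whenever $\alpha\ne 0$. Dominated convergence then gives $S_N(\alpha)\to 0$ for all $\alpha\neq 0$.
\item If $c=c_j$, the phase is $\lfloor n^{c}\rfloor(\alpha+\beta_j)+\sum_{i\ne j}\beta_i\lfloor n^{c_i}\rfloor$. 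This tends to $0$ unless $\vec\beta$ equals $-\alpha$ in the $j$-th coordinate and $0$ elsewhere. So the limit is $\sigma(\{(0,\dots,-\alpha,\dots,0)\})$ up to the vanishing terms, which is nonzero for only countably many $\alpha$.
\end{itemize}

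\emph{Step 2 (Claim 2 analogue).} We show that for every $C\subseteq\Z$ with $d^*(C)>0$,
\[
\liminf_N \frac1N\sum_{n\le N} d^*(C\cap(C+\lfloor n^c\rfloor))\,w_n>0.
\]
Take $\tau$ on $\T$ for $C$, as in the proof of \cref{th:improved_bulinski_fish_polynomial}. The product integrand is $e(\alpha\lfloor n^c\rfloor+\vec\beta\cdot\vec a(n))$ integrated against $\tau\times\sigma$.
\begin{itemize}
\item When $c\notin\{c_i\}$, \cref{lem:corollary_hardy_field} makes the average vanish off $(0,\vec 0)$. The limit is then exactly $(\tau\times\sigma)(\{(0,\vec0)\})\ge d^*(C)\,d^*(B)>0$.
\item When $c=c_j$, the limit is $\sum_{\alpha}\tau(\{\alpha\})\,\sigma(\{-\alpha e_j\})\ge\tau(\{0\})\sigma(\{\vec 0\})>0$.
\end{itemize}
This is actually easier than the polynomial case: there are no rational obstructions, so no passage to the subprogressions $r_{M!}+M!\,n$ is needed. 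Taking $C=\Z$ shows that $\sum_{n\le N}w_n\ge cN$ for large $N$.

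\emph{Step 3 (conclusion).} Claim 3 follows verbatim from Step 1: $m(\hat\mu)=0$ for continuous $\mu$, using \cref{lem:AnnihilateEquiv}. Claim 4 follows from Step 2 by taking $C$ a Bohr set with $C-C\subseteq D$, exactly as before. Hence $m$ is a $\KMF$ mean on $S$, and \cref{th:SpectralSumDifference_intro} gives a Bohr set in $A-A+S$.

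If $c\notin\{c_i\}$, then $m(\overline{\chi_\alpha})=\lim S_N(-\alpha)/(\frac1N\sum_{n\le N} w_n)=0$ for every $\alpha\ne0$, so $m$ has trivial spectrum. The second part of \cref{th:SpectralSumDifference}, applied with $\sigma_A$ the measure for $\phi_{1_A}$ as in the proof of \cref{th:SpectralSumDifference_intro}, then yields $A-A+S=\Z$.

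The main obstacle is Step 1 in the case $c=c_j$, where the exceptional set of $\alpha$ must be shown to be countable; this is handled by the atom analysis above. The only input genuinely needed is \cref{lem:corollary_hardy_field} applied to the distinct exponents.
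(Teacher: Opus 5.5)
Your proposal is correct and follows the paper's proof essentially step for step. You use the same weighted means $m_N$ built from $\nu(1_B\cdot 1_{B+\vec a(n)})$, the same two claims derived from \cref{lem:corollary_hardy_field} (your explicit atom formula in the case $c=c_j$ just makes concrete the paper's remark that each limit is $0$ or $1$), trivial spectrum when $c\notin\{c_1,\dots,c_d\}$, and the same pushforward reduction for repeated exponents.

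One harmless nit, which the paper shares: \cref{lem:HilbertianInequality} and \cref{lem:meanOfSigmaHat} give $\tau(\{0\})\ge d^*(C)^2$ and $\sigma(\{\vec 0\})\ge d^*_{\Z^d}(B)^2$, not the unsquared bounds. This does not matter, since only positivity is used.
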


\begin{remark}
For the first conclusion of \cref{thm:bulinski_fish_hardy_field}, we can replace $\lfloor n^c \rfloor, \lfloor n^{c_1} \rfloor, \ldots, \lfloor n^{c_d} \rfloor$ with any $d+1$ sequences $\lfloor a(n) \rfloor, \lfloor a_1(n) \rfloor, \ldots, \lfloor a_d(n) \rfloor$ such that $a(t), a_1(t), \ldots, a_d(t)$ belong to the same Hardy field and satisfy \eqref{eq:need_this_to_equidistribution}. The same is true for the ``furthermore'' part under the additional assumption that $a(t)$ and $a_i(t)$ have different growth rates for all $i \in \{1, \ldots, d\}$.
\end{remark}
\begin{proof}
Again since the proof is similar to that of Theorem \ref{th:improved_bulinski_fish_polynomial}, we only highlight the differences. In fact, due to the nice equidistribution property of sequences along $\lfloor n^c \rfloor$, the proof is simpler in many places.

Let $\vec{F}(n) = ( \lfloor n^{c_1}  \rfloor, \ldots \lfloor n^{c_d} \rfloor)$.
First we assume that $c_1, \ldots, c_d$ are distinct. Consider

\begin{equation} 
S_N (\alpha) = \frac{1}{N} \sum_{n=1}^N e(\lfloor n^c \rfloor \alpha) \cdot \nu(1_B \cdot 1_{ B+ \vec{F}(n)}).
\end{equation}

\noindent \textbf{Claim 1:} For all but countably many $\alpha \in \T$, we have $\lim_{N \rightarrow \infty} S_N(\alpha) =0 $. Furthermore, if $c \not \in \{ c_1, \ldots, c_d\} $, then the only exceptional $\alpha$ is $0$.

\begin{proof}[Proof of Claim 1]
There is a measure $\sigma$ on $\T^d$ such that 
\begin{equation} \label{eq:measure_sigma}
\nu(1_B \cdot 1_{B + \vec{F}(n)}) = \int_{T^d} e \left( \vec{F}(n) \cdot \vec{\beta} \right) d \sigma(\vec{\beta}).    
\end{equation}
Consequently,
\begin{eqnarray*}
S_N (\alpha) &=& \frac{1}{N} \sum_{n=1}^N e(\lfloor n^c \rfloor \alpha) \int_\T e \left( \vec{F}(n) \cdot \vec{\beta} \right) \, d \sigma(\vec{\beta})\\
&=& \int_\T  \left( \frac{1}{N} \sum_{n=1}^N e \left(\lfloor n^c \rfloor \alpha + \vec{F}(n) \cdot \vec{\beta} \right) \right) \, d \sigma(\vec{\beta}).
\end{eqnarray*}
If $c \not \in \{c_1, \ldots, c_d\}$ and $\alpha \neq 0$, then by Lemma \ref{lem:corollary_hardy_field}, for all $\vec{\beta} \in \T^d$,
\[
    \lim_{N \to \infty} \frac{1}{N} \sum_{n=1}^N e(\lfloor n^c \rfloor \alpha + \vec{F}(n) \cdot \vec{\beta} ) = 0.
\]
It follows that $\lim_{N \to \infty} S_N (\alpha) = 0$ in this case.

Suppose $c = c_j$ for some $j \in \{1, \ldots, d\}$. Then again by Lemma \ref{lem:corollary_hardy_field},
\[
    \lim_{N \to \infty} \frac{1}{N} \sum_{n=1}^N e(\lfloor n^c \rfloor \alpha + \vec{F}(n) \cdot \vec{\beta} ) = 0,
\]
unless $\alpha = - \beta_j$ and $\beta_i = 0$ for all $i \neq j$. Therefore, by the dominated convergence theorem, $\lim_{N \rightarrow \infty} S_N(\alpha) = 0$
unless $\alpha = -\beta_j$ for $1 \leq j \leq d$ and for some atom $\vec{\beta}$ of the measure $\sigma$. Our claim follows because the set of such atoms $\vec{\beta}$ is at most countable.
\end{proof}

\noindent \textbf{Claim 2:}
Let $C$ be any subset of $\Z$ with $d^*(C) >0$. We have
\[
    \liminf_{N \to \infty} \frac{1}{N} \sum_{n=1}^N d^*(C \cap (C + \lfloor n^c \rfloor)) \cdot \nu(1_B \cdot 1_{ B+ \vec{F}(n)}) >0.
\]
\begin{proof}[Proof of Claim 2]
As in the proof of Theorem \ref{th:improved_bulinski_fish_polynomial}, there exist measures $\tau$ on $\T$ and $\sigma$ on $\T^d$ such that $\tau(\{0\}) \geq d^*(C), \sigma(\{ \vec{0}\}) \geq d_{\Z^d}^*(B)$ such that
\begin{align*}
     \lim_{N \rightarrow \infty} \frac{1}{N} & \sum_{n=1}^N d^*(C \cap (C + \lfloor n^c \rfloor)) \cdot \nu(1_B \cdot 1_{B+ \vec{F}(n)}) = \\
     & \int_{\T^{d+1}} \lim_{N \to \infty} \frac{1}{N} \sum_{n=1}^N e \left( \lfloor n^c \rfloor \alpha + \vec{F}(n) \cdot \vec{\beta} \right) \ d (\tau \times \sigma) (\alpha, \vec{ \beta}).    
\end{align*}
By Lemma \ref{lem:corollary_hardy_field}, for each $(\alpha, \vec{\beta}) \in \T^{d+1}$, the limit on the right hand is equal to either $0$ or $1$. Since at $(\alpha, \vec{\beta}) = (0, \vec{0})$, the limit is equal to $1$, we infer that the right hand side is at least $\tau(\{0\}) \sigma(\{\vec{0}\}) > 0$, as was to be shown.
\end{proof}

Now consider the means $m_N$ given by 
\[
    m_N(f):=\frac{ \sum_{n=1}^N f(\lfloor n^c \rfloor) \cdot \nu(1_B \cdot 1_{B+\vec{F}(n))}}{\sum_{n=1}^N \nu(1_B \cdot 1_{B+\vec{F}(n)})}
\]
and let $m$ be any weak$^*$ cluster point of $(m_N)$. Then using Claims 1 and 2, it can be shown that $m$ is a $\KMF$ mean supported on $S$. The rest of the proof follows the proof of Theorem \ref{th:improved_bulinski_fish_polynomial} verbatim. Furthermore, if $c \not \in \{ c_1, \ldots, c_d\}$, by Claim 1, the spectrum of $m$ is trivial and so by \cref{th:SpectralSumDifference} $A - A + S = \Z$ if $d^*(A) > 0$.

In the general case where $c_1, \ldots, c_d$ are not necessarily distinct, we may assume that $c_1, \ldots, c_\ell$ are all the distinct values of $\{ c_1, \ldots, c_d \}$. Writing $\vec{F'}(n) = ( \lfloor n^{c_1}  \rfloor, \ldots \lfloor n^{c_\ell} \rfloor)$. Then there is a linear map $\Psi: \T^d \rightarrow \T^\ell$ such that for all $\vec{ \beta} \in \T^d$, we have
\[
\vec{F}(n) \cdot \vec{\beta} = \vec{F'}(n) \cdot \Psi (\vec{\beta}).
\]
Then we can rewrite \eqref{eq:measure_sigma} as
\begin{equation} 
\nu(1_B \cdot 1_{B + \vec{F}(n)}) = \int_{T^d} e \left( \vec{F}(n) \cdot \vec{\beta} \right) d \sigma(\vec{\beta}) = \int_{T^\ell} e \left( \vec{F'}(n) \cdot \vec{\gamma} \right) d \rho(\vec{\gamma}),
\end{equation}
where $\rho = \Psi_*(\sigma)$ is the pushforward of $\sigma$ by $\Psi$, thus reducing the general case to the special case.
\end{proof}



\subsection{Proof of \texorpdfstring{\cref{cor:Specific-intro_main}}{Theorem B}}

We now derive \cref{cor:Specific-intro_main} as a quick corollary of the results in \cref{sec:bulinski_fish}. For the reader's convenience, we first restate \cref{cor:Specific-intro_main}.

\begin{theorem*}
    Let $A \subseteq \Z$ with $d^*(A) > 0$. 
\begin{enumerate}
        \item
        Let $P \in \Z[x]$ be a nonconstant intersective polynomial and $S:=\{P(n):n\in \mathbb N\}$. Then $A - A + S$ contains a finite index subgroup of $\Z$.

       \item\label{item:PolyPrime} Let $P \in \Z[x]$ be a nonconstant intersective polyomial of the second kind and let $S := \{P(p): p \text{ prime}\}$. Then $A - A + S$ contains a finite index subgroup of $\Z$.

        \item 
        If $S = \{\lfloor n^c \rfloor: n \in \N\}$ where $c > 0, c \not \in \Z$, then $A - A + S = \Z$.

        \item 
        Moreover, if $S$ is any of the sets above and $S = S_1 \cup S_2 \cup \cdots \cup S_k$, then one of the $S_i$ is $\DiffBohr$.
    \end{enumerate}
\end{theorem*}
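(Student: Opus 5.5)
Every part should follow from the theorems of \cref{sec:bulinski_fish} by specializing the auxiliary data. Take $d=1$ and $B=\Z$, so that $d^*_{\Z}(B)=1$ and $B-B=\Z$. Then the membership condition on $(Q_1(n))$ or $(\lfloor n^{c_1}\rfloor)$ is vacuous, and the set $S$ in those theorems becomes exactly the set in \cref{cor:Specific-intro_main}.

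For (i), I would apply \cref{th:improved_bulinski_fish_polynomial} with $Q_1:=P^2$.
\begin{itemize}
\item $P$ and $P^2$ are jointly intersective: any $n$ with $m\mid P(n)$ also gives $m\mid P(n)^2$.
\item Since $P$ is nonconstant, $\deg P^2=2\deg P>\deg P$. So $P$ is not a rational multiple of $P^2$, i.e.\ $P\notin \Q P^2$.
\end{itemize}
The ``furthermore'' clause then gives that $A-A+S$ contains a finite index subgroup.

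Part (ii) is identical, using \cref{th:improved_bulinski_fish_polynomial_prime}. The integer $n$ witnessing $m\mid P(n)$ with $\gcd(m,n)=1$ also witnesses $m\mid P(n)^2$. Hence $P,P^2$ are jointly intersective of the second kind.

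For (iii) I split into two cases.
\begin{itemize}
\item If $c>1$, apply \cref{thm:bulinski_fish_hardy_field} with $d=1$ and any non-integer $c_1>1$ with $c_1\neq c$. This yields $A-A+S=\Z$.
\item If $0<c<1$, the theorem does not apply, and I would argue directly. The increments $(n+1)^c-n^c$ tend to $0$, so eventually they are less than $1$. Therefore every sufficiently large integer is of the form $\lfloor n^c\rfloor$, i.e.\ $S\supseteq [N,\infty)\cap\Z$ for some $N$. Since $A$ is infinite, $A-A$ contains negative elements of arbitrarily large absolute value. Given $t\in\Z$, pick $a\in A-A$ with $a\le t-N$. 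Then $t=a+(t-a)$ with $t-a\in S$, so $A-A+S=\Z$.
\end{itemize}

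For (iv), the proofs of \cref{th:improved_bulinski_fish_polynomial,th:improved_bulinski_fish_polynomial_prime,thm:bulinski_fish_hardy_field} in fact construct a $\KMF$ mean supported on $S$. In the remaining case $0<c<1$, I would take a weak$^*$ limit point $m$ of the averages $f\mapsto \frac1N\sum_{n=N_0}^{N_0+N}f(n)$, where $N_0$ is chosen so that $[N_0,\infty)\subseteq S$. Then $m$ is an invariant mean with $m(1_S)=1$, and by the Remark following \cref{def:Annihilates} it is $\KMF$. Iterating \cref{prop:partition_regular_KMF_mean} over the partition $S=S_1\cup\cdots\cup S_k$ shows some $S_i$ supports a $\KMF$ mean. \cref{th:SpectralSumDifference_intro} then shows that $S_i$ is $\DiffBohr$.

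The main obstacle is bookkeeping rather than analysis. The two points to check are: choosing an auxiliary $Q_1$ (resp.\ $c_1$) that keeps joint intersectivity while ensuring $P$ lies outside its span, and handling $0<c<1$, which the Hardy field theorem excludes.
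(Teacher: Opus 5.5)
Your proposal is correct and follows essentially the same route as the paper. You specialize Theorems \ref{th:improved_bulinski_fish_polynomial}, \ref{th:improved_bulinski_fish_polynomial_prime} and \ref{thm:bulinski_fish_hardy_field} with $d=1$, $B=\Z$, and $Q_1=P^2$ (or a non-integer $c_1>1$ with $c_1\neq c$; the paper takes $c_1=c+1$). You treat $0<c<1$ directly because $S$ contains all large integers; the paper observes that in fact $S\supseteq\N$. You get (iv) from the $\KMF$ means built in those proofs via \cref{prop:partition_regular_KMF_mean} and \cref{th:SpectralSumDifference_intro}, and your explicit invariant mean for $0<c<1$ supplies a detail the paper leaves implicit.
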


\begin{proof}
(i) Let $P \in \Z[x]$ be a nonconstant intersective polynomial. Then the pair $P, Q_1 = P^2$ are jointly intersective and linearly independent over $\Q$. Therefore, by \cref{th:improved_bulinski_fish_polynomial}, for any $A \subseteq \Z$ with $d^*(A) > 0$,
\[
    A - A + \{P(n): n \in \N\} \supseteq A - A + \{P(n): n \in \N,\ Q_1(n) \in \Z\}
\]
contains a finite index subgroup of $\Z$.

(ii) Let $P \in \Z[x]$ be a nonconstant intersective polynomial of the second kind. 
The pair of polynomials $P$ and $Q_1 = P^2$ are jointly intersective of the second kind and linearly independent over $\Q$. 
By \cref{th:improved_bulinski_fish_polynomial_prime},
\[
    A - A + \{P(p): p \in \P\} \supseteq A - A + \{P(p): p \in \P,\ Q_1(p) \in \Z\}
\]
contains a finite index subgroup of $\Z$. 

(iii) If $0 < c \leq 1$, then $S \supseteq \N$. It is obvious that $A - A + S = \Z$. If $c > 1$, then the conclusion follows directly from \cref{thm:bulinski_fish_hardy_field} with $c_1 = c + 1$ and $B = \Z$.

(iv) From the proofs of Theorems \ref{th:improved_bulinski_fish_polynomial}, \ref{th:improved_bulinski_fish_polynomial_prime}, and \ref{thm:bulinski_fish_hardy_field}, the sets $S$ appearing (i), (ii) and (iii) all support some $\KMF$ means. By \cref{prop:partition_regular_KMF_mean}, if $S = S_1 \cup S_2 \cup \cdots \cup S_k$, one of the $S_i$ supports a $\KMF$ mean and so is $\DiffBohr$ according to \cref{th:SpectralSumDifference_intro}.
\end{proof}


\section{\texorpdfstring{$\DiffBohr$}{(D, B)-expanding}, difference sets, and some counterexamples}

The goal of this section is to prove Theorems \ref{mainthm:difference_set_not_DF} and \ref{thm:A+B_not_pwBohr}. We also provide some results on the relationships between $\DiffBohr$ sets, sets of strong recurrence, and the (un)necessity of annihilating continuous measures in \cref{th:SpectralSumDifference_intro}. 
Before going into the proofs, we need some preliminary definitions and lemmas from \cite{Griesmer_SeparatingBohr}. 

\subsection{Difference sets and \texorpdfstring{$\DiffBohr$}{DB-expanding}}\label{sec:nonDBexample}

For $x \in \R$, let $\lVert x \rVert$ denote the distance to the nearest integer. This norm can also be thought to distance from $x$ to $0$ in $\T = \R/\Z$. For $\epsilon > 0, d \in \N$, and $x = (x_1, \ldots, x_d) \in \T^d$, let
\[
    w_{\epsilon}(x) = |\{j: \lVert x_j \rVert \geq \epsilon\}.
\]
Thus $w_{\epsilon}(x)$ is the number of coordinates of $x$ differing from $0$ by at least $\epsilon$.

For $k < d \in \N$ and $\alpha \in \T^d$, the \emph{Bohr-Hamming ball} with rank $d$ and radius $(k, \epsilon)$ around $0$ determined by $\alpha$ is
\[
    BH(\alpha; k, \epsilon) = \{n \in \Z: w_{\epsilon}(n \alpha) \leq k\}.
\]
Thus $BH(\alpha; k, \epsilon)$ contains all the times $n \in \Z$ such that at most $k$ coordinates of $n \alpha$ differ from $0$ by at least $\epsilon$. If $\{n \alpha: n \in \Z\}$ is dense in $\T^d$ (equivalently, the coordinates of $\alpha$ and $1$ are linearly independent over $\Q$), then $BH(\alpha; k, \epsilon)$ is called a \emph{proper} Bohr-Hamming ball.

For $k \in \N$, we say a set $E \subseteq \Z$ is \emph{$k$-Bohr dense} if $E$ intersects any Bohr neighborhood of rank $k$ of every integer or equivalently for every $n \in \Z$, $E - n$ intersects a Bohr set of rank $k$. We say $E$ is \emph{Bohr dense} if it is $k$-Bohr dense for every $k \in \N$. Note that $E$ is Bohr dense if and only if $E$ is dense in the Bohr compactification $b\Z$ of $\Z$.

\begin{lemma}[see {\cite[Lemma 4.2]{Griesmer_SeparatingBohr}}]
\label[lemma]{lem:bohr-hamming-ball-dense}
For $k < d \in \N, \epsilon > 0$, any proper Bohr-Hamming ball with rank $d$ and radius $(k, \epsilon)$ around $0$ is $k$-Bohr dense. 
\end{lemma}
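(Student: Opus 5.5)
The task is to show: if $\alpha\in\T^d$ has $1,\alpha_1,\dots,\alpha_d$ linearly independent over $\Q$, $k<d$ and $\epsilon>0$, then $E=BH(\alpha;k,\epsilon)$ meets every Bohr neighborhood $n_0+B(\Lambda;\eta)$ with $|\Lambda|=k$. Write $\Lambda=\{\chi_1,\dots,\chi_k\}$ with $\chi_i(n)=e(n\theta_i)$, $\theta=(\theta_1,\dots,\theta_k)\in\T^k$. I will look for $n\in\Z$ such that simultaneously $\|(n-n_0)\theta_i\|$ is tiny for all $i$, and at least $d-k$ coordinates of $n\alpha$ are within $\epsilon$ of $0$. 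Then $w_\epsilon(n\alpha)\le k$ and $n\in E\cap(n_0+B(\Lambda;\eta))$.

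\textbf{Step 1: reduce to an orbit closure.} Consider the joint orbit $n\mapsto (n\alpha, n\theta)\in\T^{d+k}$. Its closure $H$ is a closed subgroup, and I want points in $H$ shifted by $(0,n_0\theta)$. Equivalently, I need some $(x,y)\in H$ with $y$ close to $n_0\theta$ and $x$ having at least $d-k$ coordinates near $0$. Since $H$ is a closed subgroup containing $(n_0\alpha,n_0\theta)$, it suffices to find $(x,0)\in H$ with $x$ close to $-n_0\alpha$ on some $d-k$ prescribed coordinates. Then $(x,0)+(n_0\alpha,n_0\theta)\in H$ has the required form, and density of the orbit in $H$ gives an actual $n$.

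\textbf{Step 2: dimension count.} Let $K=\{x\in\T^d:(x,0)\in H\}$, a closed subgroup of $\T^d$. Since $\{n\alpha\}$ is dense in $\T^d$, the projection of $H$ to the first factor is all of $\T^d$. Hence $\T^d/K$ is a continuous image of the projection of $H$ to $\T^k$, which lies in $\T^k$. So $\T^d/K$ is a compact abelian group isomorphic to a closed subgroup of $\T^k$. Its dual is a quotient of $\Z^k$, so it is finitely generated of rank $\le k$. Thus $K^\perp\subseteq\Z^d$ has rank $r\le k$, and the identity component $K_0$ of $K$ is a subtorus of dimension $d-r\ge d-k$.

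\textbf{Step 3: choosing coordinates (the main obstacle).} I need a set $J$ of $d-k$ coordinates such that the projection $\pi_J:K_0\to\T^J$ is onto. Then I can choose $x\in K_0$ with $x_j=-n_0\alpha_j$ for $j\in J$. To get $J$, take the integer lattice $K^\perp\otimes\Q$, which has dimension $r\le k$. Choose $J$ with $|J|=d-r$ such that the coordinate vectors $\{e_j\}_{j\in J}$ together with $K^\perp$ span $\Q^d$; this is possible by basis extension. Then no nonzero character supported on $J$ annihilates $K_0$, so $\pi_J(K_0)$ is a subtorus of full dimension, i.e.\ all of $\T^J$. Finally shrink $J$ to $d-k$ elements.

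\textbf{Step 4: conclude.} Combining Steps 1–3, $H$ contains a point $(x',y')$ with $y'=n_0\theta$ and $x'_j=0$ for $j\in J$. Choose $n$ with $(n\alpha,n\theta)$ close enough to $(x',y')$. Then $\|(n-n_0)\theta_i\|$ is small, so $n\in n_0+B(\Lambda;\eta)$. Also $\|n\alpha_j\|<\epsilon$ for $j\in J$, so $w_\epsilon(n\alpha)\le k$. Care is needed in Step 3 that the torsion in $K/K_0$ causes no trouble; this is why I work with $K_0$ throughout.
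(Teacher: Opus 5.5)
Your proof is correct. The paper does not prove this lemma itself; it quotes it from \cite[Lemma 4.2]{Griesmer_SeparatingBohr}, so there is no in-paper argument to compare with. Your orbit-closure and duality argument is a complete self-contained proof, with two points worth tightening.

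First, Step 2. Write $p_2\colon H\to\T^k$ for the second projection. The map $p_2(H)\to\T^d/K$ given by $y\mapsto x+K$ for $(x,y)\in H$ is a continuous surjective homomorphism, but it need not be injective. Its kernel is $\{y:(0,y)\in H\}$, which is all of $\T^k$ if, say, $1,\alpha_1,\dots,\alpha_d,\theta_1,\dots,\theta_k$ are rationally independent. So ``isomorphic to a closed subgroup of $\T^k$'' does not follow as written. What you actually need still follows: dualizing the surjection embeds $K^\perp=\widehat{\T^d/K}$ into $\widehat{p_2(H)}\cong\Z^k/p_2(H)^\perp$, whence $\operatorname{rank}K^\perp\le k$.

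Second, Step 3. The claim that no nonzero character supported on $J$ annihilates $K_0$ relies on $K_0^\perp=(\mathrm{span}_\Q K^\perp)\cap\Z^d$, which holds because $K/K_0$ is finite. However, $K_0$ is not needed at all. The annihilator of $\pi_J(K)$ in $\Z^J$ consists of those $m$ whose extension by zero lies in $K^\perp$. By your choice of $J$ this is $\{0\}$, so already $\pi_J(K)=\T^J$, and the torsion in $K/K_0$ never enters.

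If you want a shorter write-up, you can do the same dimension count directly on the frequencies, using real representatives of $\alpha_j$ and $\theta_i$. The space $\mathrm{span}_\Q(1,\theta_1,\dots,\theta_k)$ meets $\mathrm{span}_\Q(1,\alpha_1,\dots,\alpha_d)$ in a subspace of dimension at most $k+1$ that contains $1$. Hence you can choose $J$ with $|J|=d-k$ such that the $\alpha_j$, $j\in J$, are $\Q$-independent modulo $\mathrm{span}_\Q(1,\theta_1,\dots,\theta_k)$. Then every character trivial on the orbit of $(\alpha_J,\theta)$ has zero $J$-component. So the orbit closure is $\T^J\times\overline{\{n\theta:n\in\Z\}}$, which contains $(0,n_0\theta)$. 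This is exactly the dual form of your Steps 2--3.
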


\begin{lemma}[see {\cite[Lemma 3.5]{Griesmer_SeparatingBohr}}]
\label[lemma]{lem:finite_bohr_dense}
If $S \subseteq \Z$ is $k$-Bohr dense, then for all $M \in \N$ and $\epsilon > 0$, there exists a finite set $S' \subseteq S$ such that for all $m \in \Z$ with $|m| \leq M$, the set $S' - m$ intersects every $(k, \epsilon)$-Bohr set.
\end{lemma}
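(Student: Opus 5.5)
The argument is a compactness argument in the finite-dimensional torus that carries the rank-$k$ Bohr sets. First I would reduce the statement to Bohr sets of a single rank. Given $k$, $\epsilon$ and $M$, a $(k,\epsilon)$-Bohr set is $\{n: \lVert n\beta_j\rVert<\epsilon' \text{ for } j\le k\}$ for some $\beta=(\beta_1,\dots,\beta_k)\in\T^k$, with $\epsilon'$ depending only on $\epsilon$. It is convenient to pass to the norm $\lVert\cdot\rVert$ on $\T$, since $|e(x)-1|<\eta$ is equivalent to $\lVert x\rVert<\delta(\eta)$. So the claim becomes: there is a finite $S'\subseteq S$ such that for every $\beta\in\T^k$ and every $|m|\le M$ there is $s\in S'$ with $\max_j\lVert (s-m)\beta_j\rVert<\delta$. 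Here $\delta$ is fixed, determined by $\epsilon$.

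Next I would use $k$-Bohr density to produce an open cover. Fix $m$ with $|m|\le M$ and fix $\beta\in\T^k$. The set $S-m$ meets the rank-$k$ Bohr set determined by $\beta$ at radius $\delta/2$, so there is $s_{\beta,m}\in S$ with $\max_j\lVert (s_{\beta,m}-m)\beta_j\rVert<\delta/2$. For a fixed integer $n=s_{\beta,m}-m$, the map $\gamma\mapsto n\gamma$ is continuous on $\T^k$. Hence there is an open neighborhood $V_{\beta,m}$ of $\beta$ such that every $\gamma\in V_{\beta,m}$ satisfies $\max_j\lVert n\gamma_j\rVert<\delta$; explicitly, one can take the ball of radius $\delta/(2(|n|+1))$.

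Then I would apply compactness. For each $m$, the sets $V_{\beta,m}$ cover the compact space $\T^k$, so finitely many of them, with centers $\beta^{(1)},\dots,\beta^{(r_m)}$, suffice. I would set
\[
S'=\bigcup_{|m|\le M}\{s_{\beta^{(i)},m}: 1\le i\le r_m\}.
\]
This is a finite union of finite sets, so $S'$ is finite. For any $\gamma$ and any $|m|\le M$, the point $\gamma$ lies in some $V_{\beta^{(i)},m}$, and the corresponding element $s_{\beta^{(i)},m}\in S'$ satisfies $s-m\in$ the $(k,\epsilon)$-Bohr set determined by $\gamma$.

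The only delicate point is the first step. I need to translate the radius convention between $|\chi(n)-1|<\eta$ and $\lVert\cdot\rVert$, and I must also allow Bohr sets given by fewer than $k$ characters. Those are handled by padding with the trivial character, or simply because they contain a rank-$k$ Bohr set. I must also keep the radius halved so that an open neighborhood exists. Everything after that is routine.
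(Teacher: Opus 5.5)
Your proof is correct. The paper gives no argument of its own and simply cites \cite[Lemma 3.5]{Griesmer_SeparatingBohr}, and your compactness argument on $\T^k$ is the standard proof of that lemma: you get an open cover from $k$-Bohr density, take a finite subcover for each of the finitely many $m$ with $|m|\le M$, and then take the union. (Halving the radius is harmless but unnecessary, since for fixed $n$ the set $\{\gamma\in\T^k:\max_j\lVert n\gamma_j\rVert<\delta\}$ is already open; and Bohr sets with repeated frequencies are correctly handled by your second option, containment of a rank-$k$ Bohr set, rather than by padding with the trivial character.)
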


For $\delta \geq 0$, we say $S \subseteq \Z$ is \emph{$\delta$-nonrecurrent} if there is a set $A \subseteq \Z$ having $d^*(A) > \delta$ and $(A - A) \cap S = \varnothing$. 

\begin{lemma}[see {\cite[Lemma 4.3]{Griesmer_SeparatingBohr}}]
\label[lemma]{lem:bohr-hamming-recurrence}
Let $\delta > 0$ and $k \in \N$. If $S \subseteq \Z$ is finite and $\delta$-nonrecurrent, then there is an $\eta > 0$ and a proper Bohr-Hamming ball $BH$ of radius $(k, \eta)$ such that $S + BH$ is $\delta$-nonrecurrent.
\end{lemma}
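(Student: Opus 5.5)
The plan is to build the required set in two stages. First I replace the witness of $\delta$-nonrecurrence by a periodic one. Then I add a layer controlled by a rotation on $\T^d$ so that the differences also avoid the thickening $S+BH$. The step I have not solved is constructing that layer with enough density; it is described in the third paragraph.

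\emph{Step 1 (periodization).} Fix $A\subseteq\Z$ with $d^*(A)>\delta$ and $(A-A)\cap S=\varnothing$, and put $M:=1+2\max_{s\in S}|s|$. Choose an interval $I$ of length $L$ with $|A\cap I|>\delta' L$, where $\delta<\delta'<d^*(A)$. Set $N:=L+M$ and $A':=(A\cap I)+N\Z$. Differences within one block avoid $S$ by hypothesis. Differences between distinct blocks have absolute value greater than $M/2$ and so are not in $S$. For $L$ large, $d^*(A')=|A\cap I|/N>\delta$. Thus $A'=\pi^{-1}(F)$ for some $F\subseteq\Z/N\Z$ with $|F|/N>\delta$. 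The choice of $M$ ensures $(F-F)\cap(S \bmod N)=\varnothing$. This step is routine.

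\emph{Step 2 (adding a torus coordinate).} I choose $d>k$ and $\alpha\in\T^d$ with $1,\alpha_1,\dots,\alpha_d$ linearly independent over $\Q$, so that $BH(\alpha;k,\eta)$ is proper. I take $\alpha_j=\tfrac1N+\gamma_j$ with each $\gamma_j$ tiny; this keeps $s\gamma_j$ negligible for every $s\in S$. The target set is
\[
A''=\{n\in\Z:\ (n \bmod N,\ n\gamma)\in C\}
\]
for a suitable $C\subseteq \Z/N\Z\times\T^d$. Since $(n \bmod N, n\gamma)$ is equidistributed in $\Z/N\Z\times\T^d$, we get $d^*(A'')=(\mathrm{counting}\times\mathrm{Haar})(C)$.

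Now suppose $a,a'\in A''$ and $a-a'=s+b$ with $s\in S$ and $b\in BH(\alpha;k,\eta)$. Reducing mod $N$ gives $b\equiv f-f'-s\not\equiv 0 \pmod N$ when $f,f'\in F$. Moreover, in all but $k$ coordinates $b\gamma_j\approx -b/N$, which is a nonzero multiple of $1/N$. So $a\gamma-a'\gamma$ is Hamming-close, within $k$ coordinates and up to $\eta+O(\max|s\gamma_j|)$, to a nonzero diagonal point $t(1,\dots,1)$ with $t\in\tfrac1N\Z\setminus\Z$. It therefore suffices to find $C'\subseteq\T^d$ such that $C'-C'$ avoids the $(k,2\eta)$-Hamming neighbourhoods of these finitely many nonzero diagonal points, and then set $C=F\times C'$.

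\emph{Main obstacle.} The difficulty is density: $d^*(A'')=(|F|/N)\,\mu(C')$, so $\mu(C')$ must exceed $\delta N/|F|$, which is close to $1$. My first attempts fail. Majority-type sets $\{x:\ \text{most } x_j\in J\}$ need $|J|>1/2$ to have large measure, and then $J-J$ contains the diagonal shifts. Rotation-invariant conditions such as equidistribution of the coordinates fail for the same reason. The two levers I would try first are these. One is to use the slack $\delta'>\delta$ together with $d\to\infty$, which might allow $C'$ of measure $1-o(1)$ via a coding-theoretic set, such as a Hamming-ball complement around a sparse "code" in $(\tfrac1N\Z/\Z)^d$. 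The other is to let $C$ be non-product, coupling the $\Z/N$ coordinate with the torus coordinates so that the forbidden diagonal shift $t$ is tied to $f-f'$. I expect this construction of $C$, which is essentially the content of \cite[Lemma 4.3]{Griesmer_SeparatingBohr}, to be the heart of the proof. Steps 1 and 2 are only bookkeeping around it.
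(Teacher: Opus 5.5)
\textbf{Verdict: there is a genuine gap.} Your Step~1 and the reduction in Step~2 are correct. But the proposal stops exactly where the content of the lemma lies, and the product ansatz $C=F\times C'$ cannot be completed.

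\textbf{Why the product ansatz fails.} Your avoidance condition forces $C'-C'$ to miss at least one point of $\T^d$, namely a nonzero diagonal point $t\mathbf{1}$. For every measurable $C'\subseteq\T^d$ and every $z$ one has $\mu(C'\cap(C'+z))\geq 2\mu(C')-1$. Hence $\mu(C')\leq 1/2$, and by your density formula $d^*(A'')\leq |F|/(2N)$. So your first lever (a $C'$ of measure $1-o(1)$) is impossible, not merely hard, and any product construction loses a factor $2$ in density. The lemma cannot afford that loss. It must return the same $\delta$, and $\delta$ may be arbitrarily close to the best density of a set whose difference set avoids $S$. Moreover, the proof of Theorem~\ref{th:ultimateCounterExample} applies the lemma infinitely often with one fixed $\delta'$.

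\textbf{What is missing.} You need the coupling of your second lever: a $\Z/N\Z$-equivariant, Hamming-robust decoding of $n \bmod N$ from $n\alpha$. Here is one way to build it. Write $\bar s = s \bmod N$.
\begin{itemize}
\item Take $\alpha$ with $1,\alpha_1,\dots,\alpha_d$ independent over $\Q$ and $\lVert \alpha_j-\tfrac{1}{N}\rVert<\eta/\max_{s\in S}|s|$.
\item Put $Z(y)=\frac1d\sum_{j} e(y_j)$. Let $L(y)\in\Z/N\Z$ be the index $t$ of the sector $[2\pi t/N,2\pi(t+1)/N)$ containing $\arg Z(y)$.
\item Let $G$ be the open set of $y$ with $|Z(y)|>c/\sqrt d$ and with $\arg Z(y)$ at angular distance $>\theta$ from $\frac{2\pi}{N}\Z$.
\end{itemize}

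\textbf{Why this works.} Since $Z(y+\tfrac{t}{N}\mathbf{1})=e(t/N)Z(y)$, the set $G$ is invariant under the diagonal shift and $L(y+\tfrac{t}{N}\mathbf{1})=L(y)+t$. Hence the $N$ label classes in $G$ have equal measure. A perturbation that is arbitrary in at most $k$ coordinates and smaller than $2\eta$ in the others moves $Z$ by at most $2k/d+4\pi\eta$. This is less than $(c/\sqrt d)\sin\theta$ once $d\gg k^2$ and $\eta\ll d^{-1/2}$, so such a perturbation does not change the label. By the central limit theorem, $\sqrt d\, Z$ is nearly a rotation-invariant complex Gaussian. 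So $\mu(G)>\delta N/|F|$ for suitably small $c,\theta$ and large $d$.

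Now set $A''=\{n:\ n\alpha\in G,\ L(n\alpha)\in F\}$; in your coordinates this is $C=\{(u,y): y\in G,\ u+L(y)\in F\}$. Then $d^*(A'')\geq \frac{|F|}{N}\mu(G)>\delta$. If $a-a'=s+b$ with $b\in BH(\alpha;k,\eta)$, then $a\alpha$ is such a perturbation of $a'\alpha+\tfrac{s}{N}\mathbf{1}\in G$. Hence $L(a\alpha)-L(a'\alpha)=\bar s\in F-F$, contradicting Step~1. Without this construction, or an equivalent one, your proposal does not prove the lemma.
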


\begin{lemma}[see {\cite[Lemma 2.2]{Griesmer_SeparatingBohr}}]
\label[lemma]{lem:finite_nonrecurrent_extrapolate}
Let $0 < \delta < \delta'$ and $S \subseteq \Z$. If every finite subset of $S$ is $\delta'$-nonrecurrent, then $S$ is $\delta$-nonrecurrent.
\end{lemma}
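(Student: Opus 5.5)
The plan is a compactness argument in the shift space $X=\{0,1\}^{\Z}$, with shift $(Tx)_n=x_{n+1}$ and cylinder $[1]=\{x\in X: x_0=1\}$. The idea is to turn each witness set for a finite piece of $S$ into a shift-invariant measure, pass to a weak$^*$ limit, and then read off a single witness set for all of $S$ from a generic point of that limit. The loss from $\delta'$ to $\delta$ is what lets us select a suitable ergodic component strictly above $\delta$.

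First we build the approximating measures. Let $S_N=S\cap[-N,N]$. By hypothesis there is $A_N\subseteq\Z$ with $d^*(A_N)>\delta'$ and $(A_N-A_N)\cap S_N=\varnothing$. Take an invariant mean $m_N$ with $m_N(1_{A_N})>\delta'$. The functional $f\mapsto m_N\bigl(n\mapsto f(T^n 1_{A_N})\bigr)$ on $C(X)$ is represented by a $T$-invariant Borel probability measure $\mu_N$ with $\mu_N([1])=m_N(1_{A_N})>\delta'$. Now fix $s\in S_N$. The function $1_{[1]\cap T^{-s}[1]}$ evaluated along the orbit of $1_{A_N}$ is $1_{A_N\cap(A_N-s)}$, which vanishes identically since $s\notin A_N-A_N$. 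Hence $\mu_N([1]\cap T^{-s}[1])=0$.

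Next we pass to the limit. Let $\mu$ be a weak$^*$ cluster point of $(\mu_N)$; it is $T$-invariant. Cylinders are clopen, so their indicators are continuous and the values pass to the limit. This gives $\mu([1])\ge\delta'>\delta$, and $\mu([1]\cap T^{-s}[1])=0$ for every $s\in S$, because each $s$ lies in $S_N$ for all large $N$. By invariance, the set
\[
Z=\bigcup_{n\in\Z}\bigcup_{s\in S}T^{-n}\bigl([1]\cap T^{-s}[1]\bigr)
\]
is a countable union of $\mu$-null sets, so $\mu(Z)=0$.

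Finally we extract a point. Use the ergodic decomposition of $\mu$. Since $\int\mu_e([1])\,d\mu(e)\ge\delta'>\delta$ and $\mu_e(Z)=0$ for almost every component, we can choose an ergodic $\mu_e$ with $\mu_e([1])>\delta$ and $\mu_e(Z)=0$. By the pointwise ergodic theorem, $\mu_e$-almost every $x$ satisfies
\[
\frac{1}{2N+1}\#\{|n|\le N: x_n=1\}\to\mu_e([1]),
\]
so we can pick such an $x\notin Z$. Put $A=\{n: x_n=1\}$. Then $A$ has density greater than $\delta$, so $d^*(A)>\delta$. Moreover $x\notin Z$ means there are no $n$ and $s\in S$ with $x_n=x_{n+s}=1$, i.e.\ $(A-A)\cap S=\varnothing$. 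Negative elements of $S$ cause no difficulty, since $A-A$ is symmetric.

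The main point requiring care is the first step: the invariant-mean/measure correspondence on the shift orbit closure must produce an exact zero for the forbidden return cylinders, and these clopen conditions must survive the weak$^*$ limit. The remaining steps are standard once that is in place.
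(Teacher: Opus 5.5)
Your proof is correct. The paper gives no argument of its own for this lemma; it is quoted from \cite[Lemma 2.2]{Griesmer_SeparatingBohr}, so there is nothing in the text to compare against. What you have written is a complete, self-contained version of the standard compactness argument. You transfer each finite witness $A_N$ to a shift-invariant measure on $\{0,1\}^{\Z}$ through an invariant mean. The conditions $\mu([1])\ge\delta'$ and $\mu([1]\cap T^{-s}[1])=0$ are weak$^*$-closed because cylinders are clopen. You then recover a witness set from a typical point of the limit measure.

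Each step checks out, including the two points you flagged. Invariance of $\mu_N$ comes from invariance of $m_N$. The forbidden cylinders get measure exactly $0$ because the corresponding function on $\Z$ vanishes identically. The final density bound is legitimate because $([-N,N])_N$ is a F{\o}lner sequence.

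Two minor remarks. First, you can skip the ergodic decomposition by applying Birkhoff's theorem to $\mu$ directly. The limit $\bar f$ of the two-sided averages of $1_{[1]}$ satisfies $\int \bar f\,d\mu=\mu([1])\ge\delta'$, so $\{\bar f\ge\delta'\}\setminus Z$ has positive measure. Second, this shows your argument actually produces $A$ with $d^*(A)\ge\delta'$. The drop from $\delta'$ to $\delta$ is needed only because strict inequality may be lost in the weak$^*$ limit.
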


\cref{mainthm:difference_set_not_DF} is an immediate corollary of the following theorem.

\begin{theorem}\label{th:ultimateCounterExample}  
For all $\delta < 1/2$, there are sets $A, S\subseteq \mathbb Z$ such that $d^*(A) > \delta$, $S$ is Bohr-dense, and $(A - A) + (S - S)$ does not contain a Bohr neighborhood of any integer.
\end{theorem}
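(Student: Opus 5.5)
The strategy is the one of \cite{Griesmer_SeparatingBohr}: build $S$ as an increasing union of finite sets $S_1 \subseteq S_2 \subseteq \cdots$ and apply \cref{lem:finite_nonrecurrent_extrapolate} at the end. The key reduction is this. If $(A-A)\cap T=\varnothing$ for some set $T$, then $A-A+(S-S)$ misses $n$ whenever $n+(S-S)\subseteq T$. Fix $\delta<\delta'<1/2$ and enumerate all pairs $(n,k)$ with $n\in\Z$ and $k\in\N$. We will construct finite sets $S_j$ and nonrecurrent sets $T_j$ such that the following hold for every $j$:
\begin{enumerate}
\item[(a)] $T_j$ is $\delta'$-nonrecurrent;
\item[(b)] $S_j$ meets every $(k_i,\epsilon_i)$-Bohr neighborhood of each integer of absolute value at most $i$, for all $i\le j$;
\item[(c)] for each $i\le j$, the set $T_j$ contains $n_i+(S_j-S_j)$ intersected with the complement of a rank-$k_i$ Bohr-type obstruction, so that $T_j$ contains no Bohr neighborhood of $n_i$ of rank at most $k_i$ and radius at least $1/i$.
\end{enumerate}
Finally, $T=\bigcup_j T_j$ should be an increasing union whose finite subsets are all $\delta'$-nonrecurrent.

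Start with $T_0$ a finite $\delta'$-nonrecurrent set. Such a set exists: any set avoiding differences of $A=\{x:\lVert x\alpha\rVert<\delta'\}$ for irrational $\alpha$ and $\delta'<1/2$ works, and this is where the constraint $\delta<1/2$ enters. At step $j$, apply \cref{lem:bohr-hamming-recurrence} to the finite $\delta'$-nonrecurrent set $T_{j-1}$ with rank parameter $k$ large (say $k=2k_j+2$). This gives a proper Bohr-Hamming ball $BH$ of radius $(k,\eta)$ such that $T_{j-1}+BH$ is still $\delta'$-nonrecurrent. By \cref{lem:bohr-hamming-ball-dense}, $BH$ is $k$-Bohr dense, and so is each translate of it by an element of $T_{j-1}$.

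Next, use \cref{lem:finite_bohr_dense} to choose a finite $S'\subseteq BH$ that meets every $(k_j,1/j)$-Bohr neighborhood of each $m$ with $|m|\le j$. Set $S_j=S_{j-1}\cup S'$. We then need $n+(S_j-S_j)$ to lie inside a $\delta'$-nonrecurrent set, for every $n$ handled so far. Bohr-Hamming balls are nearly closed under differences: $BH(\alpha;k,\eta)-BH(\alpha;k,\eta)\subseteq BH(\alpha;2k,2\eta)$. The plan is therefore to work throughout with a single growing sequence of Hamming balls and to take $T_j=T_{j-1}+BH'$, where $BH'$ is a slightly larger Hamming ball still produced by \cref{lem:bohr-hamming-recurrence}, applied with parameters $(2k,2\eta)$. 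This gives (a) and (c) at once, since a translate $n+BH'$ with $n$ chosen from $T_{j-1}$ contains $n+(S'-S')$.

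With $A$ the set given by \cref{lem:finite_nonrecurrent_extrapolate} for $T=\bigcup T_j$, we get $d^*(A)>\delta$ and $A-A$ disjoint from $T$. Then $A-A+(S-S)$ misses $n+$(stuff), and a Bohr neighborhood of $n$ of rank $k$ would have to contain some point of $n+(S-S)\subseteq T$, a contradiction.

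The main obstacle is the bookkeeping in (c). We must guarantee that the complement of $A-A+(S-S)$ meets every Bohr neighborhood of every integer, not just that $S-S$ lies in $T$. Concretely, the $n_i$ must be chosen so that $T$ itself is Bohr dense, using density of proper Hamming balls again, and the differences $S_j-S_i$ across different stages must also be controlled. I would handle the cross-stage differences by making each new $BH$ have frequency vectors extending the previous ones, so that all of $S$ lives in one nested family of Hamming balls whose difference sets remain nonrecurrent.
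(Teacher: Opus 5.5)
There is a genuine gap. Your reduction is correct: if $(A-A)\cap T=\varnothing$ and $n+(S-S)\subseteq T$, then $n\notin A-A+(S-S)$. But the content of the theorem is that the set $N$ of such $n$ must be Bohr dense, and your construction never produces more $n$'s than you start with.

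\begin{itemize}
\item At stage $j$ you only check $n+(S'-S')\subseteq n+BH'$ for $n\in T_{j-1}$. Knowing $n\in T_{j-1}$ says nothing about $n+(S_{j-1}-S_{j-1})$ or about the cross terms $n+(S'-S_{j-1})$ and $n+(S_{j-1}-S')$.
\item The invariant that does propagate needs $0\in S_{j-1}$, $S'\subseteq BH(\alpha;k,\eta)$, and thickening by $BH(\alpha;2k,2\eta)$. It reads: $n+(S_{j-1}-S_{j-1})\subseteq T_{j-1}$ implies $n+(S_j-S_j)\subseteq T_j$. This keeps the old $n$'s, but no step of your construction adds new ones, so $N$ stays finite (e.g.\ $N=T_0$ if $S_0=\{0\}$).
\item Item (c) and your closing paragraph do not fill this. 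Asking that $T$ be Bohr dense is the wrong target. The final ``contradiction'' is not one: a point of $n+(S-S)\subseteq T$ lies outside $A-A$, but it may well lie in $A-A+(S-S)$.
\end{itemize}

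You flag this yourself as the main obstacle and leave it open. Nested frequency vectors are not something \cref{lem:bohr-hamming-recurrence} provides, since it gives no control over $\alpha$, and they are not what is needed. Separately, $T_{j-1}+BH'$ is infinite, so \cref{lem:bohr-hamming-recurrence}, which requires a finite set, cannot be reapplied to it. You must keep only the finitely many points actually used.

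The missing idea is to build the certified set $N$ out of $S$ itself. The paper keeps $S_k$ finite, symmetric, containing $0$, with $1+S_k+S_k+S_k$ $\delta'$-nonrecurrent. It applies \cref{lem:bohr-hamming-recurrence} to this finite set with parameters $(3(k+1),3\eta)$. It then chooses a symmetric $S_{k+1}\supseteq S_k$ inside $S_k+BH(\alpha;k+1,\eta)$ such that $S_{k+1}-m$ meets every $(k+1,1/(k+1))$-Bohr set for $|m|\le k+1$; this uses \cref{lem:bohr-hamming-ball-dense} and \cref{lem:finite_bohr_dense}. Since a sum of three elements of $BH(\alpha;k+1,\eta)$ lies in $BH(\alpha;3(k+1),3\eta)$, the invariant persists and all cross-stage terms are absorbed automatically.

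At the end, $(A-A)\cap(1+S+S+S)=\varnothing$. Because $S=-S$ and $A-A=-(A-A)$, this is equivalent to $(A-A+S-S)\cap(-1-S)=\varnothing$. So $N\supseteq -1-S$, which is Bohr dense because $S$ is. Your scheme can be repaired by the same mechanism: add new $n_j$ from $n_i+BH(\alpha;k,\eta)$ for an old $n_i$ (this translate is $k$-Bohr dense), and thicken by $BH(\alpha;3k,3\eta)$.

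A minor slip: $\{x:\lVert x\alpha\rVert<\delta'\}$ has density $2\delta'$, and once $\delta'\ge 1/4$ its difference set is all of $\Z$. Take $A=2\Z$ and $T_0=\{1\}$ instead; this is exactly where $\delta'<1/2$ is used.
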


\begin{proof}



Our construction is similar to that of \cite[Section 4.2]{Griesmer_SeparatingBohr}. 
Fix $\delta'$ such that $\delta < \delta' < 1/2$. We will construct a sequence of finite sets $S_0 \subseteq S_1 \subseteq S_2 \subseteq \ldots$ such that
\begin{enumerate}
    \item $S_k=-S_k$,
    \item for $|m| \leq k$, $S_k - m$ intersects every $(k, 1/k)$-Bohr set,
    \item and $1+S_k+S_k+S_k$ is $\delta'$-nonrecurrent.
\end{enumerate}    
First let $S_0 = \{0\}$. It is easy to verify $S_0$ satisfies the above properties for $k = 0$. Now suppose we already have $S_k$ satisfies the stated properties. Apply \cref{lem:bohr-hamming-recurrence} to find $\eta > 0$ and a proper Bohr-Hamming ball $BH = BH(\alpha; 3(k+1), 3\eta)$ such that $1 + S_k + S_k + S_k + BH$ is $\delta'$-nonrecurrent. Note that 
\[
    BH(\alpha; k+1, \eta) + BH(\alpha; k+1, \eta) + BH(\alpha; k+1, \eta) \subseteq BH(\alpha; 3(k+1), 3\eta).
\]
It follows that
\begin{multline*}
    1 + S_k + S_k + S_k + BH(\alpha; k+1, \eta) + BH(\alpha; k+1, \eta) + BH(\alpha; k+1, \eta) \\
    = 1 + (S_k + BH(\alpha; k+1, \eta)) + (S_k + BH(\alpha; k+1, \eta)) + (S_k + BH(\alpha; k+1, \eta))
\end{multline*}
is $\delta'$-nonrecurrent. 

By \cref{lem:bohr-hamming-ball-dense}, $BH(\alpha; k + 1, \eta)$ is $(k+1)$-Bohr dense and so the same holds for $S_k + BH(\alpha; k + 1, \eta)$. Now by \cref{lem:finite_bohr_dense}, there exists a finite set $S_{k+1} \subseteq S_k + BH(\alpha; k+1, \eta)$ such that for every $|m| \leq k+1$, $S_{k+1} - m$ intersects every $(k+1, 1/(k+1))$-Bohr set. Because both $S_k$ and $BH(\alpha; k, \eta)$ are symmetric and $0 \in BH(\alpha; k, \eta)$, we can choose $S_{k+1}$ to be symmetric and $S_k \subseteq S_{k+1}$.


Let $S=\bigcup_{k=1}^\infty S_k$. From the construction, it is easy to see that $S=-S$, and $S$ is Bohr dense. Every finite subset of $1+S+S+S$ is a subset of $1 + S_k + S_k + S_k$ for some $k$ and so is $\delta'$-nonrecurrent. \cref{lem:finite_nonrecurrent_extrapolate} then implies that $1+S+S+S$ is $\delta$-nonrecurrent. Thus there is an $A\subseteq \mathbb Z$ such that $d^*(A)>\delta$ and $(A - A) \cap (1 + S + S + S) = \varnothing$, or equivalently,
\[
    (A - A + S - S) \cap (-1 - S) = \varnothing.  
\]
Since $-1 - S$ is Bohr dense, we deduce that $A - A + S - S$ does not contain a Bohr neighborhood of any integer. 
\end{proof}



\begin{remark}
Since an infinite difference set is a set of nice recurrence and a van der Corput set, \cref{mainthm:difference_set_not_DF} in particular implies that neither being a set of nice recurrence or van der Corput set implies $\DiffBohr$. In fact, \cref{th:ultimateCounterExample} even says more: there exists $S \subseteq \Z$ which is dense in the Bohr topology such that $S - S$ is not $\DiffBohr$.  
\end{remark}





\subsection{Dense sets in Bohr compactification and proof of \texorpdfstring{\cref{thm:A+B_not_pwBohr}}{Theorem D}}
\label{sec:PWBsumsets}


\begin{lemma}\label{lem:PWBdifference} Let $G$ be a discrete abelian group.  If $C\subseteq G$ is piecewise Bohr, then $C-C$ contains a Bohr set.
\end{lemma}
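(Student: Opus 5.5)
Write $C = T \cap (g + B)$, where $T$ is thick and $B = B(\Lambda;\eta)$ is a Bohr set. Set $B' := B(\Lambda;\eta/2)$. The aim is to show that $B' \subseteq C - C$, or, if that is too ambitious, that $C - C$ contains a Bohr set of comparable radius. First, the Bohr part cooperates with differences: if $x, y \in g + B(\Lambda;\eta/2)$, then $x - y \in B(\Lambda;\eta)$. Directly, though, we need the reverse: to hit a prescribed $h \in B'$, we want $x \in C$ with $x + h \in C$.

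Fix $h \in B'$. Choose $x \in T$ with $x \in g + B(\Lambda;\eta/2)$ and $x + h \in T$. Then $x + h \in g + B$, because $|\chi(x+h-g) - 1| \le |\chi(x-g)-1| + |\chi(h)-1| < \eta$. So both $x$ and $x+h$ lie in $C$, and $h = (x+h) - x \in C - C$.

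The key step is finding such an $x$. The set $T \cap (T - h)$ is thick: given any finite $F$, $T$ contains a translate $t + (F \cup (F+h))$, so $T \cap (T-h)$ contains $t + F$. The set $g + B(\Lambda;\eta/2)$ is a translate of a Bohr set, hence syndetic by \cref{lem:bohr_syndetic_set}. A thick set meets every syndetic set, so the intersection is nonempty, which gives $x$. This shows $B(\Lambda;\eta/2) \subseteq C - C$.

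The only point needing care is the routine check that thick sets are closed under this kind of finite intersection with a translate, i.e., the standard characterization that $T$ is thick iff it contains translates of every finite set. That equivalence is standard: if $T$ omitted some translate pattern $F$, then $G \setminus T$ would meet every translate of $F$, so $(G\setminus T) - F = G$, making $G \setminus T$ syndetic and disjoint from $T$, a contradiction. I expect no real obstacle beyond stating this.
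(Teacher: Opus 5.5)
Your proof is correct. It uses the same mechanism as the paper's proof but is self-contained, whereas the paper delegates the main step to a citation.

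The paper invokes \cite[Lemma 12.7]{Griesmer_DiscreteSumsets} to get a Bohr set $B$ such that $C$ contains a translate of every finite subset of $B$. It then concludes $B\subseteq B-B\subseteq C-C$ via $F-F=(F+t_F)-(F+t_F)$. You only need the two-point configurations $\{0,h\}$, and you prove that case directly from three facts: $T\cap(T-h)$ is thick, $g+B(\Lambda;\eta/2)$ is syndetic (\cref{lem:bohr_syndetic_set}), and the triangle inequality keeps $x+h$ inside $g+B(\Lambda;\eta)$. If you replace $\{0,h\}$ by an arbitrary finite $F\subseteq B(\Lambda;\eta/2)$ and $T\cap(T-h)$ by $\bigcap_{f\in F}(T-f)$, the same argument recovers the statement the paper cites.

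Your route gives an explicit Bohr set inside $C-C$ (same frequencies, half the radius) with no outside input. The paper's route is shorter on the page and records the stronger structural fact that $C$ contains translates of all finite subsets of a Bohr set.

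One small remark: to conclude that $T\cap(T-h)$ meets a syndetic set $S$, you use the implication ``contains a translate of every finite set $\Rightarrow$ meets every syndetic set.'' That is the converse of the direction you actually checked. It is just as immediate: if $S+F=G$ and $t-F\subseteq T\cap(T-h)$, write $t=s+f$ with $s\in S$ and $f\in F$; then $s=t-f$ lies in the intersection.
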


\begin{proof}     
By \cite[Lemma 12.7]{Griesmer_DiscreteSumsets}, if $C \subseteq G$ is piecewise Bohr then there is a Bohr set $B\subseteq G$ such that $C$ contains a shift of every finite subset of $B$, i.e. for all finite $F\subseteq B$, there is a $t_F\in G$ such that $F+t_F\subseteq C$.  For all such $F$, then $F-F = (F+t_F)-(F+t_F)\subseteq C-C$.  It follows that every finite subset of $B-B$ is a subset of $C-C$, and so $B-B\subseteq C-C$.  
\end{proof}

\begin{proof}[Proof of \cref{thm:A+B_not_pwBohr}]



By \cref{th:ultimateCounterExample}, there exist $S, A \subseteq \Z$ for which $\overline{S} = b\Z$, $d^*(A) > 0$, and $(S + A) - (S + A) = (A - A) + (S - S)$ does not contain a Bohr neighborhood of any integer. By \cref{lem:PWBdifference}, $S + A$ is not piecewise Bohr.
\end{proof}

\subsection{Some examples}

While $\DiffBohr$ sets are sets of recurrence, the next proposition shows that they are not necessarily sets of strong recurrence.  To state it, we introduce a stronger property.  A set $S\subseteq G$ is \emph{$\DiffG$} if for every $A\subseteq G$ with $d^*(A)>0$, $A-A+S=G$.  It is easy to verify that $S$ is $\DiffG$ if and only if for all $g\in G$, $g+S$ is a set of recurrence, if and only if every translate of $S$ is $\DiffBohr$.  

\begin{proposition}\label[proposition]{prop:DB-not-strong-recurrence}
If $G$ is a countably infinite discrete abelian group, then there exists a $\DiffG$ set in $G$ which is not a set of strong recurrence.
\end{proposition}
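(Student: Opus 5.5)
We want a set $S\subseteq G$ such that every translate $g+S$ is a set of recurrence (equivalently $A-A+S=G$ whenever $d^*(A)>0$), while $S$ fails strong recurrence. The plan is to build $S$ as a union of finite pieces, one attached to each element of $G$.

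\textbf{Setup.} Since $G$ is countably infinite, enumerate $G=\{g_1,g_2,\dots\}$ with each element listed infinitely often, and fix a F{\o}lner sequence. The guiding idea is that finite sets can already carry quantitative recurrence: by the finitary (compactness) form of recurrence, for any set of recurrence $R$ and any $\delta>0$ there is a finite $F\subseteq R$ such that $(A-A)\cap F\neq\varnothing$ for every $A$ with $d^*(A)\ge\delta$. This is the analogue of \cref{lem:finite_nonrecurrent_extrapolate}, proved by a compactness argument over measure preserving systems or over subsets of $G$ with fixed density.

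\textbf{Step 1.} For each $g\in G$ we need a set of recurrence $R_g$ such that $g+R_g$ is far from $0$ in a controlled way. Take $R_g=H_g\setminus\{\text{finite}\}$, where $H_g$ is a subgroup or difference set, e.g.\ $R_g=E_g-E_g$ with $E_g$ infinite and sparse. Apply the finitary lemma with $\delta=1/n$ to obtain a finite set $F_n\subseteq R_{g_n}$, and put
\[
S=\bigcup_n (F_n - g_n).
\]
Then for each $g$ and each $\delta>0$, some $n$ with $g_n=g$ and $1/n<\delta$ gives $(A-A)\cap(g+S)\supseteq (A-A)\cap F_n\neq\varnothing$ whenever $d^*(A)>\delta$. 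Hence every translate of $S$ is a set of recurrence, so $S$ is $\DiffG$.

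\textbf{Step 2: defeating strong recurrence.} We must also choose things so that some single system $(X,\mu,T)$ and set $E$ satisfy $\mu(E\cap T_sE)\to 0$ along $s\in S$. The freedom here lies in choosing the $E_g$ very sparse and the finite sets $F_n$ spread out, with $\max|F_n-g_n|\to\infty$. The natural system is a weakly mixing (for instance mixing Gaussian or Bernoulli) $G$-action, since $\mu(E\cap T_hE)\to\mu(E)^2$ as $h\to\infty$ would not suffice on its own. Instead, use a rigid-type construction: pick a sequence of characters or a Kronecker-type system together with a nonrecurrence pattern, so that $\mu(E\cap T_sE)<1/n$ for $s\in F_n-g_n$. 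Concretely, build $F_n$ inside a Bohr-type neighbourhood $B_n$ of $g_n$ of a compact group rotation $K=\prod K_n$, where $E$ is a set such that $\mu(E\cap (E+x))$ is small for $x$ near prescribed points. Inductively choose $F_n$ inside $(g_n+R_{g_n})\cap\{s:\ \mu(E\cap T_sE)<1/n\}$; this requires that the latter set still be a set of recurrence after translation by $-g_n$. That can be arranged since, in a weakly mixing system, $\{s:|\mu(E\cap T_sE)-\mu(E)^2|<\epsilon\}$ has density one, and in a mixing system it is cofinite, while recurrence is preserved after removing a finite set provided $R_g$ avoids $0$ appropriately.

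\textbf{Main obstacle.} The hard part is the tension between the two requirements: $\mu(E\cap T_sE)$ must tend to $0$ rather than to $\mu(E)^2$. This cannot hold for a mixing system at positive measure, so a different mechanism is needed. My first attempt would be to let $E=E^{(n)}$ vary and use a product system $X=\prod_n X_n$ with sets $E=\bigcap$ of cylinders, arranging that $F_n-g_n$ is nonrecurrent for coordinate $n$ via Kriz/Forrest-type constructions, in the spirit of \cite{Forrest90}. Failing that, I would use a diagonal construction over countably many systems, taking a weighted direct sum so that a single $E$ witnesses failure.
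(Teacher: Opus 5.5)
\textbf{Genuine gap: Step 2 is never carried out.} Your Step 1 is correct, but it carries almost none of the weight. Any set of recurrence, even $G\setminus\{0\}$, can serve as $R_g$, and the finitary compactness lemma handles the bookkeeping. The proposition lives entirely in Step 2, which you leave open.

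For your inductive choice to work, you need one fixed system $(X,\mu,(T_g)_{g\in G})$ and one set $E$ with $\mu(E)>0$ with the following property. Writing $D_\epsilon:=\{s\in G:\mu(E\cap T_sE)<\epsilon\}$, the set $g+D_\epsilon$ must be a set of recurrence for every $g\in G$ (including $g=0$) and every $\epsilon>0$. Only then can the $n$-th piece of $S$ lie in $D_{1/n}$ while its $g_n$-translate still meets every $A-A$ with $d^*(A)\geq 1/n$.

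Such an $(X,E)$ already yields a set of recurrence that is not a set of strong recurrence, with the extra uniformity over all translates. So your reduction restates the problem rather than solving it, and you give no construction of $(X,E)$. This is exactly the deep input the paper does not reprove. It notes that $S$ is $\DiffG$ iff every translate $g+S$ is a set of recurrence: indeed $g\in A-A+S$ iff $(A-A)\cap(S-g)\neq\varnothing$, using $A-A=-(A-A)$. It then cites \cite[Corollary 1.4]{GriesmerRecurrenceVsStrong}, which provides a set all of whose translates are sets of recurrence and none of which is a set of strong recurrence.

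\textbf{The mechanisms you suggest cannot produce $(X,E)$.}
\begin{itemize}
\item Mixing systems: $\mu(E\cap T_sE)\to\mu(E)^2$, as you note.
\item Compact group rotations: $s\mapsto\mu(E\cap T_sE)$ is uniformly almost periodic with value $\mu(E)$ at $0$. So for $\epsilon<\mu(E)$ the set $D_\epsilon$ misses a Bohr set $U$. A Bohr set $V$ with $V-V\subseteq U$ then shows $D_\epsilon$ is not a set of recurrence.
\item Arbitrary systems: the argument in the proof of \cref{thm:ABB_vanderCorput_optimal_recurrence} shows the discrete spectral part forces $\mu(E\cap T_sE)>\mu(E)^2/2$ on an almost Bohr set. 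Hence $D_\epsilon$, for $\epsilon<\mu(E)^2/2$, meets some Bohr set only in a set of zero Banach density. The small correlations must therefore come from the continuous part of the spectral measure, and in a non-mixing, rigid way. This is where the rigidity arguments of \cite{GriesmerRecurrenceVsStrong,GriesmerRRPD,Ackelsberg} enter.
\item Weighted direct sums: $\mu(E\cap T_sE)\to 0$ along $S$ forces the same in each component where $E$ has positive measure, so one component is already a single-system witness.
\item Products: if $\mu(E)=\prod_k\mu_k(E_k)>0$, then $\sum_k(1-\mu_k(E_k))<\infty$ and $\mu_k(E_k\cap T_sE_k)\geq 2\mu_k(E_k)-1$. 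So only finitely many coordinates can make the correlations small, and you are back to a single system.
\end{itemize}
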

\begin{proof}
\cite[Corollary 1.4]{GriesmerRecurrenceVsStrong} (see also \cite{Ackelsberg, GriesmerRRPD}) shows that there exists a set $S \subseteq G$ such that for all $g \in G$, $g+S$ is a set of recurrence but not a set of strong recurrence.  In fact, it shows that if every translate of $S$ is a set of recurrence, then there is a set $S'\subseteq S$ such that every translate of $S'$ is a set of recurrence, and no translate of $S'$ is a set of strong recurrence.
\end{proof}

The following proposition shows that there are $\DiffG$ sets, and therefore $\DiffBohr$ sets, which do not support a mean that annihilates continuous measures.  This shows that \cref{th:SpectralSumDifference_intro} fails to fully describe how a set can be $\DiffBohr$.
\begin{proposition}\label[proposition]{prop:diffbohr_not_annihilates_continuous_measure}
If $G$ is a countably infinite discrete abelian group, then there exists a $\DiffBohr$ set $S$ such that none of the means supported on $S$ annihilate continuous measures.
\end{proposition}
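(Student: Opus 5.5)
The plan is to find a single continuous positive measure $\nu$ on $\widehat{G}$ with $|\hat{\nu}|\ge c>0$ on all of $S$. Then every mean $m$ supported on $S$ satisfies $m(|\hat{\nu}|^2)\ge c^2>0$, so no such mean annihilates continuous measures. At the same time I would build $S$ so that every translate $-h+S$ contains an infinite difference set. Since infinite difference sets are sets of recurrence, this makes $S$ a $\DiffG$ set, and hence a $\DiffBohr$ set, by the remark preceding \cref{prop:DB-not-strong-recurrence}.

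\textbf{Construction.} Enumerate $G=\{h_1,h_2,\dots\}$, which is possible since $G$ is countable. Rigidity will do the work: if $(x_i)$ satisfies $\chi(x_i)\to 1$ in $L^2(\nu)$, then $\hat{\nu}(h+x_i-x_j)\to\hat{\nu}(h)$ as $i,j\to\infty$ with $i\ne j$. So if $|\hat{\nu}(h_k)|$ is not too small, a tail $D_k=\{x^{(k)}_i-x^{(k)}_j: i\ne j\ge N_k\}$ of such a difference set satisfies $|\hat{\nu}|\ge \tfrac12|\hat{\nu}(h_k)|$ on $h_k+D_k$. I would set $S=\bigcup_k (h_k+D_k)$, so that $-h_k+S\supseteq D_k$ is a set of recurrence for every $k$.

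\textbf{Building $\nu$ and the rigidity sequences.} I would build $\nu$ together with the sequences $(x^{(k)}_i)$ by a diagonal, Riesz-product style construction, in the spirit of the standard construction of weakly mixing rigid systems for countable abelian groups. At stage $k$ one chooses finitely many new characters and times, and keeps $\nu$ continuous by spreading mass at each stage. Lemma \ref{lem:meanOfSigmaHat} then gives $m(|\hat{\nu}|^2)=0$ for invariant $m$, which is consistent with $\nu$ being continuous.

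\textbf{Main obstacle.} The hard step is to make the lower bound $|\hat{\nu}|\ge c$ uniform over all the pieces $h_k+D_k$ simultaneously. A continuous $\nu$ cannot have $|\hat{\nu}(h_k)|$ bounded below for all $k$: by Wiener's lemma the averages of $|\hat{\nu}|^2$ tend to $0$. Summing separate measures $\sum 2^{-k}\sigma_k$ does not help either, because a finitely additive mean can spread over infinitely many pieces and give each one weight zero.

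\textbf{Planned fix.} I would not insist on using the exact translate $h_k+D_k$. It suffices that $-h_k+S$ contains some set of recurrence. So I would choose, for each $k$, a rigidity sequence $(y^{(k)}_i)$ along which $\hat{\nu}(h_k+y^{(k)}_i-y^{(k)}_j)$ stays near a value of modulus at least $c$, for a fixed $c$. This holds if the set $\{g:|\hat{\nu}(g)|>2c\}$ meets $h_k+\Delta$ for every infinite difference set $\Delta$ built along rigidity times. I would try to arrange this by choosing $\nu$ with a rigidity sequence that is dense in $bG$, so that $\{g:\hat{\nu}(g)\approx 1\}$ meets every coset-type target.

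If this approach stalls, the fallback is to use \cref{prop:DB-not-strong-recurrence}, i.e. the set from \cite{GriesmerRecurrenceVsStrong}. I would inspect that construction, which produces its sets from spectral data of explicit systems, and check that it can be arranged inside a set of the form $\{g: |\hat{\nu}(g)|>c\}$ with $\nu$ continuous.
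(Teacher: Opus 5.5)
\textbf{There is a genuine gap: the one step that carries the proposition is never established.} Your reduction is correct. If $\nu$ is continuous, $|\hat{\nu}|\ge c>0$ on $S$, and every translate of $S$ is a set of recurrence, then $S$ is $\DiffG$ (hence $\DiffBohr$), and every mean supported on $S$ has $m(|\hat{\nu}|^2)\ge c^2$. But the entire content of the proposition is the existence of such a pair $(\nu,S)$, and the proposal does not produce it.

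Your planned fix does not work. If $(y_i)$ is a genuine rigidity sequence for $\nu$, then, as you computed yourself, $\hat{\nu}(h_k+y_i-y_j)\to\hat{\nu}(h_k)$. So asking $\hat{\nu}(h_k+y_i-y_j)$ to stay near a value of modulus at least $c$ is the same as asking $|\hat{\nu}(h_k)|\ge c$ for all $k$. You already observed that this is impossible for continuous $\nu$. To escape it you would need non-rigid sequences whose weak limits in $L^2(\nu)$ correlate with $\chi(h_k)$, and nothing in the sketch controls that.

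Making $\{g:\hat{\nu}(g)\approx 1\}$ dense in $bG$ does not help either. It only makes each translate meet every Bohr set, i.e.\ a set of Bohr recurrence. That is strictly weaker than measurable recurrence. For instance, in the proof of Theorem \ref{th:ultimateCounterExample}, $0\in S$, so $1+S$ is Bohr dense yet disjoint from $A-A$ with $d^*(A)>0$. Your fallback, checking that the set from \cite{GriesmerRecurrenceVsStrong} sits inside some $\{|\hat{\nu}|>c\}$ with $\nu$ continuous, is just a restatement of the missing claim.

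The paper gets past this by citing an external theorem (\cite[Proposition 6.2]{Ackelsberg}, see also \cite[Proposition 5.2]{GriesmerRRPD}). It gives a continuous probability measure $\sigma$ on $\widehat{G}$ such that, for every $\epsilon>0$, every translate of $Q_\epsilon=\{g:\int|\chi(g)-1|\,d\sigma(\chi)<\epsilon\}$ is a set of recurrence.

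With this in hand, take $S$ to be the whole superlevel set $\{g:\Re\hat{\sigma}(g)>1-\epsilon\}\supseteq Q_\epsilon$. Then every translate of $S$ is a set of recurrence, so $S$ is $\DiffG$. Any mean supported on $S$ satisfies $m(|\hat{\sigma}|^2)\ge(1-\epsilon)^2>0$.

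Taking the full superlevel set makes the uniform lower bound automatic, so the union over the pieces $h_k+D_k$ and the uniformity problem it caused disappear. The deep input is exactly the recurrence of all translates of a rigidity-type set of a continuous measure. That is the missing idea, and a diagonal Riesz-product sketch does not supply it.
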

\begin{proof}

 By \cite[Proposition 6.2]{Ackelsberg} (see also \cite[Proposition 5.2]{GriesmerRRPD}) there is a continuous Borel probability measure $\sigma$ on $\widehat{G}$ such that for every $\epsilon > 0$, every translate of the set 
\[
    Q_{\epsilon} = \left\{ g \in G: \int_{\widehat{G}} |\chi(g) - 1| \ d \sigma(\chi) < \epsilon\right\}
\]
is a set of recurrence.
Note that $Q_{\epsilon} \subseteq S$ where
\[
    S:=\{g \in G: \Re \hat{\sigma}(g)>1- \epsilon\}.
\]
Therefore every translate of $S$ is a set of recurrence. As shown in the proof of \cref{prop:DB-not-strong-recurrence}, $S$ is $\DiffBohr$. If $m$ is a mean supported on $S$, then $m(|\hat{\sigma}|^2) \geq (1 - \epsilon)^2 > 0$ and so $m$ does not annihilate the continuous measure $\sigma$. 
\end{proof}



\section{Almost Bohr sets and Bohr expansions}

\subsection{Characterizations of \texorpdfstring{$\AlmostBohrBohr$}{(AB,B)-expanding} sets}

Now we prove \cref{mainthm:almost_Bohr_complete}, which we recall here for convenience.

\begin{theorem*}
    Let $G$ be a discrete abelian group and $S \subseteq G$. The following are equivalent:
    \begin{enumerate}
        \item \label{item:ABC1} $S$ is $\AlmostBohrBohr$.

        \item \label{item:ABC2} For every Bohr set $B$, $d^*(S \cap B) > 0$.

        \item \label{item:ABC3} For every almost Bohr set $A$, $S \cap A \neq \varnothing$. 
        

    \end{enumerate} 
\end{theorem*}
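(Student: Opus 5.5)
I will prove the cycle (ii)$\Rightarrow$(iii)$\Rightarrow$(i)$\Rightarrow$(ii), though (i)$\Rightarrow$(ii) may be easier via (i)$\Rightarrow$(iii)$\Rightarrow$(ii).

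\emph{(ii)$\Rightarrow$(iii).} Let $A=B\setminus E$ with $B$ a Bohr set and $d^*(E)=0$. Upper Banach density is subadditive, since each invariant mean is additive. Hence
\[
d^*(S\cap B)\le d^*(S\cap A)+d^*(E)=d^*(S\cap A).
\]
By (ii) the left side is positive, so $S\cap A\neq\varnothing$.

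\emph{(iii)$\Rightarrow$(i).} Let $A=B\setminus E$ be almost Bohr. Choose a Bohr set $V$ with $V-V\subseteq B$, or even $V=-V$ and $V+V\subseteq B$; this is possible by halving the radius. I claim $V\subseteq A+S$. For $v\in V$, consider the set
\[
(v-V)\setminus(v-E)=v-(V\setminus E).
\]
The sets $-V$ and $V\setminus E$ are almost Bohr, but the translate by $v$ is not a Bohr set around $0$. So instead I look at $V\cap(v-A)\supseteq V\cap(v-B)\setminus(v-E)$. Since $V+V\subseteq B$ and $V=-V$, we get $v-V\subseteq B$, so $V\subseteq v-B$. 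Thus $V\cap(v-A)\supseteq V\setminus(v-E)$, and $d^*(v-E)=0$ by translation invariance. Therefore $V\setminus(v-E)$ is almost Bohr, and by (iii) it meets $S$ at some $s$. Then $s\in v-A$, so $v\in A+s\subseteq A+S$.

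\emph{(i)$\Rightarrow$(ii).} This is the main obstacle. Suppose $d^*(S\cap B)=0$ for some Bohr set $B$. I want an almost Bohr set $A$ with $A+S$ containing no Bohr set. The natural choice is to pick a Bohr set $V$ with $V-V\subseteq B$ and set
\[
E:=(S\cap B)\cup(-(S\cap B)),\qquad A:=V\setminus\bigl(-(S\cap B)\bigr).
\]
Then $0\notin A+(S\cap B)$, since $a+s=0$ forces $a=-s\in -(S\cap B)$. The elements $s\in S\setminus B$ satisfy $a+s\notin V'$ for any Bohr set $V'$ with $V'-V\subseteq B$, because $a+s\in V'$ would give $s\in V'-V\subseteq B$. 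So $0\notin A+S$ follows from restricting attention to a small Bohr set, but $0\notin A+S$ alone only shows that $A+S$ misses $0$. Every Bohr set contains $0$, however, so $A+S$ contains no Bohr set, and this suffices. The density fact needed is $d^*(-(S\cap B))=d^*(S\cap B)=0$, which holds because reflection $f\mapsto f(-\cdot)$ preserves invariant means. This gives the contrapositive of (i)$\Rightarrow$(ii).

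The step requiring the most care is keeping the Bohr neighborhoods centered at $0$ in (iii)$\Rightarrow$(i), together with verifying the invariance of $d^*$ under translation and reflection.
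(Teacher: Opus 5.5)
Your proof is correct and follows essentially the same route as the paper. The shared ideas are the reflection trick $A=B\setminus\bigl(-(S\cap B)\bigr)$ forcing $0\notin A+S$, subadditivity of $d^*$ for (ii)$\Rightarrow$(iii), and, for the sufficiency direction, a smaller Bohr set $W$ with $v-W\subseteq B$ from which you pick an element of $S$ in $W\setminus(v-E)$. The differences are only organizational: you run a cycle and use (iii) with a fixed half-radius $V$, while the paper uses (ii) with a $g$-dependent $B'$ and obtains all of $B\subseteq A+S$. Your shrinking to $V$ in (i)$\Rightarrow$(ii) is unnecessary, since $B=-B$ already gives $(-S)\cap B=-(S\cap B)$.
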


\begin{proof} 
(\ref{item:ABC1}) $\Rightarrow$ (\ref{item:ABC2}):  Suppose that there exists a Bohr set $B$ such that $d^*(S \cap B) = 0$. Since $B = -B$, we have $d^*((-S) \cap B) = d^*((S \cap B)) = 0$. Let $A = B \setminus ((- S) \cap B)$, then $A$ is an almost Bohr set and $A \cap (-S) = \varnothing$. Therefore, $0 \not \in A+S$ and $A+S$ does not contain a Bohr set. This shows that $S$ is not $\AlmostBohrBohr$.

(\ref{item:ABC2}) $\Rightarrow$ (\ref{item:ABC1}): Suppose $S$ satisfies (\ref{item:ABC2}). Let $A$ be an almost Bohr set. We write $A = B \setminus E$ where $d^*(E) =0$ and $B = \Bohr(\chi_1, \ldots, \chi_d; \epsilon)$ for some $\chi_1, \ldots, \chi_d \in \widehat{G}$. 

We claim that $B \subseteq A+S$. Indeed, let $g \in B$ be arbitrary. Then there exists $\delta < \epsilon$ such that $| \chi_i(g) - 1 | < \delta$ for every $i=1, \ldots, d$. Then $g - B' \subseteq B$, where $B': = \Bohr(\chi_1, \ldots, \chi_d; \epsilon - \delta)$. By the hypothesis, $d^*(S \cap B') >0$. Since $d^*(g - E) =0$, there exists $h \in (S \cap B') \setminus (g-E)$. Hence $g-h \in (g - B') \setminus E \subseteq A$, and
\[
    g = h + (g-h) \in S + A,
\]
as desired.

\eqref{item:ABC2} $\Leftrightarrow$ \eqref{item:ABC3}: We prove the contrapositive. Suppose there exists an almost Bohr set $A$ such that $S \cap A = \varnothing$. There is a set $E \subseteq G$ such that $d^*(E) = 0$ and $B = A \cup E$ contains a Bohr set. We have
\[
    d^*(S \cap B) \leq d^*(S \cap A) + d^*(S \cap E) = 0.
\]

Conversely, if there exists a Bohr set $B$ with $d^*( S \cap B) =0 $, then $A = B \setminus (S \cap B)$ is an almost Bohr set and $S \cap A =\varnothing$. 
\end{proof}

As a first application, we have the following generalization of Bogolyubov's theorem.
\begin{corollary}\label[corollary]{cor:generalization_bogolyubov}
Let $G$ be a discrete abelian group and $A, B \subseteq G$ of positive upper Banach densities. Then $A - A + B - B$ contains a Bohr set.
\end{corollary}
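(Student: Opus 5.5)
The plan is to show that $S := B - B$ is $\AlmostBohrBohr$, and then apply this to the almost Bohr set contained in $A - A$. By F{\o}lner's theorem (\cref{thm:folner}), $A - A \supseteq U \setminus E$, where $U$ is a Bohr set and $d^*(E) = 0$. So $A - A$ contains an almost Bohr set. If $S$ is $\AlmostBohrBohr$, then $(U\setminus E) + S$ contains a Bohr set. Since $(U \setminus E) + S \subseteq A - A + B - B$, the corollary follows.

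To verify that $S = B - B$ is $\AlmostBohrBohr$, I will use criterion (\ref{item:ABC2}) of \cref{mainthm:almost_Bohr_complete}. That is, for every Bohr set $V$ I need $d^*((B-B)\cap V) > 0$.

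\emph{Step 1.} Apply F{\o}lner's theorem to $B$. This gives $B - B \supseteq U' \setminus E'$ with $U'$ a Bohr set and $d^*(E') = 0$.

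\emph{Step 2.} Then $(B-B) \cap V \supseteq (U' \cap V) \setminus E'$. The intersection $U'\cap V$ of two Bohr sets is again a Bohr set: take the union of the frequency sets and the minimum of the radii. By \cref{lem:bohr_syndetic_set}, it therefore has positive upper Banach density.

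\emph{Step 3.} Upper Banach density is subadditive, since it is a supremum of means. Hence
\[
d^*\big((U'\cap V)\setminus E'\big) \geq d^*(U' \cap V) - d^*(E') = d^*(U'\cap V) > 0.
\]

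No step here is a real obstacle. The only points that need care are that the intersection of two Bohr sets is a Bohr set, and the subadditivity of $d^*$, which follows because each invariant mean is additive on indicator functions of disjoint sets.
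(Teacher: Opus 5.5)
Your proposal is correct and is essentially the paper's proof. Both apply F{\o}lner's theorem to $A-A$ and to $B-B$, then verify criterion (ii) of the characterization of $(\mathcal{A},\mathcal{B})$-expanding sets for $B-B$ by intersecting with an arbitrary Bohr set and discarding a density-zero set. (Strictly, when the radii differ, $U'\cap V$ contains a Bohr set with frequencies $\Lambda\cup\Lambda'$ and radius $\min(\eta,\eta')$ rather than being one, but containment is all your density argument uses.)
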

\begin{proof}
By \cref{thm:folner}, the sets $A - A$ and $B - B$ are almost Bohr sets. Thus, it suffices to show that $B - B$ is $\AlmostBohrBohr$.
There exist Bohr set $B_1$ and a set of zero Banach density $E$ such that $B - B = B_1 \setminus E$. For any Bohr set $B_2$, $B_1 \cap B_2$ is a Bohr set. In particular, $d^*(B_1 \setminus B_2) > 0$ and so
\[
    d^*((B- B) \cap B_2) = d^*((B_1 \setminus E) \cap B_2) \geq d^*((B_1 \cap B_2) \setminus E) > 0.
\]
Since $B_2$ is an arbitrary Bohr set, in view of \cref{mainthm:almost_Bohr_complete}, $B - B$ is $\AlmostBohrBohr$.  \end{proof}

\subsection{Central sets}
The goal of this subsection is to prove \cref{thm:discrete_group_central_sets}.

\begin{lemma}\label{lem:BohrIsIPstar}
Let $G$ be a discrete abelian group. If $B\subseteq G$ a Bohr set, then $B$ is an $IP^*$ set.
\end{lemma}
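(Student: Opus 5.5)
We must show every Bohr set $B=\Bohr(\chi_1,\dots,\chi_d;\eta)$ meets every $IP$ set. Fix an $IP$ set generated by $(g_n)_{n\in\N}\subseteq G\setminus\{0\}$, so that all finite sums $\sum_{n\in F}g_n$ with $F$ finite and nonempty lie in the set. The plan is to find two indices $i<j$ with $\sum_{n=i+1}^{j} g_n\in B$; this is a finite sum of the generators, so the $IP$ set meets $B$.

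Consider the map $\chi:G\to (S^1)^d$, $\chi(g)=(\chi_1(g),\dots,\chi_d(g))$, as in the proof of \cref{lem:bohr_syndetic_set}. Form the partial sums $s_k=g_1+\cdots+g_k$ for $k\geq 1$. The points $\chi(s_k)$ lie in the compact torus $(S^1)^d$. Cover $(S^1)^d$ by finitely many sets of diameter less than $\eta/\sqrt{d}$ in each coordinate; more simply, cover it by finitely many sets $V_1,\dots,V_N$ such that $z,w\in V_\ell$ implies $|z_t\overline{w_t}-1|<\eta$ for all $t$. By pigeonhole, among $s_1,\dots,s_{N+1}$ there are $i<j$ with $\chi(s_i),\chi(s_j)$ in the same $V_\ell$.

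Then for each $t$ we have $|\chi_t(s_j-s_i)-1|=|\chi_t(s_j)\overline{\chi_t(s_i)}-1|<\eta$. Hence $s_j-s_i=\sum_{n=i+1}^j g_n\in B$, and this element lies in the $IP$ set.

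There is no real obstacle; the only care needed is to use the $IP$ definition with nonempty $F$. Also note that $0\in B$ trivially, so even the convention with $F=\varnothing$ would be harmless. Thus $B$ intersects every $IP$ set, i.e.\ $B$ is $IP^*$. Alternatively, one could argue via idempotents $p\in\beta G$: $\chi$ extends to $\beta G$, idempotence forces $\tilde\chi(p)=1$, and so $B\in p$. I would present the elementary pigeonhole argument.
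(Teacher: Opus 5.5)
Your proof is correct and follows essentially the same route as the paper: pigeonhole the partial sums $g_1+\cdots+g_k$ in a compact group and take a difference of two of them. The paper does this in $bG$, covering it by finitely many translates of a neighborhood $V$ with $V-V\subseteq U$; you do it directly in $(S^1)^d$ through the character map $g\mapsto(\chi_1(g),\dots,\chi_d(g))$, which is the same argument made concrete.
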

\begin{proof}
Let $B \subseteq G$ be a Bohr set. To show $B$ is $IP^*$, it suffices to show that for every $IP$ set $A$, we have $B \cap A \neq \varnothing$.

Let $bG$ be the Bohr compactification of $G$ and let $\iota: G \to bG$ be the canonical embedding. There exists neighborhood $U$ of $0$ in $bG$ such that such that $B \supseteq \iota^{-1}(U)$. Let $V$ be an open subset of $U$ such that $V - V \subseteq U$. Since $bG$ is compact and $\iota(G)$ is dense in $bG$, there exist $h_1, \ldots, h_k \in G$ such that 
\[
    bG = \bigcup_{i=1}^k (\iota(h_i) + V).
\]

Let $A \subseteq G$ be an $IP$ set. Thus there exists a sequence $(g_n)_{n \in \N} \subseteq G \setminus \{0\}$ such that $\sum_{n \in F} g_n \in A$ for all finite set $F \subseteq \N$. By pigeonhole principle, there exist $n_1 < n_2$ and $j \in \{1, \ldots, k\}$ such that
\[
    \iota (g_{1} + \cdots + g_{n_1}) \text{ and } \iota(g_1 + \cdots + g_{n_2}) \text{ both belong to } \iota(h_j) + V. 
\]
It follows that
\[
    \iota (g_{n_1 + 1} + \cdots + g_{n_2}) \in V - V \subseteq U.
\]
As a result $g_{n_1 + 1} + \cdots + g_{n_2} \in B \cap A$.
\end{proof}

\begin{lemma}\label[lemma]{lem:ps_phi(A)}
Let $G$ be an abelian group and $\phi: G \to G$ be a homomorphism with $[G: \phi(G)] < \infty$. If $A \subseteq G$ is piecewise syndetic, then $\phi(A)$ is piecewise syndetic.
\end{lemma}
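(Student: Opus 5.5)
We want: if $A$ is piecewise syndetic and $[G:\phi(G)]<\infty$, then $\phi(A)$ is piecewise syndetic. I would use the combinatorial characterization: $A$ is piecewise syndetic iff there is a finite set $F\subseteq G$ such that $A+F$ is thick. Equivalently, for every finite $K\subseteq G$ there is $t\in G$ with $K+t\subseteq A+F$. This is a standard reformulation of ``thick $\cap$ syndetic'', and I would recall it briefly with a reference or a one-line argument.

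Fix such an $F$ for $A$. Let $R=\{r_1,\dots,r_q\}$ be a complete set of coset representatives for $\phi(G)$ in $G$, where $q=[G:\phi(G)]<\infty$. Put $F'=\phi(F)+R$, which is finite. I claim $\phi(A)+F'$ is thick, which gives the lemma.

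Let $K\subseteq G$ be finite. For each $k\in K$, write $k=\phi(x_k)+r_{i(k)}$ with $x_k\in G$, and let $K_0=\{x_k:k\in K\}$, a finite subset of $G$. Since $A+F$ is thick, there is $t\in G$ with $K_0+t\subseteq A+F$. Then
\[
\phi(K_0)+\phi(t)\subseteq \phi(A)+\phi(F).
\]
Hence for each $k\in K$ we get
\[
k+\phi(t)=\phi(x_k+t)+r_{i(k)}\in \phi(A)+\phi(F)+R=\phi(A)+F'.
\]
So $K+\phi(t)\subseteq\phi(A)+F'$. Since $K$ was arbitrary, $\phi(A)+F'$ is thick.

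The only potential obstacle is justifying the characterization of piecewise syndeticity used above. If $A=T\cap S$ with $T$ thick and $S$ syndetic, $S+F=G$, then for a finite $K$ choose $t$ with $K+F+t\subseteq T$, where $F$ is taken symmetric by replacing it with $F\cup(-F)$. Each $k+t\in K+t$ equals $s+f$ for some $s\in S$ and $f\in F$. Then $s=k+t-f\in K+F+t\subseteq T$, so $s\in A$, giving $K+t\subseteq A+F$. Conversely, if $A+F$ is thick, set $T=A+F$ and $S=A\cup(G\setminus(A+F))$. Then $A=T\cap S$ and $S$ is syndetic with finite set $F\cup\{0\}$. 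So the conversion is routine, and the rest is the coset bookkeeping above.
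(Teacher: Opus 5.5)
Your proof is correct. It uses the same core idea as the paper: push a thick set of the form $A+F$ forward under $\phi$ and absorb the finitely many cosets of $\phi(G)$. Where it differs is in how it finishes.

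\textbf{The paper's route.} It takes $H=\bigcup_i (A-g_i)$ thick and observes that $\phi(H)$ contains a translate of every finite subset of $\phi(G)$. It then asserts, without the coset computation, that this together with syndeticity of $\phi(G)$ makes $\phi(H)$ piecewise syndetic. Finally, it invokes partition regularity of piecewise syndeticity to conclude that some single translate $\phi(A)-\phi(g_i)$ is piecewise syndetic.

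\textbf{Your route.} You keep $\phi(F)$ inside the witnessing finite set. Showing that $\phi(A)+\phi(F)+R$ is thick gives piecewise syndeticity of $\phi(A)$ directly via your characterization. So partition regularity, a nontrivial fact usually proved via $\beta G$, is not needed at all. Your coset bookkeeping also spells out exactly the step the paper leaves implicit. The paper's version is shorter only because it treats partition regularity as a black box.

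\textbf{One small slip.} In the converse direction of your characterization, writing $A=T\cap S$ with $T=A+F$ requires $A\subseteq A+F$. You should first replace $F$ by $F\cup\{0\}$, which keeps $A+F$ thick. Applied to $\phi(A)$, this just means adding $0$ to $F'$.
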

\begin{proof}
Since $A$ is piecewise syndetic, there exists a finite set $\{g_1, \ldots, g_k\}$ such that
\[  
    H = (A - g_1) \cup \cdots \cup (A - g_k)
\]
is thick. This means for any finite set $F \subseteq G$, there exists $g \in G$ such that $F - g \subseteq H$ and so $\phi(F) - \phi(g) \subseteq \phi(H)$.

Every finite subset of $\phi(G)$ has the form $\phi(F)$ for some finite set $F \subseteq G$. It follows that every finite subset of $\phi(G)$ has a translate which is a subset of $\phi(H)$. Since $[G:\phi(G)]$ is finite, $\phi(G)$ is syndetic. As a result, $\phi(H)$ is piecewise syndetic.

Due to the partition regularity of piecewise syndeticity and the fact that 
\[
    \phi(H) = (\phi(A) - \phi(g_1)) \cup \cdots \cup (\phi(A) - \phi(g_k)),
\]
the set $\phi(A) - \phi(g_i)$ is piecewise syndetic for some $i$. Therefore, $\phi(A)$ is piecewise syndetic.
\end{proof}

\begin{proof}[Proof of \cref{thm:discrete_group_central_sets}]
Let $G$ be an abelian group, $\phi: G \to G$ be a homomorphism of finite index, and $C \subseteq G$ a central set. By \cref{lem:ps_phi(A)}, since $C$ is piecewise syndetic, $\phi(C)$ is also piecewise syndetic. In particular, $d^*(\phi(C)) > 0$.
By \cref{thm:folner}, $\phi(C) - \phi(C)$ is an almost Bohr set. In light of \cref{mainthm:almost_Bohr_complete}, it suffices to show that for every Bohr set $B$, $d^*(\phi(C) \cap B) > 0$.

By \cite[Lemma 2.9]{Le-Le-compact}, $\phi^{-1}(B)$ is a Bohr set. By Lemma \ref{lem:BohrIsIPstar}, $\phi^{-1}(B)$ is an $IP^*$ set and so it belongs to every idempotent in $\beta G$. Let $\mathcal{F} \subseteq \mathcal{P}(G)$ be a minimal idempotent ultrafilter that contains $C$. From what we discussed, $\phi^{-1}(B) \in \mathcal{F}$. As a consequence, $C \cap \phi^{-1}(B) \in \mathcal{F}$ and therefore $C \cap \phi^{-1}(B)$ is a central set. It follows that $C \cap \phi^{-1}(B)$ is piecewise syndetic. Note that 
\[
    \phi(C) \cap B \supseteq \phi(C \cap \phi^{-1} B). 
\]
Thus by \cref{lem:ps_phi(A)}, $\phi(C) \cap B$
is piecewise syndetic and so has positive upper Banach density.
\end{proof}

\subsection{Relationships with sets of nice recurrence and van der Corput sets}

\label{sec:abb_vanderCorput_nice_recurrence}

We now give a proof of \cref{thm:ABB_vanderCorput_optimal_recurrence}, which states that any $\AlmostBohrBohr$ set is a van der Corput set and a set of nice recurrence.

\begin{proof}[Proof of \cref{thm:ABB_vanderCorput_optimal_recurrence}]

Let $S$ be an $\AlmostBohrBohr$ set and let $\sigma$ be a positive finite Radon measure on $\widehat{G}$. Let $\epsilon > 0$. By \cref{lem:very_general_way_to_get_nice_vdC}, to prove $S$ is both a van der Corput set and a set of nice recurrence, it suffices to show that there exists $h \in S$ satisfying
\[
    \Re(\hat{\sigma}(h)) > \sigma(\{\chi_0\}) - \epsilon.
\]

Since the proof is similar to the proof of \cref{prop:kmf_imply_vanderCorput}, we only sketch the main ideas and highlight the differences.
Write $\sigma = \sigma_d + \sigma_c$, where $\sigma_d$ and $\sigma_c$ are the discrete and continuous parts of $\sigma$, respectively. As in the proof of \cref{prop:kmf_imply_vanderCorput}, we can show
\[
    \hat{\sigma}_d(0) \geq \sigma (\{ \chi_0 \}).
\]
Let 
\[
    B =  \left\{ g \in G: \Re \left(\hat{\sigma}_d(g)\right) > \sigma(\{\chi_0\}) - \frac{\epsilon}{2}\right\}.
\]
Then $B$ contains the Bohr set 
\[
    \Big\{ g \in G: \sum_{\chi \in \Lambda} \left| \overline{\chi}(g) - 1 \right| \sigma_d( \{ \chi \} )\Big\} < \frac{\epsilon}{4}.
\]
Let $E: = \left\{ g \in G: \left| \hat{\sigma}_c(g) \right| \geq \frac{\epsilon}{2}\right\}$. Since $\sigma_c$ is a continuous measure, 
\cref{lem:meanOfSigmaHat} implies that for every invariant mean $m$ on $\ell^\infty(G)$, we have $m(|\hat{\sigma}_c|^2) = 0$. As $1_E \leq (2/\epsilon) |\hat{\sigma}_c|$, it follows that $m(1_E) = 0$. Because this is true for every invariant mean $m$, we have $d^*(E) = 0$.

Since $S$ is an $\AlmostBohrBohr$ set, by \cref{mainthm:almost_Bohr_complete}, the set $S$ intersects the almost Bohr set $B \setminus E$. Let $h$ be an element in the intersection. We have
\[
   \Re(\hat{\sigma}(h)) = \Re(\hat{\sigma}_d(h)) + \Re( \hat{\sigma}_c(h)) > \sigma(\{\chi_0\}) - \frac{\epsilon}{2} - \frac{\epsilon}{2} = \sigma(\{\chi_0\}) - \epsilon. \qedhere
\]
\end{proof}

\subsection{Relationships with sets of pointwise recurrence in \texorpdfstring{$\Z$}{Z} and proof of \texorpdfstring{\cref{prop:pointwise_recurrence_is_ABB}}{Theorem G}}

A \emph{dynamically central syndetic} set in $\Z$ (or \emph{dcS} set for short) is a set that contains a set of the form
\[
    \{n \in \Z: T^n x \in U\}
\]
where $(X, T)$ is a minimal topological $\Z$-system, $x \in X$ and $U \subseteq X$ is a neighborhood of $X$. A set of pointwise recurrence in $\Z$ is a set that intersects every $dcS$ set. A \emph{dynamically central piecewise syndetic} set (or \emph{dcPS} set) is the intersection of a $dcS$ set and a set of pointwise recurrence. Since $\Z$ itself is a set of pointwise recurrence, it follows from the definitions that a set of pointwise recurrence is a $dcPS$ set. Thus, \cref{prop:pointwise_recurrence_is_ABB} is a corollary of the next proposition.


\begin{proposition}\label[proposition]{prop:dcPS_almost_Bohr_complete}
Every dcPS set in $\Z$ is $\AlmostBohrBohr$. 
\end{proposition}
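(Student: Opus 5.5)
By \cref{mainthm:almost_Bohr_complete}, it suffices to show that if $S$ is a dcPS set, then $d^*(S\cap B)>0$ for every Bohr set $B\subseteq \Z$. Write $S \supseteq D \cap P$, where $D$ is a dcS set and $P$ is a set of pointwise recurrence. The plan is to show that $D\cap B$ is again a dcS set, and that $(D\cap B)\cap P$ has positive upper Banach density.

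\textbf{Step 1: intersecting with a Bohr set preserves dcS.} Suppose $D\supseteq \{n: T^n x\in U\}$ with $(X,T)$ minimal, and $B=\Bohr(\chi_1,\dots,\chi_k;\eta)$ with $\chi_j(n)=e(n\alpha_j)$. Let $Z=\overline{\{n\alpha: n\in\Z\}}\subseteq\T^k$, a minimal equicontinuous rotation. Then $B\supseteq\{n: n\alpha\in V\}$ for a neighborhood $V$ of $0$ in $Z$.

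Consider the product system $X\times Z$ and the point $(x,0)$. Its orbit closure $Y$ need not be minimal in general. However, $x$ is uniformly recurrent (since $X$ is minimal), and a product of a uniformly recurrent point with a distal point is uniformly recurrent, because rotations are distal (Furstenberg). Hence $(x,0)$ is uniformly recurrent and $Y$ is minimal. Therefore
\[
D\cap B\supseteq\{n: T^n x\in U,\ n\alpha\in V\}
\]
is dcS. If the paper prefers to avoid Furstenberg's theorem, I would prove the needed fact directly via an Ellis semigroup idempotent argument. This is the step I expect to be the main technical point.

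\textbf{Step 2: positive density.} Now $D':=D\cap B$ is dcS and $P$ is a set of pointwise recurrence, so $D'\cap P\neq\varnothing$. This alone only gives nonemptiness, so I need to upgrade it to positive density.

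First attempt: for any finite set $F$, the set $D'\setminus F$ is still dcS. Indeed, we may assume $X$ is infinite by adding a rotation factor in Step 1, so the return-time set minus finitely many points can be realized again by shrinking $U$. That gives infinitude but not density.

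Better route: use the structure of dcPS sets from Glasscock--Le. A dcPS set is piecewise syndetic, and more precisely $D'\cap P$ is itself dcPS. Glasscock and Le show that dcPS sets in $\Z$ coincide with the members of minimal idempotent ultrafilters, i.e.\ with central sets. Granting this, $S\cap B$ contains the central set $D'\cap P$. Alternatively, take a minimal idempotent containing $D\cap P$; by \cref{lem:BohrIsIPstar} the Bohr set $B$ belongs to it, exactly as in the proof of \cref{thm:discrete_group_central_sets}. Hence $S\cap B$ is central, thus piecewise syndetic, and so $d^*(S\cap B)>0$.

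In that route Step 1 becomes unnecessary, and the only external input is the identification of dcPS sets with central sets in $\Z$ from the literature on pointwise recurrence. I would cite this identification if it is available. If it is not, I would fall back on Step 1 together with a direct argument that a dcS set intersected with a set of pointwise recurrence is piecewise syndetic, following Glasner--Weiss. Either way, \cref{mainthm:almost_Bohr_complete} then finishes the proof.
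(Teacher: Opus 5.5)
Your Step 1 matches the paper's first step. It uses Furstenberg's theorem that a uniformly recurrent point paired with a distal point (for instance, any point of a group rotation) is uniformly recurrent, and this makes $D\cap B$ a dcS set. Your passing remark that a dcPS set is piecewise syndetic is Theorem F of \cite{glasscock_le_pointwise}, and that is exactly how the paper finishes: $(D\cap B)\cap P$ is dcPS, hence piecewise syndetic, hence $d^*(S\cap B)>0$. Had you stopped there, you would have the paper's proof.

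The route you then call better, and which you say makes Step 1 unnecessary, does not work. dcPS sets are not the same as central sets, and they need not even be $IP$ sets.
\begin{itemize}
\item The paper's own \cref{prop:dcps_not_difference} gives a set of pointwise recurrence $A\subseteq\N$ that contains no difference set. This $A$ is dcPS, since $A=\Z\cap A$ and $\Z$ is dcS.
\item A central set, by contrast, is $IP$. If $\{\sum_{n\in F}g_n\}\subseteq\N$, then every $g_n\ge 1$, so with $t_k=g_1+\cdots+g_k$ the set contains the difference set $\{t_j-t_i: i<j\}$.
\item Hence $A$ belongs to no idempotent ultrafilter at all. The argument from the proof of \cref{thm:discrete_group_central_sets}, which intersects a minimal idempotent with the $IP^*$ set $B$, therefore has nothing to start from.
\end{itemize}
So Step 1 is essential. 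Positive density has to come from the piecewise syndeticity of dcPS sets, not from idempotents.

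Your fallback also leaves this key input unspecified. Glasner--Weiss \cite{Glasner_Weiss89} proves piecewise syndeticity only for sets of pointwise recurrence, which is the case $D=\Z$. For $(D\cap B)\cap P$ you need the Glasscock--Le extension to dcPS sets, or a proof of it.
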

\begin{proof}
Let $A$ be a $dcPS$ set. We will use the characterization \eqref{item:ABC2-intro} in \cref{mainthm:almost_Bohr_complete} to show that $A$ is a $\AlmostBohrBohr$ set. 

By definition of $dcPS$ sets, $A = A_1 \cap A_2$ where $A_1$ contains 
\[
    R(x, U) = \{n \in \Z: T^n x \in U\}
\]
for a minimal $\Z$-system $(X, T)$, a point $x \in X$, a neighborhood $U$ of $x$, and a set of pointwise recurrence $A_2$. 
Let $B$ be a Bohr set. Then $B$ contains
\[
    R(y, V) = \{n \in \Z: S^n y \in V\},
\]
where $(Y, S)$ is a compact abelian group rotation, $y \in Y$, and $V$ is a neighborhood of $y$. (In the current setting of $G = \Z$, we can take $Y$ to be a finite dimensional torus and $S$ is the translate by an element of $Y$.) By \cite[Theorem 9.11]{Furstenberg81}, the point $(x, y)$ in a uniformly recurrent point in the product system $(X \times Y, T \times S)$. In other words, the closure of 
\[
    \{(T \times S)^n (x, y): n \in \Z\}
\]
is a minimal subsystem of $(X \times Y, T \times S)$. Thus,
\[
    A_1 \cap B \supseteq \{n \in \Z: (T \times S)^n (x, y) \in U \times V\}
\]
is a dynamically central syndetic set. Since $A_2$ is a set of pointwise recurrence, $A \cap B = (A_1 \cap B) \cap A_2$ is a dynamically central piecewise syndetic set and so is piecewise syndetic according to \cite[Theorem F]{glasscock_le_pointwise}. In particular, $d^*(A \cap B) > 0$. Since $B$ is an arbitrary Bohr set, by \cref{mainthm:almost_Bohr_complete} \eqref{item:ABC2-intro}, $A$ is a $\AlmostBohrBohr$ set.
\end{proof}

\subsection{Piecewise syndeticity and proof of \texorpdfstring{\cref{mainthm:counter_for_almost_Bohr_0}}{Theorem I}}
The following concepts are well-defined for arbitrary group $G$. However, as we only use them for $\Z$-actions, we only define them in this case. Given a $\mathbb Z$-system $(X,\mu,T)$, an \emph{eigenvector of $T$} with \emph{eigenvalue} $\lambda\in \mathbb C$ is a $\phi \in L^2(\mu)$ satisfying $\phi\circ T = \lambda \phi$ $\mu$-almost everywhere.  When $(X,\mu,T)$ is ergodic, $f\in L^2(\mu)$, we say that $f$ is \emph{orthogonal to the Kronecker factor} of $(X,\mu,T)$ if $f$ is orthogonal to every eigenvector of $T$ in $L^2(\mu)$.

For $t \in \R$, we write $e(t)$ for $e^{2 \pi i t}$. For the proof of \cref{mainthm:counter_for_almost_Bohr_0}, we need the following version of the Wiener-Wintner theorem.

\begin{theorem}[Uniform Wiener-Wintner Ergodic Theorem (see \cite{Bourgain-double_recurrence} or {\cite[Theorem 2.2]{Assani-wiener_wintner}}]
\label{thm:uniform_wiener_wintner}
Let $(X, \mu, T)$ be an ergodic $\Z$-system, and suppose $F \in L^2(\mu)$ is orthogonal to the Kronecker factor of $(X,\mu,T)$. Then for $\mu$-almost every $x \in X$,
\[
    \lim_{N \to \infty} \sup_{t \in \R} \left| \frac{1}{N} \sum_{n=1}^N f(T^n x) e(nt) \right| = 0.
\]
\end{theorem}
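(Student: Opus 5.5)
The plan is to prove the theorem in two stages: first for bounded functions, via a van der Corput estimate that is uniform in $t$, and then for general $F \in L^2(\mu)$ by approximation. I read the statement as concerning a single function $F=f$. The key observation is that the van der Corput inequality eliminates the twist $e(nt)$ completely. This reduces the uniform statement to a statement about the correlations $\langle f\circ T^h, f\rangle$. Orthogonality to the Kronecker factor makes these correlations small on average.

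\emph{Bounded case.} Assume $f \in L^\infty(\mu)$ is orthogonal to the Kronecker factor, and set $a_n = f(T^n x)$. The finite van der Corput inequality gives, for $1 \le H \le N$,
\[
\Big|\frac1N\sum_{n=1}^N a_n e(nt)\Big|^2 \le \frac{C\|f\|_\infty^2}{H} + \frac{C}{H}\sum_{h=1}^{H}\Big|\frac1N\sum_{n=1}^{N} a_{n+h}\overline{a_n}\Big| + O_{H}\Big(\frac{\|f\|_\infty^2}{N}\Big).
\]
This holds because $a_{n+h}e((n+h)t)\overline{a_n e(nt)} = e(ht)\,a_{n+h}\overline{a_n}$, and the factor $e(ht)$ disappears once we take absolute values. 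Hence the right-hand side does not depend on $t$, so the same bound holds for the supremum over $t$. Apply Birkhoff's theorem to the countably many functions $f\circ T^h\cdot\bar f$. For $\mu$-a.e.\ $x$ and every $h$, the inner average then converges to $\langle f\circ T^h, f\rangle = \hat\sigma_f(h)$, where $\sigma_f$ is the spectral measure of $f$ on $\T$ (this uses ergodicity). Letting $N\to\infty$ gives
\[
\limsup_N \sup_t\Big|\frac1N\sum_{n=1}^N f(T^nx)e(nt)\Big|^2 \le \frac{C\|f\|_\infty^2}{H} + \frac{C}{H}\sum_{h=1}^H|\hat\sigma_f(h)|.
\]
Since $f$ is orthogonal to every eigenfunction, $\sigma_f$ has no atoms. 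By Wiener's lemma (the $\Z$ case of \cref{lem:meanOfSigmaHat}(ii)), $\frac1H\sum_{h\le H}|\hat\sigma_f(h)|^2\to0$, and by Cauchy--Schwarz the Ces\`aro averages of $|\hat\sigma_f(h)|$ also tend to $0$. Letting $H\to\infty$ completes the bounded case.

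\emph{General case.} Given $f\in L^2(\mu)$ orthogonal to the Kronecker factor and $\epsilon>0$, choose a bounded $g$ with $\|f-g\|_2<\epsilon$. Replace $g$ by $g' = g-\E(g\mid\mathcal K)$, where $\mathcal K$ is the Kronecker factor. Then $g'$ is still bounded, is orthogonal to the Kronecker factor, and satisfies $\|f-g'\|_2\le\|f-g\|_2<\epsilon$, because projecting off $\mathcal K$ does not increase the distance to $f$. The difference is controlled by the trivial bound
\[
\sup_t\Big|\frac1N\sum_{n=1}^N (f-g')(T^nx)e(nt)\Big| \le \frac1N\sum_{n=1}^N|f-g'|(T^nx).
\]
By Birkhoff, this converges a.e.\ to $\|f-g'\|_1\le\epsilon$. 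Taking $\epsilon = 1/k$ and intersecting the countably many full-measure sets proves the theorem.

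The step I expect to need the most care is the bounded case: specifically, checking that the van der Corput error terms are uniform in $t$, and identifying the Birkhoff limits with $\hat\sigma_f(h)$ on a single full-measure set that works for all $h$ at once. The approximation step is routine once one notes that the conditional expectation onto the Kronecker factor preserves boundedness.
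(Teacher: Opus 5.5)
The paper gives no proof of this theorem; it is quoted from Bourgain and Assani. Your argument is correct and is essentially the standard proof found there: a van der Corput bound in which the twist $e(nt)$ disappears, Birkhoff's theorem for the countably many functions $f\circ T^h\cdot\bar f$ on a single full-measure set, Wiener's lemma for the atomless spectral measure $\sigma_f$, and then an $L^1$ approximation. For Wiener's lemma, the one-sided averages $\frac1H\sum_{h=1}^H|\hat\sigma_f(h)|^2$ tend to $0$ because every weak$^*$ limit of them is an invariant mean, so \cref{lem:meanOfSigmaHat}(ii) applies; your observation that $g-\E(g\mid\mathcal K)$ stays bounded (the projection onto the closed span of eigenfunctions is a conditional expectation onto an invariant sub-$\sigma$-algebra) is exactly what makes the reduction from $L^2$ to $L^\infty$ legitimate.
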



\begin{lemma}\label[lemma]{lem:necessary_for_2_recurrence}
If $S \subseteq \Z$ is a set of $2$-recurrence, then for every $\alpha, \beta \in \R$ and $\epsilon > 0$, there exists $n \in S$ such that $\lVert n \beta \rVert < \epsilon$ and  $\lVert n^2 \alpha \rVert < \epsilon$, where $\lVert \cdot \rVert$ denote the distance to the nearest integer.
\end{lemma}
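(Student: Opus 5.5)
We will construct a single measure preserving system and a single set of positive measure so that the definition of $2$-recurrence, applied to them, produces the desired $n$. Take $X=\T^2$ with Haar measure $\mu$ (Lebesgue measure), and define the skew product
\[
T(x,y) = (x+\beta',\; y + 2x + \alpha'),
\]
with $\beta'$, $\alpha'$ chosen below. This map preserves $\mu$. By induction,
\[
T^n(x,y) = \bigl(x + n\beta',\; y + 2nx + n(n-1)\beta' + n\alpha'\bigr).
\]
To make the quadratic term equal $n^2\alpha$, we choose $\beta' = \alpha$ and $\alpha' = \alpha$, so that $T^n(x,y) = (x+n\alpha,\; y+2nx+n^2\alpha)$. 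Coupling this with the rotation by $\beta$ on a third circle coordinate $z$, we obtain a system on $\T^3$:
\[
T(x,y,z)=(x+\alpha,\; y+2x+\alpha,\; z+\beta),\qquad T^n(x,y,z)=(x+n\alpha,\; y+2nx+n^2\alpha,\; z+n\beta).
\]

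Let $\delta>0$ be small, to be fixed in terms of $\epsilon$, and let $E=\{(x,y,z): \lVert x\rVert<\delta,\ \lVert y\rVert<\delta,\ \lVert z\rVert<\delta\}$, so that $\mu(E)>0$. Since $S$ is a set of $2$-recurrence, there is $n\in S$ with $\mu(E\cap T^{-n}E\cap T^{-2n}E)>0$. (The paper's definition uses $T_aE$, i.e. $T^nE$; we may use $T^{-n}$ instead, since the two conditions are equivalent after applying $T^{2n}$ and swapping the roles of the two outer sets. One can also avoid this point entirely by using the inverse action.) Pick a point $(x,y,z)$ in this intersection.

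From the $z$-coordinate, $\lVert z\rVert<\delta$ and $\lVert z+n\beta\rVert<\delta$, so $\lVert n\beta\rVert<2\delta$. For the quadratic part, write $Y_k = y+2knx+k^2n^2\alpha$ for $k=0,1,2$. Each $\lVert Y_k\rVert<\delta$, and the second difference is
\[
Y_2-2Y_1+Y_0 = 2n^2\alpha,
\]
so $\lVert 2n^2\alpha\rVert<4\delta$. The main obstacle is the factor $2$: this bound controls $2n^2\alpha$, not $n^2\alpha$ itself. We remove it by running the construction from the start with $\alpha/2$ in place of $\alpha$. Then the second difference is $2n^2(\alpha/2)=n^2\alpha$, giving $\lVert n^2\alpha\rVert<4\delta$. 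Taking $\delta=\epsilon/4$ then yields $\lVert n\beta\rVert<\epsilon$ and $\lVert n^2\alpha\rVert<\epsilon$, which is the claim.

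Note that the second-difference argument uses only the $y$-coordinate at the three times $0,n,2n$. The $x$-coordinate never needs to be controlled, so no eigenvalue or Weyl-type argument is required, and the only remaining care is the $T$ versus $T^{-1}$ bookkeeping described above.
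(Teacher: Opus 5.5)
Your proof is correct and follows essentially the same route as the paper: a product of the rotation by $\beta$ with a two-step skew product on $\T^2$, a small box around the origin, and a second-difference computation along the times $0, n, 2n$. The only cosmetic difference is that the paper uses $(y,z)\mapsto(y+\alpha,\, z+y+\alpha)$, whose second difference gives $n^2\alpha$ directly, so it does not need your replacement of $\alpha$ by $\alpha/2$.
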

\begin{proof}
Let $S$ be a set of $2$-recurrence and let $\T$ denote the torus $\R/\Z$. Consider the $3$-torus $X = \T^3$ and the skew product transformation on $X$ defined by $T(x, y, z) = (x + \beta, y + \alpha, z + y + \alpha)$. Let $U_{\epsilon}$ be the $(\epsilon/8)$-ball centered at $(0, 0, 0) \in X$. The system $(X, T)$ is not necessarily minimal (it would be minimal if $\alpha, \beta$ are rationally independent). However, it is a nilsystem and so the orbit closure of every point forms a minimal subsystem. (See, for example, \cite{Bergelson_Host_Kra05, Host_Kra18} for reference about nilsystems.) In particular, the orbit closure of $(0, 0,0)$ under $T$ is a minimal system and $U_{\epsilon}$ contains a neighborhood of $(0, 0, 0)$ in this system. 
Since $S$ is a set of $2$-recurrence, there exists $n \in S$ such that
\begin{equation}\label{eq:intersection_first}
    U_{\epsilon} \cap T^{-n} U_{\epsilon} \cap T^{-2n} U_{\epsilon} \neq \varnothing.
\end{equation}
Note that for all $(x, y, z) \in X$,
\[
    T^n(x, y, z) = (x + n \beta, y + n \alpha, z + n y + \binom{n}{2} \alpha),
\]
and
\[
     T^{2n}(x, y, z) = (x + 2n \beta, y + 2n \alpha, z + 2n y + \binom{2n}{2} \alpha).
\]
Let $(x_1, y_1, z_1)$ be a point in the intersection. We then have
\[
    \lVert x_1 \rVert < \epsilon/8,
\]
\[
    \lVert x_1 + n \beta \rVert < \epsilon/8,
\]
\[
    \lVert z_1 \rVert < \epsilon/8,
\]
\[
    \lVert z_1 + n y_1 + \binom{n}{2} \alpha \rVert < \epsilon/8,
\]
\[
    \lVert z_1 + 2n y_1 + \binom{2n}{2} \alpha \rVert < \epsilon/8.
\]
It follows that
\[
    \lVert n \beta \rVert < \epsilon/4,
\]
and
\[
    \lVert n^2 \alpha \rVert = \lVert z_1 + 2n y_1 + \binom{2n}{2} \alpha - 2(z_1 + n y_1 + \binom{n}{2} \alpha) + z_1 \rVert < \epsilon/2.
\]
\end{proof}

\begin{proof}[Proof of \cref{mainthm:counter_for_almost_Bohr_0}]
Consider the minimal nilsystem $(X, T)$ where $X = \T\times\T$ and $T(x, y) = (x + \alpha, y + 2x + \alpha)$. Let $\mu$ denote Haar measure on $X$, and let $\mu_{\mathbb T}$ denote Haar measure on $\mathbb T$. Let $V \subseteq [1/4, 3/4]$ be a compact set with empty interior having $\mu_{\mathbb T}(V)>0$, and let $U := \T \times V$. Then $U$ is a closed, nowhere dense set with $\mu(U) > 0$. For $z \in X$, define
\[
    S_z = \{n \in \Z: T^n z \in U\}.
\]
We show that there a single $z$ such that
\begin{enumerate}
    \item $A + S_z = \Z$ for every almost Bohr set $A$;
    \item $S_z$ is not piecewise syndetic;
    \item $S_z$ is not a set of $2$-recurrence;
    \item  $S_z$ is not an $IP_0$ set.
\end{enumerate}

First, we show for all $z \in X$, $S_z$ is not piecewise syndetic. For contradiction, assume otherwise. Then there exists $k \in \N$ such that $D = S_z \cup (S_z - 1) \cup \cdots \cup (S_z - k)$ is thick. Since $(X, T)$ is minimal, it follows that
\begin{equation}\label{eq:big_union_everything}
    X = \overline{\{T^n z: n \in D\}} \subseteq U \cup T^{-1} U \cup \cdots \cup T^{-k} U.
\end{equation}
Note that $T$ is a homeomorphism with the continuous inverse $T^{-1}(x, y) = (x - \alpha, y - 2x + \alpha)$. For each $n$, then, $T^{-n} U$ is a closed, nowhere dense subset of $X$, contradicting (\ref{eq:big_union_everything}). 

Now we show there exists $z$ so that $S_z + A = \Z$ for any almost Bohr set $A$.
It is well-known that the eigenvalues of the Koopman operator associated to $T$ are $e(k \alpha)$, $k \in \Z$, and the eigenspace corresponding to $e(k\alpha)$ is spanned by the function $(x, y) \mapsto e(kx)$. Since $1_U = 1_{\T \times V}$, it follows that the function $F = 1_U  - \mu(U)$ is orthogonal to the Kronecker factor of $(X,\mu,T)$.
By \cref{thm:uniform_wiener_wintner}, for $\mu$-almost every $z \in X$, we have 
\begin{equation}\label{eq:app_wiener-wintner}
    \lim_{N \to \infty} \frac{1}{N} \sum_{n=1}^N F(T^n z) e(nt) = 0 \text{ for all } t \in \R.
\end{equation}

Fix $z$ so that \eqref{eq:app_wiener-wintner}.
To show $S_z + A = \Z$ for every almost Bohr set $A$, we need to prove that for any Bohr set $A$ and any $n \in \Z$, $(n - A) \cap S_z \neq \varnothing$; or equivalently, $d^*(S_z\cap B)>0$ for every Bohr neighnorhood $B\subseteq \Z$.

Let $B$ be a Bohr neighborhood in $\Z$. By Proposition \ref{lem:BohrFourierEquivalent}, we may fix a uniformly almost periodic function $g:\mathbb Z\to [0,1]$, supported on $B$, with $\alpha:=\lim_{N\to\infty} \frac{1}{N}\sum_{n=1}^N g(n)>0$. Since $g$ is a uniform limit of linear combinations of functions of the form $n\mapsto e(nt)$, (\ref{eq:app_wiener-wintner}) implies \[
\lim_{N\to\infty}\sum_{n=1}^N (1_U(T^nz)-\mu(U))g(n)=0,
\] so $\lim_{N\to\infty} \frac{1}{N}\sum_{n=1}^N 1_U(T^n z)g(n)=\mu(U)\alpha>0$.   Since $1_B(n)\geq g(n)$, this implies 
\[\lim_{N\to \infty} \frac{1}{N} \sum_{n=1}^N 1_U(T^nz)1_B(n)>0.\] In other words, $\lim_{N\to \infty} \frac{1}{N}\sum_{n=1}^N 1_{S_z\cap B}(n)>0$. Since $\{1,\dots,N\}$ forms a F{\o}lner sequence, this implies $d^*(S_z\cap B)>0$.

Since \eqref{eq:app_wiener-wintner} for $\mu$-almost every $z$, we may also choose $z = (x_1, y_1)$ to satisfy $\lVert y_1 \rVert < 1/8$ and \eqref{eq:app_wiener-wintner} simultaneously. We will show $S = S_z$ is not a set of  $2$-recurrence. For contradiction, assume this is not the case. By \cref{lem:necessary_for_2_recurrence}, there exists $n \in S$ such that
\[
    \lVert 2 n x_1 + n^2 \alpha \rVert < 1/8.
\]
It follows that
\begin{equation}\label{eq:first_contradiction}
    \lVert y_1 + 2 n x_1 + n^2 \alpha \rVert < 1/4.
\end{equation}
However, since $n \in S$, 
\[
    T^n (x_1, y_1) = (x_1 + n \alpha, y_1 + 2 n x_1 + n^2 \alpha) \in U \subseteq \T \times [1/4, 3/4]
\]
and so
\begin{equation}\label{eq:second_contradiction}
    \lVert y_1 + 2 n x_1 + n^2 \alpha \rVert \in [1/4, 3/4].
\end{equation}
Inequality \eqref{eq:first_contradiction} contradicts \eqref{eq:second_contradiction}, so $S$ is not a set of  $2$-recurrence.

It is shown by Furstenberg and Katznelson \cite{Furstenberg_Katznelson85} every $IP_0$ set is a set of  $2$-recurrence. Since $S$ is not a set of  $2$-recurrence, it is not an $IP_0$ set. The set $S$ now satisfies all conditions stated in \cref{mainthm:counter_for_almost_Bohr_0}.
\end{proof}


\subsection{Partition regularity}

\begin{proposition}\label[proposition]{prop:partition_regular_almostBohr}
The family of $\AlmostBohrBohr$ is partition regular; that is if $A = A_1 \cup A_2$ completes almost Bohr sets, then either $A_1$ or $A_2$ is $\AlmostBohrBohr$.
\end{proposition}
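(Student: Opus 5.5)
We use characterization (\ref{item:ABC2}) of \cref{mainthm:almost_Bohr_complete}: a set $S$ is $\AlmostBohrBohr$ if and only if $d^*(S\cap B)>0$ for every Bohr set $B$. The argument is by contradiction, with the two witnessing Bohr sets combined by intersection.

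Suppose $A = A_1\cup A_2$ is $\AlmostBohrBohr$ but neither $A_1$ nor $A_2$ is. By (\ref{item:ABC1})$\Rightarrow$(\ref{item:ABC2}) applied to $A_1$ and to $A_2$, there are Bohr sets $B_1,B_2$ with
\[
d^*(A_1\cap B_1)=0 \quad\text{and}\quad d^*(A_2\cap B_2)=0.
\]
Set $B:=B_1\cap B_2$. This is again a Bohr set: if $B_i=\Bohr(\Lambda_i;\eta_i)$, then $B\supseteq \Bohr(\Lambda_1\cup\Lambda_2;\min(\eta_1,\eta_2))$. If $B$ merely contains a Bohr set, we replace it by that smaller Bohr set, since the conditions below pass to subsets.

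By monotonicity, $d^*(A_i\cap B)\le d^*(A_i\cap B_i)=0$ for $i=1,2$. Upper Banach density is subadditive: for each invariant mean $m$ we have $m(1_{X\cup Y})\le m(1_X)+m(1_Y)$, and we then take suprema. Hence
\[
d^*(A\cap B)\le d^*(A_1\cap B)+d^*(A_2\cap B)=0.
\]
This contradicts (\ref{item:ABC2}) for $A$, via (\ref{item:ABC1})$\Rightarrow$(\ref{item:ABC2}) of \cref{mainthm:almost_Bohr_complete}. Therefore one of $A_1$, $A_2$ is $\AlmostBohrBohr$.

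There is no real obstacle here. The only points needing care are that the intersection of two Bohr sets contains a Bohr set, and that $d^*$ is subadditive and monotone, both of which are immediate from the definitions.
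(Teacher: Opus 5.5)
Your proof is correct and is essentially the paper's argument. The paper uses characterization (iii) and notes that almost Bohr sets form a filter, via $(B_1\setminus E_1)\cap(B_2\setminus E_2)=(B_1\cap B_2)\setminus(E_1\cup E_2)$, so the dual family is partition regular; you run the same Bohr-set intersection plus subadditivity of $d^*$ directly through characterization (ii), and your remark that $B_1\cap B_2$ need only contain a Bohr set is a small point of care the paper glosses over.
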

\begin{proof}
By \cref{mainthm:almost_Bohr_complete}, the family of sets that complete almost Bohr sets is the dual of the family of almost Bohr sets. Therefore, it suffices to show that the family of almost Bohr sets is a filter. Let $B_1, B_2$ be two Bohr sets and let $E_1, E_2$ have Banach density zero. Then 
\[
    (B_1 \setminus E_1) \cap (B_2 \setminus E_2) = (B_1 \cap B_2) \setminus (E_1 \cup E_2).
\]
Now $B_1 \cap B_2$ is a Bohr set and $d^*(E_1 \cup E_2) = 0$. Therefore, $(B_1 \setminus E_1) \cap (B_2 \setminus E_2)$ is an almost Bohr set.
\end{proof}

\subsection{Relationship with difference sets}

A \emph{difference set} in $\N$ is a set of the from $(A - A) \cap \N$ where $A$ is an infinite subset of $\Z$. As previously demonstrated, $\AlmostBohrBohr$ sets are large (having positive upper Banach density) and combinatorially rich (they are van der Corput sets, sets of nice recurrence, and partition regular). As difference sets possess some of these properties, it is natural to ask whether every $\AlmostBohrBohr$ set contains a difference set. Our next result shows that this is not the case.

This result, when interpreted in the dual form, is equally interesting. A set $A \subseteq \N$ is \emph{$\Delta^*$} if it intersects every difference set. (Due to this definition, we say the families of difference sets and $\Delta^*$-sets are dual -- analogous to the duality between the families of $IP$ sets and $IP^*$ sets.)
A set $A$ is \emph{piecewise Bohr} if $A \supseteq B \cap H$ where $B$ is a Bohr set and $H$ is a thick set. It is shown by Host and Kra \cite{Host_Kra-nilbohr} that every $\Delta^*$-set is a piecewise Bohr set. Because the families of almost Bohr sets and $\AlmostBohrBohr$ sets are dual (\cref{mainthm:almost_Bohr_complete}\eqref{item:ABC3}), our next result implies that not every $\Delta^*$-set is an almost Bohr set (restricted to $\N$). 

\begin{proposition}\label[proposition]{prop:dcps_not_difference}
    There exists an $\AlmostBohrBohr$ set in $\N$ which does not contain a difference set.
\end{proposition}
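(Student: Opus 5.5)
We need a set $S\subseteq\N$ that meets every Bohr set in positive upper Banach density (so it is $\AlmostBohrBohr$ by \cref{mainthm:almost_Bohr_complete}) but contains no set $(A-A)\cap\N$ with $A$ infinite. I would build it as a return-time set with a nowhere dense target, as in the proof of \cref{mainthm:counter_for_almost_Bohr_0}. The target should be placed so that no difference set can fit inside it. The plan is to arrange for the complement $\N\setminus S$ to contain an infinite difference set's worth of obstructions. Concretely, I would find a set $R$ with $R\cap S=\varnothing$ such that $R$ meets every difference set, that is, $R$ is $\Delta^*$ in a strong sense. Then no difference set can lie in $S$.

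A natural choice is the system $X=\T^2$ with $T(x,y)=(x+\alpha,y+2x+\alpha)$ and $\alpha$ irrational, and
\[
S=\{n\in\N:\ \lVert n^2\alpha\rVert\in V\},
\]
where $V\subseteq[1/4,3/4]$ is compact, nowhere dense, and of positive measure. This is the orbit of $z=(0,0)$, since $T^n(0,0)=(n\alpha,n^2\alpha)$.

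\textbf{Step 1: $S$ contains no difference set.} Let $A$ be infinite. By Ramsey's theorem (or a compactness/pigeonhole argument applied to $a\alpha$ and $a^2\alpha$ in $\T$), we can find $a<b$ in $A$ with
\[
\lVert a\alpha-b\alpha\rVert<\delta \quad\text{and}\quad \lVert a^2\alpha-b^2\alpha\rVert<\delta .
\]
This alone does not control $(b-a)^2\alpha$, because $(b-a)^2=b^2-2ab+a^2$ involves the cross term $2ab\alpha$. I would instead use the standard fact that the set $\{n:\lVert n^2\alpha\rVert<1/8\}$ is $\Delta^*$. This is Sárközy/Furstenberg for difference sets, i.e. $\{n^2\}$-type sets are nil-Bohr sets, which are $\Delta^*$ by Host–Kra. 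Concretely, applying Ramsey to the coloring $\{a,b\}\mapsto$ (cell of $((b-a)^2\alpha, \ldots)$) gives the claim via the quadratic identity. Since $V\subseteq[1/4,3/4]$, every $n\in S$ has $\lVert n^2\alpha\rVert\ge1/4$, so $S$ avoids a $\Delta^*$ set and therefore contains no difference set.

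\textbf{Step 2: $S$ meets every Bohr set in positive density.} The function $F=1_{\T\times V}-\mu(\T\times V)$ is orthogonal to the Kronecker factor, since it depends only on $y$ through nonzero frequencies. I need the Wiener–Wintner conclusion (\cref{thm:uniform_wiener_wintner}) at the specific point $(0,0)$, not merely at almost every point. For this nilsystem, I would verify it directly. Each Fourier mode $e(k y)$ with $k\ne0$ contributes
\[
\frac1N\sum_{n\le N} e(kn^2\alpha+nt)\to0 \quad\text{uniformly in } t,
\]
by Weyl's inequality. Then I approximate $1_V$ in $L^1$ from above and below by trigonometric polynomials, which is possible since $\partial V=V$ has positive measure. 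Where that fails, I would instead use the upper bound $1_V\le h$ with $h$ continuous, together with a lower bound obtained from the return-time density argument of \cref{mainthm:counter_for_almost_Bohr_0}, after replacing $(0,0)$ by a generic point $z=(x_1,y_1)$. The argument then proceeds as in Step 1 with the set $\{n:\lVert y_1+2nx_1+n^2\alpha\rVert\ge1/4\}$. With the averages under control, pairing against a uniformly almost periodic $g$ supported on a Bohr set (\cref{lem:BohrFourierEquivalent}) gives $d^*(S\cap B)>0$. Finally, I intersect with $\N$ using positivity of one-sided averages.

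\textbf{Main obstacle.} The hard step is Step 1 for a generic point $z=(x_1,y_1)$. There I need $\{n:\lVert y_1+2nx_1+n^2\alpha\rVert<1/4\}$ to meet every difference set. With $y_1$ near $0$, this set is a nil-Bohr neighborhood of $0$: it is the set of return times of $(x_1,y_1)$ to a neighborhood of itself, shifted. I would handle it by applying Ramsey's theorem to the orbit points $T^a z$, $a\in A$, in the compact nilmanifold. Using the identity $T^{b}z-T^{a}z$ computed explicitly in coordinates, and passing to a subsequence along which both $T^a z$ converges and the pairs $a<b$ have controlled $2a(b-a)\alpha$-terms, I would aim to find $a<b$ with $T^{b-a}z$ close to $z$ in the second coordinate. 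This is exactly the content of the Host–Kra result that return times in a nilsystem are $\Delta^*$, and I would cite it in that form.
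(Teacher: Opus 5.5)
\textbf{The gap is in Step 1.} Step 1, and the fix in your ``Main obstacle'' paragraph, rest on a false claim. Neither $\{n:\lVert n^2\alpha\rVert<1/8\}$ nor the return-time sets of a nilsystem are $\Delta^*$ in general. S\'ark\"ozy--Furstenberg concerns $A-A$ with $d^*(A)>0$, not arbitrary infinite $A$. Host--Kra prove that $\mathrm{Nil}_d$-Bohr$_0$ sets meet every set of sums with gaps $SG_d(P)$. For $d\ge 2$ this is weaker than meeting every difference set, which is the gap-free case $SG_1(P)$.

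Here is a concrete counterexample. Since $((2m+1)\alpha,(2m+1)^2\alpha)_m$ is equidistributed in $\T^2$, you can choose odd $t_1<t_2<\cdots$ with $\lVert t_j^2\alpha\rVert<\delta$ and $t_j\alpha\equiv\tfrac14+\eta_j \pmod 1$, where $|\eta_j|<\delta/(2t_{j-1})$. For $i<j$, oddness of $t_i$ gives $2t_it_j\alpha\equiv\tfrac12+2t_i\eta_j \pmod 1$, so
\[
(t_j-t_i)^2\alpha\equiv t_j^2\alpha+t_i^2\alpha-\tfrac12-2t_i\eta_j \pmod 1
\]
lies within $3\delta$ of $\tfrac12$. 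Thus $E=\{t_j\}$ is infinite and the difference set $(E-E)\cap\N$ misses $\{n:\lVert n^2\alpha\rVert<1/8\}$. This holds even though every $\lVert (t_j-t_i)\alpha\rVert$ is small, so $(E-E)\cap\N$ also misses the return times of $(0,0)$ to a small neighborhood. Adding the condition $\lVert t_jx_1\rVert<\delta$, which is fine for generic $x_1$, gives the same conclusion for $\{n:\lVert y_1+2nx_1+n^2\alpha\rVert<1/4\}$ when $\lVert y_1\rVert<1/8$. So the Ramsey argument cannot force the monochromatic value of $(b-a)^2\alpha$ to be near $0$; it can sit near $\tfrac12$. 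Nothing in your proposal then rules out that $S_z$ contains a difference set. This is a missing idea, not a technicality.

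\textbf{Step 2 also fails at $z=(0,0)$, and what the paper does instead.} Sandwiching $1_V$ between continuous functions with nearly equal integrals requires $\mu_{\T}(\partial V)=0$. That is exactly what fails when $\partial V=V$ has positive measure. Indeed, $V$ may be chosen disjoint from the countable set $\{n^2\alpha\}$, which makes $S$ empty. Moving to a generic $z$ as in Theorem \ref{mainthm:counter_for_almost_Bohr_0} repairs Step 2 but not Step 1.

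The obstruction to difference sets has to come from a different mechanism. The paper takes $A=\bigcup_i (p_i\N+1)\cap H_i$, with distinct primes $p_i$ and pairwise disjoint thick sets $H_i$ built from intervals $I_1<I_2<\cdots$ separated by rapidly growing gaps. This is a set of pointwise recurrence (Glasscock--Le), hence $\AlmostBohrBohr$ by Theorem \ref{prop:pointwise_recurrence_is_ABB}. Suppose $t_j-t_i\in A$ for all $i<j$, and write $a_k=t_{k+1}-t_k$. For each $n$ with $a_n=\max\{a_1,\dots,a_n\}$, the sums $a_n,a_n+a_{n-1},\dots,a_n+\cdots+a_1$ lie in one interval. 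Hence they are all $\equiv 1$ modulo a single prime $p$, which forces $p\mid a_1,\dots,a_{n-1}$. Only finitely many primes divide $a_1$, so some prime recurs at two such indices $m<n$. Then $p\mid a_m$ while $a_m\equiv 1\pmod p$, a contradiction. In the paper it is these arithmetic congruences, not a nilsystem, that rule out difference sets.
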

\begin{proof}
Let $(p_i)$ be an increasing sequence of primes and $(H_i)$ be a sequence of thick sets. It is proved in \cite[Proposition 4.2]{glasscock_le_pointwise} that the set
\[
    A = \bigcup_{i=1}^{\infty} (p_i \N + 1) \cap H_i
\]
is a set of pointwise recurrence. By \cref{prop:dcPS_almost_Bohr_complete}, $A$ is an $\AlmostBohrBohr$ set. 

We now describe a way to construct the sequence of thick sets $(H_i)$ so that $A$ does not contain a difference set. Each $H_i$ is chosen as a union of disjoint intervals of increasing lengths such that
\begin{enumerate}
    \item $H_i \cap H_j = \varnothing$ for $i \neq j$,
    
    \item if $H = \bigcup_{i=1}^{\infty} H_i$, then $H$ can be written as a union of disjoint intervals $\bigcup_{i=1}^{\infty} I_i$ with $\max I_{i} < \min I_{i+1}$ and
\begin{equation}\label{eq:min_max_I}
    \max \{x_1 + \cdots + x_i: x_1 \in I_1, \ldots, x_i \in I_i\} < \min I_{i+1}.
\end{equation}
\end{enumerate}
With these $H_i$, we will show that the corresponding set $A$ does not contain a difference set. For contradiction, assume this is not the case, i.e. there exists an infinite sequence $t_1 < t_2 < \ldots \in \N$ such that $t_j - t_i \in A$ for all $i < j$. For $i \in \N$, let $a_i = t_{i+1} - t_i \geq 1$. It follows that 
\begin{equation}\label{eq:consequecne_difference}
    a_m + a_{m+1} + \cdots + a_n = t_{n+1} - t_m \in A
\end{equation}
for all $1 \leq m \leq n \in \N$.

First, note that $a_1, a_1 + a_2, a_1 + a_2 + a_3, \ldots \in A$. Since $A$ is not syndetic (by \eqref{eq:min_max_I}), the set $\{a_n: n \in \N\}$ is unbounded. Thus, there exist infinitely many $n$ such that $a_n = \max \{a_1, \ldots, a_n\}$. By \eqref{eq:consequecne_difference}, for all $n$, all the numbers
\[
    a_n, a_n + a_{n-1}, \ldots, a_n + a_{n-1} + \cdots + a_1 
\]
belong to $A$ and by \eqref{eq:min_max_I}, they 
belong to the same interval of $H$. Call that interval $I_{i_n}$. From the definition of $A$, all these numbers are elements of $(p_{i_n} \N + 1) \cap I_{i_n}$ for some prime $p_{i_n}$ in the sequence $(p_i)$. It follows that $a_{n-1}, a_{n-2}, \ldots, a_1$ are divisible by $p_{i_n}$. Since the number of prime divisors of $a_1$ is finite, the set
\[
    \{p_{i_n}: a_n = \max \{a_1, \ldots, a_n\}\}
\]
is finite. It follows that there two numbers $m < n \in \N$ so that $p_{i_m} = p_{i_n}$. Therefore, $a_m, a_n \in p_{i_n} \N + 1$. As said before, $a_{n-1}, a_{n-2}, \ldots, a_1$ are all divisible by $p_{i_n}$. In particular, $a_m$ is divisible by $p_{i_n}$; but this contradicts the fact that $a_m \in p_{i_n} \N + 1$.
\end{proof}

\section{Open questions}
In this section, we collect some open questions that arise from our study of $\DiffBohr$ and $\AlmostBohrBohr$ sets.

While \cref{mainthm:almost_Bohr_complete} provides necessary and sufficient conditions for $\AlmostBohrBohr$ sets, \cref{th:SpectralSumDifference_intro} only gives a sufficient condition for $\DiffBohr$ sets. This naturally leads to the following question.

\begin{question}
Find necessary and sufficient conditions for $\DiffBohr$ sets.
\end{question}

On a related note, \cref{th:SpectralSumDifference_intro} states that if a set supports a $\KMF$ mean, then it is $\DiffBohr$. A $\KMF$ mean satisfies two conditions: annihilates continuous measures and massively accumulates at $0$ in $bG$. Between these two conditions, the condition annihilating continuous measures is shown to be unnecessary (see \cref{prop:diffbohr_not_annihilates_continuous_measure}). The next question asks if the other condition is necessary. 

\begin{question}
Does there exist a mean on $\ell^{\infty}(\Z)$ which does not massively accumulate at $0$ in $b\Z$ whose support is $\DiffBohr$?
\end{question}

\cref{prop:partition_regular_KMF_mean} shows that the property of supporting a $\KMF$ mean is partition regular. \cref{prop:partition_regular_almostBohr} proves the same for $\AlmostBohrBohr$ sets. We do not know if the same holds for $\DiffBohr$ sets.
\begin{question}
Is the family of $\DiffBohr$ sets partition regular?    
\end{question}

We show that in \cref{mainthm:difference_set_not_DF} that there is a difference set in $\N$ ($(E - E) \cap \N$ where $E \subseteq \Z$ is infinite) which is not a $\DiffBohr$ set. As a result, van der Corput sets and sets of nice recurrence are not necessarily $\DiffBohr$. On the other hands, $IP$ sets (defined in \cref{sec:central_ip_sets}) are special types of difference sets that possess many rich algebraic and dynamical properties that difference sets do not have. Therefore, in the next question we ask if $IP$ sets are $\DiffBohr$.
\begin{question}
Is every $IP$ set a $\DiffBohr$ set?
\end{question}

In the next two questions, we explore the missing implications in Figure \ref{figure:relations_intersective_prime}.
By \cref{th:SpectralSumDifference_intro} and \cref{prop:partition_regular_almostBohr}, supporting a $\KMF$ mean implies both $\DiffBohr$ and van der Corput. We do not know if $\DiffBohr$ itself already implies van der Corput.

\begin{question}\label{ques:diffbohr_is_vanderCorput}
Is every $\DiffBohr$ set a van der Corput set?
\end{question}

Since every $\DiffBohr$ set is a set of recurrence, any negative answer to \cref{ques:diffbohr_is_vanderCorput} would give an example of a set of recurrence that fails to be a van der Corput set. The first example of this phenomenon was given by Bourgain \cite{Bourgain87}. Recently, Mountakis \cite{Mountakis} refined Bourgain's method to construct a set of strong recurrence that is not van der Corput. Obtaining a negative answer to \cref{ques:diffbohr_is_vanderCorput} would likely demand a construction in a similar spirit.


From their definitions, $\AlmostBohrBohr$ implies $\DiffBohr$. In fact, we believe that $\AlmostBohrBohr$ implies the stronger condition: supporting a $\KMF$ mean.

\begin{conjecture}\label{conj:ab_expanding_implies_kmf}
Every $\AlmostBohrBohr$ set supports a $\KMF$ mean.
\end{conjecture}
A natural way to approach \cref{conj:ab_expanding_implies_kmf} is the following: Let $S \subseteq G$ be $\AlmostBohrBohr$. Then by \cref{mainthm:almost_Bohr_complete}, the set $S \cap B$ has positive upper Banach density for all Bohr set $B$; in particular, $S$ has positive upper Banach density. Let $m$ be an invariant mean on $\ell^{\infty}(G)$ such that $m(1_S) > 0$. Define a mean $\nu$ on $\ell^{\infty}(G)$ by
\[
    \nu(f) = \frac{m(f \cdot 1_S)}{m(1_S)}.
\]

It is obvious that $\nu$ is supported on $S$ and by \cref{prop:RelativeAnnihilates}, the mean $\nu$ annihilates continuous measures. \cref{conj:ab_expanding_implies_kmf} would be proved if we could show that $\nu$ massively accumulates at $0$ in $bG$. Doing so would amount to showing $m(1_{B \cap S}) > 0$ for every Bohr set $B$. Even though this resembles that fact that $d^*(S \cap B) > 0$, what we want to prove is not always true. It is because the mean $m$ may not the capture the positivity of the upper Banach density of $S \cap B$.

Here is a concrete example that demonstrates the failure of this ``natural'' approach. Let $G = \Z$ and let $(I_N)_{N = 1}^{\infty}$ be a sequence of pairwise disjoint intervals in $\N$ such that 
\[
    \frac{|I_N|}{|I_1| + \cdots + |I_{N-1}|} \to \infty \text{ as } N \to \infty.
\]
Let 
\[  
    S = \bigcup_{N = 1}^{\infty} (2 \N \cap I_{2N}) \cup ((2\N + 1) \cap I_{2N + 1}).
\]
Thus $S$ alternatively contains large chunks of even and odd numbers. Since every Bohr set contains syndetically many even numbers, it is not hard to see $S$ is $\AlmostBohrBohr$.
Let $m$ be a mean supported on $\bigcup_{N=1}^{\infty} ((2\N + 1) \cap I_{2N + 1})$. Then $m(1_S) = 1$ but for the Bohr set $B = 2 \Z$, we have $m(1_{B \cap S}) = 0$. This example shows that we need a different approach for \cref{conj:ab_expanding_implies_kmf}.

All the results in our paper are qualitative in nature: we do not provide any bounds on the rank or radius of the Bohr sets appearing in the sumsets. This raises the question of whether some of these results can be strengthened to include quantitative control over these parameters. Here is a concrete example:

\begin{question} \label{q:KMF_quantitative}
Let $G$ be a discrete abelian group and $A, S\subseteq G$. If $S$ supports a $\KMF$ mean and $d^*(A)>0$, is it true that $A - A + S$ contains a Bohr set with rank and radius depending only on $S$ and $d^*(A)$?
\end{question}

For the sets $S$ appearing in parts (i) and (ii) of Theorem \ref{cor:Specific-intro_main}, using a quantitative method different from the one used in this paper, we can confirm Question \ref{q:KMF_quantitative} in the affirmative. For the vast majority of other sets that support $\KMF$ means, however, the answer remains unknown.

\bibliographystyle{abbrv}
\bibliography{refs}

\end{document}